\newcommand{\tf}{\tfrac}
  \renewcommand{\a}{\alpha}
\renewcommand{\b}{\beta}
\renewcommand{\d}{{\delta}}
\newcommand{\g}{\gamma}
\newcommand{\G}{\Gamma}
\renewcommand{\l}{\lambda}
\renewcommand{\(}{\left\(}
\renewcommand{\)}{\right\)}
\renewcommand{\[}{\left\[}
\renewcommand{\]}{\right\]}
\let\dotlessi=\i
\renewcommand{\i}{\infty}
\numberwithin{equation}{section}
 \theoremstyle{plain}
\newtheorem{theorem}{Theorem}[section]
\newtheorem{lemma}[theorem]{Lemma}
\newtheorem{corollary}[theorem]{Corollary}
\newtheorem{definition}[theorem]{Definition}
\newtheorem{remark}[]{Remark}
\def\proof{\@ifnextchar[{\@oproof}{\@nproof}}
\def\@oproof[#1][#2]{\trivlist\item[\hskip\labelsep\textit{#2 Proof of\
#1.}~]\ignorespaces}
\def\@nproof{\trivlist\item[\hskip\labelsep\textit{Proof.}~]\ignorespaces}
\def\@tocline#1#2#3#4#5#6#7{\relax
  \ifnum #1>\c@tocdepth % then omit
  \else
    \par \addpenalty\@secpenalty\addvspace{#2}%
    \begingroup \hyphenpenalty\@M
    \@ifempty{#4}{%
      \@tempdima\csname r@tocindent\number#1\endcsname\relax
    }{%
      \@tempdima#4\relax
    }%
    \parindent\z@ \leftskip#3\relax \advance\leftskip\@tempdima\relax
    \rightskip\@pnumwidth plus4em \parfillskip-\@pnumwidth
    #5\leavevmode\hskip-\@tempdima
      \ifcase #1
       \or\or \hskip 1em \or \hskip 2em \else \hskip 3em \fi%
      #6\nobreak\relax
    \dotfill\hbox to\@pnumwidth{\@tocpagenum{#7}}\par% <---- \dotfill -> \hfill
    \nobreak
    \endgroup
  \fi}
\numberwithin{theorem}{section}
\numberwithin{theorem}{subsection}
\numberwithin{equation}{section}
\numberwithin{equation}{subsection}
\begin{document}
\title[Superimposing theta structure on a generalized modular relation]{Superimposing theta structure on a generalized modular relation}
%\title[Generalized Hurwitz zeta and modified Bessel functions]{Generalized Hurwitz zeta and modified Bessel functions}
\author{Atul Dixit and Rahul Kumar}\thanks{2010 \textit{Mathematics Subject Classification.} Primary 11M06, 33E20; Secondary 33C10.\\
\textit{Keywords and phrases.} Riemann zeta function, Hurwitz zeta function, Bessel functions, theta transformation formula, Hermite's formula, modular relation.}
\address{Discipline of Mathematics, Indian Institute of Technology Gandhinagar, Palaj, Gandhinagar 382355, Gujarat, India}\email{adixit@iitgn.ac.in; rahul.kumr@iitgn.ac.in}

\begin{abstract}
A generalized modular relation of the form $F(z, w, \a)=F(z, iw,\b)$, where $\a\b=1$ and $i=\sqrt{-1}$, is obtained in the course of evaluating an integral involving the Riemann $\Xi$-function. It is a two-variable generalization of a transformation found on page $220$ of Ramanujan's Lost Notebook. This modular relation involves a surprising generalization of the Hurwitz zeta function $\zeta(s, a)$, which we denote by $\zeta_w(s, a)$. While $\zeta_w(s, 1)$ is essentially a product of confluent hypergeometric function and the Riemann zeta function, $\zeta_w(s, a)$ for $0<a<1$ is an interesting new special function. We show that $\zeta_w(s, a)$ satisfies a beautiful theory generalizing that of $\zeta(s, a)$ albeit the properties of $\zeta_w(s, a)$ are much harder to derive than those of $\zeta(s, a)$. In particular, it is shown that for $0<a<1$ and $w\in\mathbb{C}$, $\zeta_w(s, a)$ can be analytically continued to Re$(s)>-1$ except for a simple pole at $s=1$. This is done by obtaining a generalization of Hermite's formula in the context of $\zeta_w(s, a)$. The theory of functions reciprocal in the kernel $\sin(\pi z) J_{2 z}(2 \sqrt{xt}) -\cos(\pi z) L_{2 z}(2 \sqrt{xt})$, where $L_{z}(x)=-\frac{2}{\pi}K_{z}(x)-Y_{z}(x)$ and $J_{z}(x), Y_{z}(x)$ and $K_{z}(x)$ are the Bessel functions, is worked out. So is the theory of a new generalization of $K_{z}(x)$, namely, ${}_1K_{z,w}(x)$. Both these theories as well as that of $\zeta_w(s, a)$ are essential to obtain the generalized modular relation.
\end{abstract}
\maketitle
\vspace{-0.8cm}
\tableofcontents
%\vspace{-1.3cm}

\section{Introduction}\label{intro}
\setcounter{subsection}{1}
Regarding the results in Ramanujan's only published paper \cite{riemann} fully devoted to the study of the Riemann zeta function $\zeta(s)$, Hardy says \cite{hardyw}:\\

\textit{``It is difficult at present to estimate the importance of these results. The unsolved problems concerning the zeros of $\zeta(s)$ or of $\Xi(t)$ are among the most obscure and difficult in the whole range of Pure Mathematics. Any new formulae involving $\zeta(s)$ or $\Xi(t)$ are of very great interest, because of the possibility that they may throw light on some of these outstanding questions. It is, as I have shown in a short note attached to Mr. Ramanujan's paper, certainly possible to apply his formulae in this direction; but the results which can be deduced from them do not at present go beyond those obtained already by Mr. Littlewood and myself in other ways. But I should not be at all surprised if still more important applications were to be made of Mr. Ramanujan's formulae in the future".}\\

One of the goals of this paper is to not only give non-trivial applications of Ramanujan's formulas but also to generalize an integral involving the Riemann $\Xi$-function (defined in \eqref{xi} below) occurring in one of them. In the course of doing so, we are led to an interesting generalization of the Hurwitz zeta function $\zeta(s, a)$ whose theory is developed here. As will be seen, it is difficult to a priori conceive this generalized Hurwitz zeta function, however, we were naturally led to it through our quest to obtain a generalized modular relation which is a two-parameter generalization of yet another formula of Ramanujan (\eqref{ramfor} below). Developing the theory of this generalized Hurwitz zeta function turned out to be an extremely interesting and a fruitful task, for, in this endeavor, we stumbled upon a new generalization of the modified Bessel function of the second kind. The rich theory of this generalized modified Bessel function is also developed here.
% We emphasize that it is not easy to develop the theory of this generalized Hurwitz zeta function. The reason is, the properties of the associated functions that played an important role in developing the theory of $\zeta(s, a)$ was readily available whereas in our case, we have to obtain from scratch the properties of generalizations of these functions.

The first of the two results in Hardy's aforementioned quote can be stated in an equivalent form as \cite[Equation (13)]{riemann}
\begin{align}\label{mrram}
&\alpha^{\frac{1}{2}}-4\pi\alpha^{\frac{3}{2}}\int_{0}^{\infty}\frac{xe^{-\pi\alpha^2 x^2}}{e^{2\pi x}-1}\, dx=\beta^{\frac{1}{2}}-4\pi\beta^{\frac{3}{2}}\int_{0}^{\infty}\frac{xe^{-\pi\beta^2 x^2}}{e^{2\pi x}-1}\, dx\nonumber\\
&=\frac{1}{4\pi^{3/2}}\int_{0}^{\infty}\Gamma\left(\frac{-1+it}{4}\right)\Gamma\left(\frac{-1-it}{4}\right)\Xi\left(\frac{t}{2}\right)\cos \left(\frac{1}{2}t\log\alpha\right)\, dt,
\end{align}
where $\Xi(t)$ is a function of Riemann defined by \cite[p.~16]{titch}
\begin{align}\label{xi}
\Xi(t)&:=\xi\left(\tfrac{1}{2}+it\right),
\end{align}
with 
\begin{equation}\label{xii}
\xi(s):=\frac{1}{2}s(s-1)\pi^{-\frac{s}{2}}\Gamma\left(\frac{s}{2}\right)\zeta(s),
\end{equation}
and $\Gamma(s)$ being the Euler Gamma function. Here, and throughout the sequel, unless specified otherwise, $\a$ and $\b$ will always denote two positive numbers such that $\a\b=1$. 

Even though Ramanujan does not explicitly mention, the first equality in \eqref{mrram} is a transformation satisfied by an \emph{integral analogue of partial theta function}, namely, $\displaystyle\int_{0}^{\infty}\frac{xe^{-\pi\alpha^2 x^2}}{e^{2\pi x}-1}\, dx$. The reason for this nomenclature is now explained. The transformation for the Jacobi theta function can be rephrased for $\a\b=1$, Re$(\alpha^2)>0$ and Re$(\beta^2)>0$ in the form \cite[p.~43, Entry 27(i)]{berndt0}
\begin{align}\label{thetaab}
\sqrt{\alpha}\bigg(\frac{1}{2\alpha}-\sum_{n=1}^{\infty}e^{-\pi\alpha^2n^2}\bigg)=\sqrt{\beta}\bigg(\frac{1}{2\beta}-\sum_{n=1}^{\infty}e^{-\pi\beta^2n^2}\bigg),
\end{align}
Since the variable of integration in $\displaystyle\int_{0}^{\infty}\frac{xe^{-\pi\alpha^2 x^2}}{e^{2\pi x}-1}\, dx$ runs only from $0$ to $\infty$ similar to the summation indices in the partial theta functions in \eqref{thetaab} running only from $1$ to $\infty$, the integral is called an `integral analogue of partial theta function' in \cite{drz5}. 

It is well-known that \eqref{thetaab} is equivalent to the functional equation of $\zeta(s)$ given by  \cite[p.~22, eqn. (2.6.4)]{titch}
\begin{equation}\label{zetafe}
\pi^{-\frac{s}{2}}\Gamma\left(\frac{s}{2}\right)\zeta(s)=\pi^{-\frac{(1-s)}{2}}\Gamma\left(\frac{1-s}{2}\right)\zeta(1-s),
\end{equation}
Hardy \cite{Har} showed that both sides of \eqref{thetaab} also equal
\begin{equation}\label{thetaabint}
\frac{2}{\pi}\int_{0}^{\infty}\frac{\Xi(t/2)}{1+t^2}\cos\bigg(\frac{1}{2}t\log \a\bigg)\, dt,
\end{equation}
and used this fact to prove that there are infinitely many zeros of the Riemann zeta function $\zeta(s)$ on the critical line. Regarding the integral involving $\Xi$-function in \eqref{mrram}, Hardy \cite{ghh} says:

`\emph{The integral has properties similar to those of the integral by means of which I proved recently that $\zeta(s)$ has an infinity of zeros on the line $\sigma=1/2$, and may be used for the same purpose.}''
%  instead of over all integers as in the case of the Jacobi theta function $\sum_{n=-\infty}^{\infty}e^{-\pi \a^2 n^2}$ because of which we call them as partial theta functions. 

Note that the first equality in \eqref{mrram} is of the form $F(\a)=F(\b)$ for $\a\b=1$.

The second integral that Ramanujan studied in \cite[Section 5]{riemann} is 
\begin{equation}\label{rint}
\int_{0}^{\infty}\Gamma\left(\frac{z-1+it}{4}\right)\Gamma\left(\frac{z-1-it}{4}\right)
\Xi\left(\frac{t+iz}{2}\right)\Xi\left(\frac{t-iz}{2}\right)\frac{\cos(nt)}{(z+1)^2+t^2}\, dt,
\end{equation}
where $n\in\mathbb{R}$, and represented it in terms of equivalent integrals \cite[Equations (19), (20), (21)]{riemann}\footnote{See \cite[Theorem 1.2]{dixit} for the corrected versions of equations (19) and (21) of \cite{riemann}.}. He does not specify any connection whatsoever between the above integral and the corresponding one in \eqref{mrram}. However, it was shown in \cite[Section 6]{drz5} that they are related through \emph{``squaring of the functional equation''} of $\zeta(s)$.

Regarding the special case $z=0$ of the above integral, Hardy \cite{ghh} says, \textit{``the properties of this integral resemble those of one which Mr. Littlewood and I have used, in a paper to be published shortly in Acta Mathematica to prove that\footnote{Note that there is a typo in this formula in that $\pi$ should not be present.}}
\begin{equation*}
\int_{-T}^{T}\left|\zeta\left(\frac{1}{2}+ti\right)\right|^2\, dt \sim
\frac{2}{\pi} T\log T\hspace{3mm}(T\to\infty)\textup{''}.
\end{equation*}
In his Lost Notebook \cite[p.~220]{lnb}, Ramanujan found an exquisitely beautiful result linked to this special case $z=0$ of the integral in \eqref{rint}, namely, if $\psi(s)=\Gamma'(s)/\Gamma(s)$, $\g$ denotes Euler's constant and
\begin{align*}
\mathfrak{F}(\a):=\sqrt{\alpha}\bigg(\sum_{m=1}^{\infty}\left(-\psi(m\a)+\log(m\a)-\frac{1}{2m\a}\right)-\frac{(\gamma-\log(2\pi\alpha))}{2\alpha}\bigg),
\end{align*}
%where $\psi(s)$ is the logarithmic derivative of the gamma function $\G(s)$, 
then
%&=\sqrt{\beta}\bigg(\frac{\gamma-\log(2\pi\beta)}{2\beta}+\sum_{m=1}^{\infty}\left(\psi(m\b)+\frac{1}{2m\b}-\log(m\b)\right)\bigg)\nonumber\\
\begin{align}\label{ramfor}
\mathfrak{F}(\a)=\mathfrak{F}(\b)=\frac{1}{\pi^{3/2}}\int_0^{\infty}\Gamma\left(\frac{-1+it}{4}\right)\Gamma\left(\frac{-1-it}{4}\right)
\Xi^{2}\left(\frac{t}{2}\right)\frac{\cos\left(\tfrac{1}{2}t\log\alpha\right)}{1+t^2}\, dt.
\end{align}
%The first equality in \eqref{ramfor} is also of the form $F(\a)=F(\b)$, for $\a\b=1$.
The reader is referred to \cite{bcbad} and \cite{series} for proofs of this result. The extra variable $z$ in \eqref{rint}, however, suggests that the above result can be generalized. Indeed, the following generalization of \eqref{ramfor} was obtained in \cite[Theorem 1.4]{dixit} (see also \cite[Theorem 1.5]{transf}). Let 
\begin{equation*}
\mathcal{F}(z, \a):=\a^{\frac{z+1}{2}}\left(\sum_{m=1}^{\infty}\left(\zeta(z+1,m\a)-\frac{(m\a)^{-z}}{z}-\frac{1}{2}(m\a)^{-z-1}\right)-\frac{\zeta(z+1)}{2\alpha^{z+1}}-\frac{\zeta(z)}{\alpha z}\right),
\end{equation*}
where $\zeta(s, x)$ denotes the Hurwitz zeta function, defined for Re$(s)>1$ and $x\in\mathbb{C}\backslash(\mathbb{R}^{-}\cup\{0\})$, by
\begin{equation*}
\zeta(s, x):=\sum_{n=0}^{\infty}\frac{1}{(n+x)^{s}}.
\end{equation*}
Then for $-1<\textup{Re}(z)<1$,
\begin{align}\label{hurwitzeqn}
\mathcal{F}(z, \a)=\mathcal{F}(z, \b)&=\frac{2^z\pi^{\frac{z-3}{2}}}{\Gamma(z+1)}\int_{0}^{\infty}\Gamma\left(\frac{z-1+it}{4}\right)\Gamma\left(\frac{z-1-it}{4}\right)
\Xi\left(\frac{t+iz}{2}\right)\nonumber\\
&\qquad\qquad\qquad\times\Xi\left(\frac{t-iz}{2}\right)\frac{\cos\left( \tf{1}{2}t\log\a\right)}{(z+1)^2+t^2}\, dt.
\end{align}
%%%%studied a special case of the integral
%%%%\begin{equation*}
%%%%\int_{0}^{\infty}f\left(\frac{t}{2}, z\right)\Xi\left(\frac{t+iz}{2}\right)\Xi\left(\frac{t-iz}{2}\right)\cos \left(\frac{1}{2} t\log\alpha\right)\, dt,
%%%%\end{equation*}
%%%%where 
%%%%\begin{equation}\label{fdefi}
%%%%f(t, z)=\phi(it, z)\phi(-it, z),
%%%%\end{equation}
%%%%with $\phi$ analytic in the complex variable $z$ and in the real variable $t$. 
%The Jacobi theta functions \cite[Chapter XXI]{ww} have been studied for nearly two centuries now and still have new rich applications in various branches of mathematics and other sciences. Among the foremost of their properties is the theta transformation, which can be rephrased for
%with each of the above sides also equaling \cite{Har}

Before commencing the main topic which led to this work, we turn our attention to the general theta transformation, also known as Jacobi's imaginary transformation \cite[p.~475]{ww}. This transformation, along with the corresponding integral involving $\Xi(t)$ which was obtained in \cite[Theorem 1.2]{dixthet}, together generalize \eqref{thetaab} and \eqref{thetaabint}. For $\a\b=1$, Re$(\a^2)>0$, Re$(\b^2)>0$, and $w\in\mathbb{C}$, this general theta transformation is given by
\begin{align}\label{genthetatr}
\sqrt{\alpha}\bigg(\frac{e^{-\frac{w^2}{8}}}{2\alpha}-e^{\frac{w^2}{8}}\sum_{n=1}^{\infty}e^{-\pi\alpha^2n^2}\cos(\sqrt{\pi}\alpha nw)\bigg)
&=\sqrt{\beta}\bigg(\frac{e^{\frac{w^2}{8}}}{2\beta}-e^{-\frac{w^2}{8}}\sum_{n=1}^{\infty}e^{-\pi\beta^2n^2}\cosh(\sqrt{\pi}\beta nw)\bigg)\nonumber\\
&=\frac{1}{\pi}\int_{0}^{\infty}\frac{\Xi(t/2)}{1+t^2}\nabla\left(\alpha,w,\frac{1+it}{2}\right)\, dt,
\end{align}
where
\begin{align*}
\nabla(x,w,s)&:=\rho(x,w,s)+\rho(x,w,1-s),\nonumber\\
\rho(x,w,s)&:=x^{\frac{1}{2}-s}e^{-\frac{w^2}{8}}{}_1F_{1}\left(\frac{1-s}{2};\frac{1}{2};\frac{w^2}{4}\right),
\end{align*}
with ${}_1F_{1}(a;c;z):=\sum_{n=0}^{\infty}\frac{(a)_{n}z^{n}}{(c)_{n}n!}$ being the confluent hypergeometric function and $(a)_n:=a(a+1)\cdots(a+n-1)$. Note that the first equality in \eqref{genthetatr} is of the form $F(w,\a)=F(iw,\b)$, where $\a\b=1$ and $i=\sqrt{-1}$. 
%obeys the \emph{theta structure} by which we mean that it is of the form $F(w,\a)=F(iw,\b)$, where $\a\b=1$.

%As shown above, Jacobi's imaginary transformation and its special case $w=0$ both have an integral involving $\Xi(t)$ linked to it. This will, indeed, be the case with all of the transformations studied in this paper.
It would be good at this juncture to explain not only what we mean by a modular relation but also by the one with a theta structure. The theta transformation \eqref{thetaab} is a modular transformation since the Jacobi theta function is a weight $1/2$ modular form on $\G_{0}(4)$ twisted by $\chi_{-1}$, the Dirichlet character modulo $4$ defined by $\chi_{-1}(n):=\left(\frac{-1}{n}\right)=(-1)^{(n-1)/2}$. By a\emph{ modular relation} or a modular-type transformation, however, we mean a transformation of the form $F(-1/z)=F(z)$, where $z\in\mathbb{H}$ (the upper-half plane). The $F$ in such a relation may not be governed by $z\to z+1$. A \emph{generalized modular relation} or a \emph{generalized modular-type transformation} is the one which involves additional variable(s) besides $z$.

%There are, however, many transformations which are governed by only the map $z\to-1/z$, where $z\in\mathbb{H}$ (the upper-half plane), and not necessarily by $z\to z+1$. That is, we may have a function $F$ satisfying $F(-1/z)=z^kF(z)$ only. In particular, when we have the weight $k$ to be zero, we call such a transformation as a \emph{modular relation} or a \emph{modular-type transformation}.  

As explained in \cite{mtt}, any generalized modular relation may be recast in an equivalent form governed by the relation $\a\to\b$, where $\a\b=1$ and Re$(\a)>0$, Re$(\b)>0$. Thus, while the transformations in \eqref{thetaab} and \eqref{genthetatr} are modular in that they come from modular forms, the ones in \eqref{ramfor} and \eqref{hurwitzeqn} are only modular relations.
%There also exist generalized modular-type transformations which have more variables than just $\a$ or $\b$. To get a complete idea of the framework of these transformations, we first discuss the known ones before stating our new result which fills the gap. 
Generalized modular relations have many applications in analytic number theory, special functions and asymptotics. See \cite{mtt} for a survey on them as well as for the necessary references.

A \emph{modular relation with a theta structure} is the one in which the associated function, say $F(w, \a)$, becomes invariant only when $\a$ and $w$ are \emph{simultaneously} replaced by $\b$ and $iw$ respectively. The prototypical example of this is \eqref{genthetatr}. Several other examples can be found in \cite{dixthet}.

The present work arose from answering the question - \emph{can one superimpose the theta structure on \textup{\eqref{hurwitzeqn}}}, that is, can one obtain a generalized modular relation of the form $F(z, w, \a)=F(z,iw,\b)$, where $\a\b=1$, which reduces to \eqref{hurwitzeqn} when $w=0$? One reason why one may be interested in doing so is because, that would pave a way towards obtaining a new Jacobi form. Loosely speaking, a Jacobi form is a function $f:\mathbb{H}\times\mathbb{C}\to\mathbb{C}$, satisfying certain growth conditions, and which is modular in the first variable and elliptic in the second. Note that the Jacobi theta function in \eqref{genthetatr} is a prototypical example of a Jacobi form.

One of the goals of this paper, indeed, is to obtain a relation of the form $F(z, w, \a)=F(z,iw,\b)$ generalizing \eqref{hurwitzeqn}. This transformation is given in Theorem \ref{genramhureq}. It is then used to evaluate a generalization of the integral in \eqref{rint} (see Theorem \ref{xiintgenramhurthm}). 

The motivation for this work arose from the fact that answering a similar question in \cite{dkmt} in the context of the Ramanujan-Guinand formula \eqref{mainagain3} led us to its interesting generalization of the form $F(z, w, \a)=F(z,iw,\b)$ which is given in \eqref{genrgeqeqn}. The importance of having such a generalization is clear from the fact that the Ramanujan-Guinand formula itself is equivalent to the functional equation of the non-holomorphic Eisenstein series on $\textup{SL}_{2}(\mathbb{Z})$ as can be seen, for example, from \cite{cohen}. The framework in which this generalization lies is explained in detail in Section \ref{mr}.

The modular relation of the form $F(z, w, \a)=F(z,iw,\b)$ generalizing \eqref{hurwitzeqn} which we obtain in Theorem \ref{genramhureq} contains a special feature - it involves a new generalization of the Hurwitz zeta function $\zeta(s, a)$. The second goal of this paper is to develop the theory of this \emph{generalized Hurwitz zeta function}.

%%%%%%%%%%%%%%%%%%%%%%%%%%%IMPORTANT%%%%%%%%%%%%%%%%%
%This new transformation is an analogue of the higher-level theta transformation \eqref{genrgeqeqn} and satisfies the same phenomenon as \eqref{genrgeqeqn}, that is, each of the expressions in \eqref{fzalramhur} are just the constant terms (with respect to $w$) in the series expansions of corresponding expressions on the left-hand side of \eqref{genramhureqeqn}. What is striking here is, all of the integrals and series involved in the proof, except one series, can be explicitly evaluated even at this level of generality! The series which does not admit representation in terms of known functions, indeed, is a new special function whose limiting case $w\to 0$ is the Hurwitz zeta function! We call this function the \emph{generalized Hurwitz zeta function}.
%%%%%%%%%%%%%%%%%%%%%%%%%%%IMPORTANT%%%%%%%%%%%%%%%%%
%Let $\mathbb{Z}^{-}$ denote the set of negative integers [THIS MIGHT BE THE WHOLE NEGATIVE REAL LINE] and 
Let $\mathfrak{B}:=\{\xi:\textup{Re}(\xi)=1, \textup{Im}(\xi)\neq 0\}$. We define the generalized Hurwitz zeta function for $w\in\mathbb{C}\backslash\{0\}$, Re$(s)>1$ and $a\in\mathbb{C}\backslash\mathfrak{B}$ by
\begin{align}\label{new zeta}
\zeta_w(s, a)&:=\frac{4}{w^2\sqrt{\pi}\G\left(\frac{s+1}{2}\right)}\sum_{n=1}^{\infty}\int_{0}^{\infty}\int_{0}^{\infty}\frac{(uv)^{s-1}e^{-(u^2+v^2)}\sin(wv)\sinh(wu)}{\left(n^2u^2+(a-1)^2v^2\right)^{s/2}}\, dudv.
%\frac{2}{w^2\sqrt{\pi}}\frac{\G\left(\frac{s}{2}\right)}{\G\left(\frac{s+1}{2}\right)}\sum_{n=1}^\infty \left[\int_0^{\frac{\pi}{2}}\frac{(\sin\theta\cos\theta)^{s-1}\mathrm{Im}\left({}_1F_1\left(\frac{s}{2};\frac{1}{2};\frac{w^2e^{2i\theta}}{4}\right)\right)}{(n^2\cos^2\theta+(a-1)^2\sin^2\theta)^{\frac{s}{2}}}d\theta\right].
\end{align}
Even though the definition may look intimidating at first glance, we hope to convince the reader that it is indeed an interesting new object satisfying a beautiful theory! This is owing to the fact that it naturally arises while generalizing \eqref{hurwitzeqn}, and is not concocted artificially. The first conspicuous appearance of this generalized Hurwitz zeta function is seen while evaluating a generalization of \eqref{hurwitzeqn} as can be observed in \eqref{doub3}. Another interesting fact is that, even though the double integral in the summand cannot be evaluated in closed-form in general, we are still able to develop the theory of $\zeta_w(s, a)$.

The series in \eqref{new zeta} converges absolutely for Re$(s)>1$. This is easy to see since the summand is $O_{w,a}\left(n^{-\textup{Re}(s)}\right)$ as $n\to\infty$. Note that we require $a\notin\mathfrak{B}$, for, otherwise, the branch of logarithm in $(n^2u^2+(a-1)^2v^2)^{s/2}$ becomes multi-valued. In the case of the Hurwitz zeta function $\zeta(s, a)$, one has to omit all non-positive real numbers to avoid it being multi-valued.
% Also, it is uniformly convergent on Re$(s)\geq 1+\epsilon$ for any $\epsilon>0$. Hence $\zeta_w(s, a)$ is analytic in Re$(s)>1$. 

Note that $w=0$ is actually a removable singularity of $\zeta_w(s, a)$ since the power series expansion of $\displaystyle\frac{\sin(wv)\sinh(wu)}{w^2}$ around $w=0$ starts with $1$. Hence for Re$(s)>1$, by $\zeta_0(s, a)$, we mean
\begin{equation}\label{zeta0sa}
\zeta_0(s, a)=\lim_{w\to 0}\zeta_w(s, a).
\end{equation}
Indeed, from the fact that the summand in the series definition of $\zeta_w(s, a)$ in \eqref{new zeta} is $O_{w,a}\left(n^{-\textup{Re}(s)}\right)$ as $n\to\infty$, we can interchange the order of $\lim_{w\to 0}$ and summation. Also, the exponential decay of the integrand of the double integral as $u, v\to\infty$ allows us to interchange the order of $\lim_{w\to 0}$ and integration.

As will be seen in Theorem \ref{hurwitzsc} below, $\zeta_0(s, a)$ is the familiar Hurwitz zeta function. Our first result is
\begin{theorem}\label{zwsares1}
Let $w\in\mathbb{C}$ and let $a\in\mathbb{R}$. Then the generalized Hurwitz zeta function $\zeta_w(s, a)$ is analytic in \textup{Re}$(s)>1$.
\end{theorem}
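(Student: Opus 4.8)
The plan is to realize $\zeta_w(s,a)$ as a series of functions, each analytic on the half-plane $\Re(s)>1$, that converges locally uniformly there, and then to apply the Weierstrass convergence theorem. Since $1/\Gamma$ is entire and $w\neq 0$, the prefactor $4\left(w^2\sqrt{\pi}\,\Gamma\left(\tfrac{s+1}{2}\right)\right)^{-1}$ is entire in $s$, so it is enough to treat the series. Writing its $n$-th term as
\begin{equation*}
I_n(s):=\int_0^\infty\!\!\int_0^\infty\frac{(uv)^{s-1}e^{-(u^2+v^2)}\sin(wv)\sinh(wu)}{\left(n^2u^2+(a-1)^2v^2\right)^{s/2}}\,du\,dv,
\end{equation*}
I would establish (i) that each $I_n$ is analytic on $\{\Re(s)>1\}$, and (ii) that $\sum_{n\ge 1}I_n$ converges uniformly on every compact $K\subset\{\Re(s)>1\}$.

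Fix such a $K$, put $\sigma:=\Re(s)$, $\sigma_0:=\min_{s\in K}\sigma$ and $\sigma_1:=\max_{s\in K}\sigma$, so $\sigma_0>1$. The one place the hypothesis $a\in\mathbb{R}$ enters is that for real $a$ the base $D:=n^2u^2+(a-1)^2v^2$ is a nonnegative real number, strictly positive on $\{u>0,\,v>0\}$; hence $D^{s/2}$ is the unambiguous principal power, the integrand is entire in $s$ for each fixed $(u,v)$, and $\left|D^{-s/2}\right|=D^{-\sigma/2}$. Splitting according as $u^2v^2/D\le 1$ or $u^2v^2/D>1$ gives, for all $s\in K$,
\begin{equation*}
\left|\frac{(uv)^{s-1}}{D^{s/2}}\right|=\frac{1}{uv}\left(\frac{u^2v^2}{D}\right)^{\sigma/2}\le\frac{(uv)^{\sigma_0-1}}{D^{\sigma_0/2}}+\frac{(uv)^{\sigma_1-1}}{D^{\sigma_1/2}}.
\end{equation*}
Multiplying by $\left|\sin(wv)\sinh(wu)\right|e^{-(u^2+v^2)}$ produces an $s$-independent majorant on $K$ whose integrability over the quadrant I verify below. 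Granting this, dominated convergence shows $s\mapsto I_n(s)$ is continuous, and integrating $I_n$ around an arbitrary closed triangle in $K$ and interchanging the two integrations (Fubini) kills the contour integral by Cauchy's theorem; Morera's theorem then yields analyticity of $I_n$ on $\{\Re(s)>1\}$.

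For the uniform convergence I would use the elementary lower bound $D\ge n^2u^2$ (with equality when $a=1$), valid for every real $a$, which gives
\begin{equation*}
|I_n(s)|\le\frac{1}{n^{\sigma}}\int_0^\infty\!\!\int_0^\infty v^{\sigma-1}\,\frac{|\sinh(wu)|}{u}\,|\sin(wv)|\,e^{-(u^2+v^2)}\,du\,dv\le\frac{C_K}{n^{\sigma_0}}\qquad(s\in K).
\end{equation*}
The inner double integral is finite — near $u=0$ one uses $|\sinh(wu)|/u\to|w|$, near $v=0$ one uses $v^{\sigma-1}|\sin(wv)|=O(v^{\sigma})$, and at infinity the Gaussian factor dominates — and it is bounded uniformly for $\sigma\in[\sigma_0,\sigma_1]$ by splitting $v^{\sigma-1}\le v^{\sigma_0-1}+v^{\sigma_1-1}$; this yields the constant $C_K=C_K(w,a)$. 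As $\sigma_0>1$, the series $\sum_n n^{-\sigma_0}$ converges, so the Weierstrass $M$-test gives uniform convergence of $\sum_n I_n$ on $K$ (this also recovers the bound $I_n(s)=O_{w,a}(n^{-\Re(s)})$ already noted in the text). Combined with step (i), the sum $\sum_n I_n$ is therefore analytic on $\{\Re(s)>1\}$, and multiplying by the entire prefactor proves the theorem.

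I expect the main technical point to be the integrability of the majorants at the corner $u=v=0$, uniformly over $s\in K$. For $a\neq1$ a polar substitution $u=r\cos\theta$, $v=r\sin\theta$ shows the two majorant terms are $O(r^{\sigma_0})$ and $O(r^{\sigma_1})$ as $r\to0$ (the angular factor $n^2\cos^2\theta+(a-1)^2\sin^2\theta$ is bounded below on $[0,\tfrac{\pi}{2}]$), which is integrable against $r\,dr\,d\theta$. The genuinely delicate case is the degeneracy $a=1$, where $D=n^2u^2$ loses all control in the $v$-direction and the polar estimate fails along the $v$-axis; here I would argue directly, noting that each majorant term collapses to a constant multiple of $v^{\sigma-1}|\sin(wv)|\,|\sinh(wu)|\,u^{-1}e^{-(u^2+v^2)}$, which is integrable since $|\sinh(wu)|/u$ stays bounded as $u\to0$, $v^{\sigma-1}|\sin(wv)|=O(v^{\sigma})$ as $v\to0$, and the Gaussian forces decay at infinity. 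Once this corner analysis is in place, and the reality of $a$ is used to keep $D^{s/2}$ single-valued with $\left|D^{s/2}\right|=D^{\sigma/2}$, the remainder of the argument is routine.
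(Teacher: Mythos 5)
Your proposal is correct, and its global architecture is the one the paper uses: term-by-term analyticity of the double integrals plus locally uniform convergence of the series, finished off by Weierstrass' convergence theorem (your explicit bound $n^{2}u^{2}+(a-1)^{2}v^{2}\ge n^{2}u^{2}$, giving $|I_n(s)|\le C_K n^{-\sigma_0}$, is exactly the $O_{w,a}(n^{-\mathrm{Re}(s)})$ estimate the paper asserts as ``easy to see''). Where you genuinely diverge is in the proof of the key lemma that each $I_n$ is analytic on $\mathrm{Re}(s)>1$. The paper (Lemma \ref{double integral is analytic}) treats $I_n$ as an iterated integral and applies the holomorphy-under-the-integral-sign theorem (Theorem \ref{ldcttemme1}) twice: first to the inner $u$-integral, then to the outer $v$-integral, which forces a separate verification that the inner integral is continuous in $v$ (via Theorem \ref{titchcomplexthm}) and uniform-convergence checks at both endpoints of each integral. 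You instead treat $I_n$ as a genuine two-dimensional integral and build a single $s$-independent integrable majorant on each compact $K$ (via the split $x^{\sigma/2}\le x^{\sigma_0/2}+x^{\sigma_1/2}$), from which continuity follows by dominated convergence and analyticity by Fubini, Cauchy's theorem on triangles, and Morera. Your route is shorter and more self-contained --- one majorant does all the work, and the delicate continuity-in-$v$ analysis disappears --- while the paper's route has the advantage of reusing the exact toolkit (Theorems \ref{ldcttemme}, \ref{ldcttemme1}, \ref{titchcomplexthm}) that it deploys repeatedly elsewhere, e.g.\ in Lemma \ref{tia}. One small point to patch: since the theorem allows $w\in\mathbb{C}$, and both your prefactor argument and the definition \eqref{new zeta} presuppose $w\ne 0$, you should add the one-line remark that for $w=0$ the same argument runs with $\sin(wv)\sinh(wu)/w^{2}$ replaced by its limit $uv$ (this is the convention \eqref{zeta0sa}); the paper's own proof is silent on this as well.
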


%%%Even though the definition of $\zeta_w(s, a)$ looks complicated at first glance, we hope to convince the reader that it is an interesting function which satisfies many nice properties. Also, as we shall see, the generalization of the integral in  \eqref{rint} not only has modular-type property but also the theta structure. That makes this function all the more interesting. 

%We first give the generalized modular-type transformation, where $\zeta_w(s, a)$ arises naturally. One of the main objectives of this paper is to initiate the development of the theory of this generalized Hurwitz zeta function $\zeta_w(s, a)$ in the spirit of that of Hurwitz zeta function $\zeta(s, a)$ and its special case, the Riemann zeta function $\zeta(s)$.

The generalized Hurwitz zeta function $\zeta_w(s,a)$ is the first-of-its-kind generalization of $\zeta(s, a)$ which, for Re$(s)>1$, is symmetric about the line $a=1$, that is, 
\begin{align}\label{zwfea}
\zeta_w(s,2-a)=\zeta_w(s,a).
\end{align}
Moreover, it is an even function of $w$.
 
For $a=1$ and Re$(s)>1$, $\zeta_w(s,a)$ reduces (essentially) to the Riemann zeta function case since
\begin{align}\label{a=1caserz}
\zeta_w(s,1)%&=\frac{4}{w^2\sqrt{\pi}\Gamma\left(\frac{s+1}{2}\right)}\sum_{n=1}^\infty \int_0^\infty\int_0^\infty\frac{(uv)^{s-1}e^{-(u^2+v^2)}\sin(wv)\sinh(wu)}{(n^2u^2)^{\frac{s}{2}}}dudv \nonumber\\
&=\frac{4}{w^2\sqrt{\pi}\Gamma\left(\frac{s+1}{2}\right)}\sum_{n=1}^\infty\frac{1}{n^s} \int_0^\infty\int_0^\infty\frac{v^{s-1}}{u}e^{-(u^2+v^2)}\sin(wv)\sinh(wu)dudv \nonumber\\
&=\frac{\sqrt{\pi}}{w}\ \mathrm{erfi}\left(\frac{w}{2}\right){}_1F_1\left(\frac{1+s}{2};\frac{3}{2};-\frac{w^2}{4}\right)\zeta(s),
\end{align} 
as shown in Section \ref{seczws1}. Here $\textup{erfi}(w)$ is the imaginary error function defined by
\begin{align}
\textup{erfi}(w)&:=\frac{2}{\sqrt{\pi}}\int_{0}^{w}e^{t^2}\, dt=\frac{2w}{\sqrt{\pi}}e^{w^2}{}_1F_{1}\left(1;\frac{3}{2};-w^2\right).\label{erferfi}
\end{align}
We also note the definition of the error function $\textup{erf}(w)$:
\begin{align}
\textup{erf}(w)&:=\frac{2}{\sqrt{\pi}}\int_{0}^{w}e^{-t^2}\, dt=\frac{2w}{\sqrt{\pi}}e^{-w^2}{}_1F_{1}\left(1;\frac{3}{2};w^2\right).\label{erf}
\end{align}
%where we used \eqref{int1} below with $u$ and $w$ replaced by $\frac{u}{v}$ and $iw$ respectively. 
From \eqref{a=1caserz}, 
\begin{equation*}
\lim_{w\to 0}\zeta_w(s, 1)=\zeta(s).
\end{equation*}
%Since the theory of $\zeta(s)$ is well-studied, henceforth, we will always assume $a\neq 1$.
The analytic properties of $\zeta_w(s, 1)$ are studied in Section \ref{seczws1}. However, the more important case $0<a<1$ is dealt in the remainder of that section. In Section \ref{nzf}, the following result is established too.
\begin{theorem}\label{hurwitzsc}
For \textup{Re}$(s)>1$,
\begin{equation}\label{hurwitzsceqn}
\zeta_0(s, a)=\begin{cases}
\zeta(s,a) & \text{if\ $\mathrm{Re}(a)>1$}, \\
   \zeta(s, 2-a) & \text{if\ $\mathrm{Re}(a)<1$}. 
\end{cases}
\end{equation}
\end{theorem}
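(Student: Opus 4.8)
The plan is to evaluate $\zeta_0(s,a)=\lim_{w\to0}\zeta_w(s,a)$ directly from the series–integral definition \eqref{new zeta}. Since $\frac{\sin(wv)\sinh(wu)}{w^2}\to uv$ as $w\to0$, and the interchange of the limit with the summation and the double integral is permitted by the bounds recorded in the discussion following \eqref{zeta0sa}, I would first obtain
\begin{equation*}
\zeta_0(s,a)=\frac{4}{\sqrt{\pi}\,\Gamma\left(\tfrac{s+1}{2}\right)}\sum_{n=1}^{\infty}\int_0^{\infty}\int_0^{\infty}\frac{(uv)^{s}\,e^{-(u^2+v^2)}}{\left(n^2u^2+(a-1)^2v^2\right)^{s/2}}\,du\,dv.
\end{equation*}
By the symmetry \eqref{zwfea}, which passes to the limit $w\to0$, one has $\zeta_0(s,2-a)=\zeta_0(s,a)$; since $\textup{Re}(2-a)>1$ whenever $\textup{Re}(a)<1$, it suffices to prove $\zeta_0(s,a)=\zeta(s,a)$ under the assumption $\textup{Re}(a)>1$, in which case I set $b:=a-1$ with $\textup{Re}(b)>0$. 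The second case then drops out immediately.

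The heart of the argument is the evaluation, for each fixed $n$, of the inner double integral $I_n$. First I would insert the Gamma-function representation $(n^2u^2+b^2v^2)^{-s/2}=\frac{1}{\Gamma(s/2)}\int_0^{\infty}t^{s/2-1}e^{-(n^2u^2+b^2v^2)t}\,dt$ and evaluate the resulting Gaussian integrals through $\int_0^{\infty}u^{s}e^{-cu^2}\,du=\tfrac12\Gamma\left(\tfrac{s+1}{2}\right)c^{-(s+1)/2}$, which turns $I_n$ into the single integral $\frac{\Gamma(\frac{s+1}{2})^2}{4\Gamma(s/2)}\int_0^\infty t^{s/2-1}(1+n^2t)^{-(s+1)/2}(1+b^2t)^{-(s+1)/2}\,dt$. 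Next I would fuse the two factors using a Beta integral (Feynman parameter), $(1+n^2t)^{-(s+1)/2}(1+b^2t)^{-(s+1)/2}=\frac{\Gamma(s+1)}{\Gamma(\frac{s+1}{2})^2}\int_0^1[\xi(1-\xi)]^{(s-1)/2}(1+ct)^{-(s+1)}\,d\xi$ with $c=\xi n^2+(1-\xi)b^2$, carry out the elementary $t$-integral $\int_0^\infty t^{s/2-1}(1+ct)^{-(s+1)}\,dt=c^{-s/2}B\left(\tfrac s2,\tfrac s2+1\right)$, and recognise the remaining $\xi$-integral as an Euler representation of a Gauss hypergeometric function,
\begin{equation*}
\int_0^1\xi^{(s-1)/2}(1-\xi)^{(s-1)/2}\,c^{-s/2}\,d\xi=b^{-s}B\left(\tfrac{s+1}{2},\tfrac{s+1}{2}\right){}_2F_1\left(\tfrac{s}{2},\tfrac{s+1}{2};s+1;z\right),\qquad z=1-\tfrac{n^2}{b^2}.
\end{equation*}
Every one of these steps converges for $\textup{Re}(s)>1$.

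The decisive and, I expect, hardest step is collapsing this hypergeometric function to an elementary expression. The crucial observation is that our parameters satisfy $s+1=2\cdot\tfrac{s+1}{2}$, so the quadratic (algebraic) transformation
\begin{equation*}
{}_2F_1\left(a,a+\tfrac12;2a+1;z\right)=2^{2a}\left(1+\sqrt{1-z}\right)^{-2a}
\end{equation*}
applies with $a=s/2$. Taking the principal root $\sqrt{1-z}=n/b$ (legitimate because $\textup{Re}(b)>0$) gives ${}_2F_1=2^{s}b^{s}(n+b)^{-s}$, whence $I_n=\frac{2^{s}\Gamma(\frac s2+1)\Gamma(\frac{s+1}{2})^2}{4\,\Gamma(s+1)}(n+b)^{-s}$. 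Summing over $n\ge1$ then yields
\begin{equation*}
\zeta_0(s,a)=\frac{2^{s}\Gamma\left(\tfrac s2+1\right)\Gamma\left(\tfrac{s+1}{2}\right)}{\sqrt{\pi}\,\Gamma(s+1)}\sum_{n=1}^{\infty}\frac{1}{(n+b)^{s}},
\end{equation*}
and the Legendre duplication formula $\Gamma(s+1)=\frac{2^{s}}{\sqrt{\pi}}\Gamma\left(\tfrac{s+1}{2}\right)\Gamma\left(\tfrac{s}{2}+1\right)$ reduces the constant to exactly $1$. Since $\sum_{n\ge1}(n+b)^{-s}=\zeta(s,1+b)=\zeta(s,a)$, this proves the $\textup{Re}(a)>1$ case, and with it the theorem. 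The only subtle points beyond routine Gamma-function bookkeeping are the already-granted interchange of limit, sum and integral, and fixing the branch so that $\sqrt{1-z}=+n/b$; verifying the quoted quadratic transformation (e.g. by checking that both sides satisfy the hypergeometric ODE, or by a Pfaff/Euler reduction) is what makes the whole identity come out in closed form.
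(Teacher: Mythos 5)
Your route is correct in substance and genuinely different from the paper's in its middle section, but the two proofs meet at the same key point. The paper (Lemma \ref{hurwitz as a special case}) evaluates the inner $u$-integral in terms of the Tricomi function $U\left(\frac{s+1}{2};\frac{3}{2};\cdot\right)$, Laplace-transforms it against $v^{s+1}e^{-v^2}$ to produce a ${}_2F_1$, and then applies Pfaff's transformation; you instead use a Schwinger parameter for the denominator, two Gaussian integrals, a Feynman parameter, and Euler's integral. Both roads lead to exactly the same function ${}_2F_1\left(\frac{s}{2},\frac{s+1}{2};s+1;1-\frac{n^2}{(a-1)^2}\right)$ (compare \eqref{hw13}), and both collapse it by the same device: the quadratic transformation for ${}_2F_1(a,b;2b;z)$ from \cite[p.~130, Exercise 5.7]{temme}, in which the choice $a=\frac{s}{2}$, $b=\frac{s+1}{2}$ forces $a-b+\frac{1}{2}=0$ so that the second hypergeometric factor trivializes; your identity ${}_2F_1\left(a,a+\tfrac12;2a+1;z\right)=2^{2a}\left(1+\sqrt{1-z}\right)^{-2a}$ is precisely \eqref{h2}. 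Your parametrization route is arguably more self-contained (no Tricomi function, no tables), at the cost of one extra interchange of integrals to justify; the final bookkeeping via the duplication formula and the reduction of the case $\mathrm{Re}(a)<1$ through the symmetry \eqref{zwfea} agree with the paper.

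There is, however, one genuine gap: as written, your argument does not cover all complex $a$ with $\mathrm{Re}(a)>1$, which the theorem requires. The opening steps, namely the representation
\begin{equation*}
(n^2u^2+b^2v^2)^{-s/2}=\frac{1}{\Gamma(s/2)}\int_0^\infty t^{s/2-1}e^{-(n^2u^2+b^2v^2)t}\,dt,
\end{equation*}
the Gaussian evaluation of $\int_0^\infty v^{s}e^{-(1+b^2t)v^2}\,dv$, and the Fubini interchanges behind them, all require $\mathrm{Re}(b^2)>0$, i.e.\ $|\arg(a-1)|<\frac{\pi}{4}$, and not merely $\mathrm{Re}(b)>0$ as you assume. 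For instance $a=1+\epsilon+i$ with small $\epsilon>0$ has $\mathrm{Re}(a)>1$ but $\mathrm{Re}\left((a-1)^2\right)=\epsilon^2-1<0$, and then your $t$-integral diverges for small $u$. The repair is exactly the maneuver the paper makes: your computation is airtight for real $a>1$ (indeed for the whole sector $|\arg(a-1)|<\frac{\pi}{4}$), both sides of the resulting evaluation of $I_n$ are analytic in $a$ on $\mathrm{Re}(a)>1$ (for the integral side this is the content of the repeated application of Theorem \ref{ldcttemme1} invoked in the paper's proof of Lemma \ref{hurwitz as a special case}), and the principle of analytic continuation then extends the identity to the full half-plane, after which summation over $n$ and the symmetry argument finish the theorem as you describe.
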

\noindent
The theory of any zeta function is fruitless unless one analytically continues the function beyond Re$(s)>1$. We show in this paper that $\zeta_w(s, a)$ admits analytic continuation to $\textup{Re}(s)>-1, s\neq1$, as can be seen from the following theorem. Before stating it, however, we recall the definition of Bessel function of the first kind be \cite[p.~40]{watson-1966a}:
\begin{align}\label{sumbesselj}
	J_{\nu}(s):=\sum_{m=0}^{\infty}\frac{(-1)^m(s/2)^{2m+\nu}}{m!\Gamma(m+1+\nu)}, \quad |s|<\infty.
	\end{align}
Also, we will be working only with real $a$ from now onwards, and that too such that $0<a<1$.
%An essential result central to the theory of $\zeta_w(s, a)$ is now stated.
\begin{theorem}\label{lse}
Let $0<a<1$ and $w\in\mathbb{C}$. The generalized Hurwitz zeta function $\zeta_w(s, a)$ can be analytically continued to $\textup{Re}(s)>-1$ except for a simple pole at $s=1$. The residue at this pole is
\begin{equation}\label{lse1}
e^{\frac{w^2}{4}}{}_1F_{1}^{2}\left(1;\frac{3}{2};-\frac{w^2}{4}\right)=\frac{\pi}{w^2}e^{-\frac{w^2}{4}}\textup{erfi}^{2}\left(\frac{w}{2}\right),
\end{equation}
where $\textup{erfi}(w)$ is defined in \eqref{erferfi}. Moreover near $s=1$, we have
\begin{align}\label{luslus}
\zeta_w(s, a+1)=\frac{e^{\frac{w^2}{4}}{}_1F_{1}^{2}\left(1;\frac{3}{2};-\frac{w^2}{4}\right)}{s-1}-\psi_w(a+1) +O_{w, a}(|s-1|),
\end{align}
%\begin{align}
%\zeta_w(s, a+1)=\frac{\pi e^{-\frac{w^2}{4}}}{w^2(s-1)}\textup{erfi}^{2}\left(\frac{w}{2}\right)-\psi_w(a+1) +O_{w, a}(|s-1|),
%\end{align}
where $\psi_w(a)$ is a new generalization of the digamma function $\psi(a)$ defined by
\begin{align}\label{new psi function}
\psi_w(a):=\frac{4}{w^2\sqrt{\pi}}\int_0^\infty\int_0^\infty\int_0^\infty \frac{e^{-(u^2+v^2+x)}}{u}\sin(wv)\sinh(wu)\left(\frac{1}{x}-\frac{J_{0}\left((a-1)\frac{vx}{u}\right)}{1-e^{-x}}\right)dxdudv.
\end{align}	
\end{theorem}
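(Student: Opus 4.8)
The plan is to derive a generalized Hermite formula for $\zeta_w(s,a)$ by \emph{linearizing} the index $n$ in the summand of \eqref{new zeta}, and then to read off the analytic continuation, the pole, and the Laurent coefficients from it. I work with the shifted argument $a+1$ throughout (so that $(a-1)^2$ in \eqref{new zeta} becomes $a^2$ and $0<a<1$), the symmetry \eqref{zwfea} transferring everything back at the end. The key device is the Bessel integral $\frac{1}{(n^2u^2+a^2v^2)^{s/2}}=\frac{\sqrt{\pi}}{(2av)^{(s-1)/2}\G(s/2)}\int_{0}^{\infty}e^{-nut}\,t^{(s-1)/2}J_{(s-1)/2}(avt)\,dt$, valid for $\Re(s)>0$, which replaces the quadratic dependence on $n$ by the linear one $e^{-nut}$. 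Summing the resulting geometric series $\sum_{n\ge1}e^{-nut}=(e^{ut}-1)^{-1}$ (the interchange being justified by the absolute convergence recorded after \eqref{new zeta}) and substituting $x=ut$ gives, for $\Re(s)>1$,
\begin{equation*}
\zeta_w(s,a+1)=\frac{4}{w^2(2a)^{(s-1)/2}\,\G\!\left(\tfrac{s+1}{2}\right)\G\!\left(\tfrac{s}{2}\right)}\int_{0}^{\infty}\!\!\int_{0}^{\infty}\!\!\int_{0}^{\infty}\frac{u^{\frac{s-3}{2}}v^{\frac{s-1}{2}}x^{\frac{s-1}{2}}\,e^{-(u^2+v^2)}\sin(wv)\sinh(wu)}{e^{x}-1}\,J_{\frac{s-1}{2}}\!\left(\tfrac{avx}{u}\right)\,dx\,du\,dv.
\end{equation*}
Both the Hurwitz-type kernel $(e^{x}-1)^{-1}$ and the Bessel factor $J_{(s-1)/2}(avx/u)$, which collapses to $J_{0}(avx/u)$ at $s=1$ by \eqref{sumbesselj}, already anticipate the integrand of \eqref{new psi function}.

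The only singularity in $\Re(s)>-1$ comes from the behaviour of the $x$-integral at $x=0$. Since $(e^{x}-1)^{-1}\sim x^{-1}$ and $x^{(s-1)/2}J_{(s-1)/2}(avx/u)\sim \left(\tfrac{av}{2u}\right)^{(s-1)/2}x^{s-1}/\G(\tfrac{s+1}{2})$ as $x\to0$, the inner integral behaves like $\int_{0}x^{s-2}\,dx$, producing a simple pole at $s=1$ whose small-$x$ coefficient equals $1$ at $s=1$, independently of $u$ and $v$. Carrying the prefactor to $s=1$ (where $(2a)^{0}=1$, $\G(1)\G(\tfrac12)=\sqrt{\pi}$, $u^{(s-3)/2}=u^{-1}$, $v^{(s-1)/2}=1$) leaves
\begin{equation*}
\Res_{s=1}\zeta_w(s,a+1)=\frac{4}{w^2\sqrt{\pi}}\int_{0}^{\infty}\!\!\int_{0}^{\infty}\frac{e^{-(u^2+v^2)}}{u}\sin(wv)\sinh(wu)\,du\,dv,
\end{equation*}
which factors into $\int_{0}^{\infty}e^{-v^2}\sin(wv)\,dv=\tfrac{\sqrt{\pi}}{2}e^{-w^2/4}\textup{erfi}(w/2)$ and $\int_{0}^{\infty}u^{-1}e^{-u^2}\sinh(wu)\,du=\tfrac{\pi}{2}\textup{erfi}(w/2)$, giving exactly the residue \eqref{lse1}.

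To continue beyond $\Re(s)>1$ I would subtract from the inner integrand its leading $x\to0$ behaviour; because $(e^{x}-1)^{-1}\sim x^{-1}$, removing the $x^{-1}$-singular part (the counterterm visible in \eqref{new psi function}) renders the $x$-integral convergent on a half-plane reaching past $s=1$, the explicitly integrable removed piece carrying the pole. The prospective pole of that explicit piece at $s=0$ is annihilated by the zero of $1/\G(\tfrac{s}{2})$, while the outer $u$-integration near $u=0$ (where $\sinh(wu)\sim wu$ against the weight $u^{(s-3)/2}$, i.e. an effective $u^{(s-1)/2}$) converges precisely for $\Re(s)>-1$; together these confine the continuation to $\Re(s)>-1$ with the single pole at $s=1$, as claimed. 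The Gaussian $e^{-(u^2+v^2)}$ secures convergence at infinity and $\sin(wv)\sim wv$ does so near $v=0$.

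Finally, the constant term in \eqref{luslus} is obtained by expanding the representation to order $(s-1)^{0}$: writing the prefactor as $A(s)$ and the regularized triple integral as $\tfrac{P}{s-1}+Q+O(s-1)$, the constant equals $A(1)Q+A'(1)P$, and the contribution $A'(1)P$ recombines with the regularization to reorganize the finite part into the $e^{-x}$-weighted combination $\tfrac1x-\tfrac{J_{0}(avx/u)}{1-e^{-x}}$ of \eqref{new psi function}, yielding $-\psi_w(a+1)$. As a consistency check, letting $w\to0$ collapses the $v$- and $u$-integrals via $\int_{0}^{\infty}v e^{-v^2}J_{0}(cv)\,dv=\tfrac12 e^{-c^2/4}$ and $\int_{0}^{\infty}e^{-u^2-\beta^2/u^2}\,du=\tfrac{\sqrt{\pi}}{2}e^{-2\beta}$ and returns the Gauss integral for $\psi(a+1)$, in agreement with Theorem \ref{hurwitzsc}. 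The main obstacle is two-fold: rigorously justifying the interchanges and the termwise analytic continuation uniformly in $u,v$ (the inner integral continues only conditionally past $\Re(s)=0$, so one must lean on the vanishing of the prefactor Gammas and control the oscillatory $J_{(s-1)/2}(avx/u)$ as $u\to0$), and extracting the \emph{exact} constant $-\psi_w(a+1)$, since the $A'(1)P$ reorganization into the clean $1/x$-regularized $J_{0}$ form of \eqref{new psi function} is where the genuine computational weight lies.
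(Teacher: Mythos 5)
Your skeleton coincides with the paper's own route. The triple-integral representation you derive from the Lipschitz--Hankel integral plus the geometric series is exactly the paper's Theorem \ref{mtr} (there obtained from Kapteyn's formula \eqref{grad result1}, which encodes the same computation); your plan to subtract the singular part of $(e^{x}-1)^{-1}$, with the explicitly integrable pieces carrying the pole and the prefactor $1/\Gamma(s/2)$ killing the spurious pole at $s=0$, is the content of Theorem \ref{befhermite}; and your residue computation is correct and complete --- it reproduces \eqref{twoo} and hence \eqref{lse1}.

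The genuine gap is the step you flag but do not close: the analyticity of the regularized triple integral in a strip reaching down to $\Re(s)>-1$. Your power count near $u=0$ (``effective $u^{(s-1)/2}$, hence convergence precisely for $\Re(s)>-1$'') treats the inner $x$-integral as $O(1)$ uniformly in $u$, but that integral contains $J_{(s-1)/2}(avx/u)$ whose argument blows up as $u\to 0$; the integrand is not absolutely integrable over the triple range in the strip you need, and what must actually be proved is a quantitative decay of the inner oscillatory integral as $u\to 0$. This is precisely why the paper does \emph{not} continue the $J$-form directly: it first converts the regularized integral, via Legendre's integral \eqref{legendre} and the evaluation \eqref{MT of bessel j and sin 2}, into the Hermite-type form \eqref{hermiteeqn}, whose kernel $(e^{2\pi avt/u}-1)^{-1}(t^2-1)^{-s/2}$ decays exponentially, and then devotes the long Lemma \ref{tia} to the analyticity of that integral in $-1<\Re(s)<2$. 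Even there, the decisive input is the estimate \eqref{bound for a}, $\int_0^\infty e^{-v^2}\sin(wv)\,(e^{2\pi avt/u}-1)^{-1}dv \ll |w|u^2/(a^2t^2)$, which is not elementary: it comes from the asymptotics of the integral analogue of a partial theta function in \cite{drz5}, i.e.\ ultimately from Ramanujan's transformation \eqref{mrram}; that extra factor $u^{2}$ is what controls the range of $(u,t)$ where crude bounds fail, and, consistently, Section \ref{nomorehermite} shows the continuation genuinely stops at $\Re(s)=-1$, so any argument pretending to be routine power counting is suspect.

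A second, smaller gap is the constant term. The bookkeeping $A(1)Q+A'(1)P$ does not by itself produce the integrand $e^{-x}\bigl(\tfrac{1}{x}-\tfrac{J_{0}((a-1)vx/u)}{1-e^{-x}}\bigr)$ of \eqref{new psi function}; you acknowledge this is where the weight lies, but the reorganization is the proof. The paper's device is to write the polar term as $\Gamma(s-1)/\Gamma(s)$ times the residue, represent $\Gamma(s-1)=\int_0^\infty e^{-x}x^{s-2}dx$ and the residue by the double integral \eqref{twoo}, so that $\zeta_w(s,a)-\tfrac{1}{s-1}e^{\frac{w^2}{4}}{}_1F_1^{2}\left(1;\tfrac32;-\tfrac{w^2}{4}\right)$ collapses into the single convergent triple integral \eqref{threee}; letting $s\to 1$ there yields $-\psi_w(a)$ at once. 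Without this (or an equivalent) recombination, and without a substitute for Lemma \ref{tia}, the proposal does not yet establish the theorem.
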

The analytic continuation of $\zeta_w(s, a)$ to Re$(s)>-1, s\neq1$, is not straightforward as in the case of $\zeta(s, a)$. For example, Hermite's formula for $\zeta(s, a)$ given for $s\in\mathbb{C}\backslash\{1\}$ and Re$(a)>0$ by \cite[p.~609, Formula \textbf{25.11.27}]{nist}
\begin{align}\label{hermiteor}
\zeta(s,a)=\frac{a^{-s}}{2}+\frac{a^{1-s}}{s-1}+2\int_0^\infty \frac{\sin\left(s\tan^{-1}\left(\frac{y}{a}\right)\right)\, dy}{(e^{2\pi y}-1)(y^2+a^2)^{\frac{s}{2}}}
\end{align}
gives analytic continuation of $\zeta(s, a)$ in $\mathbb{C}\backslash\{1\}$ at once since the integral on the right-hand side is an entire function of $s$. However, as will be shown in Section \ref{lseproof}, the generalization of the above formula in the setting of $\zeta_w(s, a)$ which we state in the theorem below gives analytic continuation only in Re$(s)>-1, s\neq1$ owing to the fact that the triple integral in the theorem is analytic only in $-1<\textup{Re}(s)<2$ (see Lemma \ref{tia}).

The aforementioned generalization of Hermite's formula is contained in the following theorem which is initially proved for $1<\mathrm{Re}(s)<2$.
\begin{theorem}\label{hermite}
Let $A_w(z)$ be defined by
\begin{align}\label{ab}
A_w(z)&:=\frac{\sqrt{\pi}}{w}\textup{erf}\left(\frac{w}{2}\right){}_1F_{1}\left(1+\frac{z}{2};\frac{3}{2};\frac{w^2}{4}\right)=e^{-\frac{w^2}{4}}{}_1F_{1}\left(1;\frac{3}{2};\frac{w^2}{4}\right){}_1F_{1}\left(1+\frac{z}{2};\frac{3}{2};\frac{w^2}{4}\right).
%B_w(z)&:=\frac{\sqrt{\pi}}{w}\textup{erfi}\left(\frac{w}{2}\right){}_1F_{1}\left(1-\frac{z}{2};\frac{3}{2};-\frac{w^2}{4}\right).
\end{align}
Then for $a>0$ and $1<\mathrm{Re}(s)<2$,
\begin{align}\label{hermiteeqn}
\zeta_w(s,a+1)&=-\frac{a^{-s}}{2}A_w(s-1)+\frac{a^{1-s}}{s-1}A_{iw}(1-s)\nonumber\\
&\quad+\frac{2^{s+2}a^{1-s}}{w^2\Gamma\left(1-\frac{s}{2}\right)\Gamma(s)}\int_0^\infty\int_1^\infty\int_{0}^\infty \frac{u^{s-2}e^{-(u^2+v^2)}\sin(wv)\sinh(wu)}{(e^{\frac{2\pi avt}{u}}-1)(t^2-1)^{\frac{s}{2}}}\ dvdtdu.
\end{align}
\end{theorem}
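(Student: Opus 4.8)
The plan is to start from the series definition \eqref{new zeta} with $a$ replaced by $a+1$, so that the denominator becomes $(n^2u^2+a^2v^2)^{s/2}$, and to isolate the inner sum over $n$. Writing $b=av/u$, one has $\sum_{n=1}^{\infty}(n^2u^2+a^2v^2)^{-s/2}=u^{-s}\sum_{n=1}^{\infty}(n^2+b^2)^{-s/2}$. For $1<\mathrm{Re}(s)<2$ the double $(u,v)$-integral and the $n$-sum converge absolutely, so I may interchange them freely; the whole task then reduces to finding a usable closed form for the one-variable series $P(s,b):=\sum_{n=1}^{\infty}(n^2+b^2)^{-s/2}$ and integrating it back against the Gaussian weight $e^{-(u^2+v^2)}\sin(wv)\sinh(wu)$.

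The central step is a Hermite-type expansion of $P(s,b)$, which I would obtain from the Abel--Plana summation formula applied to $f(z)=(z^2+b^2)^{-s/2}$. Here $\tfrac12 f(0)=\tfrac12 b^{-s}$ and $\int_0^\infty f(x)\,dx=\tfrac{\sqrt\pi\,\Gamma((s-1)/2)}{2\Gamma(s/2)}b^{1-s}$ (convergent precisely because $\mathrm{Re}(s)>1$). The delicate part---and the main obstacle---is the Abel--Plana correction $i\int_0^\infty (f(iy)-f(-iy))/(e^{2\pi y}-1)\,dy$: since the branch points of $(z^2+b^2)^{-s/2}$ sit at $z=\pm ib$ on the contour, one must track the argument of $z^2+b^2$ as $z$ approaches the imaginary axis from the right. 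For $0<y<b$ the two boundary values coincide and cancel, while for $y>b$ one finds $\arg(z^2+b^2)\to\pm\pi$, giving $f(\pm iy)=(y^2-b^2)^{-s/2}e^{\mp is\pi/2}$ and hence $f(iy)-f(-iy)=-2i\sin(\tfrac{\pi s}{2})(y^2-b^2)^{-s/2}$. After the substitution $y=bt$ this produces $2\sin(\tfrac{\pi s}{2})\,b^{1-s}\int_1^\infty (t^2-1)^{-s/2}/(e^{2\pi bt}-1)\,dt$, and the requirement $\mathrm{Re}(s)<2$ is exactly what makes $(t^2-1)^{-s/2}$ integrable at $t=1$. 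Subtracting the $n=0$ term $b^{-s}$ then yields
\begin{align*}
P(s,b)=-\frac{b^{-s}}{2}+\frac{\sqrt\pi\,\Gamma\!\left(\frac{s-1}{2}\right)}{2\Gamma\!\left(\frac{s}{2}\right)}b^{1-s}+2\sin\!\left(\frac{\pi s}{2}\right)b^{1-s}\int_1^\infty\frac{(t^2-1)^{-s/2}}{e^{2\pi bt}-1}\,dt.
\end{align*}

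Substituting $b=av/u$ and reinserting the factor $u^{-s}(uv)^{s-1}$, the three summands separate. The first two factor completely into one-dimensional Gaussian integrals, which I evaluate by expanding $\sinh$ and $\sin$ and using $\int_0^\infty u^{\mu-1}e^{-u^2}\sinh(wu)\,du=\tfrac{w}{2}\Gamma(\tfrac{\mu+1}{2}){}_1F_1(\tfrac{\mu+1}{2};\tfrac32;\tfrac{w^2}{4})$ together with the sine analogue (carrying $-w^2/4$); the identity $(2k+1)!=4^k k!\,(3/2)_k$ is what produces the ${}_1F_1$. Matching these products with $A_w(s-1)$ and $A_{iw}(1-s)$ of \eqref{ab} then rests on Kummer's transformation ${}_1F_1(a;c;z)=e^z{}_1F_1(c-a;c;-z)$ and on the identities $\tfrac{\sqrt\pi}{w}\,\textup{erf}(w/2)={}_1F_1(\tfrac12;\tfrac32;-\tfrac{w^2}{4})$ and $\tfrac{\sqrt\pi}{w}\,\textup{erfi}(w/2)=e^{w^2/4}{}_1F_1(1;\tfrac32;-\tfrac{w^2}{4})$ coming from \eqref{erferfi}--\eqref{erf}; a short computation recovers exactly the first two terms $-\tfrac{a^{-s}}{2}A_w(s-1)$ and $\tfrac{a^{1-s}}{s-1}A_{iw}(1-s)$, with the $\Gamma$-factors in $C=\tfrac{4}{w^2\sqrt\pi\,\Gamma((s+1)/2)}$ cancelling against those from the Gaussian integrals and the ratio $\Gamma(\tfrac{s-1}{2})/\Gamma(\tfrac{s+1}{2})=2/(s-1)$ producing the pole factor $1/(s-1)$.

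For the third (triple-integral) summand, Fubini---legitimate in the strip $1<\mathrm{Re}(s)<2$ by the same absolute convergence---lets me keep the $t$-integration inside, so only the overall constant needs checking: I must show $\tfrac{8\sin(\pi s/2)}{w^2\sqrt\pi\,\Gamma((s+1)/2)}$ equals $\tfrac{2^{s+2}}{w^2\Gamma(1-s/2)\Gamma(s)}$. This follows at once from the reflection formula $\sin(\tfrac{\pi s}{2})=\pi/(\Gamma(\tfrac{s}{2})\Gamma(1-\tfrac{s}{2}))$ and Legendre's duplication formula $\Gamma(\tfrac{s}{2})\Gamma(\tfrac{s+1}{2})=2^{1-s}\sqrt\pi\,\Gamma(s)$. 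Collecting the three pieces gives \eqref{hermiteeqn}. I expect the genuinely non-routine point to be the branch bookkeeping in the Abel--Plana correction together with the Fubini justification needed to pull the $(u,v)$-integration through both the $n$-summation and the $t$-integration; everything after that is bookkeeping with Gamma functions and confluent hypergeometric identities.
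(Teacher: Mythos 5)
Your proposal is correct, and it takes a genuinely different route from the paper. The paper never writes down a closed form for the inner sum $\sum_{n\geq 1}(n^2u^2+a^2v^2)^{-s/2}$ in one stroke: it first converts that sum into a Bessel-function integral via Kapteyn's formula \eqref{grad result1} (Theorem \ref{mtr}), then manufactures the two boundary terms by adding and subtracting $\frac{1}{y}-\frac{1}{2}$ inside $\frac{1}{e^y-1}$ and evaluating Mellin transforms of $J_\nu$ together with Lemma \ref{int} (Theorem \ref{befhermite}), and finally converts the remaining Bessel integral into the triple integral of \eqref{hermiteeqn} using Legendre's integral \eqref{legendre} and the discontinuous Weber--Schafheitlin-type evaluation \eqref{hm00}. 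Your Abel--Plana step applied to $f(z)=(z^2+b^2)^{-s/2}$, $b=av/u$, delivers exactly the expansion that this Kapteyn--Legendre--Schafheitlin chain amounts to, and your downstream bookkeeping is sound: I checked that $\Gamma(\tfrac{s-1}{2})/\Gamma(\tfrac{s+1}{2})=2/(s-1)$ produces the pole factor, that your Gaussian-integral evaluations (which are \eqref{equivv} plus Kummer's transformation \eqref{kft}) reproduce $-\tfrac{a^{-s}}{2}A_w(s-1)$ and $\tfrac{a^{1-s}}{s-1}A_{iw}(1-s)$, and that reflection plus duplication turn $\tfrac{8\sin(\pi s/2)}{w^2\sqrt{\pi}\,\Gamma(\frac{s+1}{2})}$ into $\tfrac{2^{s+2}}{w^2\Gamma(1-\frac{s}{2})\Gamma(s)}$; your branch computation $f(iy)-f(-iy)=-2i\sin(\tfrac{\pi s}{2})(y^2-b^2)^{-s/2}$ for $y>b$ is also correct, with the two hypotheses $1<\mathrm{Re}(s)<2$ arising exactly where you say they do. What your route buys is brevity and the complete avoidance of Bessel functions; what it costs is that the entire weight rests on Abel--Plana with branch points $\pm ib$ sitting \emph{on} the contour, which is outside the textbook statement of that theorem, so a complete write-up must indent the contour around $\pm ib$ and show the semicircular contributions are $O(\rho^{1-\mathrm{Re}(s)/2})$, hence vanish for $\mathrm{Re}(s)<2$ --- you flag this correctly but it is the one step that must actually be carried out rather than sketched. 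What the paper's longer route buys is that its intermediate results are reused: Theorem \ref{mtr} feeds the residue computation in the proof of Theorem \ref{lse}, and Theorem \ref{befhermite} is invoked again in the proof of Theorem \ref{genramhureq}. Finally, note that both routes need the same two Fubini justifications (interchanging $\sum_n$ with the $(u,v)$-integration, and reordering the triple integral into the $dv\,dt\,du$ order of \eqref{hermiteeqn}, done in the paper in \eqref{fubini2}--\eqref{fubini6}), so that part should not be dismissed as pure bookkeeping in your version either.
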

In Lemma \ref{tia}, we show that the triple integral on the right-hand side of \eqref{hermiteeqn} is analytic in $-1<\textup{Re}(s)<2$. Since the first expression on the right-hand side of \eqref{hermiteeqn} is entire in $s$, and the second is analytic in $\mathbb{C}\backslash\{1\}$, we see that \eqref{hermiteeqn} and Lemma \ref{tia} give analytic continuation of $\zeta_w(s, a)$ in $-1<\textup{Re}(s)\leq1, s\neq 1$ upon \underline{\emph{defining}} $\zeta_w(s, a+1)$ to be the right-hand side of \eqref{hermiteeqn} in this region.

\begin{remark}\label{ext}
It is clear from Theorem \textup{\ref{lse}} and the discussion following it that the first equality in Theorem \textup{\ref{hurwitzsc}} now holds for $s$ such that $\textup{Re}(s)>-1$ and $1<a<2$, that is,  
\begin{equation}\label{hurwitzscall}
\zeta_{0}(s,a)=\zeta(s, a)
\end{equation}
since the right-hand side of \eqref{hermiteeqn} for $w=0$ reduces to that of \eqref{hermiteeqn} as can be seen from Lemma \textup{\ref{3in1}} below.
\end{remark}
\begin{remark}
The reader may have noticed that in passing from Theorems \textup{\ref{zwsares1}} and \textup{\ref{hurwitzsc}} to Theorems \textup{\ref{lse}} and \textup{\ref{hermite}}, we have switched from stating results for $\zeta_w(s, a)$ to doing the same for $\zeta_w(s, a+1)$. The reason for doing so is now explained. Firstly, in view of \eqref{hurwitzscall} and to recover the known results for $\zeta(s, a)$ from ours, if we let $0<a<1$, then we are forced to work with $\zeta_w(s, a+1)$. Secondly, we are unable to find a relation between $\zeta_w(s, a+1)$ and $\zeta_w(s, a)$ that would contain the relation \cite[p.~607, Formula \textbf{25.11.3}]{nist}
\begin{equation}\label{hurrel}
\zeta(s, a)=\zeta(s,a+1)+a^{-s}\hspace{8mm}(s\in\mathbb{C},\hspace{1mm}0<a\leq 1)
\end{equation}
for $\textup{Re}(s)>-1$ as its special case. However, stating the results for $\zeta_w(s, a+1)$ causes no loss of generality, and hence in the sequel too we state most of the results for $\zeta_w(s, a+1)$ with $0<a<1$ rather than for $\zeta_w(s,a)$ with $0<a<1$. 
\end{remark}
%[THEN EXPLAIN THE TRADE-OFF BETWEEN USING HERMITE'S FORMULA AND EULER'S SUMMATION FORMULA. THIS WILL BE DONE AFTER WE GIVE THE PROOF OF \eqref{luslus}.]

Since $J_{0}(-x)=J_{0}(x)$, as can be easily seen from \eqref{sumbesselj}, $\psi_w(a)$ defined in \eqref{new psi function} clearly satisfies
\begin{align*}
\psi_w(a)=\psi_w(2-a).
\end{align*}
Similar to the case of $\zeta_w(s, a)$ for Re$(s)>1$, we see that $\psi_w(a)$ also has a removable singularity at $w=0$. Hence by $\psi_0(a)$, we mean\footnote{The interchange of the order of $\lim_{w\to 0}$ and integration can be easily justified.} 
\begin{equation*}
\psi_0(a)=\lim_{w\to 0}\psi_w(a).
\end{equation*}
That
\begin{equation}\label{limitpsi}
\psi_0(a)=\begin{cases}
\psi(a) & \text{if\ $\mathrm{Re}(a)>1$}, \\
\psi(2-a) & \text{if\ $\mathrm{Re}(a)<1$}
\end{cases}
\end{equation}
is derived in Section \ref{psiwa}. It is natural that $\psi_w(a)$ turns out to be a triple integral since $\psi(a)$ is given by a single integral \cite[p.~911, Formula \textbf{8.361.1}]{grn}, namely for Re$(a)>0$,
\begin{align}\label{psiintegral}
\psi(a)=\int_0^\infty \left(\frac{e^{-x}}{x}-\frac{e^{-ax}}{1-e^{-x}}\right)dx,
\end{align}
and, as can be seen from \eqref{int2} below, the residue of $\zeta_w(s, a)$ at $s=1$ itself is a double integral!
 %One, for example, is the aforementioned generalized modular-type transformation obtained in \eqref{genramhureqeqn}. Another interesting property that we derive here is that $\zeta_w(s, a)$ has a simple pole at $s=1$ with residue $\displaystyle\frac{\pi}{w^2}e^{-\frac{w^2}{4}}\textup{erfi}^{2}\left(\frac{w}{2}\right)$. 

%In Lemma \ref{zwsared}, we show how $\zeta_w(s, a)$ reduces to $\zeta(s, a)$ in the limiting case. $w\in\mathbb{C}\backslash\{0\}$

Now that we have stated the basic properties of $\zeta_w(s, a)$, we give below the generalized modular relation of the form $F(z, w, \a)=F(z, iw, \b)$, where $\zeta_w(s, a)$ appears naturally. This result along with the one following it generalizes Theorem 1.5 from \cite{transf}, that is, \eqref{hurwitzeqn}. To prove it, however, it is necessary to first obtain a generalized modular-type transformation between two triple integrals (see Theorem \ref{modular trans in intergal form} below). The following theorem is then derived from it.
\begin{theorem}\label{genramhureq}
Let $\zeta_w(s,a)$ be the generalized Hurwitz zeta function\footnote{Note that $\zeta_w(s, a)$ is defined for \textup{Re}$(s)>1$ by \eqref{new zeta}, and for $-1<\textup{Re}(s)\leq1, s\neq1$, by \eqref{hermiteeqn}. The latter is inferred from the paragraph following \eqref{hermiteeqn}.}. Let $w\in\mathbb{C}$, $-1<$ \textup{Re}$(z)<1$ and $x>0$. Define $\varphi_w(z, x)$ by
\begin{align}\label{genramhureqeq}
\varphi_w(z, x)&:=\zeta_w(z+1, x+1)+\frac{1}{2}A_w(z)x^{-z-1}-A_{iw}(-z)\frac{x^{-z}}{z},
\end{align}
where $A_w(z)$ is defined in \eqref{ab}.
%\Bigg\{\sum_{m=1}^\infty\Bigg(\zeta_w(z+1,mx+1)-\frac{\sqrt{\pi}\mathrm{erfi}\left(\frac{w}{2}\right){}_1F_1\left(1-\frac{z}{2};\frac{3}{2};-\frac{w^2}{4}\right)}{z(mx)^zw} \nonumber\\
%&\quad+\frac{\sqrt{\pi}\mathrm{erf}\left(\frac{w}{2}\right){}_1F_1\left(1+\frac{z}{2};\frac{3}{2};\frac{w^2}{4}\right)}{2(mx)^{z+1}w}\Bigg)\nonumber\\
%&\quad-\frac{\sqrt{\pi}\mathrm{erf}\left(\frac{w}{2}\right)}{w}\left(\frac{\zeta(1+z)}{2x^{1+z}}{}_1F_1\left(1+\frac{z}{2};\frac{3}{2};\frac{w^2}{4}\right)+\frac{\zeta(z)}{x z}{}_1F_1\left(1-\frac{z}{2};\frac{3}{2};\frac{w^2}{4}\right)\right)\Bigg\},
Then for $\a, \b>0$ and $\a\b=1$, we have
\begin{align}\label{genramhureqeqn}
&\a^{\frac{z+1}{2}}\left(\sum_{m=1}^{\infty}\varphi_w(z,m\a)-\frac{\zeta(z+1)A_w(z)}{2\alpha^{z+1}}-\frac{\zeta(z)A_w(-z)}{\alpha z}\right)\nonumber\\
&=\b^{\frac{z+1}{2}}\left(\sum_{m=1}^{\infty}\varphi_{iw}(z,m\b)-\frac{\zeta(z+1)A_{iw}(z)}{2\beta^{z+1}}-\frac{\zeta(z)A_{iw}(-z)}{\beta z}\right).
%\mathbb{F}(z, w, \a)&=\mathbb{F}(z, iw, \b).
\end{align}
\end{theorem}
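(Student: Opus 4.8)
The plan is to prove \eqref{genramhureqeqn} by collapsing the left side to a single triple integral, transforming the Lambert-type series concealed inside it via the functional equation of $\zeta(s)$, and recognizing the outcome as the right side; the heart of the matter is the triple-integral transformation of Theorem \ref{modular trans in intergal form}, which I would establish first. The starting observation is that the two elementary terms in the definition \eqref{genramhureqeq} of $\varphi_w$ are engineered precisely to annihilate the first two terms of Hermite's formula \eqref{hermiteeqn}: setting $s=z+1$ and $a=x$ there, the term $-\tfrac12 x^{-z-1}A_w(z)$ cancels against $\tfrac12 A_w(z)x^{-z-1}$ and $\tfrac{x^{-z}}{z}A_{iw}(-z)$ against $-A_{iw}(-z)\tfrac{x^{-z}}{z}$, so that $\varphi_w(z,x)$ equals exactly the triple integral of \eqref{hermiteeqn}. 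This holds first for $0<\Re(z)<1$, where Hermite's formula is valid (Theorem \ref{hermite}), and then on all of $-1<\Re(z)<1$ by analytic continuation, both sides being analytic there by Lemma \ref{tia}.

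Next I would put $x=m\a$ and sum over $m\ge 1$. Since the integrand decays exponentially and $\varphi_w(z,m\a)=O_{\a}(m^{-\Re(z)-2})$, one may interchange summation and integration, collapsing $\sum_{m\ge1}\varphi_w(z,m\a)$ to a single triple integral whose integrand now carries the Lambert-type series $\Phi(\xi):=\sum_{m\ge1}m^{-z}(e^{m\xi}-1)^{-1}$ with $\xi=2\pi\a vt/u$. The subtracted terms in \eqref{genramhureqeqn} are not needed for convergence here; they will instead emerge as the polar contributions in the transformation of $\Phi$.

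The central step, which is the content of Theorem \ref{modular trans in intergal form}, is the modular transformation of this inner series. I would write $\Phi(\xi)=\tfrac{1}{2\pi i}\int_{(c)}\Gamma(s)\zeta(s)\zeta(s+z)\xi^{-s}\,ds$ for $c>2$ and move the contour to the left past the poles at $s=1$, $s=1-z$ and $s=0$. Integrated against the Gaussian factors, these residues reproduce exactly the explicit terms $\tfrac{\zeta(z+1)A_w(z)}{2\a^{z+1}}$ and $\tfrac{\zeta(z)A_w(-z)}{\a z}$ subtracted in \eqref{genramhureqeqn} (the residues at $s=0$ and $s=1-z$ merging into the latter through \eqref{zetafe}), the factors $A_w(\pm z)$ being produced by the one-dimensional integrals $\int_0^\infty u^{\bullet}e^{-u^2}\sinh(wu)\,du$ and $\int_0^\infty v^{\bullet}e^{-v^2}\sin(wv)\,dv$ evaluated through \eqref{ab} and \eqref{a=1caserz}. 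Applying the functional equation \eqref{zetafe} to the remaining left-shifted integral re-sums it into the reciprocal Lambert series in $\b=1/\a$; the attendant change of variables interchanges the roles of $u$ and $v$ in the Gaussian factor, sending $\sin(wv)\sinh(wu)$ to $\sin(wu)\sinh(wv)$, which together with $(iw)^2=-w^2$ is precisely the integrand defining $\varphi_{iw}(z,m\b)$. Re-assembling then yields the right side of \eqref{genramhureqeqn}.

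The main obstacle is the analytic bookkeeping of the contour shift: I would need Stirling's formula together with convexity bounds for $\zeta$ on vertical lines to show that the horizontal segments vanish and to justify interchanging the Mellin--Barnes contour with the triple integration, and then to match each residue and the functional-equation image to the exact combination of explicit terms and the $\varphi_{iw}$-integral. Because several of the constituent $t$- and $v$-integrals converge only on subranges of $-1<\Re(z)<1$, I would prove the transformation first on a convenient strip such as $0<\Re(z)<1$ and then propagate it to the full strip $-1<\Re(z)<1$ by analytic continuation, both sides of \eqref{genramhureqeqn} being analytic in $z$ there. The asserted equality of the two sides then expresses nothing but the invariance of the common post-transformation integral under $(\a,w)\mapsto(\b,iw)$.
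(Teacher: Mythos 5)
Your route is genuinely different from the paper's. The paper never manipulates the Dirichlet series $\zeta(s)\zeta(s+z)$ directly: it first proves the triple-integral transformation of Theorem \ref{modular trans in intergal form} using the Hankel self-reciprocality \eqref{dix kosh} of $\Omega(x,z)$, and then identifies that triple integral with the arithmetical sum via \eqref{omegarep}, the partial-fraction expansion of $\frac{1}{e^t-1}-\frac1t+\frac12$, Theorem \ref{befhermite} and Lemma \ref{int}. Your opening move is correct and clean: by \eqref{hermiteeqn}, $\varphi_w(z,x)$ \emph{is} the Hermite triple integral, so $\sum_{m}\varphi_w(z,m\a)$ carries the Lambert series $\sum_m m^{-z}(e^{m\xi}-1)^{-1}$, $\xi=2\pi\a vt/u$. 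However, your contour manipulation is done in an infeasible order. To re-expand both zeta factors after the functional equation you must end on a line with $\Re(s)<-\Re(z)$; but there the termwise $t$-integral $\int_1^\infty t^{-s}(t^2-1)^{-(z+1)/2}\,dt$ diverges (and your starting line $c>2$ already makes the termwise $v$-integral diverge, since $\int_0^\infty v^{-s}e^{-v^2}\sin(wv)\,dv$ requires $\Re(s)<2$). So one cannot shift the bare integrand $\G(s)\zeta(s)\zeta(s+z)\xi^{-s}$ first and integrate afterwards: one must first perform the $u$-, $v$-, $t$-integrations inside the Mellin--Barnes integral on a line $\max(1,1-\Re(z))<\Re(s)<2$, obtaining (up to constants) the completed integrand
\begin{equation*}
\frac{\G(s+z)\,\zeta(s)\zeta(s+z)}{\sin(\pi s/2)}\,(2\pi\a)^{-s}\,
{}_1F_1\!\left(1-\tfrac{z+s}{2};\tfrac32;-\tfrac{w^2}{4}\right)
{}_1F_1\!\left(\tfrac{1+s}{2};\tfrac32;\tfrac{w^2}{4}\right),
\end{equation*}
and only then reflect the contour.

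This reordering exposes the decisive gap: the completed integrand has \emph{four} poles in the strip being crossed, not three. Besides $s=1$, $s=1-z$ and $s=0$ there is a pole at $s=-z$ coming from $\G(s+z)$, which is created by the $u$- and $t$-integrations and is invisible in the bare Lambert-series integrand — which is why your count misses it. The correct matching is: $s=1$ gives $\frac{\zeta(z+1)A_w(z)}{2\a^{z+1}}$; $s=1-z$ \emph{alone}, after \eqref{zetafealt}, gives $\frac{\zeta(z)A_w(-z)}{\a z}$ (these are the two $\a$-side subtracted terms); while $s=0$ gives $-\frac{\a^{-z}\zeta(z)A_{iw}(-z)}{z}$ and $s=-z$ gives $-\frac{\zeta(z+1)A_{iw}(z)}{2}$, which after multiplying by $\a^{\frac{z+1}{2}}$ and using $\a\b=1$ are exactly the two $\b$-side subtracted terms. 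Your claim that the residues at $s=0$ and $s=1-z$ ``merge'' into $\frac{\zeta(z)A_w(-z)}{\a z}$ is therefore false, and without the $s=-z$ residue the final identity comes out wrong by $\b^{\frac{z+1}{2}}\,\frac{\zeta(z+1)A_{iw}(z)}{2\b^{z+1}}$ — carrying out your plan as written would end in a contradiction rather than in \eqref{genramhureqeqn}. Once the four residues are in place, the reflected integral does close up as you hoped: under $s\to 1-z-s$ the two ${}_1F_1$ factors swap parameters, which by Kummer's transformation \eqref{kft} is precisely the substitution $w\to iw$, and the power factors recombine into $(2\pi\b)^{-s}$, so the remaining integral equals $\b^{z+1}\sum_m\varphi_{iw}(z,m\b)$; this is the rigorous form of your heuristic about interchanging $u$ and $v$. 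So your strategy is salvageable, and when repaired it gives an alternative, more classical proof that bypasses $\Omega(x,z)$ entirely; but the residue bookkeeping and the order of operations, as written, constitute genuine errors rather than omitted routine details.
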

%After suitably choosing $Z\left(\frac{1}{2}+it,z\right)$ and $\Theta(x, z)$ in Theorem \ref{general} in Section \ref{seckosh} and employing the above transformation, we are 
The above theorem leads to the following integral evaluation.
\begin{theorem}\label{xiintgenramhurthm}
Let $\a>0$, $w\in\mathbb{C}$ and $-1<$ \textup{Re}$(z)<1$. Let $A_w(z)$ and $\varphi_w(z, x)$ be defined in \eqref{ab} and \eqref{genramhureqeq} respectively. Define 
\begin{align}\label{Del}
\Delta_{2}(x, z, w, s)&:=\omega(x, z, w, s)+\omega(x, z, w, 1-s),\nonumber\\
\omega(x, z, w, s)&:=e^{\frac{w^2}{4}}x^{\frac{1}{2}-s}{}_1F_{1}\left(1-\frac{s+z}{2};\frac{3}{2};-\frac{w^2}{4}\right){}_1F_{1}\left(1-\frac{s-z}{2};\frac{3}{2};-\frac{w^2}{4}\right).
\end{align}
Then
\begin{align}\label{xiintgenramhurthmeqn}
&\frac{2^{z-1}\pi^{\frac{z-3}{2}}}{\Gamma(z+1)}\int_0^\infty \Gamma\left(\frac{z-1+it}{4}\right)\Gamma\left(\frac{z-1-it}{4}\right)\Xi\left(\frac{t+iz}{2}\right)\Xi\left(\frac{t-iz}{2}\right)\frac{\Delta_2\left(\alpha,\frac{z}{2},w,\frac{1+it}{2}\right)}{(z+1)^2+t^2}dt\nonumber\\
&=\a^{\frac{z+1}{2}}\left(\sum_{m=1}^{\infty}\varphi_w(z,m\a)-\frac{\zeta(z+1)A_w(z)}{2\alpha^{z+1}}-\frac{\zeta(z)A_w(-z)}{\alpha z}\right).
\end{align}
\end{theorem}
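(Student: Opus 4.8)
The plan is to evaluate the $\Xi$-integral on the left of \eqref{xiintgenramhurthmeqn} by recasting it as a Mellin--Barnes integral along the critical line $\mathrm{Re}(s)=\tfrac12$ and then recognising its integrand as a product of two Mellin transforms, so that Parseval's formula for Mellin transforms returns the series on the right. The manifest $\alpha\mapsto\beta$, $w\mapsto iw$ invariance that will appear along the way is exactly the symmetry already recorded in Theorem \ref{genramhureq}, so the two results sit consistently on top of one another.

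First I would use $\Xi(t)=\xi(\tfrac12+it)$ together with \eqref{xii} to write the two $\Xi$-factors as $\xi\left(\frac{1-z+it}{2}\right)\xi\left(\frac{1+z+it}{2}\right)$, so that the zeta values $\zeta\left(\frac{1-z+it}{2}\right)$, $\zeta\left(\frac{1+z+it}{2}\right)$, the hidden factors $\pi^{-s/2}\Gamma(s/2)$, and the explicit quotient $\Gamma\left(\frac{z-1\pm it}{4}\right)$ may all be consolidated. Since every surviving factor --- the Gamma product, the denominator $(z+1)^2+t^2$, and $\Delta_2$ (which is even in $t$ because $\omega(\cdots,s)+\omega(\cdots,1-s)$ is invariant under $s\mapsto 1-s$, i.e.\ under $t\mapsto -t$ via $s=\frac{1+it}{2}$) --- is even in $t$, I would extend the range to $(-\infty,\infty)$, halve, and substitute $s=\frac{1+it}{2}$. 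This turns the left-hand side into $\frac{1}{2\pi i}\int_{(1/2)}\Phi(s)\,ds$ for an explicit meromorphic $\Phi$.

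The heart of the argument is to factor $\Phi(s)=M(s)\,N(1-s)$, where $N$ is the Mellin transform of a theta-type series built from the factors $\zeta\left(\frac{1\pm z+it}{2}\right)$ and the power $\alpha^{1/2-s}$ sitting inside $\omega(\alpha,\tfrac{z}{2},w,s)$, while $M$ carries the remaining Gamma and ${}_1F_1$ factors, which I expect to match the Mellin transform (in the variable dual to $m\alpha$) of the summand $\varphi_w(z,m\alpha)$ of \eqref{genramhureqeq}. The symmetrised form $\Delta_2=\omega(\cdot,s)+\omega(\cdot,1-s)$ is precisely what allows the functional equation \eqref{zetafe} to be applied to one of the two $\xi$-factors while leaving the integrand invariant; this is the analytic origin of the $w\mapsto iw$, $\alpha\mapsto\beta$ symmetry of Theorem \ref{genramhureq}. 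Applying Parseval's formula, expanding the $\zeta$-series and summing the resulting contribution over $m\ge 1$ should then reproduce $\sum_{m=1}^\infty\varphi_w(z,m\alpha)$.

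The two subtracted terms $-\frac{\zeta(z+1)A_w(z)}{2\alpha^{z+1}}$ and $-\frac{\zeta(z)A_w(-z)}{\alpha z}$ should emerge as residue corrections: when the contour at $\mathrm{Re}(s)=\tfrac12$ is shifted to line up the two transforms, it crosses the simple poles of $\Phi$ coming from the pole of $\zeta$ at argument $1$ and from the Gamma factors, located at the values of $s$ corresponding to the powers $\alpha^{-z-1}$ and $\alpha^{-z}$; evaluating these residues, with the constants $A_w(z)$ and $A_w(-z)$ supplied by the ${}_1F_1$-factors of $\omega$ at those points, yields exactly the regularising terms. The main obstacle I anticipate is identifying $M(s)$ with the Mellin transform of $\varphi_w(z,\cdot)$: because $\zeta_w(z+1,x+1)$ is given only through the double integral \eqref{new zeta} and its continuation \eqref{hermiteeqn} rather than in closed form, pinning down this transform directly is delicate, and the cleanest route is probably to pass first through the intermediate triple-integral transformation (Theorem \ref{modular trans in intergal form}), whose integrand has a transparent Mellin transform, and only then invoke Parseval. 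A secondary burden is justifying the interchange of the sum over $m$ with the contour integral, for which one uses the exponential decay of the Gamma factors together with the polynomial control on ${}_1F_1$ along vertical lines to secure absolute convergence on the strip $-1<\mathrm{Re}(z)<1$.
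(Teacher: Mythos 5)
Your skeleton---recasting the $\Xi$-integral as a line integral on $\mathrm{Re}(s)=\tfrac{1}{2}$, shifting the contour so that the Dirichlet series \eqref{doublezeta} for $\zeta\left(s-\tfrac{z}{2}\right)\zeta\left(s+\tfrac{z}{2}\right)$ applies, collecting the residues at $s=1\pm\tfrac{z}{2}$ (which is indeed where the subtracted terms originate), and then applying Parseval---is exactly how the paper proves its general result, Theorem \ref{analogueReslutthm}. The genuine gap lies in what Parseval is paired with. You propose to factor the integrand as $M(s)N(1-s)$ with $M$ equal to the Mellin transform of $\varphi_w(z,\cdot)$; but no closed form for $\int_0^\infty x^{s-1}\varphi_w(z,x)\,dx$ is available or established anywhere---precisely because $\zeta_w(z+1,x+1)$ is itself only given by a double integral---and this is the reason the paper does not take that route. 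In the paper, the functions whose normalized Mellin transforms produce the ${}_1F_1$-factors of $\Delta_2$ are not $\varphi_w$ but the pair $\phi(x)=e^{-w^2/2}\,{}_1K_{z/2,iw}(2\alpha x)$, $\psi(x)=\beta\,{}_1K_{z/2,w}(2\beta x)$, which must first be shown to be reciprocal under the second Koshliakov kernel (Theorem \ref{reciFn}) and to lie in the diamond class $\Diamond_{\eta,\omega}$ (via the asymptotics of Theorems \ref{asymptotcs large x} and \ref{asymSmall}); Parseval then pairs $\Theta=\phi+\psi$ against the ${}_2F_1$-kernel of Lemma \ref{InverseMellOfQuetientOfGamma}, not against $\varphi_w$.

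That leaves the real crux, which your proposal does not address: converting $\sum_{n}\sigma_{-z}(n)\int_0^\infty {}_1K_{z/2,iw}(2\alpha x)\bigl({}_2F_1\bigl(1,\tfrac{z}{2};\tfrac{1}{2};-\tfrac{x^2}{\pi^2n^2}\bigr)-1\bigr)x^{(z-2)/2}\,dx$ into $\sum_m\varphi_w(z,m\alpha)$. The paper achieves this by (i) Theorem \ref{integr}, which turns each such integral into a double integral against $K_{z/2}(2n\pi\alpha/u)$; (ii) the unfolding $\sigma_{-z}(n)n^{z/2}=\sum_{mk=n}m^{-z/2}k^{z/2}$ followed by Watson's summation formula \eqref{watson lemma} for $\sum_k k^{z/2}K_{z/2}(2\pi k\xi)$; and (iii) the recognition that the surviving series $\sum_n\left(n^2t^2+m^2\alpha^2v^2\right)^{-(z+1)/2}$, integrated against the Gaussian kernel, is exactly the defining double-integral series \eqref{new zeta} of $\zeta_w(z+1,m\alpha+1)$, while the closed-form terms of Watson's formula give the $A_w$-terms via Lemma \ref{int} (see \eqref{doub1}--\eqref{doub3}). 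Your fallback---routing through Theorem \ref{modular trans in intergal form}---does not fill this gap: that theorem supplies only the $\alpha\leftrightarrow\beta$, $w\leftrightarrow iw$ symmetry of a triple integral and is what proves Theorem \ref{genramhureq}; it does not connect the $\Xi$-integral to the series side. (Theorem \ref{genramhureq} does enter the paper's proof, but only at the very last step, to collapse the symmetric sum of the $\alpha$- and $\beta$-expressions into a single one.) Note also that the authors remark in Section \ref{cr} that they know of no other proof of this theorem, which is consistent with the direct Mellin-factorization you envisage not going through as stated.
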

\begin{remark}
Employing \eqref{kft} below, it is easy to see that $\Delta_2\left(\beta,\frac{z}{2},iw,\frac{1+it}{2}\right)=\Delta_2\left(\alpha,\frac{z}{2},w,\frac{1+it}{2}\right)$.
\end{remark}
The analysis which leads to the generalized modular relation and the integral evaluation in Theorems \ref{genramhureq} and \ref{xiintgenramhurthm} rests upon two theories - first, the theory of another new special function which is a new generalization of the modified Bessel function $K_z(x)$ (different from \eqref{kzw}) and second, the theory of functions reciprocal in a kernel consisting of Bessel functions. The latter is developed in Section \ref{seckosh}. Note that the modified Bessel function of the second kind of order $z$ is defined by \cite[p.~78, eq.~(6)]{watson-1966a}
\begin{equation}\label{knus}
K_{z}(x):=\frac{\pi}{2}\frac{\left(I_{-z}(x)-I_{z}(x)\right)}{\sin\pi z},
\end{equation}
where $I_{z}(x)$ is the modified Bessel function of the first kind of order $z$ given by \cite[p.~77]{watson-1966a}
\begin{equation*}
I_{z}(x):=
\begin{cases}
e^{-\frac{1}{2}\pi z i}J_{z}(e^{\frac{1}{2}\pi i}x), & \text{if $-\pi<$ arg $x\leq\frac{\pi}{2}$,}\\
e^{\frac{3}{2}\pi z i}J_{\nu}(e^{-\frac{3}{2}\pi i}x), & \text{if $\frac{\pi}{2}<$ arg $x\leq \pi$}.
\end{cases}
\end{equation*}
Our generalization of $K_{z}(x)$ is defined for $z, w \in\mathbb{C}$, $x\in\mathbb{C}\backslash\{x\in\mathbb{R}: x\leq 0\}$ and $c:=$Re$(s)>-1\pm$Re$(z)$ by
\begin{align}\label{def}
{}_1K_{z,w}(x)&:=\frac{1}{2\pi i}\int_{(c)}\Gamma\left(\frac{1+s-z}{2}\right)\Gamma\left(\frac{1+s+z}{2}\right) {}_1F_1\left(\frac{1+s-z}{2};\frac{3}{2};-\frac{w^2}{4}\right)\nonumber\\
&\qquad\qquad\times {}_1F_1\left(\frac{1+s+z}{2};\frac{3}{2};-\frac{w^2}{4}\right)2^{s-1}x^{-s}\, ds,
\end{align}
where, here, and throughout the paper, the notation $\int_{(c)}$ is used to denote the line integral $\int_{c-i\infty}^{c+i\infty}$, $c\in\mathbb{R}$. It is clear that ${}_1K_{z,w}(x)$ is an even function in both variables $z$ and $w$. When $w=0$,
\begin{align}\label{Basicform}\nonumber
{}_1K_{z,0}(x)&=\frac{1}{2\pi i}\int_{(c)}\Gamma\left(\frac{1+s-z}{2}\right)\Gamma\left(\frac{1+s+z}{2}\right)2^{s-1}x^{-s}\, ds\\ 
&=xK_z(x),
\end{align}
where the last equality follows by replacing $s$ by $s+1$ in the well-known formula \cite[p. 115, Formula 11.1]{ober}, namely, for $c:=$Re$(s)>\pm$Re$(z)$,
\begin{align}\label{ober1}
\frac{1}{2\pi i}\int_{(c)}2^{s-2}\Gamma\left(\frac{s}{2}-\frac{z}{2}\right)\Gamma\left(\frac{s}{2}+\frac{z}{2}\right)x^{-s}ds=K_z(x).
\end{align}
 %that naturally arise while evaluating the generalization of \eqref{rint}.
While the theory of ${}_1K_{w}(x)$ is developed in Section \ref{1kzw}, we record below an integral representation for it.
\begin{theorem}\label{integralRepr}
 For $z,w\in\mathbb{C}$ and $|arg\ x|<\frac{\pi}{4}$, we have
\begin{equation}\label{integralRepreqn}
{}_1K_{z,w}(2x)=\frac{2x^{-z}}{w^2}\int_0^\infty u^{2z-1}e^{-u^2-x^2/u^2}\sin(wu)\sin\left(\frac{wx}{u}\right)\, du.
\end{equation}
\end{theorem}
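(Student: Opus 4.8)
The plan is to prove the identity first for real $x>0$ by matching Mellin transforms in the variable $x$, and then to extend it to the sector $|\arg x|<\tfrac{\pi}{4}$ by analytic continuation. Write $g(x):={}_1K_{z,w}(2x)$ for the left-hand side and abbreviate
\begin{align*}
G(s):=\Gamma\!\left(\tfrac{1+s-z}{2}\right)\Gamma\!\left(\tfrac{1+s+z}{2}\right){}_1F_1\!\left(\tfrac{1+s-z}{2};\tfrac{3}{2};-\tfrac{w^2}{4}\right){}_1F_1\!\left(\tfrac{1+s+z}{2};\tfrac{3}{2};-\tfrac{w^2}{4}\right).
\end{align*}
The Mellin--Barnes representation \eqref{def} exhibits ${}_1K_{z,w}(y)$ as the inverse Mellin transform in $y$ of $G(s)2^{s-1}$, so the Mellin transform of ${}_1K_{z,w}(y)$ equals $G(s)2^{s-1}$; substituting $y=2x$ shows that the Mellin transform of $g$ is $\int_0^\infty g(x)x^{s-1}\,dx=2^{-s}G(s)2^{s-1}=\tfrac12 G(s)$, valid for $\Re(s)>-1\pm\Re(z)$. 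It therefore suffices to show that the right-hand side of \eqref{integralRepreqn} has Mellin transform $\tfrac12 G(s)$ in the same strip.

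Denoting the right-hand side of \eqref{integralRepreqn} by $f(x)$, I would compute its Mellin transform and interchange the order of integration to get
\begin{align*}
\int_0^\infty f(x)x^{s-1}\,dx=\frac{2}{w^2}\int_0^\infty u^{2z-1}e^{-u^2}\sin(wu)\left(\int_0^\infty x^{s-z-1}e^{-x^2/u^2}\sin\!\left(\tfrac{wx}{u}\right)dx\right)du.
\end{align*}
In the inner $x$-integral the substitution $t=x/u$ factors off all the $u$-dependence, yielding $u^{s-z}\int_0^\infty t^{s-z-1}e^{-t^2}\sin(wt)\,dt$, so the double integral separates into a product of two one-dimensional integrals
\begin{align*}
\int_0^\infty f(x)x^{s-1}\,dx=\frac{2}{w^2}\left(\int_0^\infty u^{s+z-1}e^{-u^2}\sin(wu)\,du\right)\left(\int_0^\infty t^{s-z-1}e^{-t^2}\sin(wt)\,dt\right).
\end{align*}

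The computation now hinges on the single evaluation
\begin{align*}
\int_0^\infty u^{\mu-1}e^{-u^2}\sin(wu)\,du=\frac{w}{2}\,\Gamma\!\left(\tfrac{\mu+1}{2}\right){}_1F_1\!\left(\tfrac{\mu+1}{2};\tfrac{3}{2};-\tfrac{w^2}{4}\right)\qquad(\Re(\mu)>-1,\ w\in\mathbb{C}),
\end{align*}
which I would establish by expanding $\sin(wu)$ in its Taylor series, integrating term by term via $\int_0^\infty u^{\mu+2k}e^{-u^2}\,du=\tfrac12\Gamma\!\left(\tfrac{\mu+1}{2}+k\right)$, and recognizing the resulting series as a ${}_1F_1$ through the identity $(2k+1)!=4^k k!\,(3/2)_k$. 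Applying this with $\mu=s+z$ and with $\mu=s-z$ converts the product above into $\tfrac{2}{w^2}\cdot\tfrac{w}{2}\cdot\tfrac{w}{2}=\tfrac12$ times the two factors $\Gamma\cdot{}_1F_1$, that is, exactly $\tfrac12 G(s)$. Mellin inversion then gives $f(x)=g(x)$ for $x>0$, and since both sides are holomorphic throughout $|\arg x|<\tfrac{\pi}{4}$ — this being precisely the condition that makes $\Re(x^2)>0$, so that $e^{-x^2/u^2}$ decays and $f$ is well-defined and analytic there — the identity persists on the whole sector.

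The main obstacle I anticipate is the rigorous justification of the two interchanges of limiting operations, namely Fubini's theorem for the double integral and the term-by-term integration of the sine series. For both I would verify absolute convergence by the standard corner analysis in $(u,x)$: the Gaussian factors $e^{-u^2}$ and $e^{-x^2/u^2}$ control the behaviour as $u\to\infty$ and as $u\to0$ (even for complex $w$, where $\sin$ grows only exponentially and is thus dominated by the Gaussians), while the conditions $\Re(s)>-1\pm\Re(z)$ guarantee integrability at $u=0$, $t=0$ and $x=0$. These are exactly the constraints under which the defining integral \eqref{def} is taken, so no genuine restriction is introduced, and the strip in which the two Mellin transforms are shown to agree is the natural one.
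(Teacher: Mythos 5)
Your proof is correct and is essentially the paper's own argument run in the opposite direction: the paper combines the two inverse Mellin transforms obtained from \eqref{mellin transform of gamma 1F1 1} (equivalently \eqref{equivv} together with Kummer's transformation \eqref{kft}) via Parseval's identity \eqref{Persval} applied to the definition \eqref{def}, whereas you compute the forward Mellin transform of the right-hand side, factor it through the substitution $t=x/u$, and conclude by Mellin inversion/uniqueness --- the same factorization, resting on the same key evaluation $\int_0^\infty u^{\mu-1}e^{-u^2}\sin(wu)\,du=\tfrac{w}{2}\Gamma\bigl(\tfrac{\mu+1}{2}\bigr)\,{}_1F_1\bigl(\tfrac{\mu+1}{2};\tfrac{3}{2};-\tfrac{w^2}{4}\bigr)$, which you prove termwise instead of citing. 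Since Parseval's identity for Mellin transforms is precisely the statement that a multiplicative convolution has the product of the Mellin transforms as its transform, the two routes require the same justifications (absolute convergence for the interchanges, validity of the inversion step) and differ only in bookkeeping.
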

The two new special functions, $\zeta_w(s, a)$ and ${}_1K_{z,w}(x)$, that we have introduced in this paper are connected by means of the following relation.
\begin{theorem}\label{relation b/w zetaw and 1Kzw}
For $w\in\mathbb{C}$, $a>0$ and $1<\mathrm{Re}(s)<2$, we have
\begin{align*}
\zeta_w(s,a+1)=\frac{2^{s+1}\pi^{\frac{s}{2}}e^{-\frac{w^2}{4}}}{a^{\frac{s-1}{2}}\Gamma\left(1-\frac{s}{2}\right)\Gamma(s)}\int_1^\infty\int_0^\infty \frac{(tx)^{\frac{s-1}{2}}{}_1K_{\frac{s-1}{2},iw}(2\pi atx)}{(t^2-1)^{\frac{s}{2}}(e^{2\pi x}-1)}\, dxdt.
\end{align*}
\end{theorem}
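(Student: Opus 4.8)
The plan is to collapse \emph{both} sides of the asserted identity to one and the same Mellin--Barnes line integral and then read off their equality; equivalently, one transports the right-hand side all the way back to the series/double-integral definition \eqref{new zeta} of $\zeta_w(s,a+1)$. Throughout I would stay in the strip $1<\operatorname{Re}(s)<2$, which the convergence bookkeeping below reveals to be exactly the range in which every constituent integral, the series, and the contour placements are simultaneously legitimate.

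First I would process the right-hand side. Writing $z=\tfrac{s-1}{2}$ and inserting the Mellin--Barnes definition \eqref{def} of ${}_1K_{z,iw}(2\pi a t x)$ (whose two confluent factors then carry argument $+\tfrac{w^2}{4}$, since $w\mapsto iw$), I would interchange the $\xi$-contour with the $t$- and $x$-integrations, which is permissible by absolute convergence in the strip. The $t$-integral is a Beta integral, $\int_1^\infty t^{z-\xi}(t^2-1)^{-s/2}\,dt=\tfrac12 B\!\left(\tfrac{\xi+z}{2},1-\tfrac{s}{2}\right)$, valid for $\operatorname{Re}(s)<2$; the $x$-integral is the classical $\int_0^\infty x^{z-\xi}(e^{2\pi x}-1)^{-1}\,dx=(2\pi)^{-(z-\xi+1)}\Gamma(z-\xi+1)\zeta(z-\xi+1)$, valid for $\operatorname{Re}(\xi)<\operatorname{Re}(z)$. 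Cancelling the factor $\Gamma\!\left(\tfrac{1+\xi-z}{2}\right)$ against the one the Beta integral produces, applying the duplication formula to $\Gamma\!\left(\tfrac{\xi+z}{2}\right)\Gamma\!\left(\tfrac{1+\xi+z}{2}\right)$, and collecting powers of $2,\pi,a$ (which simplify dramatically because $s=2z+1$), the right-hand side collapses to
\begin{align*}
\frac{e^{-w^2/4}}{\Gamma(s)}\,\frac{1}{2\pi i}\int_{(c)}\Gamma(\xi+z)\Gamma(z-\xi+1)\zeta(z-\xi+1)\,{}_1F_1\!\left(\tfrac{1+\xi-z}{2};\tfrac32;\tfrac{w^2}{4}\right){}_1F_1\!\left(\tfrac{1+\xi+z}{2};\tfrac32;\tfrac{w^2}{4}\right)a^{-z-\xi}\,d\xi.
\end{align*}

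Next I would open up the other side from the definition \eqref{new zeta} of $\zeta_w(s,a+1)$, whose denominator is $(n^2u^2+a^2v^2)^{s/2}$. Splitting this denominator by the standard Mellin--Barnes representation of $(A+B)^{-s/2}$, the $u$- and $v$-integrals become Mellin transforms of $e^{-u^2}\sinh(wu)$ and $e^{-v^2}\sin(wv)$ against powers, each evaluating to a $\Gamma$ times a ${}_1F_1$---crucially with argument $+\tfrac{w^2}{4}$ from the $\sinh$ and $-\tfrac{w^2}{4}$ from the $\sin$---while $\sum_n n^{2\xi'}=\zeta(-2\xi')$ supplies the zeta factor (forcing the line $-\tfrac{\operatorname{Re}(s)}{2}<\operatorname{Re}(\xi')<-\tfrac12$, again nonempty exactly when $\operatorname{Re}(s)>1$). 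The change of variable $\xi=z+1+2\xi'$ aligns this with the master integral above, and after a second use of duplication the two integrands agree provided
\begin{equation*}
e^{-w^2/4}\,{}_1F_1\!\left(1+\xi';\tfrac32;\tfrac{w^2}{4}\right)={}_1F_1\!\left(\tfrac12-\xi';\tfrac32;-\tfrac{w^2}{4}\right),
\end{equation*}
which is precisely Kummer's transformation ${}_1F_1(p;q;x)=e^{x}\,{}_1F_1(q-p;q;-x)$ with $p=1+\xi'$, $q=\tfrac32$. This reconciles the apparent sign discrepancy in the confluent arguments and identifies the two Mellin--Barnes integrals, finishing the proof.

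The main obstacle, I expect, is not any individual computation but the simultaneous justification of all the interchanges together with the matching of contour locations. The Beta integral demands $\operatorname{Re}(s)<2$; the $x$-integral, the series, and the constraint $\operatorname{Re}(\xi)>-1\pm\operatorname{Re}(z)$ from \eqref{def} push in the opposite direction and force $\operatorname{Re}(s)>1$; one must verify these inequalities carve out a nonempty admissible contour precisely on $1<\operatorname{Re}(s)<2$, and that the integrand decays fast enough on vertical lines (via Stirling and boundedness of the ${}_1F_1$ factors) to license Fubini and the shift $\xi=z+1+2\xi'$. The single genuinely special-function step, Kummer's identity, is short once the two integrands are laid side by side.
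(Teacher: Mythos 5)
Your proposal is correct, but it takes a genuinely different route from the paper's own proof. The paper derives the identity from its generalized Hermite formula: it substitutes the modular-type transformation \eqref{mrramg} of \cite{drz5} for the innermost $v$-integral of the triple integral in \eqref{hermiteeqn}, evaluates the two resulting elementary pieces so that they exactly cancel the $-\frac{a^{-s}}{2}A_w(s-1)$ and $\frac{a^{1-s}}{s-1}A_{iw}(1-s)$ terms of Theorem \ref{hermite}, and then recognizes the surviving triple integral as a double integral of ${}_1K_{\frac{s-1}{2},iw}$ by means of the integral representation in Theorem \ref{integralRepr}. You bypass Theorem \ref{hermite} and the result of \cite{drz5} altogether, unfolding both sides into one and the same Mellin--Barnes integral: the right-hand side via the contour definition \eqref{def}, the Beta integral in $t$ and the classical evaluation of $\int_0^\infty x^{z-\xi}(e^{2\pi x}-1)^{-1}\,dx$; the left-hand side via the Mellin--Barnes splitting of $(n^2u^2+a^2v^2)^{-s/2}$ in the series definition \eqref{new zeta} together with \eqref{equivv}; the integrands are then reconciled by the duplication formula and a single application of Kummer's transformation \eqref{kft}. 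I verified your bookkeeping: with $z=\frac{s-1}{2}$ the powers of $2$, $\pi$ and $a$ collapse as claimed, the Beta integral's denominator $\Gamma\left(\frac{1+\xi-z}{2}\right)$ cancels the matching factor from \eqref{def}, the substitution $\xi=z+1+2\xi'$ maps the left-hand contour $-\frac{1}{2}\operatorname{Re}(s)<\operatorname{Re}(\xi')<-\frac12$ into the strip $-\operatorname{Re}(z)<\operatorname{Re}(\xi)<\operatorname{Re}(z)$ demanded on the right, the integrand is analytic there (the pole of $\zeta(z-\xi+1)$ sits on the edge $\xi=z$, not inside), so the two vertical lines may be identified, and the strip is nonempty exactly when $1<\operatorname{Re}(s)<2$. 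What the paper's route buys is structural economy within its own program—the theorem appears as a corollary of the Hermite-type formula that also drives the analytic continuation, and its Fubini steps involve only real multiple integrals. What your route buys is self-containedness: only \eqref{new zeta}, \eqref{def} and table entries are needed. Its price is the justification of the contour interchanges, which you correctly flag but should execute: on vertical lines the two gamma factors supply decay $e^{-\pi|\operatorname{Im}\xi|/2}$ each, which dominates the $O\big(e^{C\sqrt{|\operatorname{Im}\xi|}}\big)$ growth of the ${}_1F_1$ factors (compare the asymptotic \eqref{bd1f1}), so absolute convergence and Fubini do hold throughout.
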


\section{Motivation behind the project}\label{mr}
\setcounter{subsection}{1}
The transformation of the type \eqref{thetaab} which is equivalent to the result obtained by squaring the functional equation of $\zeta(s)$ was found by Koshliakov \cite[p.~32]{koshliakov}. For
\begin{equation*}
\mathfrak{F}(\a):=\sqrt{\alpha}\left(\frac{\gamma-\log (4\pi\alpha)}{\alpha}-4\sum_{n=1}^{\infty}d(n)K_{0}(2\pi n\alpha)\right),
\end{equation*}
it is given by
%\end{align}
\begin{align}\label{mainagain30}
 \mathfrak{F}(\a)=\mathfrak{F}(\b)=-\frac{32}{\pi}\int_{0}^{\infty}\Xi^{2}\left(\frac{t}{2}\right)\frac{\cos\left(\frac{1}{2}t\log\alpha\right)\, dt}{(1+t^2)^2}.
\end{align}
Its generalization is the famous Ramanujan-Guinand formula, which, along with the integral involving the Riemann $\Xi$-function, is given by \cite[Theorem 1.4]{transf}
\begin{align}\label{mainagain3}
\mathcal{F}(z,\a)=\mathcal{F}(z,\b)=-\frac{32}{\pi}\int_{0}^{\infty}\Xi\left(\frac{t+iz}{2}\right)\Xi\left(\frac{t-iz}{2}\right)\frac{\cos\left(\frac{1}{2}t\log\alpha\right)}{(t^2+(z+1)^2)(t^2+(z-1)^2)}\, dt,
\end{align}
where
\begin{align}\label{fzalrg}
\mathcal{F}(z, \a):=\sqrt{\alpha}\left(\alpha^{\frac{z}{2}-1}\pi^{\frac{-z}{2}}\Gamma\left(\frac{z}{2}\right)\zeta(z)+\alpha^{-\frac{z}{2}-1}\pi^{\frac{z}{2}}\Gamma\left(\frac{-z}{2}\right)\zeta(-z)-4\sum_{n=1}^{\infty}\sigma_{-z}(n)n^{z/2}K_{\frac{z}{2}}\left(2n\pi\alpha\right)\right).
\end{align}
%\begin{align}\label{mainagain3}
%&\sqrt{\alpha}\left(\alpha^{\frac{z}{2}-1}\pi^{\frac{-z}{2}}\Gamma\left(\frac{z}{2}\right)\zeta(z)+\alpha^{-\frac{z}{2}-1}\pi^{\frac{z}{2}}\Gamma\left(\frac{-z}{2}\right)\zeta(-z)-4\sum_{n=1}^{\infty}\sigma_{-z}(n)n^{z/2}K_{\frac{z}{2}}\left(2n\pi\alpha\right)\right)\nonumber\\
%&=\sqrt{\beta}\left(\beta^{\frac{z}{2}-1}\pi^{\frac{-z}{2}}\Gamma\left(\frac{z}{2}\right)\zeta(z)+\beta^{-\frac{z}{2}-1}\pi^{\frac{z}{2}}\Gamma\left(\frac{-z}{2}\right)\zeta(-z)-4\sum_{n=1}^{\infty}\sigma_{-z}(n)n^{z/2}K_{\frac{z}{2}}\left(2n\pi\beta\right)\right)\nonumber\\
%&=-\frac{32}{\pi}\int_{0}^{\infty}\Xi\left(\frac{t+iz}{2}\right)\Xi\left(\frac{t-iz}{2}\right)\frac{\cos\left(\frac{1}{2}t\log\alpha\right)}{(t^2+(z+1)^2)(t^2+(z-1)^2)}\, dt.
%\end{align}
Here $-1<$ Re$(z)<1$, $\alpha, \beta>0$ (though one can analytically continue the result for Re$(\a)>0$, Re$(\b)>0$), $\a\b=1$, $\sigma_{z}(n)=\sum_{d|n}d^z$ and $K_{z}(x)$ is defined in \eqref{knus}.

It is well-known \cite[p.~60]{cohen} that the first equality in \eqref{mainagain3} is equivalent to the functional equation of the non-holomorphic Eisenstein series on $\textup{SL}_{2}(\mathbb{Z})$. Koshliakov's aforementioned formula is obtained by letting $z\to 0$ in \eqref{mainagain3}.

The natural question that could be asked here is, can one superimpose the theta structure on \eqref{mainagain3}? In other words, does there exist a transformation of the form $F(z, w, \a)=F(z,iw,\b)$ whose special case is \eqref{mainagain3}?

This question was affirmatively answered in \cite[Theorems 1.3, 1.5]{dkmt}, where the following result was obtained for $w\in\mathbb{C}$ and $-1<\textup{Re}(z)<1$\footnote{The first equality in \eqref{genrgeqeqn} is actually valid for $z\in\mathbb{C}\backslash\{-1,1\}$. The condition $-1<$Re$(z)<1$ is required only for both equalities to hold. Same is the case with \eqref{mainagain3}.}:
\begin{align}\label{genrgeqeqn}
\mathbb{F}(z, w, \a)=\mathbb{F}(z, iw, \b)=-\frac{16}{\pi} \int_{0}^{\infty} \Xi\left( \frac{t+iz}{2} \right) \Xi\left( \frac{t-iz}{2} \right)
\frac{\nabla_{2}\left(\a,\tfrac{z}{2},w,\tfrac{1+it}{2}\right)\, dt}{\left(t^2+(z+1)^2\right)\left(t^2+(z-1)^2\right)}, 
\end{align}
where
\begin{align}\label{fzalrggen}
\mathbb{F}(z, w, \a)&=\sqrt{\a}  \Bigg( 
\Gamma\left(\frac{z}{2}\right) \zeta(z)  \pi^{-\frac{z}{2}} \a^{\frac{z}{2}-1} 
\,_1F_1\left(\frac{1-z}{2};\frac{1}{2};\frac{w^2}{4}\right)-4 \sum_{n=1}^{\infty} \sigma_{-z}(n) n^{\frac{z}{2}}  e^{-\frac{w^2}{4}} K_{\frac{z}{2},iw}(2  n \pi\a)
  \nonumber\\
&\quad\quad+ \Gamma\left(-\frac{z}{2}\right) \zeta(-z) \pi^{\frac{z}{2}}  \a^{-\frac{z}{2}-1} \,_1F_1\left(\frac{1+z}{2};\frac{1}{2};\frac{w^2}{4} \right)   
\Bigg).
\end{align}
%\begin{align}\label{genrgeqeqn}
%&\sqrt{\a}  \Bigg( 
%4 \sum_{n=1}^{\infty} \sigma_{-z}(n) n^{\frac{z}{2}}  e^{-\frac{w^2}{4}} K_{\frac{z}{2},iw}(2  n \pi\a)
%- \Gamma\left(\frac{z}{2}\right) \zeta(z)  \pi^{-\frac{z}{2}} \a^{\frac{z}{2}-1} 
%\,_1F_1\left(\frac{1-z}{2};\frac{1}{2};\frac{w^2}{4}\right) \nonumber\\
%&\quad\quad- \Gamma\left(-\frac{z}{2}\right) \zeta(-z) \pi^{\frac{z}{2}}  \a^{-\frac{z}{2}-1} \,_1F_1\left(\frac{1+z}{2};\frac{1}{2};\frac{w^2}{4} \right)   
%\Bigg)\nonumber\\
%&= \sqrt{\b}  \Bigg( 
%4 \sum_{n=1}^{\infty} \sigma_{-z}(n) n^{\frac{z}{2}}  e^{\frac{w^2}{4}}  K_{\frac{z}{2},w}(2 n \pi \b)
%- \Gamma\left(\frac{z}{2}\right) \zeta(z) \pi^{-\frac{z}{2}} \b^{\frac{z}{2}-1} \,_1F_1\left(\frac{1-z}{2};\frac{1}{2};-\frac{w^2}{4}\right) \nonumber\\
%&\quad\quad\quad- \Gamma\left(-\frac{z}{2}\right) \zeta(-z)  \pi^{\frac{z}{2}} \b^{-\frac{z}{2}-1} \,_1F_1\left(\frac{1+z}{2};\frac{1}{2};-\frac{w^2}{4} \right) 
%\Bigg)\nonumber\\
%&=\frac{16}{\pi} \int_{0}^{\infty} \Xi\left( \frac{t+iz}{2} \right) \Xi\left( \frac{t-iz}{2} \right)
%\frac{\nabla_{2}\left(\a,\tfrac{z}{2},w,\tfrac{1+it}{2}\right)\, dt}{\left(t^2+(z+1)^2\right)\left(t^2+(z-1)^2\right)}, 
%\end{align}
Here $K_{z,w}(x)$ is the generalized modified Bessel function defined for $z, w \in\mathbb{C}$, $x\in\mathbb{C}\backslash\{x\in\mathbb{R}: x\leq 0\}$ and $c:=$Re$(s)>\pm$Re$(z)$ by \cite[Equation (1.3)]{dkmt}
\begin{equation}\label{kzw}
K_{z,w}(x) := 
\frac{1}{2\pi i} \int_{(c)}\Gamma\bigg(\frac{s-z}{2}\bigg) \Gamma\bigg(\frac{s+z}{2}\bigg) \,_1F_1\bigg(\frac{s-z}{2};\frac{1}{2};\frac{-w^2}{4}\bigg)  
\,_1F_1\bigg(\frac{s+z}{2};\frac{1}{2};\frac{-w^2}{4}\bigg) 
 2^{s-2}x^{-s}ds,
\end{equation}
and
\begin{align*}
\nabla_{2}(x, z, w, s)&:=\rho(x, z, w, s)+\rho(x, z, w, 1-s),\\
\rho(x, z, w, s)&:=e^{\frac{w^2}{4}}x^{\frac{1}{2}-s}{}_1F_{1}\left(\frac{1-s-z}{2};\frac{1}{2};-\frac{w^2}{4}\right){}_1F_{1}\left(\frac{1-s+z}{2};\frac{1}{2};-\frac{w^2}{4}\right).
\end{align*}
The surprising thing is, each of the expressions in \eqref{fzalrg} is just the first term in the series expansion of the corresponding one in \eqref{fzalrggen}. In the same paper \cite{dkmt}, the new special function $K_{z,w}(x)$ is shown to satisfy a theory whose richness is comparable to that of the modified Bessel function $K_{z}(x)$.

Thus, in view of the fact that \eqref{ramfor} and \eqref{hurwitzeqn} are analogous to \eqref{mainagain30} and \eqref{mainagain3}, we are motivated to search for a corresponding analogue of \eqref{genrgeqeqn}.
\section{Preliminaries}\label{prelim}
\setcounter{subsection}{1}
Here we collect some results from the literature that are used throughout the sequel.

The duplication and reflection formulas for the gamma function are given by \cite[p.~46, Equations (3.4), (3.5)]{temme}
\begin{align}
\G(s)\G\left(s+\frac{1}{2}\right)&=\frac{\sqrt{\pi}}{2^{2s-1}}\G(2s),\label{dup}\\
\G(s)\G(1-s)&=\frac{\pi}{\sin(\pi s)}\hspace{4mm}(s\notin\mathbb{Z}).\label{refl}
\end{align}
For $a>0$, $b\in\mathbb{C}$ fixed and $|\arg(s)|<\pi-\delta$, where $\delta>0$, Stirling's formula for the gamma function \cite[p. 141, Formula \textbf{5.11.7}]{nist} is given by 
\begin{equation}\label{strivert0}
\G(as+b)\sim\sqrt{2\pi}e^{-as}(as)^{as+b-1/2}
\end{equation}
as $s\to\infty$. In the vertical strip $p\leq\sigma\leq q$, we also have \cite[p.~224]{cop}
\begin{equation}\label{strivert}
  |\Gamma(s)|=\sqrt{2\pi}|t|^{\sigma-\frac{1}{2}}e^{-\frac{1}{2}\pi |t|}\left(1+O\left(\frac{1}{|t|}\right)\right)
\end{equation}
as $|t|\to \infty$.
The functional equation \eqref{zetafe} of $\zeta(s)$ can be alternatively written in the form 
\begin{equation}\label{zetaalt}
\xi(s)=\xi(1-s),
\end{equation}
where $\xi(s)$ is defined in \eqref{xii}. An asymmetric version of \eqref{zetafe} is \cite[p.~259, Theorem 12.7]{apostol}
\begin{equation}\label{zetafealt}
\zeta(1-s)=2^{1-s}\pi^{-s}\G(s)\zeta(s)\cos\left(\frac{\pi s}{2}\right).
\end{equation}
Kummer's formula for the confluent hypergeometric function is given by
\begin{equation}\label{kft}
{}_1F_{1}(a;c;z)=e^z{}_1F_{1}(c-a;c;-z).
\end{equation}
The error functions $\textup{erfi}(w)$ and $\textup{erf}(w)$ in \eqref{erferfi} and \eqref{erf} are related by
\begin{equation}\label{erferfirel}
\textup{erf}(iw)=i\textup{erfi}(w).
\end{equation}
If $\mathcal{G}(s)$ and $\mathcal{H}(s)$ are the Mellin transforms of $g(x)$ and $h(x)$ respectively, then Parseval's identity \cite[p. 83, Equation (3.1.13)]{kp} reads
\begin{align}\label{Persval}
\int_0^\infty g(x)h\left(\frac{u}{x}\right)\frac{dx}{x}=\frac{1}{2\pi i}\int_{(c)} \mathcal{G}(s)\mathcal{H}(s) u^{-s} ds,
\end{align}
whenever the following conditions hold with $t=\textup{Im}(s)$:
\begin{equation}\label{conditions}
\int_{0}^{\infty}x^{-c-it}h(u/x)\, dx\in L(-\infty, \infty)\hspace{2mm}\textup{and}\hspace{2mm} x^{c-1}g(x)\in L(0, \infty).
\end{equation}
A variant of this formula is \cite[p.~83, Equation (3.1.11)]{kp}
\begin{equation}\label{par}
\int_{0}^{\infty}g(x)h(x)\, dx=\frac{1}{2\pi i}\int_{(c)}\mathcal{G}(1-s)\mathcal{H}(s)\, ds.
\end{equation}
A useful inverse Mellin transform which we will need is \cite[p.~507, Formula \textbf{3.952.7}]{grn}
\begin{equation}\label{mellin transform of gamma 1F1 1}
\frac{1}{2\pi i}\int_{c-i\infty}^{c+i\infty}\frac{b}{2}a^{-\frac{1}{2}-\frac{s}{2}}e^{-\frac{b^2}{4a}}\Gamma\left(\frac{s+1}{2}\right){}_1F_{1}\left(1-\frac{s}{2};\frac{3}{2};\frac{b^2}{4a}\right)t^{-s}\, ds=e^{-at^2}\sin(bt)
\end{equation}
for $c:=\mathrm{Re}(s)>-1$ and $\mathrm{Re}(a)>0$. Equivalently,
\begin{equation}\label{equivv}
\int_{0}^{\infty}t^{s-1}e^{-at^2}\sin(bt)\, dt=\frac{b}{2}a^{-\frac{1}{2}-\frac{s}{2}}e^{-\frac{b^2}{4a}}\Gamma\left(\frac{s+1}{2}\right){}_1F_{1}\left(1-\frac{s}{2};\frac{3}{2};\frac{b^2}{4a}\right).
\end{equation}
The following lemma will be used while proving Theorem \ref{integr}. 
\begin{lemma}\label{IMT of 2F1}
For $\textup{Re}$$(s)<\mathrm{Re}(z)$ and $0<\mathrm{Re}(s)<2$, we have
\begin{align*}
\int_0^\infty {}_2F_1\left(1,\frac{z}{2};\frac{1}{2};-\frac{x^2}{\pi^2n^2}\right)x^{s-1}dx=\frac{n^s\pi^{\frac{3}{2}+s}}{2\sin\left(\frac{\pi s}{2}\right)}\frac{\Gamma\left(\frac{z-s}{2}\right)}{\Gamma\left(\frac{1-s}{2}\right)\Gamma\left(\frac{z}{2}\right)}.
\end{align*}
\end{lemma}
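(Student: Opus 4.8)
The plan is to reduce the integral to the classical Mellin transform of a Gauss hypergeometric function by a quadratic change of variables. First I would substitute $y=x^2/(\pi^2n^2)$, so that $x=\pi n\sqrt{y}$ and $x^{s-1}\,dx=\tfrac12(\pi n)^{s}y^{s/2-1}\,dy$. This recasts the left-hand side as $\tfrac{(\pi n)^s}{2}\int_0^\infty {}_2F_1\left(1,\tfrac z2;\tfrac12;-y\right)y^{s/2-1}\,dy$, which up to the prefactor is the Mellin transform of ${}_2F_1\left(1,\tfrac z2;\tfrac12;-y\right)$ evaluated at $s/2$.

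Next, I would invoke the standard Mellin transform of the Gauss hypergeometric function,
\begin{align*}
\int_0^\infty y^{\sigma-1}\,{}_2F_1(a,b;c;-y)\,dy=\frac{\Gamma(c)\,\Gamma(\sigma)\,\Gamma(a-\sigma)\,\Gamma(b-\sigma)}{\Gamma(a)\,\Gamma(b)\,\Gamma(c-\sigma)},
\end{align*}
valid for $0<\operatorname{Re}(\sigma)<\min(\operatorname{Re}(a),\operatorname{Re}(b))$. If a self-contained derivation is preferred over citing a table, this identity follows from the Mellin--Barnes representation of ${}_2F_1$: writing ${}_2F_1(a,b;c;-y)=\frac{\Gamma(c)}{\Gamma(a)\Gamma(b)}\frac{1}{2\pi i}\int_{(\gamma)}\frac{\Gamma(a+u)\Gamma(b+u)\Gamma(-u)}{\Gamma(c+u)}y^{u}\,du$ and reading off the transform via the substitution $u=-\sigma$ (Mellin inversion identifies the transform with the integrand evaluated at $u=-\sigma$). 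Applying this with $\sigma=s/2$, $a=1$, $b=z/2$, $c=1/2$, and using $\Gamma(1)=1$ and $\Gamma(\tfrac12)=\sqrt{\pi}$, gives
\begin{align*}
\frac{(\pi n)^s}{2}\cdot\frac{\sqrt\pi\,\Gamma\left(\tfrac s2\right)\Gamma\left(1-\tfrac s2\right)\Gamma\left(\tfrac{z-s}{2}\right)}{\Gamma\left(\tfrac z2\right)\Gamma\left(\tfrac{1-s}{2}\right)}.
\end{align*}

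Finally, I would simplify the two $s$-dependent gamma factors in the numerator via the reflection formula \eqref{refl}, namely $\Gamma\left(\tfrac s2\right)\Gamma\left(1-\tfrac s2\right)=\pi/\sin\left(\tfrac{\pi s}{2}\right)$; collecting the powers of $\pi$ then yields the claimed right-hand side $\frac{n^s\pi^{3/2+s}}{2\sin(\pi s/2)}\cdot\frac{\Gamma((z-s)/2)}{\Gamma((1-s)/2)\Gamma(z/2)}$. The constraints on $s$ fall out of this analysis: the condition $0<\operatorname{Re}(\sigma)$ becomes $\operatorname{Re}(s)>0$ (needed for convergence at $y=0$, where ${}_2F_1\to1$), while $\operatorname{Re}(\sigma)<\min(\operatorname{Re}(a),\operatorname{Re}(b))$ becomes $\operatorname{Re}(s)<2$ and $\operatorname{Re}(s)<\operatorname{Re}(z)$, exactly the stated hypotheses.

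The only genuine obstacle is justifying the Gauss hypergeometric Mellin transform together with its precise strip of validity; this is where one must control the large-$y$ asymptotics of ${}_2F_1(1,z/2;1/2;-y)$, which decays like $y^{-\min(1,\operatorname{Re}(z)/2)}$ and thereby pins down the upper bounds $\operatorname{Re}(s)<2$ and $\operatorname{Re}(s)<\operatorname{Re}(z)$ after undoing the substitution. Everything else is routine gamma-function bookkeeping.
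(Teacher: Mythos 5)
Your proposal is correct and follows essentially the same route as the paper: the paper's proof consists of citing exactly the Gauss hypergeometric Mellin transform you use (NIST 15.14.1, valid for $0<\operatorname{Re}(\sigma)<\min(\operatorname{Re}(a),\operatorname{Re}(b))$) and leaving the quadratic substitution and gamma/reflection bookkeeping implicit. Your write-up just makes those routine steps explicit and adds an optional Mellin--Barnes justification of the table formula, all of which checks out.
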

\begin{proof}
This follows from \cite[p. 398, Formula \textbf{15.14.1}]{nist}
\begin{align*}
\int_0^\infty{}_2F_1(a,b;c;-x)x^{s-1}dx=\frac{\Gamma(s)\Gamma(a-s)\Gamma(b-s)\Gamma(c)}{\Gamma(a)\Gamma(b)\Gamma(c-s)},
\end{align*}
which is valid for $\min(\mathrm{Re}(a),\mathrm{Re}(b))>\mathrm{Re}(s)>0$.
%Let $a=1,b=\frac{z}{2}, c=\frac{1}{2}$ and replace $x$ by $\frac{x^2}{\pi^2n^2}$ in the above equation to get for $\mathrm{Re}(s)<\mathrm{Re}\left(\frac{z}{2}\right)$ and $0<\mathrm{Re}(s)<1$,
%\begin{align*}
%\int_0^\infty {}_2F_1\left(1,\frac{z}{2};\frac{1}{2};-\frac{x^2}{\pi^2n^2}\right)x^{2s-1}dx=\frac{(\pi^2n^2)^s}{2}\frac{\Gamma(s)\Gamma(1-s)\Gamma\left(\frac{z}{2}-s\right)\Gamma\left(\frac{1}{2}\right)}{\Gamma(1)\Gamma\left(\frac{z}{2}\right)\Gamma\left(\frac{1}{2}-s\right)}.
%\end{align*}
%Now apply \eqref{refl}, then replace $s$ by $\frac{s}{2}$ to obtain the result.
\end{proof}
The following two results will be used several times in the sequel.
%We will frequently invoke the following results \cite[p.~30, Theorems 2.1, 2.3]{temme} in this paper.
\begin{theorem}\cite[p.~30, Theorem 2.1]{temme}\label{ldcttemme}
Let $(a, b)$ be a given finite or an infinite interval, and let $\{C_n(t)\}_{n=1}^{\infty}$ be a sequence of real or complex valued continuous functions, which satisfy the following conditions:

\textup{(1)} $\sum_{n=0}^{\infty}C_n(t)$ converges uniformly on any compact interval in $(a,b)$.

\textup{(2)} At least one of the following two quantities is finite:

\begin{equation*}
\int_{a}^{b}\sum_{n=0}^{\infty}|C_n(t)|\, dt, \sum_{n=0}^{\infty}\int_{a}^{b}|C_n(t)|\, dt.
\end{equation*}
Then we have 
\begin{equation*}
\int_{a}^{b}\sum_{n=0}^{\infty}C_n(t)\, dt=\sum_{n=0}^{\infty}\int_{a}^{b}C_n(t)\, dt.
\end{equation*}
\end{theorem}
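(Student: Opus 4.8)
The plan is to recognize this statement as a special case of the Fubini–Tonelli theorem for the product of Lebesgue measure on $(a,b)$ with the counting measure on $\mathbb{N}_0=\{0,1,2,\dots\}$, with condition (1) supplying continuity (hence local integrability and measurability) of the limit function. Write $S_N(t):=\sum_{n=0}^{N}C_n(t)$ and $S(t):=\sum_{n=0}^{\infty}C_n(t)$. By (1) the convergence $S_N\to S$ is uniform on every compact subinterval, so $S$ is continuous on $(a,b)$ and Lebesgue measurable, and since each $C_n$ is continuous the map $(n,t)\mapsto C_n(t)$ is jointly measurable on $\mathbb{N}_0\times(a,b)$.

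First I would dispose of condition (2). For the nonnegative integrand $|C_n(t)|$, Tonelli's theorem (equivalently, the monotone convergence theorem applied to the partial sums $\sum_{n=0}^{N}|C_n(t)|$) gives the unconditional identity
\begin{equation*}
\int_{a}^{b}\sum_{n=0}^{\infty}|C_n(t)|\,dt=\sum_{n=0}^{\infty}\int_{a}^{b}|C_n(t)|\,dt\in[0,\infty],
\end{equation*}
so that the two quantities in (2) are always \emph{equal}; consequently the finiteness of one forces the finiteness of both to a common value $M<\infty$. In particular $g(t):=\sum_{n=0}^{\infty}|C_n(t)|$ is integrable on $(a,b)$ with $\int_a^b g=M$, the bound $|S(t)|\le g(t)$ shows $S\in L^1(a,b)$, and the series $\sum_{n}\int_a^b C_n(t)\,dt$ converges absolutely.

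With absolute integrability of $(n,t)\mapsto C_n(t)$ on the product space now established, Fubini's theorem yields the desired interchange at once. Should one prefer a proof that visibly uses the uniform-convergence hypothesis (1) rather than invoking measure theory on the product space, I would instead argue by exhaustion: choose compact intervals $[\alpha_k,\beta_k]\uparrow(a,b)$, apply the elementary term-by-term integration valid under uniform convergence to get $\int_{\alpha_k}^{\beta_k}S=\sum_{n}\int_{\alpha_k}^{\beta_k}C_n$ for each $k$, and then let $k\to\infty$. Both one-sided passages are controlled by the single integrable majorant $g$: on the left $|\int_a^b S-\int_{\alpha_k}^{\beta_k}S|\le\int_{(a,b)\setminus[\alpha_k,\beta_k]}g$, and on the right $\sum_{n}|\int_a^b C_n-\int_{\alpha_k}^{\beta_k}C_n|\le\int_{(a,b)\setminus[\alpha_k,\beta_k]}g$, and both upper bounds tend to $0$ by the absolute continuity of the integral of $g$ as the excluded set shrinks to the endpoints.

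The main obstacle is precisely this passage from compact subintervals to the full, possibly infinite, interval: uniform convergence alone licenses term-by-term integration only over bounded intervals, and on $(a,b)$ it can genuinely fail without a domination hypothesis. Condition (2) is what repairs this, and the crux of the argument is the observation that for the nonnegative majorants the two integrals in (2) coincide automatically (Tonelli), so that a single finiteness assumption delivers the integrable dominating function $g$ needed to control the tails uniformly in $n$.
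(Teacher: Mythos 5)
Your proposal is correct and complete. Note that the paper does not prove this statement at all: it is quoted verbatim as Theorem 2.1 from Temme's book \cite{temme} and used as a black box, so there is no in-paper argument to compare against. Your route is the standard measure-theoretic one, and both of your variants work: viewing the double sum/integral as an integral over $\mathbb{N}_0\times(a,b)$ with counting $\times$ Lebesgue measure (both $\sigma$-finite, and joint measurability is automatic here because $\mathbb{N}_0$ is countable and each $C_n$ is continuous), or the exhaustion argument over compact subintervals, where uniform convergence licenses term-by-term integration and the integrable majorant $g(t)=\sum_{n}|C_n(t)|$ controls the tails near the endpoints. Your opening observation is also a genuine clarification of the hypothesis: by monotone convergence applied to the partial sums of $|C_n|$, the two quantities in condition (2) are always \emph{equal} as elements of $[0,\infty]$, so assuming one of them finite is the same as assuming both finite with a common value; this is exactly what produces the dominating function $g$ that makes either the Fubini step or the limit over exhausting compact intervals legitimate. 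One could streamline further by applying dominated convergence directly to the partial sums $S_N$ with majorant $g$, but that is a cosmetic difference, not a gap.
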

\begin{theorem}\cite[p.~30, Theorem 2.1]{temme}\label{ldcttemme1}
Let $t$ be a real variable ranging over a finite or infinite interval $(a, b)$ and $z$ a complex variable ranging over a domain $\Omega$. Assume that the function $f:(\Omega\times(a, b))\to\mathbb{C}$ satisfies the following conditions:

\textup{(i)} $f$ is a continuous function of both variables.

\textup{(ii)} For each fixed value of $t$, $f(\cdot, t)$ is a holomorphic function of the first variable.

\textup{(iii)} The integral $F(z)=\int_{a}^{b}f(z, t)\, dt, z\in\Omega$, converges uniformly at both limits in any compact set in $\Omega$. 

Then $F(z)$ is holomorphic in $\Omega$, and its derivatives of all orders may be found by differentiating under the sign of integration.
\end{theorem}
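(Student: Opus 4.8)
The plan is to deduce holomorphy from Morera's theorem and then read off the differentiation-under-the-integral formulas from Cauchy's integral formula for derivatives; in both steps the uniform-convergence hypothesis (iii) is precisely what licenses interchanging a contour integral with the (possibly improper) $t$-integral. Throughout, fix a compact set $K\subset\Omega$; by (iii) the truncated integrals $F_{c,d}(z):=\int_c^d f(z,t)\,dt$ converge to $F(z)$ uniformly on $K$ as $[c,d]\uparrow(a,b)$.

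First I would record that $F$ is continuous on $\Omega$. On any compact box $K\times[c,d]$ the function $f$ is uniformly continuous by (i), so each truncation $F_{c,d}$ is continuous on $K$; since $F$ is a uniform limit of the $F_{c,d}$ on $K$ by (iii), $F$ is continuous on $K$, and hence on all of $\Omega$. (This continuity is exactly what Morera's theorem requires below.)

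Next I would verify Morera's hypothesis: for every closed triangle $\Delta$ with $\Delta\subset\Omega$ one has $\oint_{\partial\Delta}F(z)\,dz=0$. For a fixed truncation $[c,d]$, the integrand $f$ is continuous, hence bounded, on the compact set $\partial\Delta\times[c,d]$, so Fubini's theorem gives
\begin{equation*}
\oint_{\partial\Delta}F_{c,d}(z)\,dz=\int_c^d\left(\oint_{\partial\Delta}f(z,t)\,dz\right)dt.
\end{equation*}
By (ii) each $f(\cdot,t)$ is holomorphic, so Cauchy's theorem makes the inner contour integral vanish, whence $\oint_{\partial\Delta}F_{c,d}\,dz=0$. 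Since $\partial\Delta$ is a compact subset of $\Omega$, (iii) gives $F_{c,d}\to F$ uniformly on $\partial\Delta$, so
\begin{equation*}
\oint_{\partial\Delta}F(z)\,dz=\lim_{[c,d]\uparrow(a,b)}\oint_{\partial\Delta}F_{c,d}(z)\,dz=0.
\end{equation*}
Morera's theorem, together with the continuity of $F$, then yields that $F$ is holomorphic on $\Omega$.

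Finally, for the derivative formulas I would fix $z\in\Omega$ and a positively oriented circle $C$ of radius $r$ centered at $z$ with $\overline{D(z,r)}\subset\Omega$, and apply Cauchy's formula for the $n$-th derivative, $F^{(n)}(z)=\frac{n!}{2\pi i}\oint_C\frac{F(\zeta)}{(\zeta-z)^{n+1}}\,d\zeta$. Inserting $F(\zeta)=\int_a^b f(\zeta,t)\,dt$ and repeating the truncate--Fubini--limit argument (the kernel $(\zeta-z)^{-(n+1)}$ is bounded on $C$, and uniform convergence on the compact set $C$ is again furnished by (iii)) interchanges the contour integral with the $t$-integral; a second use of Cauchy's formula applied to each $f(\cdot,t)$ then gives $F^{(n)}(z)=\int_a^b\partial_z^n f(z,t)\,dt$, as claimed. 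The one genuinely delicate point, isolated in both arguments, is the passage to the limit $[c,d]\uparrow(a,b)$ past the contour integral; this is exactly where hypothesis (iii) is indispensable, reducing every interchange to the compact-subinterval case where Fubini applies without difficulty.
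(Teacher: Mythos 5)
Your proof is correct, but note that the paper does not prove this statement at all: it is quoted, with citation, from Temme's book and is used purely as a tool, so there is no "paper proof" to compare against. Your argument — truncation to compact subintervals $[c,d]$, Fubini plus Cauchy--Goursat to verify Morera's hypothesis for each truncation $F_{c,d}$, hypothesis (iii) to pass the limit $[c,d]\uparrow(a,b)$ through contour integrals over compact contours, and Cauchy's integral formula on a circle to obtain $F^{(n)}(z)=\int_a^b \partial_z^n f(z,t)\,dt$ — is the standard proof of this classical result. An equivalent and equally common packaging is to first prove the compact-interval case (each $F_{c,d}$ is holomorphic, by the same Morera--Fubini step) and then invoke Weierstrass' theorem on locally uniform limits of holomorphic functions, which simultaneously yields holomorphy of $F$ and $F^{(n)}=\lim F_{c,d}^{(n)}$; your version simply inlines the proof of Weierstrass' theorem. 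The one point worth making explicit, which you correctly isolate, is that (iii) is only needed on compact sets (a triangle boundary, a circle), where the interchange reduces to Fubini for continuous functions on compact products; no gap remains.
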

We will be also needing the following result from Titchmarsh's text \cite[p.~25]{titchcomplex}.
\begin{theorem}\label{titchcomplexthm}
If $f(x, y)$ is continuous in the rectangle $c\leq x\leq d$, $m\leq y\leq n$, for all values of $d$, and the integral $\kappa(y):=\int_{c}^{\infty}f(x, y)\, dx$ converges uniformly with respect to $y$ in the interval $(m, n)$, then $\kappa(y)$ is a continuous function of $y$ in this interval.
\end{theorem}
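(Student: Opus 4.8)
The plan is to prove continuity of $\kappa$ at an arbitrary point $y_0\in(m,n)$ by the classical $\epsilon/3$ argument, truncating the improper integral at a large but finite upper limit. First I would introduce the partial integral $\kappa_d(y):=\int_c^d f(x,y)\,dx$ for $c\le d<\infty$. The key preliminary observation is that each $\kappa_d$ is continuous in $y$ on $[m,n]$: since $f$ is continuous on the compact rectangle $[c,d]\times[m,n]$, it is uniformly continuous there, so given $\eta>0$ one finds $\delta>0$ with $|f(x,y)-f(x,y')|<\eta$ for all $x\in[c,d]$ whenever $|y-y'|<\delta$; integrating over $[c,d]$ then yields $|\kappa_d(y)-\kappa_d(y')|\le (d-c)\eta$. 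This is the standard fact that integrating a continuous function over a fixed finite interval produces a continuous function of the parameter, and it is available precisely because the hypothesis grants continuity of $f$ on the rectangle for every finite $d$.

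Next I would bring in the hypothesis of uniform convergence. By its definition, for every $\epsilon>0$ there is an $X=X(\epsilon)>c$, \emph{independent of} $y$, such that for all $d\ge X$,
$$\bigl|\kappa(y)-\kappa_d(y)\bigr|=\Bigl|\int_d^\infty f(x,y)\,dx\Bigr|<\frac{\epsilon}{3}\qquad\text{for every }y\in(m,n).$$
Fixing one such value $d=X$, the three-term decomposition
$$|\kappa(y)-\kappa(y_0)|\le|\kappa(y)-\kappa_d(y)|+|\kappa_d(y)-\kappa_d(y_0)|+|\kappa_d(y_0)-\kappa(y_0)|$$
bounds the two outer terms by $\epsilon/3$ each, uniformly in $y$, leaving only the middle term to control.

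Finally, the continuity of the finite integral $\kappa_d$ at $y_0$, established in the first step, supplies a $\delta>0$ such that $|y-y_0|<\delta$ forces the middle term below $\epsilon/3$; adding the three estimates gives $|\kappa(y)-\kappa(y_0)|<\epsilon$. Since $y_0\in(m,n)$ was arbitrary, $\kappa$ is continuous on the whole interval. The only genuinely substantive step is the uniform-continuity argument for $\kappa_d$ in the first paragraph; the rest is routine $\epsilon/3$ bookkeeping. The essential role played by the uniform-convergence hypothesis is that it makes the tail estimate independent of $y$, so that a single truncation level $d$ serves simultaneously for all parameter values — without this, the two outer terms could not both be controlled by one choice of $X$, and the argument would collapse.
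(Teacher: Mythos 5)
Your proof is correct and follows the standard approach: the paper does not prove this statement at all, but quotes it verbatim from Titchmarsh's \emph{Theory of Functions} (cited as \cite{titchcomplex}), and your $\epsilon/3$ argument — continuity of the truncated integral $\kappa_d$ on the compact rectangle, plus a single $y$-independent truncation level furnished by uniform convergence — is precisely the classical proof of that result.
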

The following result from calculus of several variables \cite[p.~87-88, Proposition 3.3 (iii)]{ghorlim} is essential too.
\begin{theorem}\label{multivariate}
For any $f:[a, b]\times[c, d]\to\mathbb{R}$, if the partial derivatives $\frac{\partial f}{\partial x}$ and $\frac{\partial f}{\partial y}$ exist and one of them is bounded on $[a, b]\times[c, d]$, then $f$ is continuous on $[a, b]\times[c, d]$.
\end{theorem}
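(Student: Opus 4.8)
The plan is to establish continuity at an arbitrary point $(x_0,y_0)\in[a,b]\times[c,d]$ by splitting the increment of $f$ into a horizontal part and a vertical part, controlling the horizontal part through the bounded partial derivative together with the one-variable mean value theorem, and the vertical part through the continuity of a single slice. Without loss of generality I would take $\frac{\partial f}{\partial x}$ to be the bounded partial derivative, say $\left|\frac{\partial f}{\partial x}\right|\le M$ throughout the rectangle.

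First I would write, for any $(x,y)$ in the rectangle,
\begin{align*}
f(x,y)-f(x_0,y_0)=\bigl(f(x,y)-f(x_0,y)\bigr)+\bigl(f(x_0,y)-f(x_0,y_0)\bigr).
\end{align*}
For the first bracket, fix $y$ and consider the one-variable function $t\mapsto f(t,y)$ on the segment joining $x_0$ and $x$; since the rectangle is convex this segment lies inside it, and the assumed existence of $\frac{\partial f}{\partial x}$ makes the function differentiable there. The mean value theorem then produces a $\xi$ between $x_0$ and $x$ with $f(x,y)-f(x_0,y)=\frac{\partial f}{\partial x}(\xi,y)(x-x_0)$, so that $|f(x,y)-f(x_0,y)|\le M|x-x_0|$. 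The essential feature is that this bound is \emph{uniform} in $y$.

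For the second bracket I would use that the vertical slice $y\mapsto f(x_0,y)$ is differentiable, its derivative being $\frac{\partial f}{\partial y}(x_0,y)$, which exists by hypothesis; a differentiable function of one real variable is continuous, so $f(x_0,\cdot)$ is continuous at $y_0$. Hence, given $\e>0$, there is $\delta_2>0$ with $|f(x_0,y)-f(x_0,y_0)|<\e/2$ whenever $|y-y_0|<\delta_2$. Taking $\delta_1=\e/(2M)$ (and $\delta_1$ arbitrary when $M=0$) and $\delta=\min(\delta_1,\delta_2)$, the triangle inequality yields $|f(x,y)-f(x_0,y_0)|<\e$ whenever $|x-x_0|<\delta$ and $|y-y_0|<\delta$. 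As $(x_0,y_0)$ was arbitrary, $f$ is continuous on $[a,b]\times[c,d]$.

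I do not anticipate a genuine obstacle here; the one subtlety worth flagging is that the argument is deliberately asymmetric, and the boundedness hypothesis enters at exactly one place — to make the mean value estimate on the horizontal increment uniform in $y$. That uniformity is precisely what permits combining it with the merely pointwise continuity of a single vertical slice, and it cannot be dispensed with: the familiar example $f(x,y)=xy/(x^2+y^2)$ with $f(0,0)=0$ has both partial derivatives existing everywhere yet is discontinuous at the origin.
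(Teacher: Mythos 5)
Your proof is correct. The paper does not actually prove this statement—it quotes it as a known result from the textbook of Ghorpade and Limaye (Proposition 3.3(iii) there)—and your argument, splitting the increment into a horizontal part controlled by the mean value theorem with the bounded partial derivative (uniformly in $y$) and a vertical part controlled by continuity of the single slice $y\mapsto f(x_0,y)$, is precisely the standard proof given in that reference, so there is nothing to compare beyond noting the agreement.
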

Leibnitz's rule for differentiating under the integral sign \cite[p.~337--339]{lang1} is given below.
\begin{theorem}\label{lang}
Let $f$ be a function of two variables $(v,x)$ defined for $v\geq a$ and $x$ in some interval $J=[c,d],\ c<d$. Assume that $\frac{\partial f}{\partial x}$ exists, and that both $f$ and $\frac{\partial f}{\partial x}$ are continuous.  Assume that there are functions $\phi(v)$ and $\psi(v)$ which are $\geq 0$, such that 
	 $|f(v,x)|\leq \phi(v)$ and $|\frac{\partial }{\partial x}f(v,x)|\leq\psi(v)$,
for all $v,x$, and such that the integrals
\begin{align*}
\int_{a}^{\infty}\phi(v)dv\qquad \textup{and}\qquad \int_{a}^{\infty}\psi(v)dv
\end{align*}
converge. Then $\displaystyle\int_{a}^{\infty}f(v,x)dv$ is differentiable, and  
\begin{align*}
\frac{d}{dx}\int_{a}^{\infty}f(v,x)dv=\int_{a}^{\infty}\frac{\partial}{\partial x}f(v,x)dv.
\end{align*}
\end{theorem}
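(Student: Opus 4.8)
The plan is to prove the identity by showing that its two sides are two different evaluations of one and the same Mellin--Barnes integral. Concretely, I will show that both $\zeta_w(s,a+1)$ and the double integral on the right equal
\begin{equation*}
\frac{2e^{-\frac{w^2}{4}}}{\Gamma(s)}\,\frac{1}{2\pi i}\int_{(c)}\Gamma(2\xi)\Gamma(s-2\xi)\zeta(s-2\xi)\,a^{-2\xi}\,{}_1F_1\!\left(\tfrac12+\xi;\tfrac32;\tfrac{w^2}{4}\right){}_1F_1\!\left(1-\tfrac{s}{2}+\xi;\tfrac32;\tfrac{w^2}{4}\right)d\xi,
\end{equation*}
where the contour is taken with $0<c<\tfrac{\mathrm{Re}(s)-1}{2}$; this strip is nonempty exactly because $\mathrm{Re}(s)>1$, and $\mathrm{Re}(s)<2$ will be needed for the right-hand side.

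For the left-hand side I would start from \eqref{new zeta} with $a$ replaced by $a+1$, so that the summand carries the factor $(n^2u^2+a^2v^2)^{-s/2}$, and insert the standard Mellin--Barnes splitting $(1+X)^{-s/2}=\tfrac{1}{2\pi i\,\Gamma(s/2)}\int_{(c)}\Gamma(\xi)\Gamma(\tfrac s2-\xi)X^{-\xi}\,d\xi$ with $X=a^2v^2/(n^2u^2)$. This decouples the $u$- and $v$-integrals, each of which is evaluated in closed form by \eqref{equivv}; for the $u$-integral one first writes $\sinh(wu)=-i\sin(iwu)$ so that \eqref{equivv} applies with $b=iw$. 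Summing the resulting $n^{-s+2\xi}$ over $n$ produces $\zeta(s-2\xi)$ (legitimate for $c<\tfrac{\mathrm{Re}(s)-1}{2}$), the duplication formula \eqref{dup} collapses the four half-argument gammas into $\Gamma(2\xi)\Gamma(s-2\xi)$, and the prefactors telescope to $2/\Gamma(s)$. A single application of Kummer's transformation \eqref{kft} to the confluent function coming from the $u$-integral turns its argument $-w^2/4$ into $w^2/4$ and extracts the factor $e^{-w^2/4}$, yielding exactly the displayed integral.

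For the right-hand side I would insert the defining Mellin--Barnes integral \eqref{def} for ${}_1K_{\frac{s-1}{2},iw}(2\pi a t x)$ (using $-\tfrac{(iw)^2}{4}=\tfrac{w^2}{4}$), interchange its $\sigma$-contour with the $t$- and $x$-integrations, and evaluate the two inner integrals: $\int_1^\infty t^{\frac{s-1}{2}-\sigma}(t^2-1)^{-s/2}\,dt$ is a beta integral (its convergence near $t=1$ is where $\mathrm{Re}(s)<2$ enters, and it supplies the $\Gamma(1-\tfrac s2)$ that cancels the one in the prefactor), while $\int_0^\infty x^{\frac{s-1}{2}-\sigma}(e^{2\pi x}-1)^{-1}\,dx=\Gamma\!\left(\tfrac{s+1}{2}-\sigma\right)\zeta\!\left(\tfrac{s+1}{2}-\sigma\right)(2\pi)^{-\frac{s+1}{2}+\sigma}$. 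The change of variable $\sigma=2\xi-\tfrac{s-1}{2}$ then sends $\tfrac{s+1}{2}-\sigma$ to $s-2\xi$, turns the two ${}_1K$-gammas together with the beta-integral gammas into $\Gamma(\tfrac12+\xi)\Gamma(\xi)$ after one cancellation, and the duplication formula \eqref{dup} again gives $\Gamma(2\xi)\Gamma(s-2\xi)$; a bookkeeping of the powers of $2$, $\pi$ and $a$ (together with the Jacobian $d\sigma=2\,d\xi$) shows they all telescope, leaving precisely the displayed integral.

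The main obstacle is not the algebra, which is routine once the two reductions are set up, but the justification of the various interchanges and the verification that a single contour $(c)$ is admissible throughout. I would control the interchange of summation, the $u,v$ double integral and the vertical contour on the left, and of the $t,x$ integrals and the contour on the right, by combining the Gaussian/exponential decay of the integrands with the Stirling bound \eqref{strivert} for the gamma factors and the polynomial growth of the confluent functions along vertical lines, invoking Theorem \ref{ldcttemme} where termwise integration must be justified. One then checks that the convergence constraints---$c>0$ (from the Mellin--Barnes splitting and the beta integral), $c<\tfrac{\mathrm{Re}(s)-1}{2}$ (from the $\zeta$-series and the $x$-integral), and $\mathrm{Re}(s)<2$ (from the $t$-integral)---are simultaneously met in $1<\mathrm{Re}(s)<2$, so that the two Mellin--Barnes representations are literally the same integral.
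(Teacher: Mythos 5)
Your proposal does not address the statement at hand. The statement is Theorem \ref{lang}, the classical Leibnitz rule for differentiating an improper integral with respect to a parameter; in the paper it is a preliminary tool quoted verbatim from Lang's \emph{Undergraduate Analysis} \cite[p.~337--339]{lang1}, and it is invoked (for instance, twice inside the proof of Lemma \ref{tia}) to justify the existence of partial derivatives of integrals such as $\int_0^\infty \frac{e^{-v^2}\sin(wv)\sinh(wu)}{e^{2\pi avt/u}-1}\,dv$. What you have written instead is a Mellin--Barnes proof sketch of an entirely different result, namely Theorem \ref{relation b/w zetaw and 1Kzw}, the identity relating $\zeta_w(s,a+1)$ to the double integral of $(tx)^{\frac{s-1}{2}}\,{}_1K_{\frac{s-1}{2},iw}(2\pi atx)$ against $(t^2-1)^{-s/2}(e^{2\pi x}-1)^{-1}$. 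Whatever the merits of that sketch (its route, identifying both sides with a common contour integral, does differ from the paper's own derivation, which obtains Theorem \ref{relation b/w zetaw and 1Kzw} by transforming the triple integral of the generalized Hermite formula, Theorem \ref{hermite}, via the transformation \eqref{mrramg} and the integral representation of Theorem \ref{integralRepr}), it cannot serve as a proof of Theorem \ref{lang}: nothing in it concerns the differentiability of $x\mapsto\int_a^\infty f(v,x)\,dv$ or the interchange of $\frac{d}{dx}$ with $\int_a^\infty\,dv$, which is the entire content of the statement.

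For completeness, the missing argument is elementary and has nothing to do with zeta or Bessel functions. Set $F(x)=\int_a^\infty f(v,x)\,dv$, which converges by the bound $|f(v,x)|\leq\phi(v)$. For small $h\neq 0$, the mean value theorem gives $\frac{f(v,x+h)-f(v,x)}{h}=\frac{\partial f}{\partial x}(v,\xi_{v,h})$ for some $\xi_{v,h}$ between $x$ and $x+h$, so the integrands of the difference quotients are dominated by $\psi(v)$ uniformly in $h$. Given $\epsilon>0$, choose $B$ with $\int_B^\infty\psi(v)\,dv<\epsilon$; the tails of both $\frac{F(x+h)-F(x)}{h}$ and $\int_a^\infty\frac{\partial f}{\partial x}(v,x)\,dv$ beyond $B$ are then at most $\epsilon$ in absolute value, while on the compact rectangle $[a,B]\times J$ the uniform continuity of $\frac{\partial f}{\partial x}$ forces $\int_a^B\left|\frac{\partial f}{\partial x}(v,\xi_{v,h})-\frac{\partial f}{\partial x}(v,x)\right|dv\to 0$ as $h\to 0$. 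Hence $\frac{F(x+h)-F(x)}{h}\to\int_a^\infty\frac{\partial f}{\partial x}(v,x)\,dv$, which is the assertion of Theorem \ref{lang}. This (essentially Lang's own argument) is what a blind proof of the given statement needs to contain.
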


\section{Theory of the generalized Hurwitz zeta function $\zeta_w(s, a)$}\label{nzf}
The analytic properties of $\zeta_w(s, a)$ are established here. We first consider the simpler case $a=1$.

\subsection{The generalized Riemann zeta function $\zeta_w(s, 1)$}\label{seczws1}
\hfill\\

This subsection is devoted to proving properties of the function $\zeta_w(s, 1)$.\\

\textbf{Proof of \eqref{a=1caserz}:} Make a change of variable $u\to y/v$ in \eqref{int1} and recall the definition of $A_{w}(z)$ in \eqref{ab} so that
\begin{equation}\label{d1}
\int_{0}^{\infty}\int_{0}^{\infty}\frac{y^z}{v}e^{-(y^2+v^2)}\sin(wv)\sinh(wy)dydv=\frac{\pi w}{4}\Gamma\left(1+\frac{z}{2}\right)\mathrm{erf}\left(\frac{w}{2}\right){}_1F_1\left(1+\frac{z}{2};\frac{3}{2};\frac{w^2}{4}\right).
\end{equation}
Now switch the roles of $y$ and $v$ in the above double integral and note that 
\begin{align}\label{d2}
\int_{0}^{\infty}\int_{0}^{\infty}\frac{y^z}{v}e^{-(y^2+v^2)}\sin(wv)\sinh(wy)dydv&=\int_{0}^{\infty}\int_{0}^{\infty}\frac{v^z}{y}e^{-(v^2+y^2)}\sin(wy)\sinh(wv)dvdy\nonumber\\
&=\int_{0}^{\infty}\int_{0}^{\infty}\frac{v^z}{y}e^{-(v^2+y^2)}\sin(wy)\sinh(wv)dydv,
\end{align}
where in the last step we applied Fubini's theorem owing to the absolute convergence of the double integral. Equating the right-hand sides of \eqref{d1} and \eqref{d2}, multiplying both sides by $\frac{4}{w^2\sqrt{\pi}\G\left(1+\frac{z}{2}\right)}$, letting $z=s-1$, replacing $w$ by $iw$, and using \eqref{erferfirel}, we deduce that
\begin{align*}
\frac{4}{w^2\sqrt{\pi}\G\left(\frac{1+s}{2}\right)}\int_0^\infty\int_0^\infty\frac{v^{s-1}}{y}e^{-(v^2+y^2)}\sin(wv)\sinh(wy)dydv=\frac{\sqrt{\pi}}{w}\ \mathrm{erfi}\left(\frac{w}{2}\right){}_1F_1\left(\frac{1+s}{2};\frac{3}{2};-\frac{w^2}{4}\right).
\end{align*} 
This proves the second equality in \eqref{a=1caserz}.
\qed

\begin{remark}
From \eqref{a=1caserz} and the facts that ${}_1F_1\left(\frac{1+s}{2};\frac{3}{2};-\frac{w^2}{4}\right)$ is an entire function of $s$ and $\zeta(s)$ has analytic continuation in $\mathbb{C}\backslash\{1\}$, it is clear that $\zeta_w(s, 1)$ has analytic continuation in the whole $s$-complex plane except $s=1$ given by the right-hand side of \eqref{a=1caserz}.
\end{remark}
It is easy to obtain the functional equation for $\zeta_w(s, 1)$. It is given in the following theorem.
\begin{theorem}
For $s, w\in\mathbb{C}$,
\begin{equation}\label{fezws1}
\zeta_w(1-s, 1)=2e^{-\frac{w^2}{4}}\frac{\textup{erfi}\left(\frac{w}{2}\right)}{\textup{erf}\left(\frac{w}{2}\right)}\frac{\G(s)}{(2\pi)^{s}}\cos\left(\frac{\pi s}{2}\right)\zeta_{iw}(s, 1).
\end{equation}
\end{theorem}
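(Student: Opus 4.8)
The plan is to reduce the whole statement to the explicit closed form \eqref{a=1caserz}. By the remark following \eqref{a=1caserz}, that formula expresses $\zeta_w(s,1)$ as an elementary prefactor times $\zeta(s)$ throughout $\mathbb{C}\setminus\{1\}$, so both sides of \eqref{fezws1} are meromorphic in $s$ and it suffices to verify the identity as an algebraic consequence of \eqref{a=1caserz}, Kummer's transformation \eqref{kft}, the error-function relation \eqref{erferfirel}, and the asymmetric functional equation \eqref{zetafealt} of $\zeta(s)$.

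First I would write out the two sides. Replacing $s$ by $1-s$ in \eqref{a=1caserz} and simplifying the first hypergeometric parameter $\tfrac{1+(1-s)}{2}=1-\tfrac{s}{2}$ gives
\begin{equation*}
\zeta_w(1-s,1)=\frac{\sqrt{\pi}}{w}\,\mathrm{erfi}\left(\frac{w}{2}\right){}_1F_1\left(1-\frac{s}{2};\frac{3}{2};-\frac{w^2}{4}\right)\zeta(1-s).
\end{equation*}
The right-hand side requires evaluating $\zeta_{iw}(s,1)$ from \eqref{a=1caserz}. Two elementary facts are needed: since $(iw)^2=-w^2$, the hypergeometric argument becomes $-\tfrac{(iw)^2}{4}=\tfrac{w^2}{4}$, and from \eqref{erferfirel} (equivalently $\mathrm{erfi}(w)=-i\,\mathrm{erf}(iw)$) one gets, after $w\mapsto iw$ and the oddness of $\mathrm{erf}$, that $\mathrm{erfi}\!\left(\tfrac{iw}{2}\right)=i\,\mathrm{erf}\!\left(\tfrac{w}{2}\right)$. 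Hence
\begin{equation*}
\zeta_{iw}(s,1)=\frac{\sqrt{\pi}}{w}\,\mathrm{erf}\left(\frac{w}{2}\right){}_1F_1\left(\frac{1+s}{2};\frac{3}{2};\frac{w^2}{4}\right)\zeta(s).
\end{equation*}

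Next I would apply Kummer's transformation \eqref{kft} to the confluent hypergeometric function in $\zeta_{iw}(s,1)$: with $a=\tfrac{1+s}{2}$, $c=\tfrac{3}{2}$ one has $c-a=1-\tfrac{s}{2}$, so ${}_1F_1\!\left(\tfrac{1+s}{2};\tfrac{3}{2};\tfrac{w^2}{4}\right)=e^{w^2/4}{}_1F_1\!\left(1-\tfrac{s}{2};\tfrac{3}{2};-\tfrac{w^2}{4}\right)$, which is precisely the hypergeometric factor appearing in $\zeta_w(1-s,1)$. Substituting the displayed expression for $\zeta_{iw}(s,1)$ into the right-hand side of \eqref{fezws1}, the prefactor $2e^{-w^2/4}\,\mathrm{erfi}(w/2)/\mathrm{erf}(w/2)$ cancels the $e^{w^2/4}\,\mathrm{erf}(w/2)$ produced above and restores the factor $\tfrac{\sqrt{\pi}}{w}\mathrm{erfi}(w/2)$; collecting the remaining constants as $2(2\pi)^{-s}=2^{1-s}\pi^{-s}$, the right-hand side reduces to
\begin{equation*}
\frac{\sqrt{\pi}}{w}\,\mathrm{erfi}\left(\frac{w}{2}\right){}_1F_1\left(1-\frac{s}{2};\frac{3}{2};-\frac{w^2}{4}\right)\cdot 2^{1-s}\pi^{-s}\Gamma(s)\cos\left(\frac{\pi s}{2}\right)\zeta(s).
\end{equation*}
Finally, invoking \eqref{zetafealt} in the form $\zeta(1-s)=2^{1-s}\pi^{-s}\Gamma(s)\cos(\pi s/2)\zeta(s)$ turns the last three factors into $\zeta(1-s)$, which matches the expression for $\zeta_w(1-s,1)$ displayed above, completing the proof. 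There is no serious analytic obstacle here; the only delicate point is the careful bookkeeping of the imaginary-error-function identity $\mathrm{erfi}(iw/2)=i\,\mathrm{erf}(w/2)$ and verifying that the Kummer-transformed hypergeometric in $\zeta_{iw}(s,1)$ matches the one in $\zeta_w(1-s,1)$ exactly.
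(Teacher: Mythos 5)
Your proof is correct and follows essentially the same route as the paper's: both rest on \eqref{a=1caserz}, Kummer's transformation \eqref{kft}, the error-function relation \eqref{erferfirel}, and the asymmetric functional equation \eqref{zetafealt}, the only difference being that you expand $\zeta_{iw}(s,1)$ explicitly and match the two sides, whereas the paper transforms the left-hand side directly into the right-hand side. Your appeal to meromorphy to cover the exceptional points plays the same role as the paper's explicit remark that the pole of $\Gamma(s)$ at $s=0$ matches that of $\zeta_w(1-s,1)$ and that the zero of $\cos\left(\frac{\pi s}{2}\right)$ at $s=1$ cancels the pole of $\zeta_{iw}(s,1)$.
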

\begin{proof}
First let $s\neq0, 1$. Since \eqref{a=1caserz} holds for all $s\in\mathbb{C}\backslash\{1\}$,
\begin{align*}
\zeta_w(1-s,1)&=\frac{\sqrt{\pi}}{w}\ \mathrm{erfi}\left(\frac{w}{2}\right){}_1F_1\left(1-\frac{s}{2};\frac{3}{2};-\frac{w^2}{4}\right)\zeta(1-s)\nonumber\\
&=e^{-\frac{w^2}{4}}\frac{\sqrt{\pi}}{w}\ \mathrm{erfi}\left(\frac{w}{2}\right){}_1F_1\left(\frac{1+s}{2};\frac{3}{2};\frac{w^2}{4}\right)2^{1-s}\pi^{-s}\G(s)\cos\left(\frac{\pi s}{2}\right)\zeta(s)\nonumber\\
&=2e^{-\frac{w^2}{4}}\frac{\textup{erfi}\left(\frac{w}{2}\right)}{\textup{erf}\left(\frac{w}{2}\right)}\frac{\G(s)}{(2\pi)^{s}}\cos\left(\frac{\pi s}{2}\right)\zeta_{iw}(s, 1),
\end{align*}
where in the second step, we used \eqref{zetafealt} as well as \eqref{kft}. Now both sides of \eqref{fezws1} have simple pole at $s=0$ (due to $\zeta_w(s, 1)$ and $\G(s)$). Also, even though $\zeta_{iw}(s, 1)$ has a pole at $s=1$, $\cos\left(\frac{\pi s}{2}\right)$ has a zero there. Hence we conclude that $\eqref{fezws1}$ holds for all $s\in\mathbb{C}$.
\end{proof}
\subsection{The generalized Hurwitz zeta function $\zeta_w(s, a)$}\label{seczwsa}
\hfill\\

We begin this subsection by proving Theorem \ref{zwsares1}. However, we first need a lemma.
\begin{lemma}\label{double integral is analytic}
Let $w\in\mathbb{C}, n\in\mathbb{N}$ and $a\in\mathbb{R}$. Then the integral 
\begin{align*}
\int_0^\infty\int_0^\infty\frac{(uv)^{s-1}e^{-(u^2+v^2)}\sin(wv)\sinh(wu)}{(n^2u^2+(a-1)^2v^2)^{s/2}}\ dudv
\end{align*}
is analytic for $\mathrm{Re}(s)>1$.
\end{lemma}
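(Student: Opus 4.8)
The plan is to show that the double integral defines an analytic function of $s$ on $\mathrm{Re}(s)>1$ by applying the holomorphy-under-the-integral-sign machinery, namely Theorem \ref{ldcttemme1}, twice: once for the inner integral in $v$, and once for the resulting outer integral in $u$. The integrand, call it
\begin{align*}
f(s,u,v):=\frac{(uv)^{s-1}e^{-(u^2+v^2)}\sin(wv)\sinh(wu)}{(n^2u^2+(a-1)^2v^2)^{s/2}},
\end{align*}
is, for each fixed $(u,v)\in(0,\infty)^2$, an entire function of $s$ (the factor $(n^2u^2+(a-1)^2v^2)^{-s/2}$ is entire in $s$ because the base is a positive real number, so there is no branch ambiguity), and it is jointly continuous in all three variables. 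Thus hypotheses (i) and (ii) of Theorem \ref{ldcttemme1} are immediate, and the whole task reduces to verifying hypothesis (iii): uniform convergence of the integral on compact subsets of the half-plane $\mathrm{Re}(s)>1$.

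First I would fix a compact set $K\subset\{\mathrm{Re}(s)>1\}$, so that $1<\sigma_0\le\mathrm{Re}(s)\le\sigma_1$ and $|\mathrm{Im}(s)|\le T$ for all $s\in K$. The key is a dominating bound independent of $s\in K$. Writing $s=\sigma+it$, the modulus of the integrand is controlled by noting $|(uv)^{s-1}|=(uv)^{\sigma-1}$ and $|(n^2u^2+(a-1)^2v^2)^{-s/2}|=(n^2u^2+(a-1)^2v^2)^{-\sigma/2}$, while $|\sin(wv)|$ and $|\sinh(wu)|$ grow at most like $e^{|w|v}$ and $e^{|w|u}$, which are dwarfed by the Gaussian factor $e^{-(u^2+v^2)}$. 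The essential point is to handle the denominator: since $n\ge 1$ one has $(n^2u^2+(a-1)^2v^2)^{\sigma/2}\ge (n u)^{\sigma}\ge u^{\sigma}$ when $\sigma>0$ (using the trivial bound from the first term alone), which tames the singularity at $u=0$ but must be combined carefully with the $u^{\sigma-1}$ in the numerator. To get an integrable majorant near the origin and at infinity simultaneously, I would split $(0,\infty)^2$ into regions and in each use the elementary bound $\frac{|\sinh(wu)|}{u}\le C_w e^{|w|u}$ (since $\sinh(wu)/u\to w$ as $u\to0$) together with $|\sin(wv)|\le C_w e^{|w|v}$, producing a dominating function of the product form $g(u)h(v)$ with $g,h\in L^1(0,\infty)$, uniformly over $s\in K$.

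The main obstacle, and the step requiring genuine care, is the behavior near $u=0$: the numerator contributes $u^{\sigma-1}$ and the factor $\sinh(wu)$ contributes an extra $u$, giving $u^{\sigma}$, but the denominator's lower bound $u^{-\sigma}$ exactly cancels this, potentially leaving a non-integrable constant unless one exploits the full denominator $n^2u^2+(a-1)^2v^2$ rather than just its $u$-part. When $a\ne 1$, the term $(a-1)^2v^2$ keeps the denominator bounded away from zero for fixed $v>0$, so the $u$-integral near $0$ behaves like $\int_0 u^{\sigma}\,du$, which converges for $\sigma>-1$ and hence certainly for $\sigma>1$; one then integrates in $v$ against the Gaussian. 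The borderline case $a=1$ is where the denominator degenerates to $(nu)^{\sigma}$ and the hypothesis $\mathrm{Re}(s)>1$ is genuinely needed, matching the computation of $\zeta_w(s,1)$ in \eqref{a=1caserz}. Once a single $s$-independent integrable majorant $\Phi(u,v)$ is secured on $(0,\infty)^2$, uniform convergence at both limits of each iterated integral follows from the tail estimate $\int\!\!\int_{\text{tails}}\Phi\to 0$, Theorem \ref{ldcttemme1} applies to the inner $v$-integral to yield a function holomorphic in $s$, and a second application to the outer $u$-integral (whose integrand inherits joint continuity from Theorem \ref{titchcomplexthm} and holomorphy from the first step) completes the proof that the full double integral is analytic on $\mathrm{Re}(s)>1$.
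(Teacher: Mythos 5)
Your proposal follows essentially the same route as the paper's own proof: two applications of Theorem \ref{ldcttemme1} (one per iterated integral, with Theorem \ref{titchcomplexthm} supplying the joint continuity of the partially integrated function), the whole burden lying in the uniform convergence on compact subsets of $\mathrm{Re}(s)>1$, which both you and the paper extract from the crude bound $(n^2u^2+(a-1)^2v^2)^{\sigma/2}\geq (nu)^{\sigma}$ combined with the Gaussian decay and $|\sinh(wu)|/u=O_w(e^{|w|u})$. One harmless misstatement: the cancellation you worry about near $u=0$ leaves a \emph{bounded}, hence locally integrable, integrand, so no case distinction between $a=1$ and $a\neq 1$ is actually needed and $\mathrm{Re}(s)>1$ is not what rescues the $a=1$ case (the same product majorant works for every real $a$, and indeed for $\sigma$ well below $1$); this does not affect the validity of your argument under the lemma's hypothesis.
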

\begin{proof}
This is achieved by repeatedly employing Theorem \ref{ldcttemme1}. Let 
\begin{align*}
f_v(s,u):=\frac{u^{s-1}e^{-u^2}\sinh(wu)}{(n^2u^2+(a-1)^2v^2)^{s/2}}.
\end{align*}
We first show that $\int_{0}^{\infty}f_v(s,u)\ du$ is analytic for $\mathrm{Re}(s)>1$. It is easy to see that $f_v(s,u)$ is a continuous function in both $s$ and $u$ and also an analytic function of $s$ in $\mathrm{Re}(s)>1$. Now consider a compact subset of $\mathrm{Re}(s)>1$ given by
\begin{equation*}
\Omega:=\{s\in\mathbb{C}: 1<\theta_1\leq\textup{Re}(s)\leq\theta_2, \gamma_1\leq\textup{Im}(s)\leq\gamma_2\}.
\end{equation*}
We next show that $\int_0^\infty f_v(s,u)\ du$ converges uniformly at both the limits on compact subsets $\Omega$ of $\mathrm{Re}(s)>1$. Let $\epsilon>0$ be given. Let $0<A<B<1$. Then
\begin{align*}
\left|\int_A^Bf_v(s,u)\ du\right|&\leq \int_A^B \frac{u^{\mathrm{Re}(s)}e^{-u^2}}{(nu)^{\mathrm{Re}(s)}}\frac{|\sinh(wu)|}{u}\ du\nonumber\\
&\leq \frac{\sinh(|w|)}{n^{\theta_1}}\int_A^Be^{-u^2}\ du \nonumber\\
&=\frac{\sinh(|w|)}{n^{\theta_1}}\frac{\sqrt{\pi}}{2}(\mathrm{erf}(B)-\mathrm{erf}(A)),
\end{align*}
for all $s\in\Omega$. Thus we can choose $\lambda,\ 0<\lambda<1$, such that $\frac{\sqrt{\pi}\sinh(|w|)}{2n^{\theta_1}}(\mathrm{erf}(B)-\mathrm{erf}(A))<\epsilon$ for $|A-B|<\lambda$. The uniform convergence at the upper limit can be similarly observed.
 %To prove the uniform convergence in upper limit observe that $e^{-u^2/2}\sinh(wu)u^{-1}$ is continuous in $[1,\infty)$ and converge to $0$ as $u\rightarrow 0$, there is a constant $c_w$ such that $|e^{-u^2/2}\sinh(wu)u^{-1}|<c_w$ for all $u\geq1$. This implies that, for $B>A>1$,
%\begin{align*}
%\left|\int_A^Bf_v(s,u)\ du\right|&\leq \frac{c_w}{n^{\theta_1}}\int_A^Be^{-u^2/2}\ du \nonumber\\
%&=\frac{c_w}{n^{\theta_1}}\frac{\sqrt{\pi}}{2}\left(\mathrm{erf}\left(\frac{B}{2}\right)-\mathrm{erf}\left(\frac{A}{2}\right)\right),
%\end{align*}
%Again, for any $\epsilon>0$ there exists $\lambda_1>1$ such that $\left|\frac{c_w}{n^{\theta_1}}\frac{\sqrt{\pi}}{2}\left(\mathrm{erf}\left(\frac{B}{2}\right)-\mathrm{erf}\left(\frac{A}{2}\right)\right)\right|<\epsilon$ whenever $B>A>\lambda_1$. 
Therefore by Theorem \ref{ldcttemme1}, integral $\int_0^\infty f_v(s,u)\ du$ is an analytic function of $s$ in $\mathrm{Re}(s)>1$.

Now we will show $\int_0^\infty\int_0^\infty v^{s-1}e^{-v^2}\sin(wv)f_v(s,u)\ dudv$ is analytic in Re$(s)>1$.

It is clear that $\int_0^\infty v^{s-1}e^{-v^2}\sin(wv)f_v(s,u)\ du$ is continuous in $s$ from what we just proved. To show the continuity of this integral in $v$ on $(0, \infty)$ as well, we make use of the Theorem \ref{titchcomplexthm}. It is easy to see that $v^{s-1}e^{-v^2}\sin(wv)f_v(s,u)$ is continuous on $0\leq u\leq \kappa,\ 0\leq v\leq\delta$. Thus we need only show $\int_0^\infty v^{s-1}e^{-v^2}\sin(wv)f_v(s,u)\ du$ is uniformly convergent in $v$ on $(0,\delta)$. Let $\epsilon>0$. We find an $X_0$ depending only on $\epsilon$ such that for all $v\in(0,\delta)$ and $X\geq X_0$,
\begin{align*}
\left|\int_0^\infty v^{s-1}e^{-v^2}\sin(wv)f_v(s,u)\ du-\int_0^X v^{s-1}e^{-v^2}\sin(wv)f_v(s,u)\ du\right|<\epsilon.
\end{align*}
To that end,
\begin{align*}
\left|\int_X^\infty v^{s-1}e^{-v^2}\sin(wv)f_v(s,u)\ du\right|&\leq e^{-v^2}v^{\mathrm{Re}(s)}\frac{|\sin(wv)|}{v}\int_X^\infty \frac{u^{\mathrm{Re}(s)-1}e^{-u^2}|\sin(wu)|}{(n^2u^2)^{\mathrm{Re}(s)/2}}\ du\nonumber\\
&\leq \frac{\delta^{\mathrm{Re}(s)-1}}{n^{\mathrm{Re}(s)}}\sinh(|w|\delta)\int_X^\infty u^{-1}e^{-u^2}|\sin(wu)|\ du, 
\end{align*}
for all $v\in(0,\delta)$ and $\mathrm{Re}(s)>1$. Note that we can always find an $X_0$ large enough such that for $X\geq X_0$, we have
\begin{align*}
\int_X^\infty u^{-1}e^{-u^2}|\sin(wu)|\ du<\epsilon.\frac{n^{\mathrm{Re}(s)}\sinh(|w|\delta)}{\delta^{\mathrm{Re}(s)-1}}.
\end{align*} 
Therefore the integral $\int_0^\infty v^{s-1}e^{-v^2}\sin(wv)f_v(s,u)\ du$ is uniformly convergent on $v\in(0,\delta)$. Hence by Theorem \ref{titchcomplexthm}, this integral is continuous in $v$ on $(0,\delta)$ for every $\delta>0$, and hence on $(0,\infty)$. It is easy to see that condition (ii) of Theorem \ref{ldcttemme1} is satisfied for $\mathrm{Re}(s)>1$. To show that the condition (iii) of Theorem \ref{ldcttemme1} is also satisfied, let $\epsilon>0$. Let $0<A<B<1$. Then 
\begin{align*}
&\left|\int_A^B\int_0^\infty v^{s-1}e^{-v^2}\sin(wv)f_v(s,u)\ dudv\right| \nonumber\\
&\leq\int_A^B \int_0^\infty \frac{(uv)^{\mathrm{Re}(s)-1}e^{-(u^2+v^2)}|\sinh(wu)\sin(wv)|}{|n^2u^2+(a-1)^2v^2|^{\textup{Re}(s)/2}}\ dudv\nonumber\\
&\leq\int_A^B v^{\mathrm{Re}(s)-1}e^{-v^2}|\sin(wv)|\int_{0}^{\infty}e^{-u^2}\frac{|\sinh(wu)|}{u}\ dudv\nonumber\\
&\leq C_{w}\int_A^B v^{\mathrm{Re}(\theta_1)}\ dv\nonumber\\
&\leq C_{w}\frac{(B^{\theta_1+1}-A^{\theta_1+1})}{\theta_1+1},
\end{align*}
for all $s\in\Omega$, where $C_{w}$ is a constant depending on $w$. Therefore we can choose $\delta$, $0<\delta<1$, such that $C_{w}\frac{(B^{\theta_1+1}-A^{\theta_1+1})}{\theta_1+1}<\epsilon$ for $|B-A|<\delta$. The uniform convergence at the upper limit can be similarly observed. Therefore by another application of Theorem \ref{ldcttemme1}, we see that the double integral $\int_0^\infty\int_0^\infty\frac{(uv)^{s-1}e^{-(u^2+v^2)}\sin(wv)\sinh(wu)}{(n^2u^2+(a-1)^2v^2)^{s/2}}\ dudv$ is analytic in Re$(s)>1$.
\end{proof}
\begin{proof}[Theorem \textup{\ref{zwsares1}}][]
As stated in the introduction, it is easy to see that
\begin{equation*}
\displaystyle\int_0^\infty\int_0^\infty\frac{(uv)^{s-1}e^{-(u^2+v^2)}\sin(wv)\sinh(wu)}{(n^2u^2+(a-1)^2v^2)^{s/2}}\ dudv=O_{w, a}\left(n^{-\textup{Re}(s)}\right)
\end{equation*}
as $n\to\infty$. Hence the series in \eqref{new zeta} converges uniformly on Re$(s)>1$. Since Lemma \ref{double integral is analytic} implies that the above double integral is analytic in Re$(s)>1$, by Weierstrass' theorem on analytic functions, we conclude that $\zeta_w(s, a)$ is analytic in Re$(s)>1$.
\end{proof}
Next, we prove a lemma which is crucial in proving not only \eqref{a=1caserz} but also Theorems \ref{lse} and \ref{xiintgenramhurthm}.
\begin{lemma}\label{int}
Let $A_w(z)$ be defined in \eqref{ab}. For $\mathrm{Re}(z)>-2$ and $ w\in\mathbb{C}$, we have
\begin{align}\label{int1}
\int_0^\infty\int_0^\infty u^z v^z e^{-v^2(u^2+1)}\sin(wv)\sinh(wuv)dudv=\frac{\sqrt{\pi} w^2}{4}\Gamma\left(1+\frac{z}{2}\right)A_w(z),
%=\frac{\pi w}{4}\Gamma\left(1+\frac{z}{2}\right)\mathrm{erf}\left(\frac{w}{2}\right){}_1F_1\left(1+\frac{z}{2};\frac{3}{2};\frac{w^2}{4}\right).
\end{align}
and, for $\textup{Re}(z)>-1$ and $w\in\mathbb{C}$,
\begin{align}\label{int2}
\int_0^\infty\int_0^\infty u^{z-1} v^z e^{-v^2(u^2+1)}\sin(wv)\sinh(wuv)dudv=\frac{w^2}{4}\Gamma\left(\frac{1+z}{2}\right)A_{iw}(-z).
%\frac{w\sqrt{\pi}}{4}\Gamma\left(\frac{1+z}{2}\right)\mathrm{erfi}\left(\frac{w}{2}\right){}_1F_1\left(1-\frac{z}{2};\frac{3}{2};\frac{-w^2}{4}\right).
\end{align}
\end{lemma}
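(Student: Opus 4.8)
The plan is to turn each double integral into an iterated one, evaluate the inner integral over $u$ and then the outer integral over $v$ by means of the inverse Mellin transform \eqref{equivv}, and finally match the resulting product of two confluent hypergeometric functions against the two equivalent forms of $A_w(z)$ recorded in \eqref{ab} by invoking Kummer's transformation \eqref{kft}. I would prove \eqref{int1} in detail; \eqref{int2} comes out by the same computation with the exponent of $u$ lowered by one.

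\emph{Absolute convergence.} First I would verify that the double integral converges absolutely, so that the order of integration may be reversed. As $u,v\to 0^+$ one has $\sin(wv)=O(v)$ and $\sinh(wuv)=O(uv)$, so the integrand of \eqref{int1} is $O\big(u^{\mathrm{Re}(z)+1}v^{\mathrm{Re}(z)+2}\big)$, which is integrable near the origin precisely when $\mathrm{Re}(z)>-2$; for \eqref{int2} the same reasoning gives $O\big(u^{\mathrm{Re}(z)}v^{\mathrm{Re}(z)+2}\big)$ and hence the condition $\mathrm{Re}(z)>-1$. Away from the origin the Gaussian factor $e^{-v^2u^2}$ overwhelms the exponential growth of $\sinh(wuv)$, so splitting the quadrant into a neighborhood of the origin and its complement establishes absolute convergence under the stated hypotheses, and Fubini's theorem permits integrating over $u$ first.

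\emph{The inner integral.} Fixing $v>0$ and writing $\sinh(wuv)=\tfrac1i\sin(iwvu)$, I would apply \eqref{equivv} with $s=z+1$, $a=v^2$ and $b=iwv$. Since for fixed $a$ with $\mathrm{Re}(a)>0$ and $s$ with $\mathrm{Re}(s)>-1$ both sides of \eqref{equivv} are entire in $b$, the formula persists at the complex value $b=iwv$ by analytic continuation. Noting $e^{-b^2/(4a)}=e^{w^2/4}$ and $\Gamma\big(\tfrac{s+1}{2}\big)=\Gamma\big(1+\tfrac z2\big)$, this yields, for $\mathrm{Re}(z)>-2$,
\begin{equation*}
\int_0^\infty u^z e^{-v^2u^2}\sinh(wuv)\,du=\frac{w}{2}\,v^{-z-1}e^{\frac{w^2}{4}}\Gamma\Big(1+\tfrac z2\Big)\,{}_1F_1\Big(\tfrac{1-z}{2};\tfrac32;-\tfrac{w^2}{4}\Big).
\end{equation*}

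\emph{The outer integral and Kummer's transformation.} Substituting this back, the factor $v^z$ cancels $v^{-z-1}$ to leave $v^{-1}$, and the remaining integral $\int_0^\infty v^{-1}e^{-v^2}\sin(wv)\,dv$ is again given by \eqref{equivv}, now with $s=0$, $a=1$, $b=w$, equalling $\tfrac{w}{2}\sqrt{\pi}\,e^{-w^2/4}\,{}_1F_1(1;\tfrac32;\tfrac{w^2}{4})$. Multiplying the two evaluations produces
\begin{equation*}
\frac{\sqrt\pi\,w^2}{4}\,\Gamma\Big(1+\tfrac z2\Big)\,{}_1F_1\Big(1;\tfrac32;\tfrac{w^2}{4}\Big)\,{}_1F_1\Big(\tfrac{1-z}{2};\tfrac32;-\tfrac{w^2}{4}\Big).
\end{equation*}
Applying \eqref{kft} in the form ${}_1F_1(1+\tfrac z2;\tfrac32;\tfrac{w^2}{4})=e^{w^2/4}{}_1F_1(\tfrac{1-z}{2};\tfrac32;-\tfrac{w^2}{4})$ turns the product of the two ${}_1F_1$'s into $A_w(z)$ via the second form in \eqref{ab}, which is \eqref{int1}. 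For \eqref{int2} one takes the inner integral with $s=z$ (whence $\mathrm{Re}(z)>-1$), the power $v^z$ now cancels completely, the outer integral $\int_0^\infty e^{-v^2}\sin(wv)\,dv$ is \eqref{equivv} with $s=1$, and a single application of \eqref{kft} identifies the answer with $\tfrac{w^2}{4}\Gamma\big(\tfrac{1+z}{2}\big)A_{iw}(-z)$. The only genuinely delicate points are the absolute-convergence argument justifying Fubini and the analytic-continuation step legitimizing \eqref{equivv} at the complex parameter $b=iwv$; everything else is routine bookkeeping.
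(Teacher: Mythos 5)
Your proposal is correct and takes essentially the same route as the paper's own proof: both evaluate the inner $u$-integral via \eqref{equivv} with the complex parameter $b=iwv$, then the outer $v$-integral via \eqref{equivv} again, and identify the resulting product of confluent hypergeometric functions with $A_w(z)$ (resp.\ $A_{iw}(-z)$) using Kummer's transformation \eqref{kft}. The only differences are cosmetic: you defer Kummer's transformation to the final step (the paper applies it immediately after the inner integral) and you add an absolute-convergence justification that the paper omits since it computes the iterated integral directly in the stated order.
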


\begin{proof}
%From \cite[p. 503, Equation (3.952.7)]{grn}, for $\mathrm{Re}(\beta)>0$ and  $\mathrm{Re}(\mu)>-1$,
%\begin{align}\label{prud1}
%\int_0^\infty x^{\mu-1}e^{-\beta x^2}\sin(\gamma x)dx=\frac{\gamma e^{-\frac{\gamma^2}{4\beta}}}{2\beta^{\frac{\mu+1}{2}}}\Gamma\left(\frac{1+\mu}{2}\right){}_1F_1\left(1-\frac{\mu}{2};\frac{3}{2};\frac{\gamma^2}{4\beta}\right).
%\end{align}
%From \cite[p. 390, Equation (3.562.1)]{grn}, for $\mathrm{Re}(\mu)>-\frac{1}{2}, \mathrm{Re}(\beta)>0$, we have
%\begin{align}\label{prud1}
%\int_0^\infty x^{2\mu-1}e^{-\beta x^2}\sinh(\gamma x)dx=\frac{1}{2}\Gamma(2\mu)(2\beta)^{-\mu}e^{\frac{\gamma^2}{8\beta}}\left[D_{-2\mu}\left(-\frac{\gamma}{\sqrt{2\beta}}\right)-D_{-2\mu}\left(\frac{\gamma}{\sqrt{2\beta}}\right)\right],
%\end{align}
%where $D_p(z)$ is the parabolic cylinder function defined by \cite[p. 1028, Equation (9.240)]{grn}
%\begin{align}\label{prud2}
%D_p(z):=2^{\frac{p}{2}}e^{-\frac{z^2}{4}}\left\{\frac{\sqrt{\pi}}{\Gamma\left(\frac{1-p}{2}\right)}{}_1F_1\left(-\frac{p}{2};\frac{1}{2};\frac{z^2}{2}\right)-\frac{\sqrt{2\pi}z}{\Gamma\left(\frac{-p}{2}\right)}{}_1F_1\left(\frac{1-p}{2};\frac{3}{2};\frac{z^2}{2}\right)\right\}.
%\end{align}
Let $s=z+1, a=v^2$, and $b=iwv$ in \eqref{equivv} so that for Re$(z)>-2$ and Re$(v^2)>0$,
\begin{align}\label{prud2}
\int_0^\infty u^ze^{-u^2v^2}\sinh(wuv)du
%&=\frac{1}{2}\Gamma(z+1)(2v^2)^{\frac{-z-1}{2}}e^{\frac{w^2}{8}}\left[D_{-z-1}\left(-\frac{w}{\sqrt{2}}\right)-D_{-z-1}\left(\frac{w}{\sqrt{2}}\right)\right]\\
%&=\frac{1}{2}\Gamma(z+1)(2v^2)^{\frac{-z-1}{2}}e^{\frac{w^2}{8}}\Bigg[2^{\frac{-z-1}{2}}e^{-\frac{w^2}{8}}\Bigg(\frac{\sqrt{\pi}}{\Gamma\left(\frac{z+2}{2}\right)}{}_1F_1\left(\frac{1+z}{2};\frac{1}{2};\frac{w^2}{4}\right)\\
%&\quad -\frac{\sqrt{2\pi}}{\Gamma\left(\frac{1+z}{2}\right)}\left(-\frac{w}{\sqrt{2}}\right){}_1F_1\left(\frac{2+z}{2};\frac{3}{2};\frac{w^2}{4}\right)\Bigg)-2^{\frac{-z-1}{2}}e^{-\frac{w^2}{8}}\Bigg(\frac{\sqrt{\pi}}{\Gamma\left(\frac{z+2}{2}\right)}{}_1F_1\left(\frac{1+z}{2};\frac{1}{2};\frac{w^2}{4}\right)\\
%&\quad -\frac{\sqrt{2\pi}}{\Gamma\left(\frac{1+z}{2}\right)}\left(\frac{w}{\sqrt{2}}\right){}_1F_1\left(\frac{2+z}{2};\frac{3}{2};\frac{w^2}{4}\right)\Bigg)\Bigg]\\
%&=\frac{1}{2}\Gamma(z+1)(2v^2)^{\frac{-z-1}{2}}2^{\frac{-z-1}{2}}2\frac{\sqrt{\pi} w}{\Gamma\left(\frac{1+z}{2}\right)}{}_1F_1\left(\frac{2+z}{2};\frac{3}{2};\frac{w^2}{4}\right)\\
=\frac{1}{2}wv^{-1-z}\Gamma\left(1+\frac{z}{2}\right){}_1F_1\left(1+\frac{z}{2};\frac{3}{2};\frac{w^2}{4}\right),
\end{align}%
where we used \eqref{kft} for simplification. Now multiply the above equation by $v^ze^{-v^2}\sin(wv)$ and integrate the resulting equation with respect to real $v$ from $0$ to $\infty$ so that
\begin{align*}
&\int_0^\infty\int_0^\infty u^z v^z e^{-v^2(u^2+1)}\sin(wv)\sinh(wuv)dudv\\
&=\frac{1}{2}w\Gamma\left(1+\frac{z}{2}\right){}_1F_1\left(1+\frac{z}{2};\frac{3}{2};\frac{w^2}{4}\right)\int_0^\infty e^{-v^2}\sin(wv)\frac{dv}{v}\\
&=\frac{\pi w}{4}\Gamma\left(1+\frac{z}{2}\right)\mathrm{erf}\left(\frac{w}{2}\right){}_1F_1\left(1+\frac{z}{2};\frac{3}{2};\frac{w^2}{4}\right),
\end{align*}
which follows from employing \eqref{equivv} with $s=0, b=w$, and $a=1$, and then using \eqref{erf}.
%as can be seen by employing the result \cite[p. 503, Equation(3.952.6)]{grn}
%\begin{align*}
%\int_0^\infty e^{-p^2x^2}\sin(ax)\frac{dx}{x}=\frac{\pi}{2}\mathrm{erf}\left(\frac{a}{2p}\right).
%\end{align*}
Equation \eqref{int2} can be similarly proved.
%, we use \eqref{prud1} with $u=\frac{z}{2},\beta=v^2$ and $\gamma=wv$ and then \eqref{prud2} so that for Re$(z)>-1$ and Re$(v^2)>0$, we have
%\begin{align*}
%\int_0^\infty u^{z-1}e^{-u^2v^2}\sinh(wuv)du
%=\frac{1}{2}wv^{-z}\Gamma\left(\frac{1+z}{2}\right){}_1F_1\left(\frac{1+z}{2};\frac{3}{2};\frac{w^2}{4}\right).
%\end{align*}
%Next, multiply the above equation by $v^ze^{-v^2}\sin(wv)$ and then integrate the resulting equation with respect to real $v$ from $0$ to $\infty$ so as to have
%\begin{align*}
%&\int_0^\infty\int_0^\infty u^{z-1} v^z e^{-v^2(u^2+1)}\sin(wv)\sinh(wuv)dudv\\
%&=\frac{1}{2}w\Gamma\left(\frac{1+z}{2}\right){}_1F_1\left(\frac{1+z}{2};\frac{3}{2};\frac{w^2}{4}\right)\int_0^\infty e^{-v^2}\sin(wv) dv\\
%&=\frac{1}{2}w\Gamma\left(\frac{1+z}{2}\right){}_1F_1\left(\frac{1+z}{2};\frac{3}{2};\frac{w^2}{4}\right)\frac{\sqrt{\pi}}{2}e^{-\frac{w^2}{4}}{}_1F_1\left(\frac{1}{2};\frac{3}{2};\frac{w^2}{4}\right)\\
%&=\frac{\sqrt{\pi}}{4}w\Gamma\left(\frac{1+z}{2}\right){}_1F_1\left(1-\frac{z}{2};\frac{3}{2};-\frac{w^2}{4}\right)\mathrm{erfi}\left(\frac{w}{2}\right),
%\end{align*}
%where in the last last step we used Kummer's formula, ${}_1F_1\left(\frac{1}{2};\frac{3}{2};\frac{w^2}{4}\right)=\frac{\sqrt{\pi}}{w}\mathrm{erfi}\left(\frac{w}{2}\right)$  and in the penultimate step used \cite[p. 503, Equation(3.952.7)]{grad} by letting $\mu=1,\beta=1$ and $\gamma=w$
\end{proof}

To prove Theorem \ref{hurwitzsc}, we begin with a lemma.
\begin{lemma}\label{hurwitz as a special case}
For $\mathrm{Re}(s)>0, n\in\mathbb{R}^{+}$ and $a\in\mathbb{C}\backslash\{\xi: \textup{Re}(\xi)=1, \textup{Im}(\xi)\neq0\}$, we have
\begin{align}\label{aaaa}
 \frac{4}{\sqrt{\pi}}\frac{1}{\Gamma\left(\frac{1+s}{2}\right)} \int_0^\infty\int_0^\infty\frac{(uv)^{s}e^{-(u^2+v^2)}}{(n^2u^2+(a-1)^2v^2)^{\frac{s}{2}}}dudv =
  \begin{cases}
   (n+a-1)^{-s} & \text{if\ $\mathrm{Re}(a)>1$}, \\
   (n-a+1)^{-s} & \text{if\ $\mathrm{Re}(a)<1$}, \\
	n^{-s} & \text{if\ $a=1$}.
  \end{cases}
\end{align}
\end{lemma}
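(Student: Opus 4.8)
The plan is to collapse the double integral to a one-dimensional integral by inserting a subordination (Gamma-function) variable, evaluate that single integral in closed form, and then pass from real $a$ to complex $a$ by analytic continuation. Throughout write $b=a-1$, so the left-hand side of \eqref{aaaa} depends on $a$ only through $b^2$; denote the double integral by $I(b)$. I first dispose of the case $a=1$ (i.e. $b=0$): there the factor $(n^2u^2)^{s/2}=n^su^s$ cancels the $u^s$, leaving the separable integrand $n^{-s}v^{s}e^{-(u^2+v^2)}$, and the resulting product of Gaussians gives $n^{-s}$ after multiplying by $4/(\sqrt\pi\,\Gamma(\tfrac{s+1}{2}))$.

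For real $a\neq1$, so $b^2>0$, I would insert $(n^2u^2+b^2v^2)^{-s/2}=\frac{1}{\Gamma(s/2)}\int_0^\infty y^{s/2-1}e^{-y(n^2u^2+b^2v^2)}\,dy$ (valid for $\mathrm{Re}(s)>0$) and interchange the order of integration, which is legitimate by Tonelli since the triple integrand is absolutely integrable for $\mathrm{Re}(s)>0$ (doing the $y$-integral first recovers the convergent integral $I$ with real exponents). The inner $u$- and $v$-integrals are then elementary Gaussians $\int_0^\infty u^{s}e^{-\alpha u^2}\,du=\tfrac12\alpha^{-(s+1)/2}\Gamma(\tfrac{s+1}{2})$, leaving $I(b)=\frac{\Gamma(\tfrac{s+1}{2})^2}{4\,\Gamma(s/2)}\,K$, where $K=\int_0^\infty y^{s/2-1}\bigl[(1+n^2y)(1+b^2y)\bigr]^{-(s+1)/2}\,dy$.

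The heart of the argument, and the step I expect to be the main obstacle, is evaluating $K$. I would first substitute $y=z/(n|b|)$ to symmetrize the quadratic, obtaining $(1+n^2y)(1+b^2y)=z^2+cz+1$ with $c=\tfrac{n}{|b|}+\tfrac{|b|}{n}$; writing $c=2\cosh\eta$, this gives $K=(n|b|)^{-s/2}\int_0^\infty z^{s/2-1}(z^2+cz+1)^{-(s+1)/2}\,dz$. Setting $z=e^{\phi}$ turns $z^2+cz+1$ into $2e^{\phi}(\cosh\phi+\cosh\eta)$ and cancels the leftover power of $z$, and a further substitution $t=\sinh(\phi/2)$ reduces the integral to the standard Beta integral $\int_0^\infty(1+t^2)^{-(s+1)/2}\,dt=\tfrac12 B(\tfrac12,\tfrac{s}{2})=\frac{\sqrt\pi\,\Gamma(s/2)}{2\,\Gamma((s+1)/2)}$. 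Using $(n+|b|)^2=n|b|(c+2)$ to repackage the constants, this yields $K=\frac{\sqrt\pi\,\Gamma(s/2)}{\Gamma((s+1)/2)}(n+|b|)^{-s}$, and hence $\frac{4}{\sqrt\pi\,\Gamma(\tfrac{s+1}{2})}I(b)=(n+|b|)^{-s}$. Since $|b|=a-1$ when $a>1$ and $|b|=1-a$ when $a<1$, this is precisely $(n+a-1)^{-s}$ and $(n-a+1)^{-s}$ respectively, which settles \eqref{aaaa} for all real $a$.

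Finally, to reach arbitrary $a\in\mathbb{C}\setminus\mathfrak{B}$, I would observe that for $a\notin\mathfrak{B}$ the quantity $n^2u^2+(a-1)^2v^2$ never lands on $(-\infty,0]$, so the integrand is holomorphic in $a$, and a standard uniform-convergence argument (Theorem \ref{ldcttemme1} applied in each variable, or Morera's theorem together with Fubini) shows that $I(a-1)$ is holomorphic in $a$ on each of the two half-planes $\mathrm{Re}(a)>1$ and $\mathrm{Re}(a)<1$. Both sides of \eqref{aaaa} are then holomorphic on $\mathrm{Re}(a)>1$ and agree on the real ray $a>1$, so the identity theorem forces equality throughout $\mathrm{Re}(a)>1$; the same reasoning handles $\mathrm{Re}(a)<1$. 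I would note in passing that the composite $a\mapsto\bigl(n+\sqrt{(a-1)^2}\bigr)^{-s}$ is genuinely non-analytic across $\mathrm{Re}(a)=1$, which is consistent with, and explains, the branch obstruction that forces the exclusion of $\mathfrak{B}$.
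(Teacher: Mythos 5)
Your proof is correct, but it takes a genuinely different route from the paper's. The paper evaluates the inner $u$-integral in terms of the Tricomi function $U\left(\frac{s+1}{2};\frac{3}{2};\frac{(a-1)^2v^2}{n^2}\right)$, then performs the $v$-integral via a Laplace-transform formula producing a ${}_2F_1$, and finally collapses that ${}_2F_1$ using Pfaff's transformation followed by a quadratic transformation; because that last transformation is applied only for $\frac{n}{a-1}>\frac{1}{\sqrt{2}}$, the identity is first obtained only for $1<a<n\sqrt{2}+1$ and then extended to $\mathrm{Re}(a)>1$ by analytic continuation, with the substitution $a\mapsto 2-a$ giving the case $\mathrm{Re}(a)<1$. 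Your Schwinger-type subordination $(n^2u^2+b^2v^2)^{-s/2}=\frac{1}{\Gamma(s/2)}\int_0^\infty y^{s/2-1}e^{-y(n^2u^2+b^2v^2)}\,dy$ instead decouples the two Gaussian integrals and replaces all of the hypergeometric machinery by a single elementary algebraic integral, evaluated by the substitutions $y=z/(n|b|)$, $z=e^{\phi}$, $t=\sinh(\phi/2)$ and the Beta integral; this works uniformly for every real $a\neq 1$ at once, avoiding the paper's artificial intermediate restriction, and is entirely self-contained. Both proofs then finish with the same analytic-continuation-in-$a$ argument, which you have set up correctly (holomorphy of both sides on each half-plane, agreement on a real ray, identity theorem).

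One step you should make precise: setting $z=e^{\phi}$ does \emph{not} fully cancel the power of $z$. From $z^{s/2-1}\,dz=e^{\phi s/2}\,d\phi$ and the factor $e^{-\phi(s+1)/2}$ pulled out of $(z^2+cz+1)^{-(s+1)/2}$, a residual $e^{-\phi/2}$ survives. The fix is exactly the symmetrization your next substitution suggests: write $e^{-\phi/2}=\cosh(\phi/2)-\sinh(\phi/2)$; since $\cosh\phi+\cosh\eta=2\left(\sinh^2(\phi/2)+\cosh^2(\eta/2)\right)$ is even in $\phi$, the $\sinh(\phi/2)$ term integrates to zero over $\mathbb{R}$, while $\cosh(\phi/2)\,d\phi=2\,dt$ under $t=\sinh(\phi/2)$. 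This yields a constant multiple of $\int_{0}^{\infty}\left(t^2+\cosh^2(\eta/2)\right)^{-\frac{s+1}{2}}dt$, and since $4\cosh^2(\eta/2)=c+2=(n+|b|)^2/(n|b|)$, the constants assemble to exactly your claimed $K=\frac{\sqrt{\pi}\,\Gamma(s/2)}{\Gamma\left(\frac{s+1}{2}\right)}\,(n+|b|)^{-s}$, and hence to the stated right-hand side. So the gap is presentational, not mathematical.
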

\begin{proof}
The $a=1$ case can be easily proved by noting that
\begin{align*}
\int_{0}^{\infty}e^{-u^2}\, du&=\frac{\sqrt{\pi}}{2},\\
\int_{0}^{\infty}v^se^{-v^2}\, dv&=\frac{1}{2}\G\left(\frac{1+s}{2}\right),
\end{align*}
the first of which is a well-known result and the second results from the integral representation of the Gamma function. Now assume $a\neq 1$. 

We first prove the result for $1<a<n\sqrt{2}+1$ and later extend it to Re$(a)>1$ by analytic continuation. Employing change of variable $u=\frac{(a-1)v}{n}\sqrt{t}$ in the second step below, we see that
\begin{align*}
\int_0^\infty \frac{u^se^{-u^2}}{(n^2u^2+(a-1)^2v^2)^{\frac{s}{2}}}du&=\frac{1}{n^s}\int_0^\infty \frac{e^{-u^2}}{\left(1+\frac{(a-1)^2v^2}{n^2u^2}\right)^{\frac{s}{2}}}du\nonumber\\
&=\frac{(a-1)v}{2n^{s+1}}\int_0^\infty \frac{t^{\frac{s-1}{2}}}{(1+t)^{\frac{s}{2}}}e^{-t\frac{(a-1)^2v^2}{n^2}}\, dt.
\end{align*}
From \cite[p. 326, Equation (13.4.4)]{nist}, for $\mathrm{Re}(b)>0$ and $|\arg(\xi)|<\frac{\pi}{2}$,
\begin{align*}
U(b;c;\xi)=\frac{1}{\Gamma(b)}\int_0^\infty e^{-\xi t}t^{b-1}(1+t)^{c-b-1}dt,
\end{align*}
where $U(b,c;\xi)$ is the Tricomi confluent hypergeometric function. Using this result with $b=\frac{s+1}{2},\ c=\frac{3}{2}$ and $\xi=\frac{(a-1)^2v^2}{n^2}$ so that for $a>1$\footnote{This result is, in fact, true for Re$(a)>1$.} and Re$(s)>0$,
\begin{align*}
\int_0^\infty \frac{u^se^{-u^2}}{(n^2u^2+(a-1)^2v^2)^{\frac{s}{2}}}du&=\frac{(a-1)v}{2n^{s+1}}\Gamma\left(\frac{s+1}{2}\right)U\left(\frac{s+1}{2};\frac{3}{2};\frac{(a-1)^2v^2}{n^2}\right).
\end{align*} 
This gives
\begin{align}\label{hw15}
&\int_0^\infty\int_0^\infty\frac{(uv)^se^{-(u^2+v^2)}}{(n^2u^2+(a-1)^2v^2)^{\frac{s}{2}}}dudv\nonumber\\
&=\frac{(a-1)}{2n^{s+1}}\Gamma\left(\frac{s+1}{2}\right)\int_0^\infty v^{s+1}e^{-v^2}U\left(\frac{s+1}{2};\frac{3}{2};\frac{(a-1)^2v^2}{n^2}\right)dv\nonumber\\
&=\frac{1}{4n^{s}}\left(\frac{n}{a-1}\right)^{s+1}\Gamma\left(\frac{s+1}{2}\right)\int_0^\infty y^{\frac{s}{2}}e^{-\frac{n^2y}{(a-1)^2}}U\left(\frac{s+1}{2};\frac{3}{2};y\right)dy,
\end{align}
where we again employed change of variable $v=\frac{n}{(a-1)}\sqrt{y}$. Next, from \cite[p. 830, Equation (7.621.6)]{grn}, for $\mathrm{Re}(\xi)>\frac{1}{2}$,
\begin{align*}
\int_0^\infty t^{b-1}U(a;c;t)e^{-\xi t}dt=\frac{\Gamma(b)\Gamma(b-c+1)}{\Gamma(a+b-c+1)}\xi^{-b}{}_2F_1\left(a,b;a+b-c+1;1-\frac{1}{\xi}\right).
\end{align*}
Now let $b=\frac{s}{2}+1,\ a=\frac{s+1}{2},\ c=\frac{3}{2}$ and $\xi=\frac{n^2}{(a-1)^2}$ in the above identity, substitute the resultant in \eqref{hw15}, and then use the duplication formula \eqref{dup} for simplification so that for $\frac{n}{a-1}>\frac{1}{\sqrt{2}}$,
\begin{align}\label{hw13}
&\int_0^\infty\int_0^\infty\frac{(uv)^se^{-(u^2+v^2)}}{(n^2u^2+(a-1)^2v^2)^{\frac{s}{2}}}dudv\nonumber\\
&=\frac{(a-1)\sqrt{\pi}}{2^{s+2}n^{s+1}}\Gamma\left(\frac{s+1}{2}\right){}_2F_1\left(\frac{s}{2}+1,\frac{s+1}{2};s+1;1-\frac{(a-1)^2}{n^2}\right),\nonumber\\
&=\frac{\sqrt{\pi}}{2^{s+2}(a-1)^s}\Gamma\left(\frac{s+1}{2}\right){}_2F_1\left(\frac{s}{2},\frac{s+1}{2};s+1;1-\frac{n^2}{(a-1)^2}\right),
\end{align}
where in the last step, we invoked Pfaff's transformation \cite[p.~110, Equation (5.5)]{temme} valid for $|\arg(1-z)|<\pi$:
\begin{align*}
{}_2F_1(a,b;c;z)=(1-z)^{-b}{}_2F_1\left(c-a,b;c;\frac{z}{z-1}\right).
\end{align*}
From \cite[p. 130, Exercise 5.7, Equation (3)]{temme}, for $\textup{Re}(b)>0$ and $|\arg(1-z)|<\pi$,
\begin{equation*}
{}_2F_1\left(a, b; 2b; z\right)=\left(\frac{1}{2}+\frac{1}{2}\sqrt{1-z}\right)^{-2a}{}_2F_{1}\left(a, a-b+\frac{1}{2};b+\frac{1}{2};\left(\frac{1-\sqrt{1-z}}{1+\sqrt{1-z}}\right)^2\right).
\end{equation*}
Letting $a=s/2, b=(s+1)/2$ and $z=1-n^2/(a-1)^2$ in the above equation, we get
\begin{equation}\label{h2}
{}_2F_1\left(\frac{s}{2},\frac{s+1}{2};s+1;1-\frac{n^2}{(a-1)^2}\right)=2^s\left(1+\frac{n}{a-1}\right)^{-s}.
\end{equation}
Thus from \eqref{hw13} and \eqref{h2}, for $1<a<n\sqrt{2}+1$ and $\mathrm{Re}(s)>0$,
\begin{align}\label{a>1}
\frac{4}{\sqrt{\pi}}\frac{1}{\Gamma\left(\frac{1+s}{2}\right)} \int_0^\infty\int_0^\infty\frac{(uv)^{s}e^{-(u^2+v^2)}}{(n^2u^2+(a-1)^2v^2)^{\frac{s}{2}}}dudv=\frac{1}{(n+a-1)^s}.
\end{align}
Now by a repeated application of Theorem \ref{ldcttemme1}, it can be seen\footnote{We do not give the details of the argument here as a similar one is established in full detail in Lemma \ref{tia}.} that the integral \newline $\displaystyle\int_0^\infty\int_0^\infty \frac{(uv)^se^{-(u^2+v^2)}}{(n^2u^2+(a-1)^2v^2)^{\frac{s}{2}}}dudv$ is analytic for Re$(a)>1$. Since the right-hand side of \eqref{a>1} is also analytic for Re$(a)>1$, we conclude that \eqref{a>1} holds for Re$(a)>1$. To prove the remaining case when Re$(a)<1$, replace $a$ by $2-a$ and invoke the evaluation just proved.
\end{proof}
\begin{proof}[Theorem \textup{\ref{hurwitzsc}}][]
Let $n\in\mathbb{N}$ in \eqref{aaaa}. Assume Re$(s)>1$ and sum both sides of \eqref{aaaa} over $n$ from $1$ to $\infty$ whence
\begin{equation*}
\frac{4}{\sqrt{\pi}}\frac{1}{\Gamma\left(\frac{1+s}{2}\right)}\sum_{n=1}^{\infty}\int_0^\infty\int_0^\infty\frac{(uv)^{s}e^{-(u^2+v^2)}}{(n^2u^2+(a-1)^2v^2)^{\frac{s}{2}}}dudv
=\begin{cases}
\zeta(s,a) & \text{if\ $\mathrm{Re}(a)>1$}, \\
   \zeta(s, 2-a) & \text{if\ $\mathrm{Re}(a)<1$}.
\end{cases}
\end{equation*}
Since the left-hand side of the above equation is $\zeta_0(s, a)$, the proof is complete.
\end{proof}
%%%Finally using definition \eqref{new zeta}, we see that
%%%\begin{align}\label{hsc2}
%%%\lim_{w\to 0}\zeta_w(s, a)=\frac{4}{\sqrt{\pi}}\frac{1}{\Gamma\left(\frac{1+s}{2}\right)}\sum_{n=1}^{\infty}\int_0^\infty\int_0^\infty\lim_{w\to 0}\frac{(uv)^{s-1}e^{-(u^2+v^2)}\sin(wv)\sinh(wu)}{w^2(n^2u^2+(a-1)^2v^2)^{\frac{s}{2}}}dudv,
%%%\end{align}
%%%where the interchange of the order of limit and summation follows from uniform convergence of the series in a neighborhood of $w=0$, and that of limit and integration follows from the fact that the double integral, as a function of $w$, is not only absolutely convergent but also uniformly convergent on compact subsets containing a neighborhood of $w=0$. The result now follows from \eqref{hsc1}, \eqref{hsc2} and the fact that $\lim_{w\to 0}\frac{\sin(w)}{w}=\lim_{w\to 0}\frac{\sinh(w)}{w}=1$.

 %and using the series definition of the Hurwitz zeta function.
The Mellin transform representations of $\zeta(s)$ and $\zeta(s, a)$ \cite[p.~251]{apostol} are central to developing their theories. An analogous representation for the generalized Hurwitz zeta function $\zeta_w(s, a)$ is now presented. The main ingredient towards obtaining it is the following result of Kapteyn \cite[p.~5]{kapteyn} (see also \cite[Equation (9), p.~386]{watson-1966a}, \cite[p. 710, Formula \textbf{6.24.7}]{grn}) for $\mathrm{Re}(\nu)>0$ and $|\mathrm{Im}(b)|<\pi$:
\begin{align}\label{grad result1}
\sum_{n=1}^\infty\frac{1}{(n^2\pi^2+b^2)^{\nu+\frac{1}{2}}}=\frac{\sqrt{\pi}}{(2b)^{\nu}\Gamma\left(\nu+\frac{1}{2}\right)}\int_0^\infty \frac{J_\nu(bx)x^\nu}{e^{\pi x}-1}\, dx.
\end{align}
\begin{theorem}\label{mtr}
For $a\in\mathbb{R}$ and $\textup{Re}(s)>1$, 
\begin{align}\label{double integral with bessel j}
\zeta_w(s,a)%&=\frac{2^{s+1}\Gamma\left(\frac{s}{2}\right)}{\pi w^2}\int_0^\infty\int_0^\infty(uv)^{s-1}e^{-(u^2+v^2)}\sin(wv)\sinh(wu)\sum_{n=1}^\infty \frac{1}{(n^2u^2+(a-1)^2v^2)^{\frac{s}{2}}}dudv \nonumber\\
&=\frac{2^{\frac{s+3}{2}}}{w^2\sqrt{\pi}(a-1)^{\frac{s-1}{2}}\Gamma(s)}\int_0^\infty\int_0^\infty\int_0^\infty \frac{(uv)^{\frac{s-1}{2}}}{u}\frac{e^{-(u^2+v^2)}\sin(wv)\sinh(wu)}{e^{y}-1}\nonumber\\
&\qquad\qquad\qquad\qquad\qquad\qquad\quad\times J_{\frac{s-1}{2}}\left(\frac{(a-1)vy}{u}\right)y^{\frac{s-1}{2}}dydudv.
\end{align}
\end{theorem}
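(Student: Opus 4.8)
The plan is to start from the series definition \eqref{new zeta} of $\zeta_w(s,a)$ and to convert the sum over $n$ into the Bessel-function integral by means of Kapteyn's formula \eqref{grad result1}. Since the summand is $O_{w,a}(n^{-\mathrm{Re}(s)})$ and the double integral in each summand converges absolutely for $\mathrm{Re}(s)>1$ (this is essentially the content of Lemma \ref{double integral is analytic}), I would first interchange the summation with the $u,v$-integration, justified by Theorem \ref{ldcttemme}. This lets the sum over $n$ act only on the factor $(n^2u^2+(a-1)^2v^2)^{-s/2}$, while the factor $e^{-(u^2+v^2)}\sin(wv)\sinh(wu)$ stays outside.

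For fixed $u,v>0$ I would then write $n^2u^2+(a-1)^2v^2=(u^2/\pi^2)(n^2\pi^2+b^2)$ with $b=\pi(a-1)v/u$, so that $(n^2u^2+(a-1)^2v^2)^{-s/2}=(\pi/u)^s(n^2\pi^2+b^2)^{-s/2}$, and apply \eqref{grad result1} with $\nu=\tfrac{s-1}{2}$, whence $\nu+\tfrac12=\tfrac{s}{2}$ and $\Gamma(\nu+\tfrac12)=\Gamma(s/2)$. The hypothesis $\mathrm{Re}(\nu)>0$ of Kapteyn's formula is exactly $\mathrm{Re}(s)>1$, and the condition $|\mathrm{Im}(b)|<\pi$ is automatic because $a,u,v$ are real, so $b\in\mathbb{R}$. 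This replaces the inner sum by $\tfrac{\sqrt{\pi}}{(2b)^{(s-1)/2}\Gamma(s/2)}\int_0^\infty \tfrac{J_{(s-1)/2}(bx)x^{(s-1)/2}}{e^{\pi x}-1}\,dx$. I would next substitute $y=\pi x$, which turns $e^{\pi x}-1$ into $e^{y}-1$ and the argument $bx$ into $(a-1)vy/u$, thereby producing precisely the kernel $J_{(s-1)/2}\!\left((a-1)vy/u\right)$ and the weight $y^{(s-1)/2}/(e^{y}-1)$ of the statement.

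Collecting all powers of $\pi$, $u$, $v$, and $(a-1)$ then converts the prefactor into $u^{(s-3)/2}v^{(s-1)/2}=(uv)^{(s-1)/2}/u$ inside the integrand and a factor $(a-1)^{-(s-1)/2}$ outside. To reach the displayed constant $\tfrac{2^{(s+3)/2}}{w^2\sqrt{\pi}\,(a-1)^{(s-1)/2}\Gamma(s)}$, I would simplify the product of the original constant $\tfrac{4}{w^2\sqrt{\pi}\,\Gamma((s+1)/2)}$ with $\tfrac{\sqrt{\pi}}{2^{(s-1)/2}\Gamma(s/2)}$, using the duplication formula \eqref{dup} in the form $\Gamma(s/2)\Gamma((s+1)/2)=\sqrt{\pi}\,2^{1-s}\Gamma(s)$; the two $\Gamma$-factors in the denominator then merge, the $\sqrt{\pi}$ factors cancel, and the powers of $2$ combine to give $2^{(s+3)/2}$. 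Assembling these pieces yields the triple integral in the statement.

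The main technical obstacle is the rigorous justification of the several interchanges of summation and integration. I would verify that the resulting triple integral converges absolutely for $\mathrm{Re}(s)>1$, using the Gaussian decay $e^{-(u^2+v^2)}$, the elementary bounds on $|\sin(wv)\sinh(wu)|$, and standard estimates on $J_\nu$, and then invoke Fubini's theorem together with Theorem \ref{ldcttemme} to rearrange the integrals into the stated iterated order. A secondary point worth recording is that when $0<a<1$ the quantity $b=\pi(a-1)v/u$ is negative; however, $(2b)^{-(s-1)/2}J_{(s-1)/2}(bx)$ is an even function of $b$ by the series \eqref{sumbesselj}, so the expression is unambiguous, in agreement with the symmetry $\zeta_w(s,2-a)=\zeta_w(s,a)$ of \eqref{zwfea}.
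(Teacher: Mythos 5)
Your proposal is correct and follows essentially the same route as the paper: Kapteyn's formula \eqref{grad result1} with $b=\pi(a-1)v/u$, $\nu=\tfrac{s-1}{2}$ and the substitution $y=\pi x$ converts the inner sum into the Bessel integral, the interchange of summation and integration is justified by Theorem \ref{ldcttemme}, and the duplication formula \eqref{dup} collapses $\Gamma(s/2)\Gamma((s+1)/2)$ to produce the constant $\tfrac{2^{(s+3)/2}}{w^2\sqrt{\pi}(a-1)^{(s-1)/2}\Gamma(s)}$. Your additional observation that $(2b)^{-(s-1)/2}J_{(s-1)/2}(bx)$ is even in $b$, so the case $0<a<1$ is unambiguous, is a correct refinement consistent with the paper's symmetry \eqref{zwfea}.
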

\begin{proof}
Let $b=(a-1)\pi v/u,\ \nu=(s-1)/2$ in \eqref{grad result1} and employ the change of variable $x=y/\pi$ to obtain for $\mathrm{Re}(s)>1$,
\begin{align*}
\sum_{n=1}^\infty \frac{1}{(n^2u^2+(a-1)^2v^2)^{\frac{s}{2}}}=\frac{\sqrt{\pi} u^{-\frac{(s+1)}{2}}}{2^{\frac{s-1}{2}}\Gamma\left(\frac{s}{2}\right)((a-1) v)^{\frac{s-1}{2}}}\int_0^\infty 
%\frac{J_{\frac{s-1}{2}}\left(\frac{(a-1)vy}{u}\right)}{e^{y}-1}y^{\frac{s-1}{2}}\ dy.
\frac{y^{\frac{s-1}{2}}}{e^{y}-1}J_{\frac{s-1}{2}}\left(\frac{(a-1)vy}{u}\right)\ dy.
\end{align*}
Now interchange the order of summation and integration in the definition \eqref{new zeta} of $\zeta_w(s, a)$ by invoking Theorem \ref{ldcttemme} twice, substitute the above representation for the infinite sum, and simplify to arrive at \eqref{double integral with bessel j}.
\end{proof}
\begin{remark}
Note that the function $(a-1)^{\frac{1-s}{2}}J_{\frac{s-1}{2}}\left(\frac{(a-1)vy}{u}\right)$ has a removable singularity at $a=1$. Moreover, when we let $a\to 1$ in \eqref{double integral with bessel j}, we can pass the limit through the triple integral by appealing to Theorem \ref{titchcomplexthm}. This results in \eqref{a=1caserz}.
\end{remark}
\begin{remark}
If we let $w\to 0$ in Theorem \textup{\ref{mtr}}, then the double integral over $u$ and $v$ can be explicitly evaluated leading to the Mellin transform representation of $\zeta(s, a)$, namely \cite[p.~251]{apostol}, whose equivalent form is given in Corollary \textup{\ref{getsplcor}} below.
%\begin{equation*}
%\zeta(s, a)=\frac{1}{\G(s)}\int_{0}^{\infty}\frac{x^{s-1}e^{-ax}}{1-e^{-x}}\, dx.
%\end{equation*}
\end{remark}

We now derive a representation for $\zeta_w(s, a)$ which is imperative in obtaining the modular-type transformation stated in Theorem \ref{genramhureq}.
\begin{theorem}\label{befhermite}
Let $A_w(z)$ be defined in \eqref{ab}. For $a>0$ and $1<\textup{Re}(s)<2$,
\begin{align}\label{hm90}
\zeta_w(s, a+1)&=-\frac{a^{-s}}{2}A_w(s-1)+\frac{a^{1-s}}{s-1}A_{iw}(1-s)+\frac{2^{\frac{s+3}{2}}a^{\frac{1-s}{2}}}{w^2\sqrt{\pi}\Gamma(s)}\int_0^\infty\int_0^\infty\int_0^\infty u^{\frac{s-3}{2}}v^{\frac{s-1}{2}}\nonumber\\
&\quad\times e^{-(u^2+v^2)}\sin(wv)\sinh(wu)J_{\frac{s-1}{2}}\left(\frac{avx}{u}\right)x^{\frac{s-1}{2}}\left(\frac{1}{e^{x}-1}-\frac{1}{x}+\frac{1}{2}\right)dxdudv.
\end{align}
%\begin{align}
%\zeta_w(s, a+1)&=-\frac{a^{-s}}{2}\frac{\sqrt{\pi}}{w}\mathrm{erf}\left(\frac{w}{2}\right){}_1F_1\left(\frac{s+1}{2};\frac{3}{2};\frac{w^2}{4}\right)+\frac{a^{1-s}}{s-1}\frac{\sqrt{\pi}}{w}e^{-\frac{w^2}{4}}{}_1F_1\left(\frac{s}{2};\frac{3}{2};\frac{w^2}{4}\right)\mathrm{erfi}\left(\frac{w}{2}\right)\nonumber\\
%&\qquad+\frac{2^{\frac{s+3}{2}}a^{\frac{1-s}{2}}}{w^2\sqrt{\pi}\Gamma(s)}\int_0^\infty\int_0^\infty\int_0^\infty u^{\frac{s-3}{2}}v^{\frac{s-1}{2}}e^{-(u^2+v^2)}\sin(wv)\sinh(wu)\nonumber\\
%&\qquad\qquad\qquad\qquad\times J_{\frac{s-1}{2}}\left(\frac{avx}{u}\right)x^{\frac{s-1}{2}}\left(\frac{1}{e^{x}-1}-\frac{1}{x}+\frac{1}{2}\right)\ dxdudv
%\end{align}
\end{theorem}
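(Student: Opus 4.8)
The plan is to start from the Mellin--Bessel representation of Theorem \ref{mtr}. Replacing $a$ by $a+1$ in \eqref{double integral with bessel j} gives
\begin{equation*}
\zeta_w(s,a+1)=\frac{2^{\frac{s+3}{2}}a^{\frac{1-s}{2}}}{w^2\sqrt{\pi}\,\Gamma(s)}\int_0^\infty\!\!\int_0^\infty\!\!\int_0^\infty u^{\frac{s-3}{2}}v^{\frac{s-1}{2}}e^{-(u^2+v^2)}\sin(wv)\sinh(wu)J_{\frac{s-1}{2}}\!\left(\frac{avy}{u}\right)\frac{y^{\frac{s-1}{2}}}{e^y-1}\,dy\,du\,dv.
\end{equation*}
The idea is then to subtract the first two terms of the Laurent expansion of $(e^y-1)^{-1}$ at the origin, writing $\frac{1}{e^y-1}=\left(\frac{1}{e^y-1}-\frac{1}{y}+\frac{1}{2}\right)+\frac{1}{y}-\frac{1}{2}$. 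The bracketed summand reproduces verbatim the triple integral on the right-hand side of \eqref{hm90}, so the whole content of the theorem reduces to evaluating the two correction integrals coming from $\frac1y$ and $-\frac12$, and showing they equal $\frac{a^{1-s}}{s-1}A_{iw}(1-s)$ and $-\frac{a^{-s}}{2}A_w(s-1)$ respectively.

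For each correction term the inner $y$-integral is a Mellin transform of the Bessel function, $\int_0^\infty J_\nu(by)y^{\sigma-1}\,dy=b^{-\sigma}2^{\sigma-1}\Gamma\!\left(\frac{\nu+\sigma}{2}\right)/\Gamma\!\left(\frac{\nu-\sigma}{2}+1\right)$ with $b=av/u$. For the $\frac1y$-term one has $\nu=\sigma=\frac{s-1}{2}$, and the transform equals a constant times $\left(\frac{u}{av}\right)^{\frac{s-1}{2}}\Gamma\!\left(\frac{s-1}{2}\right)$; this cancels all the $v$-power and leaves the factored double integral $\int_0^\infty\!\int_0^\infty u^{s-2}e^{-(u^2+v^2)}\sin(wv)\sinh(wu)\,du\,dv$. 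For the $-\frac12$-term one has $\nu=\frac{s-1}{2}$, $\sigma=\frac{s+1}{2}$, and the transform equals a constant times $\left(\frac{u}{av}\right)^{\frac{s+1}{2}}\Gamma\!\left(\frac{s}{2}\right)/\Gamma\!\left(\frac12\right)$, leaving $\int_0^\infty\!\int_0^\infty u^{s-1}v^{-1}e^{-(u^2+v^2)}\sin(wv)\sinh(wu)\,du\,dv$. In both cases the $u$- and $v$-integrals separate: the $\sinh$-integral in $u$ is evaluated by \eqref{prud2} (with $v=1$) and the $\sin$-integral in $v$ by \eqref{equivv}, each producing a confluent hypergeometric factor.

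The resulting gamma factors then collapse under the duplication formula \eqref{dup}: for the $\frac1y$-term, $\Gamma\!\left(\frac{s-1}{2}\right)\Gamma\!\left(\frac{s}{2}\right)=\sqrt{\pi}\,2^{2-s}\Gamma(s-1)$ combines with the prefactor to produce the clean simple pole $\frac{1}{s-1}$, while for the $-\frac12$-term $\Gamma\!\left(\frac{s}{2}\right)\Gamma\!\left(\frac{s+1}{2}\right)=\sqrt{\pi}\,2^{1-s}\Gamma(s)$ cancels the $\Gamma(s)$ in the prefactor. Matching the leftover hypergeometric products against \eqref{ab}, the $-\frac12$-term yields $\frac{\sqrt{\pi}}{w}\mathrm{erf}\!\left(\frac w2\right){}_1F_1\!\left(\frac{s+1}{2};\frac32;\frac{w^2}{4}\right)=A_w(s-1)$ directly, whereas for the $\frac1y$-term one applies Kummer's transformation \eqref{kft} to each ${}_1F_1$ so as to recognize the product $e^{-\frac{w^2}{4}}{}_1F_1\!\left(\frac12;\frac32;\frac{w^2}{4}\right){}_1F_1\!\left(\frac s2;\frac32;\frac{w^2}{4}\right)$ as the second form of $A_{iw}(1-s)$ in \eqref{ab}.

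The main obstacle is analytic rather than algebraic, and it is exactly what pins down the hypothesis $1<\mathrm{Re}(s)<2$. The splitting above is legitimate only if each of the three triple integrals converges on its own, and the two Bessel Mellin transforms are of Weber--Schafheitlin type, hence only conditionally convergent at infinity and valid in a restricted strip: the $\frac1y$-term requires $-\mathrm{Re}\!\left(\frac{s-1}{2}\right)<\mathrm{Re}\!\left(\frac{s-1}{2}\right)<\frac32$, i.e. $1<\mathrm{Re}(s)<4$, and the $-\frac12$-term requires $-\mathrm{Re}\!\left(\frac{s-1}{2}\right)<\mathrm{Re}\!\left(\frac{s+1}{2}\right)<\frac32$, i.e. $0<\mathrm{Re}(s)<2$; the intersection is precisely $1<\mathrm{Re}(s)<2$. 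I would justify the interchange of the $y$-integration with the $u,v$-integrations (and the splitting itself) by first proving absolute convergence of the full triple integral on $(0,\infty)^3$ throughout this strip, using the small-$y$ estimate $\frac{1}{e^y-1}-\frac1y+\frac12=O(y)$, the bound $J_\nu(t)=O\!\left(t^{\mathrm{Re}(\nu)}\right)$ as $t\to0$, and the Gaussian decay of $e^{-(u^2+v^2)}$, following the convergence analysis carried out in detail in Lemma \ref{tia}.
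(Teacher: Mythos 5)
Your proposal is correct and follows essentially the same route as the paper's own proof: the paper likewise replaces $a$ by $a+1$ in Theorem \ref{mtr}, splits $\frac{1}{e^y-1}=\left(\frac{1}{e^y-1}-\frac1y+\frac12\right)+\frac1y-\frac12$, evaluates the two correction integrals via the Mellin transform \eqref{mt of bessel J} of $J_{\frac{s-1}{2}}$ (valid in $1<\mathrm{Re}(s)<4$ and $0<\mathrm{Re}(s)<2$ respectively, whose intersection gives the stated strip), and then collapses the remaining separated Gaussian integrals through Lemma \ref{int} (equivalently, your direct use of \eqref{prud2} and \eqref{equivv}) together with the duplication formula \eqref{dup} and Kummer's transformation \eqref{kft} to identify $A_w(s-1)$ and $A_{iw}(1-s)$. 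Your accounting of the validity strips and of the cancellation of gamma factors matches \eqref{i1asw} and \eqref{i2asw} exactly, so there is nothing to correct.
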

\begin{proof}
Replace $a$ by $a+1$ in Theorem \ref{mtr} and rewrite it in the form
\begin{align}\label{mtrmasale}
\zeta_w(s,a+1)&=\frac{2^{\frac{s+3}{2}}a^{\frac{1-s}{2}}}{w^2\sqrt{\pi}\Gamma(s)}\bigg\{\int_0^\infty\int_0^\infty\int_0^\infty u^{\frac{s-3}{2}}v^{\frac{s-1}{2}}e^{-(u^2+v^2)}\sin(wv)\sinh(wu)y^{\frac{s-1}{2}}\nonumber\\
&\quad\times J_{\frac{s-1}{2}}\left(\frac{avy}{u}\right)\left(\frac{1}{e^{y}-1}-\frac{1}{y}+\frac{1}{2}\right)\, dydudv+I_1(a, s, w)-I_2(a, s, w)\bigg\},
\end{align}
where
\begin{align*}
I_1(a, s, w)&:=\int_0^\infty\int_0^\infty\int_0^\infty u^{\frac{s-3}{2}}v^{\frac{s-1}{2}}e^{-(u^2+v^2)}\sin(wv)\sinh(wu)\nonumber\\
&\quad\times J_{\frac{s-1}{2}}\left(\frac{avy}{u}\right)y^{\frac{s-3}{2}}\, dydudv\nonumber\\
I_2(a, s, w)&:=\frac{1}{2}\int_0^\infty\int_0^\infty\int_0^\infty u^{\frac{s-3}{2}}v^{\frac{s-1}{2}}e^{-(u^2+v^2)}\sin(wv)\sinh(wu)\nonumber\\
&\quad\times J_{\frac{s-1}{2}}\left(\frac{avy}{u}\right)y^{\frac{s-1}{2}}\, dydudv.
\end{align*}
We first evaluate $I_2(a, s, w)$ followed by $I_1(a, s, w)$. 

From \cite[p. 684, Formula \textbf{6.561.14}]{grn}, for $-\mathrm{Re}(\nu)-1<\mathrm{Re}(\mu)<\frac{1}{2},\ b>0$,
\begin{align}\label{mt of bessel J}
\int_0^\infty x^\mu J_\nu(bx)\ dx=2^\mu b^{-\mu-1}\frac{\Gamma\left(\frac{1+\nu+\mu}{2}\right)}{\Gamma\left(\frac{1+\nu-\mu}{2}\right)}.
\end{align}
Let $\mu=\frac{s-1}{2},\ \nu=\frac{s-1}{2},\ b=\frac{av}{u}$ in \eqref{mt of bessel J} so that for $0<\mathrm{Re}(s)<2$, $a>0, u>0$, and $v>0$,
\begin{align}\label{bessel J 93}
\int_0^\infty J_{\frac{s-1}{2}}\left(\frac{avx}{u}\right)x^{\frac{s-1}{2}}\ dx=\frac{2^{\frac{s-1}{2}}}{\sqrt{\pi}}\Gamma\left(\frac{s}{2}\right)\left(\frac{u}{av}\right)^{\frac{s+1}{2}},
\end{align}
 and then evaluate the innermost integral in $I_2(a, s, w)$ using \eqref{bessel J 93} whence for $0<\mathrm{Re}(s)<2$ and $a>0$,
\begin{align}\label{i2asw}
I_2(a, s, w)&=\frac{2^{\frac{s-3}{2}}}{\sqrt{\pi}a^{\frac{s+1}{2}}}\Gamma\left(\frac{s}{2}\right)\int_0^\infty\int_0^\infty \frac{u^{s-1}}{v}e^{-(u^2+v^2)}\sin(wv)\sinh(wu)\ dudv \nonumber\\
&=\frac{2^{\frac{s-3}{2}}\sqrt{\pi}w}{4a^{\frac{s+1}{2}}}\Gamma\left(\frac{s}{2}\right)\Gamma\left(\frac{s+1}{2}\right)\mathrm{erf}\left(\frac{w}{2}\right){}_1F_1\left(\frac{s+1}{2};\frac{3}{2};\frac{w^2}{4}\right)\nonumber\\
&=\frac{2^{\frac{s-3}{2}}w^2}{4a^{\frac{s+1}{2}}}\Gamma\left(\frac{s}{2}\right)\Gamma\left(\frac{s+1}{2}\right)A_w(s-1),
\end{align}
where in the last step, we used \eqref{int1} with $u$ replaced by $u/v$ and $z$ by $s-1$. Similarly, substituting \eqref{mt of bessel J}, this time with $\nu=\frac{s-1}{2},\ \mu=\frac{s-3}{2}, b=\frac{av}{u}$, in $I_1(a, s, w)$, gives for $1<\textup{Re}(s)<4$ and $a>0$,
\begin{align}\label{i1asw}
I_1(a, s, w)&=2^{\frac{s-3}{2}}a^{\frac{1-s}{2}}\Gamma\left(\frac{s-1}{2}\right)\Gamma\left(\frac{s}{2}\right)\frac{\sqrt{\pi} w}{4}\mathrm{erfi}\left(\frac{w}{2}\right){}_1F_1\left(\frac{3-s}{2};\frac{3}{2};-\frac{w^2}{4}\right)\nonumber\\
&=2^{\frac{s-7}{2}}a^{\frac{1-s}{2}}\Gamma\left(\frac{s-1}{2}\right)\Gamma\left(\frac{s}{2}\right)w^2A_{iw}(1-s),
%&=2^{\frac{s-3}{2}}a^{\frac{1-s}{2}}\Gamma\left(\frac{s-1}{2}\right)\Gamma\left(\frac{s}{2}\right)\frac{\sqrt{\pi} w}{4}e^{-\frac{w^2}{4}}\mathrm{erfi}\left(\frac{w}{2}\right){}_1F_1\left(\frac{s}{2};\frac{3}{2};\frac{w^2}{4}\right),
\end{align}
where in the first step, we also used \eqref{int2} with $u$ replaced by $u/v$ and $z=s-1$. Finally from \eqref{mtrmasale}, \eqref{i2asw}, \eqref{i1asw} and simplifying using \eqref{dup} twice, we deduce \eqref{hm90} for $1<\textup{Re}(s)<2$ and $a>0$.
\end{proof}
As a corollary of the above result, we obtain a well-known representation of the Hurwitz zeta function \cite[p.~609, Formula \textbf{25.11.27}]{nist}. Before we get to it though, we need a lemma.
\begin{lemma}\label{useful}
For $x>0$. Let $\mathrm{Re}(s)>-1$, then we have
\begin{align}\label{usefuleqn} 
\frac{2^{\frac{s+3}{2}}}{\sqrt{\pi}(ax)^{\frac{s-1}{2}}}\int_0^\infty\int_0^\infty u^{\frac{s-1}{2}}v^{\frac{s+1}{2}} e^{-(u^2+v^2)}J_{\frac{s-1}{2}}\left(\frac{avx}{u}\right)\, dudv=
\begin{cases} 
 e^{-ax},  & \mathrm{Re}(a)>0 \\
      e^{ax}, & \mathrm{Re}(a)<0. \\
   \end{cases}
\end{align}
\end{lemma}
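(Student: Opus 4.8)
The plan is to establish \eqref{usefuleqn} first for $a>0$ by iterating the double integral (integrating in $v$ and then in $u$), each step being a classical closed‑form evaluation, and afterwards to pass to complex $a$. Throughout I set $\nu:=\tfrac{s-1}{2}$, so that the integrand reads $u^{\nu}v^{\nu+1}e^{-(u^2+v^2)}J_{\nu}(avx/u)$ and the prefactor is $2^{\nu+2}/\bigl(\sqrt{\pi}\,(ax)^{\nu}\bigr)$, using $\tfrac{s+1}{2}=\nu+1$ and $\tfrac{s+3}{2}=\nu+2$. For $a,x>0$ the integrand is absolutely integrable on $(0,\infty)^2$: near $v=0$ the Bessel factor contributes $v^{\nu}$ (so the $v$-density behaves like $v^{2\nu+1}$, integrable for $\mathrm{Re}(\nu)>-1$), near $u=0$ the oscillatory bound $|J_{\nu}|=O\bigl((u/v)^{1/2}\bigr)$ gives a density $O(u^{\nu+1/2})$, and the Gaussian forces rapid decay at infinity. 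Hence Fubini's theorem lets me carry out the $v$-integration first with $u$ held fixed.

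First I would evaluate the inner integral by the classical Gaussian--Bessel formula (see, e.g., \cite{grn} or \cite{watson-1966a})
\begin{equation*}
\int_0^\infty t^{\nu+1}e^{-t^2}J_{\nu}(ct)\,dt=\frac{c^{\nu}}{2^{\nu+1}}e^{-c^2/4},\qquad \mathrm{Re}(\nu)>-1,\ c>0,
\end{equation*}
which is the special case in which the confluent hypergeometric factor collapses to an exponential; its validity condition is precisely $\mathrm{Re}(\nu)>-1$, i.e. $\mathrm{Re}(s)>-1$, exactly as hypothesized. Taking $c=ax/u$ yields $\tfrac{(ax/u)^{\nu}}{2^{\nu+1}}e^{-(ax)^2/(4u^2)}$. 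Substituting back, the powers $u^{\nu}$ and $(ax/u)^{\nu}$ collapse to $(ax)^{\nu}$, leaving $\tfrac{(ax)^{\nu}}{2^{\nu+1}}\int_0^\infty e^{-u^2-(ax)^2/(4u^2)}\,du$. The remaining $u$-integral is the standard evaluation
\begin{equation*}
\int_0^\infty e^{-u^2-b^2/u^2}\,du=\frac{\sqrt{\pi}}{2}e^{-2b},\qquad b>0,
\end{equation*}
with $b=ax/2$, giving $\tfrac{\sqrt{\pi}}{2}e^{-ax}$. Collecting constants, the double integral equals $\sqrt{\pi}\,(ax)^{\nu}e^{-ax}/2^{\nu+2}$, and multiplication by the prefactor $2^{\nu+2}/\bigl(\sqrt{\pi}\,(ax)^{\nu}\bigr)$ produces exactly $e^{-ax}$, settling the case $a>0$.

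For general $a$ the decisive observation is that the entire left side of \eqref{usefuleqn} is an \emph{even} function of $a$. Indeed, inserting the series \eqref{sumbesselj} for $J_{\nu}$ shows that $(ax)^{-\nu}J_{\nu}(avx/u)$ is a power series in $a^{2}$: the factor $a^{2m+\nu}$ coming from $(avx/2u)^{2m+\nu}$ cancels the $a^{-\nu}$ of $(ax)^{-\nu}$, leaving only even powers $a^{2m}$, so no branch ambiguity survives and $a\mapsto -a$ fixes the expression. Consequently, once the identity holds for $a>0$, analytic continuation extends it to $\mathrm{Re}(a)>0$, and evenness then delivers the $\mathrm{Re}(a)<0$ branch as $e^{-(-a)x}=e^{ax}$, matching the stated piecewise formula.

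The closed‑form computation is thus short; the part that genuinely requires care---and the main obstacle---is the analytic justification. One must verify the absolute convergence that licenses Fubini and the $v$‑first ordering (the delicate region being $u\to 0$, where the Bessel argument blows up and one relies on the oscillatory decay of $J_{\nu}$), and one must be mindful of the region of validity of the continuation in $a$: after the inner integration the factor $e^{-(ax)^2/(4u^2)}$ controls the behaviour as $u\to 0$ only while $\mathrm{Re}(a^{2})>0$. This is exactly why the clean route to the $\mathrm{Re}(a)<0$ case is through the evenness in $a$ rather than through direct convergence of the integral.
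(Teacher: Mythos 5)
Your proposal is correct and reaches the same evaluation, but by a genuinely different and more elementary route than the paper's. The paper integrates in $u$ first: after the substitution $u=1/t$ it invokes the Prudnikov formula \eqref{mt of bessel j 1}, which expresses the inner integral as a combination of two ${}_0F_2$ functions, then integrates in $v$ via \eqref{mt of pfq} and collapses the result using ${}_0F_1\left(-;\tfrac12;\tfrac{a^2x^2}{4}\right)=\cosh(ax)$ and ${}_0F_1\left(-;\tfrac32;\tfrac{a^2x^2}{4}\right)=\tfrac{\sinh(ax)}{ax}$, obtaining $\cosh(ax)-\sinh(ax)=e^{-ax}$. You instead integrate in $v$ first using Weber's first exponential integral and finish with the classical evaluation $\int_0^\infty e^{-u^2-b^2/u^2}\,du=\tfrac{\sqrt{\pi}}{2}e^{-2b}$, so everything stays at the level of two standard closed forms and no ${}_pF_q$ machinery is needed; your handling of the sign of $\mathrm{Re}(a)$ through evenness of $(ax)^{-\nu}J_{\nu}(avx/u)$ in $a$ is the same mechanism the paper uses when it replaces $a$ by $-a$ and appeals to $J_{\nu}(-x)=(-1)^{\nu}J_{\nu}(x)$. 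One caveat applies to both arguments, and you are the one who flags it: for non-real $a$ the double integral is not absolutely convergent, so the statement for complex $a$ is genuinely one about analytic continuation. In fact the paper's ordering is worse off here, since its inner $u$-integral diverges for every non-real $a$ (as $u\to0$, $\left|J_{\nu}(avx/u)\right|$ grows like $e^{|\mathrm{Im}(a)|vx/u}$), and the paper covers this only by the footnote extending Prudnikov's formula to $\mathrm{Re}(b)>0$ ``by analytic continuation''; with your ordering the inner $v$-integral converges for all complex $a$ and the outer $u$-integral converges in the sector $|\arg(a)|<\tfrac{\pi}{4}$, so your computation is rigorous on a strictly larger region before any continuation argument is invoked.
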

\begin{proof}
We first evaluate the inner integral over $u$. By change of variable $u=1/t$,
\begin{align}\label{int on u}
\int_0^\infty u^{\frac{s-1}{2}}e^{-u^2}J_{\frac{s-1}{2}}\left(\frac{avx}{u}\right)du=\int_0^\infty t^{-\frac{(s+3)}{2}}e^{-1/t^2}J_{\frac{s-1}{2}}(avxt)\ dt.
\end{align}
To evaluate the integral on the above right-hand side, we use \cite[p.~187, Formula \textbf{2.12.9.14}]{prudII} valid \footnote{This formula, which is given for $b>0$ in \cite{prudII}, can be seen to be true for Re$(b)>0$ by analytic continuation.} for $\mathrm{Re}(b)>0,\, \mathrm{Re}(c)>0$ and $\mathrm{Re}(\xi)<\frac{3}{2}$, namely, 
\begin{align}\label{mt of bessel j 1}
\int_0^\infty e^{-c/t^2}J_\nu(bt)t^{\xi-1}\ dt&=\frac{c^{\frac{\xi+\nu}{2}}b^\nu}{2^{\nu+1}}\frac{\Gamma\left(-\frac{(\xi+\nu)}{2}\right)}{\Gamma(\nu+1)}{}_0F_2\left(-;\nu+1,\frac{2+\xi+\nu}{2};\frac{cb^2}{4}\right)\nonumber\\
&\quad+\frac{2^{\xi-1}}{b^{\xi}}\frac{\Gamma\left(\frac{\xi+\nu}{2}\right)}{\Gamma\left(\frac{2-\xi+\nu}{2}\right)}{}_0F_2\left(-;\frac{2-\xi-\nu}{2},\frac{2-\xi+\nu}{2};\frac{cb^2}{4}\right).
\end{align}
Let $b=avx,\ c=1$ and $\nu=\frac{s-1}{2},\ \xi=-\frac{s+1}{2}$ in \eqref{mt of bessel j 1}, so that for Re$(a)>0$, $v>0, x>0$ and $\mathrm{Re}(s)>-4$,
{\allowdisplaybreaks\begin{align}\label{int 2 0f2}
\int_0^\infty t^{-\frac{(s+3)}{2}}e^{-1/t^2}J_{\frac{s-1}{2}}(avxt)\ dt&=\frac{(avx)^{\frac{s-1}{2}}}{2^{\frac{s+1}{2}}}\frac{\sqrt{\pi}}{\Gamma\left(\frac{s+1}{2}\right)}{}_0F_2\left(-;\frac{1}{2},\frac{s+1}{2};\frac{a^2v^2x^2}{4}\right)\nonumber\\
&\quad-\frac{(avx)^{\frac{s+1}{2}}}{2^{\frac{s+1}{2}}}\frac{\sqrt{\pi}}{\Gamma\left(1+\frac{s}{2}\right)}{}_0F_2\left(-;\frac{3}{2},1+\frac{s}{2};\frac{a^2v^2x^2}{4}\right),
\end{align}}
since $\Gamma(-1/2)=-2\sqrt{\pi}$. Hence from \eqref{int on u} and \eqref{int 2 0f2},
\begin{align}\label{double integral on u v}
&\int_0^\infty\int_0^\infty v^{\frac{s+1}{2}}u^{\frac{s-1}{2}}e^{-(u^2+v^2)}J_{\frac{s-1}{2}}\left(\frac{avx}{u}\right)dudv\nonumber\\ &=\frac{(ax)^{\frac{s-1}{2}}}{2^{\frac{s+1}{2}}}\frac{\sqrt{\pi}}{\Gamma\left(\frac{s+1}{2}\right)}\int_0^\infty v^se^{-v^2}{}_0F_2\left(-;\frac{1}{2},\frac{s+1}{2};\frac{a^2v^2x^2}{4}\right)\  dv\nonumber\\
&\qquad-\frac{(ax)^{\frac{s+1}{2}}}{2^{\frac{s+1}{2}}}\frac{\sqrt{\pi}}{\Gamma\left(1+\frac{s}{2}\right)}\int_0^\infty v^{s+1}e^{-v^2}{}_0F_2\left(-;\frac{3}{2},1+\frac{s}{2};\frac{a^2v^2x^2}{4}\right)\ dv.
\end{align} 
Employ change of variable $v=\sqrt{t}$ in both the integrals on the above right-hand side, and evaluate the resulting ones by appropriately specializing the identity \cite[p.~823, Formula \textbf{7.522.5}]{grn}
\begin{align}\label{mt of pfq}
\int_0^\infty e^{-x}x^{\xi-1}{}_pF_q(a_1,...a_p;b_1,...,b_q;cx)\ dx=\Gamma(\xi)\hspace{1mm}{}_{p+1}F_q(\xi,a_1,...a_p;b_1,...,b_q;c),
\end{align}
valid for $p<q$ and Re$(\xi)>0$. Simplify the resultant evaluations using ${}_0F_1\left(-;\frac{1}{2};\frac{a^2x^2}{4}\right)=\cosh(ax)$ and ${}_0F_1\left(-;\frac{3}{2};\frac{a^2x^2}{4}\right)=\frac{\sinh(ax)}{ax}$, and put them together to get, in view of \eqref{double integral on u v},
\begin{align*}
\int_0^\infty\int_0^\infty v^{\frac{s+1}{2}}u^{\frac{s-1}{2}}e^{-(u^2+v^2)}J_{\frac{s-1}{2}}\left(\frac{avx}{u}\right)dudv
% &=\frac{(ax)^{\frac{s-1}{2}}}{2^{\frac{s+1}{2}}}\sqrt{\pi}(\cosh(ax)+\sinh(ax))\nonumber\\
=\sqrt{\pi}2^{-\frac{(s+3)}{2}}(ax)^{\frac{s-1}{2}}e^{-ax}
\end{align*}
for $\mathrm{Re}(a)>0$ and $\mathrm{Re}(s)>-1$. This proves \eqref{usefuleqn} for Re$(a)>0$. To derive the second part when Re$(a)<0$, just replace $a$ by $-a$ in the first, and use the well-known relation $J_\nu(-x)=(-1)^\nu J_\nu(x)$.
\end{proof}

\begin{corollary}\label{hurint}
For\footnote{This result actually holds for $\textup{Re}(a)>0$ and $\textup{Re}(s)>-1, s\neq 1$, by analytic continuation.} $a>0$ and $1<\textup{Re}(s)<2$,
\begin{align}\label{hurinteqn}
\zeta(s,a+1)&=-\frac{1}{2}a^{-s}+\frac{a^{1-s}}{s-1}+\frac{1}{\Gamma(s)}\int_0^\infty e^{-ax}\left(\frac{1}{e^x-1}-\frac{1}{x}+\frac{1}{2}\right)x^{s-1}\ dx,
\end{align}
\end{corollary}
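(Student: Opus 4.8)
The plan is to derive \eqref{hurinteqn} as the $w\to 0$ specialization of the representation in Theorem \ref{befhermite}. On the left-hand side, $\zeta_w(s,a+1)\to\zeta_0(s,a+1)$ as $w\to 0$ by \eqref{zeta0sa}; since $a>0$ gives $\textup{Re}(a+1)>1$, Theorem \ref{hurwitzsc} identifies this limit with $\zeta(s,a+1)$, which is exactly the left side of \eqref{hurinteqn}. It therefore remains to compute the $w\to 0$ limit of each of the three terms on the right of \eqref{hm90}.

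For the two boundary terms I would first record that $A_0(z)=1$ for every $z$. Indeed, from \eqref{ab} and \eqref{erf} one has $\frac{\sqrt{\pi}}{w}\textup{erf}(w/2)\to 1$ as $w\to 0$, while ${}_1F_1(1+z/2;3/2;w^2/4)\to {}_1F_1(1+z/2;3/2;0)=1$; the same applies to $A_{iw}$ since $iw\to 0$ as well. Consequently $-\tfrac12 a^{-s}A_w(s-1)\to -\tfrac12 a^{-s}$ and $\frac{a^{1-s}}{s-1}A_{iw}(1-s)\to \frac{a^{1-s}}{s-1}$, which are the first two terms on the right of \eqref{hurinteqn}.

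The heart of the argument is the triple integral. The key elementary fact is $\lim_{w\to 0}\frac{\sin(wv)\sinh(wu)}{w^2}=uv$, so that the prefactor $w^{-2}$ is absorbed and the integrand acquires an extra factor $uv$. After raising $u^{(s-3)/2}\cdot u=u^{(s-1)/2}$ and $v^{(s-1)/2}\cdot v=v^{(s+1)/2}$ and pulling the $x$-integration outside, the remaining inner double integral over $u$ and $v$ is precisely the left-hand side of Lemma \ref{useful}. Substituting its value, namely $\frac{\sqrt{\pi}}{2^{(s+3)/2}}(ax)^{(s-1)/2}e^{-ax}$ (legitimate since $a>0$ and $1<\textup{Re}(s)<2$ lies in $\textup{Re}(s)>-1$), the constant $2^{(s+3)/2}/\sqrt{\pi}$ cancels, the powers of $a$ combine as $a^{(1-s)/2}a^{(s-1)/2}=1$, and $x^{(s-1)/2}x^{(s-1)/2}=x^{s-1}$, leaving exactly $\frac{1}{\Gamma(s)}\int_0^\infty x^{s-1}\left(\frac{1}{e^x-1}-\frac1x+\frac12\right)e^{-ax}\,dx$, the third term of \eqref{hurinteqn}.

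The only step requiring care—and the main obstacle—is justifying the interchange of $\lim_{w\to 0}$ with the triple integration. I would do this by dominated convergence after restricting to $|w|\le 1$: the power-series bounds $|\sin(wv)/w|\le v\cosh(|w|v)$ and $|\sinh(wu)/w|\le u\cosh(|w|u)$ give $\left|\frac{\sin(wv)\sinh(wu)}{w^2}\right|\le uv\cosh(u)\cosh(v)$ uniformly in $|w|\le 1$. Together with the Gaussian factor $e^{-(u^2+v^2)}$ and the integrability of the remaining factors over $(u,v,x)$ throughout the strip $1<\textup{Re}(s)<2$—the very integrability underlying the convergence of the integral in Theorem \ref{befhermite}—this produces a $w$-independent integrable majorant. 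This is the same type of justification already used in the discussion following \eqref{zeta0sa} and in the remark after Theorem \ref{mtr}, so no genuinely new difficulty arises.
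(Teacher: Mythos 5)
You follow exactly the route of the paper's own proof: let $w\to 0$ in Theorem \ref{befhermite}, identify the left-hand side through \eqref{zeta0sa} and Theorem \ref{hurwitzsc}, note that $A_w(s-1)\to 1$ and $A_{iw}(1-s)\to 1$, and collapse the $(u,v)$-integration with Lemma \ref{useful} after moving the $x$-integration outside; the algebra (cancellation of $2^{(s+3)/2}/\sqrt{\pi}$, of the powers of $a$, and the recombination $x^{(s-1)/2}\cdot x^{(s-1)/2}=x^{s-1}$) is correct, as is the elementary bound $\left|\sin(wv)\sinh(wu)/w^{2}\right|\le uv\cosh(u)\cosh(v)$ for $|w|\le 1$.

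The one substantive defect sits precisely in the step you single out as the main obstacle: the $w$-independent majorant you propose is \emph{not} integrable over $(u,v,x)$, so neither dominated convergence nor the Fubini rearrangement that "pulls the $x$-integration outside" can be justified by absolute integrability. As $x\to\infty$ the factor $\frac{1}{e^{x}-1}-\frac{1}{x}+\frac{1}{2}$ tends to $\frac{1}{2}$, while $J_{\frac{s-1}{2}}\left(\frac{avx}{u}\right)$ is only $O(x^{-1/2})$ and oscillates; hence, for every fixed $u,v>0$, the absolute value of the integrand is of order $x^{\frac{\mathrm{Re}(s)}{2}-1}|\cos(avx/u-\phi)|$, whose $x$-integral diverges throughout $1<\mathrm{Re}(s)<2$. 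In other words, the convergence "underlying Theorem \ref{befhermite}" that you appeal to is only \emph{conditional} in $x$ (it comes from the oscillation of the Bessel factor), not absolute, so no integrable majorant of the proposed form exists. In fairness, the paper's published proof commits the same error — it asserts the triple integral in \eqref{hm90} is absolutely convergent "because of the presence of the exponential function" — so your write-up reproduces the paper's argument together with its gap. Two rigorous repairs: (a) insert a convergence factor $e^{-\epsilon x}$, perform the rearrangement and the limit $w\to 0$ in the regularized, genuinely absolutely convergent integral, and then let $\epsilon\to 0^{+}$; or, cleaner, (b) bypass \eqref{hm90} and instead let $w\to 0$ in Theorem \ref{hermite}, whose triple integral \emph{does} admit a $w$-free integrable majorant (use $\left|\sin(wv)\sinh(wu)/w^2\right|\le uv\cosh(u)\cosh(v)$ together with $\frac{1}{e^{y}-1}\le\frac{1}{y}$), and then quote \eqref{3in1o} from the proof of Lemma \ref{triple integral to one integral}, which is verbatim the integral appearing in \eqref{hurinteqn}.
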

\begin{proof}
Let $w\to 0$ on both sides of \eqref{hm90}. Since $a+1>1$, by Theorem \ref{hurwitzsc} and \eqref{zeta0sa}, the left-hand side becomes $\zeta(s, a+1)$. On the other hand, note that the triple integral on the right-hand side of \eqref{hm90} is absolutely convergent because of the presence of exponential function. Hence interchange the order of integration using Fubini's theorem so that the integration is first performed on $u$, then on $v$ and finally on $x$, then interchange the order of limit and integration, permissible due to the uniform convergence of the integral at the upper and lower limits in any compact subsets of the $w$-complex plane, and finally invoke Lemma \ref{useful} so as to arrive at the integral on the right-hand side of \eqref{hurinteqn}.
\end{proof}

%%%We now generalize Hermite's formula for $\zeta(s, a)$ in the setting of $\zeta_w(s, a)$. This result is pivotal to the theory of $\zeta_w(s, a)$.
%%%\begin{theorem}\label{hermite}
%%%Let $A_w(z)$ be defined in \eqref{ab}. For $a>0$ and $1<\mathrm{Re}(s)<2$, we have
%%%\begin{align}\label{hermiteeqn}
%%%\zeta_w(s,a+1)&=-\frac{a^{-s}}{2}A_w(s-1)+\frac{a^{1-s}}{s-1}A_{iw}(1-s)\nonumber\\
%%%&\quad+\frac{2^{s+2}a^{1-s}}{w^2\Gamma\left(1-\frac{s}{2}\right)\Gamma(s)}\int_0^\infty\int_1^\infty\int_{0}^\infty \frac{u^{s-2}e^{-(u^2+v^2)}\sin(wv)\sinh(wu)}{(e^{\frac{2\pi avt}{u}}-1)(t^2-1)^{\frac{s}{2}}}\ dvdtdu.
%%%\end{align}
%%%\end{theorem}
%%%\begin{remark}
%%%Later in Section \ref{lseproof}, we show that the triple integral on the right-hand side of \eqref{hermiteeqn} is analytic in $-1<\textup{Re}(s)<2$, and hence $\zeta_w(s, a)$ can be analytically continued in $-1<\textup{Re}(s)<2, s\neq 1$ by defining $\zeta_w(s, a)$ to be the right-hand side of \eqref{hermiteeqn}.
%%%\end{remark}
\begin{proof}[Theorem \textup{\ref{hermite}}][]
The Legendre's integral for $t>0$ is given by \cite{riemann}
\begin{align}\label{legendre}
\int_0^\infty \frac{\sin(\pi yt)}{e^{\pi y}-1}\ dy=\frac{1}{e^{2\pi t}-1}-\frac{1}{2\pi t}+\frac{1}{2}.
\end{align}
Invoke the above integral evaluation in the first step below to observe that the triple integral in Theorem \ref{befhermite} can be written in the form
\begin{align}\label{hm10}
&\int_0^\infty\int_0^\infty\int_0^\infty u^{\frac{s-3}{2}}v^{\frac{s-1}{2}}e^{-(u^2+v^2)}\sin(wv)\sinh(wu)J_{\frac{s-1}{2}}\left(\frac{avx}{u}\right)x^{\frac{s-1}{2}}\left(\frac{1}{e^{x}-1}-\frac{1}{x}+\frac{1}{2}\right)\ dxdudv\nonumber\\
&=\int_0^\infty\int_0^\infty\int_0^\infty u^{\frac{s-3}{2}}v^{\frac{s-1}{2}}e^{-(u^2+v^2)}\sin(wv)\sinh(wu)J_{\frac{s-1}{2}}\left(\frac{avx}{u}\right)x^{\frac{s-1}{2}}\int_0^\infty \frac{\sin\left(\frac{xy}{2}\right)}{e^{\pi y}-1}\ dydxdudv\nonumber\\
&=\int_0^\infty\int_0^\infty u^{\frac{s-3}{2}}v^{\frac{s-1}{2}}e^{-(u^2+v^2)}\sin(wv)\sinh(wu)dudv\int_0^\infty\frac{dy}{e^{\pi y}-1}\int_0^\infty J_{\frac{s-1}{2}}\left(\frac{avx}{u}\right)x^{\frac{s-1}{2}}\sin\left(\frac{xy}{2}\right)dx,
\end{align}
by interchanging the order of integration, which is easily justified. Now from \cite[p.~738, Formula \textbf{6.699.1}]{grn}, for $-\mathrm{Re}(\nu)-1<1+\mathrm{Re}(\lambda)<\frac{3}{2}$,
\begin{align}\label{hm00}
\int_0^\infty x^\lambda J_\nu(cx)\sin(bx)\ dx&=
\begin{cases}
2^{1+\lambda}c^{-(2+\lambda)}b\frac{\Gamma\left(\frac{2+\lambda+\nu}{2}\right)}{\Gamma\left(\frac{\nu-\lambda}{2}\right)}{}_2F_1\left(\frac{2+\lambda+\nu}{2},\frac{2+\lambda-\nu}{2};\frac{3}{2};\frac{b^2}{c^2}\right),\hspace{1mm}\text{if}\hspace{1mm}0<b<c,\\
\left(\frac{c}{2}\right)^\nu b^{-(\nu+\lambda+1)}\frac{\Gamma(\nu+\lambda+1)}{\Gamma(\nu+1)}\sin\left(\pi\left(\frac{1+\lambda+\nu}{2}\right)\right),\hspace{15mm}\text{if}\hspace{1mm}0<c<b\\
\times{}_2F_1\left(\frac{2+\lambda+\nu}{2},\frac{1+\lambda+\nu}{2};\nu+1;\frac{c^2}{b^2}\right).
\end{cases}
\end{align}
Let $\nu=\frac{s-1}{2},\ c=\frac{av}{u},\ b=\frac{y}{2},\ \lambda=\frac{s-1}{2}$ in \eqref{hm00} so that for $-1<\mathrm{Re}(s)<2$,
\begin{align}\label{MT of bessel j and sin 2}
\int_0^\infty J_{\frac{s-1}{2}}\left(\frac{avx}{u}\right)x^{\frac{s-1}{2}}\sin\left(\frac{xy}{2}\right)dx&=
\begin{cases}
0,\hspace{3mm}\text{if}\hspace{1mm} 0<y<\frac{2av}{u},\\
\left(\frac{av}{2u}\right)^{\frac{s-1}{2}}\frac{y^{-s}\Gamma(s)\sin\left(\frac{\pi s}{2}\right)}{2^{-s}\Gamma\left(\frac{s+1}{2}\right)}{}_1F_0\left(\frac{s}{2};-;\frac{4a^2v^2}{u^2y^2}\right),\hspace{2mm} \text{if}\hspace{1mm}y>\frac{2av}{u}
\end{cases}\nonumber\\
&=\begin{cases}
0,\hspace{3mm}\text{if}\hspace{1mm} 0<y<\frac{2av}{u},\\
\frac{2^{\frac{3s-1}{2}}\sqrt{\pi}}{a^{\frac{1-s}{2}}\Gamma\left(1-\frac{s}{2}\right)}\frac{v^{\frac{s-1}{2}}u^{\frac{s+1}{2}}}{(u^2y^2-4a^2v^2)^{\frac{s}{2}}},\hspace{2mm} \text{if}\hspace{1mm}y>\frac{2av}{u},
\end{cases}
\end{align}
where in the last step, we used \eqref{dup}, \eqref{refl} and the generalized binomial theorem \cite[p.~108]{temme}. Thus from \eqref{hm10} and \eqref{MT of bessel j and sin 2},
\begin{align}\label{anot}
&\int_0^\infty\int_0^\infty\int_0^\infty u^{\frac{s-3}{2}}v^{\frac{s-1}{2}}e^{-(u^2+v^2)}\sin(wv)\sinh(wu)J_{\frac{s-1}{2}}\left(\frac{avx}{u}\right)x^{\frac{s-1}{2}}\left(\frac{1}{e^{x}-1}-\frac{1}{x}+\frac{1}{2}\right)\ dxdudv\nonumber\\
&=\frac{2^{\frac{3s-1}{2}}\sqrt{\pi}}{a^{\frac{1-s}{2}}\Gamma\left(1-\frac{s}{2}\right)}\int_0^\infty\int_0^\infty\int_{\frac{2av}{u}}^\infty \frac{(uv)^{s-1}e^{-(u^2+v^2)}\sin(wv)\sinh(wu)}{(e^{\pi y}-1)(u^2y^2-4a^2v^2)^{\frac{s}{2}}}\ dydudv\nonumber\\
&=\frac{2^{\frac{s+1}{2}}\sqrt{\pi}a^{\frac{1-s}{2}}}{\Gamma\left(1-\frac{s}{2}\right)}\int_0^\infty\int_0^\infty\int_{1}^\infty \frac{u^{s-2}e^{-(u^2+v^2)}\sin(wv)\sinh(wu)}{(e^{\frac{2\pi avt}{u}}-1)(t^2-1)^{\frac{s}{2}}}\ dtdudv\nonumber\\
&=\frac{2^{\frac{s+1}{2}}\sqrt{\pi}a^{\frac{1-s}{2}}}{\Gamma\left(1-\frac{s}{2}\right)}\int_0^\infty\int_1^\infty\int_{0}^\infty \frac{u^{s-2}e^{-(u^2+v^2)}\sin(wv)\sinh(wu)}{(e^{\frac{2\pi avt}{u}}-1)(t^2-1)^{\frac{s}{2}}}\ dvdtdu,
\end{align}
where, in the last step, we interchanged the order of integration using Fubini's theorem. This is now justified. 

For brevity, we define
\begin{align*}
f(u,v):=f(u, v, a, s, w):=\int_1^\infty \frac{u^{s-2}e^{-(u^2+v^2)}\sin(wv)\sinh(wu)}{\left(e^{\frac{2\pi avt}{u}}-1\right)(t^2-1)^{\frac{s}{2}}}\ dt.
\end{align*}
We first show
\begin{align}\label{fubini2}
\int_0^\infty\int_0^\infty f(u,v)\ dudv=\int_0^\infty\int_0^\infty f(u,v)\ dvdu.
\end{align}
Upon deriving \eqref{fubini2}, the interchange in the last step in \eqref{anot} will follow provided we show
\begin{align}\label{fubini6}
\int_0^\infty\int_1^\infty \frac{e^{-v^2}\sin(wv)}{\left(e^{\frac{2\pi avt}{u}}-1\right)(t^2-1)^{\frac{s}{2}}}\ dtdv=\int_1^\infty\int_0^\infty \frac{e^{-v^2}\sin(wv)}{\left(e^{\frac{2\pi avt}{u}}-1\right)(t^2-1)^{\frac{s}{2}}}\ dvdt.
\end{align}
To prove \eqref{fubini2}, it suffices to show
\begin{align*}
I_1:=\left|\int_0^\infty\int_0^\infty \int_1^\infty \frac{u^{s-2}e^{-(u^2+v^2)}\sin(wv)\sinh(wu)}{\left(e^{\frac{2\pi avt}{u}}-1\right)(t^2-1)^{\frac{s}{2}}}\ dtdudv\right|<\infty
\end{align*}
and 
\begin{align*}
I_2:=\left|\int_0^\infty\int_0^\infty \int_1^\infty \frac{u^{s-2}e^{-(u^2+v^2)}\sin(wv)\sinh(wu)}{\left(e^{\frac{2\pi avt}{u}}-1\right)(t^2-1)^{\frac{s}{2}}}\ dtdvdu\right|<\infty.
\end{align*}
Using $e^{\frac{2\pi a vt}{u}}-1\geq\frac{2\pi a vt}{u}$, we see that for $1<\mathrm{Re}(s)<2$,  
\begin{align*}
I_1&\leq\frac{1}{2\pi a}\int_0^\infty u^{\mathrm{Re}(s)-1}e^{-u^2}|\sinh(wu)|\ du \int_0^\infty v^{-1}e^{-v^2}|\sin(wv)|\ dv\int_1^\infty \frac{1}{t(t^2-1)^{\frac{\mathrm{Re}(s)}{2}}}\ dt\nonumber\\
&<\infty,
\end{align*}
as all integrals are finite. Similarly $I_2<\infty$. This implies \eqref{fubini2}.

To prove \eqref{fubini6}, we need to show
\begin{align*}
J_1:=\left|\int_0^\infty\int_1^\infty \frac{e^{-v^2}\sin(wv)}{\left(e^{\frac{2\pi avt}{u}}-1\right)(t^2-1)^{\frac{s}{2}}}\ dtdv\right|<\infty
\end{align*}
and 
\begin{align*}
J_2:=\left|\int_1^\infty\int_0^\infty \frac{e^{-v^2}\sin(wv)}{\left(e^{\frac{2\pi avt}{u}}-1\right)(t^2-1)^{\frac{s}{2}}}\ dvdt\right|<\infty.
\end{align*}
This follows from using $e^{\frac{2\pi a vt}{u}}-1\geq\frac{2\pi a vt}{u}$ again since
\begin{align*}
J_1\leq\frac{u}{2\pi a}\int_0^\infty v^{-1}e^{-v^2}|\sin(wv)|\ dv\int_1^\infty\frac{1}{t(t^2-1)^{\frac{\mathrm{Re}(s)}{2}}}\ dtdv<\infty,
\end{align*}
as $1<\mathrm{Re}(s)<2$. Similarly, $J_2<\infty$. This proves \eqref{fubini6}, and so the last step in \eqref{anot} follows from \eqref{fubini2} and \eqref{fubini6}.

Upon substituting \eqref{anot} in Theorem \ref{befhermite}, we arrive at \eqref{hermiteeqn}.
\end{proof}
Before we derive Hermite's formula as a special case of Theorem \ref{hermite}, we require a lemma which is interesting in itself since it requires Ramanujan's transformation \eqref{mrram} in its derivation.
\begin{lemma}\label{triple integral to one integral}
For\footnote{According to Remark \ref{validityoflemma4.9} below, this result is actually valid for $-1<\mathrm{Re}(s)<2$.} $1<\mathrm{Re}(s)<2$ and $a>0$, we have
\begin{align}\label{3in1}
\int_0^\infty\int_1^\infty\int_0^\infty \frac{u^{s-1}v\ e^{-(u^2+v^2)}}{(e^{\frac{2\pi avt}{u}}-1)(t^2-1)^{\frac{s}{2}}}\ dvdtdu=\frac{\Gamma(s)\Gamma\left(1-\frac{s}{2}\right)}{2^{s+1}a^{1-s}}\int_0^\infty \frac{\sin\left(s\tan^{-1}\left(\frac{y}{a}\right)\right)}{\left(e^{2\pi y}-1\right)(y^2+a^2)^{\frac{s}{2}}}\ dy.
\end{align}
\end{lemma}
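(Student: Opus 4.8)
The plan is to collapse the triple integral to a single one by first carrying out the $v$-integration, which—after a change of variable—produces the integral analogue of the partial theta function to which Ramanujan's transformation \eqref{mrram} applies, and then performing the remaining two integrations in closed form. Fix $u$ and $t$ and substitute $v=uy/(at)$ in the innermost integral, so that the argument of the exponential in the denominator becomes $2\pi y$ and
\[
\int_0^\infty \frac{v\,e^{-v^2}}{e^{2\pi avt/u}-1}\,dv=\frac{u^2}{a^2t^2}\int_0^\infty\frac{y\,e^{-\pi\alpha^2 y^2}}{e^{2\pi y}-1}\,dy,\qquad \alpha:=\frac{u}{\sqrt{\pi}\,at}.
\]
The inner integral is precisely the integral analogue of the partial theta function occurring in \eqref{mrram}. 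I would then rearrange \eqref{mrram} into an expression for $\int_0^\infty y\,e^{-\pi\alpha^2 y^2}/(e^{2\pi y}-1)\,dy$ in terms of the reciprocal integral (with $\alpha$ replaced by $1/\alpha$, i.e. carrying the Gaussian $e^{-\pi^2a^2t^2y^2/u^2}$) together with the two elementary terms $1/(4\pi\alpha)$ and $1/(4\pi\alpha^2)$.

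Substituting this back splits the left-hand side of \eqref{3in1} into three triple integrals, say $P_1+P_2+P_3$. The two ``elementary'' pieces $P_1,P_2$ carry no $y$-dependence; I would evaluate them with the substitution $\tau=t^2-1$ (turning the $t$-integrals into Beta integrals) together with the Gaussian $u$-integrals, simplifying by the duplication formula \eqref{dup}. For the main piece $P_3$ I would interchange the order so that the $y$-integral is outermost, perform the $t$-integral via $\tau=t^2-1$ (producing $\Gamma(1-s/2)$, a residual Gaussian $e^{-\pi^2a^2y^2/u^2}$, and a factor $(\pi a y/u)^{s-2}$), and then the $u$-integral via $\int_0^\infty e^{-u^2-b^2/u^2}\,du=\tfrac{\sqrt{\pi}}{2}e^{-2b}$ with $b=\pi a y$. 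The powers of $u$ cancel exactly, and what remains is proportional to $\int_0^\infty y^{s-1}e^{-2\pi a y}/(e^{2\pi y}-1)\,dy$; expanding $1/(e^{2\pi y}-1)$ geometrically identifies this with $\Gamma(s)(2\pi)^{-s}\zeta(s,a+1)$.

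Finally I would assemble $P_1+P_2+P_3$ and recast it as the right-hand side of \eqref{3in1}. Using $\zeta(s,a)=\zeta(s,a+1)+a^{-s}$ from \eqref{hurrel} together with Hermite's formula \eqref{hermiteor} for $\zeta(s,a)$, the combination $\zeta(s,a+1)+\tfrac12 a^{-s}-a^{1-s}/(s-1)$ equals $2\int_0^\infty \sin(s\tan^{-1}(y/a))/((e^{2\pi y}-1)(y^2+a^2)^{s/2})\,dy$, and a final constant check with \eqref{dup} yields the claimed identity. The main obstacle here is analytic bookkeeping rather than any single evaluation: the three pieces converge on overlapping but distinct half-strips, so the splitting and all Fubini interchanges must be justified within $1<\mathrm{Re}(s)<2$, exactly in the spirit of the proof of Theorem \ref{hermite} where the bound $e^{2\pi a vt/u}-1\ge 2\pi a vt/u$ is used; one must also verify that the pointwise application of \eqref{mrram} for each $(u,t)$ is legitimate and that recombining the (individually more restrictive) pieces $P_1,P_2,P_3$ is valid on the full strip. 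I note that the $t$-integration can alternatively be performed directly on the unflipped Gaussian, where an Euler integral evaluates the $t$-integral to ${}_2F_1\bigl(\tfrac{s}{2}+1,\tfrac{s+1}{2};\tfrac32;-y^2/a^2\bigr)$, which collapses by an elementary quadratic transformation to $\sin(s\tan^{-1}(y/a))/(y(a^2+y^2)^{s/2})$ and delivers the right-hand side without recourse to \eqref{hermiteor}; but the partial-theta route above is the one genuinely tied to \eqref{mrram}.
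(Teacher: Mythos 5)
Your main route is, in essence, the paper's own proof: substitute so that Ramanujan's transformation \eqref{mrram} applies to the innermost $v$-integral, split the result into three pieces (the paper's $\mathfrak{I}_1,\mathfrak{I}_2,\mathfrak{I}_3$ in \eqref{i1i2i3}), evaluate the two elementary ones by Beta and Gaussian integrals with \eqref{dup}, and reduce the main piece to a constant multiple of $\int_0^\infty x^{s-1}e^{-2\pi ax}(e^{2\pi x}-1)^{-1}dx$. You deviate in two places. First, inside the main piece you integrate over $t$ before $u$, so that only $\int_0^\infty\tau^{-s/2}e^{-c\tau}\,d\tau$ and $\int_0^\infty e^{-u^2-b^2/u^2}du=\tfrac{\sqrt{\pi}}{2}e^{-2b}$ are needed; the paper integrates over $u$ first, which produces a Bessel function $K_{(s-1)/2}$ via \eqref{bessel k integral} and then requires the integral formula \eqref{K bessel integral}. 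Your order is simpler and lands on exactly the paper's \eqref{tripletrouble1}. Second---the one substantive difference---your closing step identifies the surviving integral with $\Gamma(s)(2\pi)^{-s}\zeta(s,a+1)$ and invokes Hermite's formula \eqref{hermiteor} together with \eqref{hurrel}, whereas the paper never touches \eqref{hermiteor}: it instead converts $\int_0^\infty x^{s-1}e^{-ax}\bigl(\tfrac{1}{e^x-1}-\tfrac1x+\tfrac12\bigr)dx$ into the right-hand side of \eqref{3in1} using Legendre's integral \eqref{legendre} and the elementary evaluation $\int_0^\infty e^{-\beta x}x^{\mu-1}\sin(\delta x)\,dx=\Gamma(\mu)\sin\bigl(\mu\tan^{-1}(\delta/\beta)\bigr)(\beta^2+\delta^2)^{-\mu/2}$. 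This matters for the paper's architecture: Lemma \ref{triple integral to one integral} is used there precisely to \emph{derive} Hermite's formula as the $w\to0$ case of Theorem \ref{hermite}, so your appeal to \eqref{hermiteor} would make that later derivation circular within the paper, even though it is not a mathematical gap (Hermite's formula is classical and proved independently of anything here). To serve the paper's purpose, finish with the Legendre-integral step instead. Finally, your sketched alternative is correct and genuinely different: keeping the unflipped Gaussian from $v=ux/(at)$, the $u$-integral is a Gamma integral, the $t$-integral is an Euler integral yielding ${}_2F_1\bigl(\tfrac{s}{2}+1,\tfrac{s+1}{2};\tfrac32;-\tfrac{y^2}{a^2}\bigr)$, and the quadratic collapse of this ${}_2F_1$ to $\sin\bigl(s\tan^{-1}(y/a)\bigr)$ divided by $s(y/a)(1+y^2/a^2)^{s/2}$ gives \eqref{3in1} with the correct constants (I checked them). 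Since every integrand in that route is positive, Tonelli handles all interchanges for free, and neither \eqref{mrram} nor \eqref{hermiteor} is needed; its only cost is losing the connection to Ramanujan's transformation that the paper deliberately showcases.
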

\begin{proof}
%First we apply Fubini's theorem on triple integral in \eqref{3in1} so that
%\begin{align}\label{triple integral}
%\int_0^\infty\int_0^\infty\int_1^\infty \frac{u^{s-1}v \ e^{-(u^2+v^2)}}{(e^{\frac{2\pi avt}{u}}-1)(t^2-1)^{\frac{s}{2}}}\ dtdudv&=\int_0^\infty\int_1^\infty \frac{u^{s-1}e^{-u^2}}{(t^2-1)^{\frac{s}{2}}}\ dtdu\int_0^\infty \frac{v\ e^{-v^2}}{e^{\frac{2\pi avt}{u}}-1}\ dv,
%\end{align}
To transform the innermost integral on the left-hand side of \eqref{3in1}, we invoke \eqref{mrram} with $\a=\frac{u}{\sqrt{\pi}at}$, $\a\b=1$, so as to obtain
\begin{align*}
\int_0^\infty \frac{x\ e^{-\frac{u^2x^2}{a^2t^2}}}{e^{2\pi x}-1}\ dx&=\frac{at}{4\sqrt{\pi}u}-\frac{a^2t^2}{4u^2}+\frac{\pi^{\frac{3}{2}}a^3t^3}{u^3}\int_0^\infty \frac{x\ e^{-(\pi atx)^2/u^2}}{e^{2\pi x}-1}\ dx.
\end{align*}
The change of variable $x=atv/u$ on the above left-hand side gives
\begin{align}\label{integral v2}
\int_0^\infty \frac{v\ e^{-v^2}}{e^{\frac{2\pi avt}{u}}-1}\ dv&=\frac{u}{4\sqrt{\pi}at}-\frac{1}{4}+\frac{\pi^{\frac{3}{2}}at}{u}\int_0^\infty \frac{x\ e^{-(\pi atx)^2/u^2}}{e^{2\pi x}-1}\ dx.
\end{align}
From \eqref{integral v2},
\begin{align}\label{triple integral1}
\int_0^\infty\int_1^\infty\int_0^\infty \frac{u^{s-1}v \ e^{-(u^2+v^2)}}{(e^{\frac{2\pi avt}{u}}-1)(t^2-1)^{\frac{s}{2}}}\ dvdtdu&=\mathfrak{I}_{1}(s,a)+\mathfrak{I}_{2}(s,a)+\mathfrak{I}_{3}(s,a),
\end{align}
where
\begin{align}\label{i1i2i3}
\mathfrak{I}_{1}(s,a)&:=\frac{1}{4\sqrt{\pi}a}\int_{0}^{\infty}\int_{1}^{\infty}\frac{u^se^{-u^2}}{t(t^2-1)^{\frac{s}{2}}}\, dtdu,\nonumber\\
\mathfrak{I}_{2}(s,a)&:=-\frac{1}{4}\int_{0}^{\infty}\int_{1}^{\infty}\frac{u^{s-1}e^{-u^2}}{(t^2-1)^{\frac{s}{2}}}\, dtdu,\nonumber\\
\mathfrak{I}_{3}(s,a)&:=\pi^{\frac{3}{2}}a\int_{0}^{\infty}\int_{1}^{\infty}\frac{tu^{s-2}e^{-u^2}}{(t^2-1)^{\frac{s}{2}}}\int_0^\infty \frac{x\ e^{-(\pi atx)^2/u^2}}{e^{2\pi x}-1}\ dxdtdu.
\end{align}
Since, for $\mathrm{Re}(s)>-1$,
\begin{align*}
\int_0^\infty u^{s}e^{-u^2}du&=\frac{1}{2}\Gamma\left(\frac{s+1}{2}\right),
\end{align*}
and for $0<\mathrm{Re}(s)<2$,
\begin{align}\label{integra210}
\int_1^\infty \frac{1}{t(t^2-1)^{\frac{s}{2}}}\ dt&=\frac{1}{2}\int_0^1 x^{\frac{s}{2}-1}(1-x)^{-\frac{s}{2}}\ dx=\frac{1}{2}\Gamma\left(\frac{s}{2}\right)\Gamma\left(1-\frac{s}{2}\right),
%&=\frac{1}{2}\Gamma\left(\frac{s}{2}\right)\Gamma\left(1-\frac{s}{2}\right)\nonumber\\
\end{align}
we find that for $0<\mathrm{Re}(s)<2$,
\begin{align}\label{double integral10}
\mathfrak{I}_{1}(s,a)=\frac{1}{2^{s+3}a}\G(s)\Gamma\left(1-\frac{s}{2}\right),
\end{align}
where we used \eqref{dup} for simplification. Similarly for $1<\mathrm{Re}(s)<2$,
\begin{align}\label{double integral11}
\mathfrak{I}_{2}(s,a)=-\frac{1}{2^{s+2}}\Gamma\left(1-\frac{s}{2}\right)\Gamma\left(s-1\right).
\end{align}
Applying Fubini's theorem, this time to the last integral in \eqref{i1i2i3}, we have
\begin{align}\label{tripletrouble}
\mathfrak{I}_{3}(s,a)=\pi^{\frac{3}{2}}a\int_{0}^{\infty}\frac{x\, dx}{e^{2\pi x}-1}\int_{1}^{\infty}\frac{t\, dt}{(t^2-1)^{\frac{s}{2}}}\int_{0}^{\infty}u^{s-2}e^{-u^2-(\pi atx)^2/u^2}\, du.
\end{align}
The formula \cite[p. 370, Formula \textbf{3.471.9}]{grn}
\begin{align*}
\int_0^\infty y^{\nu-1}e^{-\frac{\beta}{y}-\gamma y}\, dy=2\left(\frac{\beta}{\gamma}\right)^{\frac{\nu}{2}}K_\nu(2\sqrt{\beta\gamma}),
\end{align*}
valid for $\mathrm{Re}(\beta)>0$ and $\mathrm{Re}(\gamma)>0$, gives upon letting $\nu=\frac{s-1}{2},\ \gamma=1,\ \beta=(\pi atx)^2$ and employing change of variable $y=u^2$,
\begin{align}\label{bessel k integral}
\int_0^\infty u^{s-2}e^{-u^2}e^{-\frac{\pi^2a^2t^2x^2}{u^2}}\ du=(\pi atx)^{\frac{s-1}{2}}K_{\frac{s-1}{2}}(2a\pi tx).
\end{align}
Moreover, from \cite[p.~346, Formula \textbf{2.16.3.8}]{prudII}, we have for $r>0$, Re$(c)>0$ and Re$(b)>0$,
\begin{equation*}
\int_{r}^{\infty}t^{\nu+1}(t^2-r^2)^{b-1}K_{\nu}(ct)\, dt=2^{b-1}r^{b+\nu}c^{-b}\G(b)K_{\nu+b}(rc).
\end{equation*}
Letting $r=1$, $b=1-s/2$, $\nu=(s-1)/2$ and $c=2\pi ax$ in the above equation, we note that for Re$(s)<2$, $a>0$ and $x>0$,  
\begin{align}\label{K bessel integral}
\int_1^\infty \frac{t^{\frac{s+1}{2}}}{(t^2-1)^{\frac{s}{2}}}K_{\frac{s-1}{2}}(2a\pi tx)\ dt=\frac{1}{4}\pi^{\frac{s}{2}-1}(ax)^{\frac{s-3}{2}}e^{-2\pi a x}\Gamma\left(1-\frac{s}{2}\right).
\end{align}
From \eqref{tripletrouble}, \eqref{bessel k integral} and \eqref{K bessel integral}, we deduce that for $1<\textup{Re}(s)<2$,
\begin{align}\label{tripletrouble1}
\mathfrak{I}_{3}(s,a)&=\frac{1}{4}\pi^sa^{s-1}\G\left(1-\frac{s}{2}\right)\int_{0}^{\infty}\frac{x^{s-1}e^{-2\pi ax}}{e^{2\pi x}-1}\, dx\nonumber\\
&=\frac{a^{s-1}}{2^{s+2}}\G\left(1-\frac{s}{2}\right)\int_0^\infty x^{s-1}e^{-ax}\left(\frac{1}{e^{x}-1}-\frac{1}{x}+\frac{1}{2}\right)\ dx-\mathfrak{I}_{2}(s,a)-\mathfrak{I}_{1}(s,a),
\end{align}
as can be seen from \eqref{double integral10} and \eqref{double integral11}. Thus combining \eqref{triple integral1}, \eqref{tripletrouble1}, and then using \eqref{legendre} in the second step, we observe that for $1<\textup{Re}(s)<2$,
\begin{align}\label{3in1o}
\int_0^\infty\int_1^\infty\int_0^\infty \frac{u^{s-1}v\ e^{-(u^2+v^2)}dvdtdu}{(e^{\frac{2\pi avt}{u}}-1)(t^2-1)^{\frac{s}{2}}}&=\frac{\Gamma\left(1-\frac{s}{2}\right)}{2^{s+2}a^{1-s}}\int_0^\infty x^{s-1}e^{-ax}\left(\frac{1}{e^{x}-1}-\frac{1}{x}+\frac{1}{2}\right)\ dx\nonumber\\
&=\frac{\Gamma\left(1-\frac{s}{2}\right)}{2^{s+1}a^{1-s}}\int_0^\infty\int_0^\infty \frac{e^{-ax}x^{s-1}\sin(yx)}{e^{2\pi y}-1}\ dydx\nonumber\\
&=\frac{\Gamma\left(1-\frac{s}{2}\right)}{2^{s+1}a^{1-s}}\int_0^\infty\int_0^\infty \frac{e^{-ax}x^{s-1}\sin(yx)}{e^{2\pi y}-1}\ dxdy,
\end{align}
where in the last step we again applied Fubini's theorem. For $\mathrm{Re}(\mu)>-1$ and $\mathrm{Re}(\beta)>|\mathrm{Im}(\delta)|$, we have \cite[p.~502, Formula \textbf{3.944.5}]{grn}
\begin{align*}
\int_0^\infty e^{-\beta x}x^{\mu-1}\sin(\delta x)dx=\Gamma(\mu)\frac{\sin\left(\mu\tan^{-1}(\delta/\beta)\right)}{(\beta^2+\delta^2)^{\frac{s}{2}}}.
\end{align*}
Set $\mu=s,\ \beta=a$ and $\delta=y$ in the above identity, then substitute the resulting evaluation for the inner integral over $x$ on the right-hand side of \eqref{3in1o} and simplify to arrive at \eqref{3in1}.
\end{proof}
With the help of the lemma just proved, it is straightforward to obtain Hermite's formula \eqref{hermiteor} as a special case of Theorem \ref{hermite}.
%%%\begin{corollary}
%%%For\footnote{This formula actually holds for all $s\in\mathbb{C}\backslash\{1\}$ and $\textup{Re}(a)>0$.} $1<\textup{Re}(s)<2$ and $a>0$,
%%%\begin{align}
%%%\zeta(s,a+1)=-\frac{a^{-s}}{2}+\frac{a^{1-s}}{s-1}+2\int_0^\infty \frac{\sin\left(s\tan^{-1}\left(\frac{y}{a}\right)\right)\, dy}{(e^{2\pi y}-1)(y^2+a^2)^{\frac{s}{2}}}.
%%%\end{align}
%%%\end{corollary}
\begin{proof}[\textup{\eqref{hermiteor}}][]
Let $w\rightarrow 0$ in Theorem \ref{hermite}, interchange the order of limit and triple integral and then use Lemma \ref{triple integral to one integral} and \eqref{zeta0sa}.
\end{proof}
%%%[THIS COROLLARY CAN REMAIN AS IT IS, BUT WE ANALYTICALLY CONTINUE $\zeta_w(s, a)$ IN $-1<\textup{Re}(s)<2$, THEN ONLY WE CAN CONCLUDE THAT $\lim_{w\to0}\zeta_w(s, a+1)=\zeta(s, a+1)$ in $-1<\textup{Re}(s)\leq 1$.]

\subsection{A relation between $\zeta_w(s, a)$ and ${}_1K_{z,w}(x)$}\label{relation}
\hfill\\

The two new special functions introduced in this paper, namely $\zeta_w(s, a)$ and ${}_1K_{z,w}(x)$, are related by a nice relation given in Theorem \ref{relation b/w zetaw and 1Kzw} which we now prove.
\begin{proof}[Theorem \textup{\ref{relation b/w zetaw and 1Kzw}}][]
We first transform the triple integral in \eqref{hermiteeqn}. To do this, however, we need the following generalization of the first equality in \eqref{mrram} which was established in \cite[Theorem 1.1]{drz5}, and which is analogous to \eqref{genthetatr}, that is, it is a transformation of the form $F(w, \a)=F(iw, \b)$:
\begin{align}\label{mrramg}
&\sqrt{\alpha}e^{\frac{w^2}{8}}\left(\frac{\sqrt{\pi}}{w}\textup{erf}\left(\frac{w}{2}\right)-\frac{4\sqrt{\pi}}{w}\int_{0}^{\infty}\frac{e^{-\pi\alpha^2 x^2}\sin(\sqrt{\pi}\alpha xw)}{e^{2\pi x}-1}\, dx\right)\nonumber\\
&=\sqrt{\beta}e^{\frac{-w^2}{8}}\left(\frac{\sqrt{\pi}}{w}\textup{erfi}\left(\frac{w}{2}\right)-\frac{4\sqrt{\pi}}{w}\int_{0}^{\infty}\frac{e^{-\pi\beta^2 x^2}\sinh(\sqrt{\pi}\beta xw)}{e^{2\pi x}-1}\, dx\right).
%&=\frac{1}{8\pi^{3/2}}\int_{0}^{\infty}\Gamma\left(\frac{-1+it}{4}\right)\Gamma\left(\frac{-1-it}{4}\right)\Xi\left(\frac{t}{2}\right)\Delta\left(\alpha,w,\frac{1+it}{2}\right)\, dt.
\end{align}
Let $\a=\frac{u}{\sqrt{\pi}at}$ in \eqref{mrramg}, simplify, and then employ change of variable $x=atv/u$ in the integral containing sine function so that
\begin{align}\label{dixRoyZah}
\int_0^\infty \frac{e^{-v^2}\sin\left(wv\right)}{e^{2\pi atv/u}-1} \ dv&=\frac{u}{4at}\mathrm{erf}\left(\frac{w}{2}\right)-\frac{\sqrt{\pi} }{4}e^{-\frac{w^2}{4}}\mathrm{erfi}\left(\frac{w}{2}\right)\nonumber\\
&\quad+\sqrt{\pi}\ e^{-\frac{w^2}{4}}\int_0^\infty \frac{e^{-(\pi atx/u)^2}\sinh\left(\frac{\pi atxw}{u}\right)}{e^{2\pi x}-1}\ dx.
%\int_0^\infty \frac{e^{-\left(\frac{ux}{at}\right)^2}\sin\left(\frac{uxw}{at}\right)}{e^{2\pi x}-1} \ dx&=\frac{1}{4}\mathrm{erf}\left(\frac{w}{2}\right)-\frac{\sqrt{\pi} at}{4u}e^{-\frac{w^2}{4}}\mathrm{erfi}\left(\frac{w}{2}\right)\nonumber\\
%&\quad+\frac{\sqrt{\pi}at\ e^{-\frac{w^2}{4}}}{u}\int_0^\infty \frac{e^{-(\pi atx/u)^2}\sinh\left(\frac{\pi atxw}{u}\right)}{e^{2\pi x}-1}\ dx.
\end{align}
Substituting \eqref{dixRoyZah} for the innermost integral of the triple integral in \eqref{hermiteeqn}, invoking \eqref{equivv} with appropriate specializations, \eqref{integra210} and the corresponding evaluation for $\int_{1}^{\infty}(t^2-1)^{-s/2}\, dt$ to evaluate the two double integrals on the right-hand side, and then simplifying, we see that for $1<\textup{Re}(s)<2$,
\begin{align}\label{dt}
&\frac{1}{w^2}\int_0^\infty\int_1^\infty\int_{0}^\infty \frac{u^{s-2}e^{-(u^2+v^2)}\sin(wv)\sinh(wu)}{(e^{\frac{2\pi avt}{u}}-1)(t^2-1)^{\frac{s}{2}}}\ dvdtdu\nonumber\\
&=\frac{1}{2^{s+3}a}\G(s)\G\left(1-\frac{s}{2}\right)A_w(s-1)-\frac{1}{2^{s+2}}\G(s-1)\G\left(1-\frac{s}{2}\right)A_{iw}(1-s)\nonumber\\
&\quad+\frac{\sqrt{\pi}\ e^{-\frac{w^2}{4}}}{w^2}\int_0^\infty\int_1^\infty\int_0^\infty\frac{u^{s-2}e^{-u^2-(\pi atx/u)^2}\sinh(wu)\sinh\left(\frac{\pi atxw}{u}\right)}{(t^2-1)^{\frac{s}{2}}(e^{2\pi x}-1)}\, dxdtdu,
\end{align}
%\begin{align}\label{dt}
%&\frac{1}{w^2}\int_0^\infty\int_0^\infty\int_{1}^\infty \frac{u^{s-2}e^{-(u^2+v^2)}\sin(wv)\sinh(wu)}{(e^{\frac{2\pi avt}{u}}-1)(t^2-1)^{\frac{s}{2}}}\ dtdudv\nonumber\\
%&=\frac{\sqrt{\pi}}{2^{s+3}aw}\textup{erf}\left(\frac{w}{2}\right)\G(s)\G\left(1-\frac{s}{2}\right){}_1F_{1}\left(\frac{s+1}{2};\frac{3}{2};\frac{w^2}{4}\right)\nonumber\\
%&\quad-\frac{\sqrt{\pi}e^{-\frac{w^2}{4}}}{2^{s+2}w}\textup{erfi}\left(\frac{w}{2}\right)\G(s-1)\G\left(1-\frac{s}{2}\right){}_1F_{1}\left(\frac{s}{2};\frac{3}{2};\frac{w^2}{4}\right)\nonumber\\
%&\quad+\frac{\sqrt{\pi}\ e^{-\frac{w^2}{4}}}{w^2}\int_0^\infty\int_1^\infty\int_0^\infty\frac{u^{s-2}e^{-u^2-(\pi atx/u)^2}\sinh(wu)\sinh\left(\frac{\pi atxw}{u}\right)}{(t^2-1)^{\frac{s}{2}}(e^{2\pi x}-1)}\, dxdtdu.
%\end{align}
where $A_w(z)$ is defined in \eqref{ab}. In the triple integral on the above right-hand side, apply Fubini's theorem again so as to integrate first on $u$, then on $x$ and finally on $t$. Then evaluate the innermost integral on $u$ by letting $z=(s-1)/2$ in Theorem \ref{integralRepr} as well as replacing $w$ and $x$ by $iw$ and $\pi a t x$ respectively so that
\begin{align*}
&\int_0^\infty\int_1^\infty\int_0^\infty\frac{u^{s-2}e^{-u^2-(\pi atx/u)^2}\sinh(wu)\sinh\left(\frac{\pi atxw}{u}\right)}{(t^2-1)^{\frac{s}{2}}(e^{2\pi x}-1)}\, dxdtdu\nonumber\\
&=\frac{w^2(\pi a)^{\frac{s-1}{2}}}{2}\int_1^\infty\int_0^\infty\frac{(tx)^{\frac{s-1}{2}}{}_1K_{\frac{s-1}{2},iw}(2\pi atx)}{(t^2-1)^{\frac{s}{2}}(e^{2\pi x}-1)}\, dxdt
\end{align*}
Now substitute the above equation in \eqref{dt}, and the resulting equation, in turn, in Theorem \ref{hermite} to arrive at Theorem \ref{relation b/w zetaw and 1Kzw}.
\end{proof}
Theorem \ref{relation b/w zetaw and 1Kzw} gives the well-known Mellin transform representation of $\zeta(s, a)$ as its special case.
\begin{corollary}\label{getsplcor}
For $\textup{Re}(a)>0$ and $\textup{Re}(s)>1$,
\begin{equation}\label{getspl}
\zeta(s, a+1)=\frac{1}{\G(s)}\int_{0}^{\infty}\frac{x^{s-1}e^{-ax}}{e^{x}-1}\, dx.
\end{equation}
\end{corollary}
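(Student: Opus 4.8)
The plan is to obtain \eqref{getspl} by letting $w\to 0$ in Theorem \ref{relation b/w zetaw and 1Kzw}, which holds for $1<\textup{Re}(s)<2$. First I would treat the left-hand side: since $a>0$ forces $a+1>1$, Theorem \ref{hurwitzsc} together with \eqref{zeta0sa} gives $\lim_{w\to 0}\zeta_w(s,a+1)=\zeta_0(s,a+1)=\zeta(s,a+1)$. For the right-hand side, the key observations are that $e^{-w^2/4}\to 1$ and, by \eqref{Basicform}, ${}_1K_{\frac{s-1}{2},iw}(2\pi atx)\to {}_1K_{\frac{s-1}{2},0}(2\pi atx)=2\pi atx\,K_{\frac{s-1}{2}}(2\pi atx)$ as $w\to 0$ (recall that ${}_1K_{z,w}(x)$ is even in $w$, so the factor $i$ is immaterial).

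Assuming the limit can be passed through the double integral, the factor $(tx)^{\frac{s-1}{2}}\cdot 2\pi atx=2\pi a\,(tx)^{\frac{s+1}{2}}$ simplifies the integrand, and the right-hand side becomes
\begin{align*}
\frac{2^{s+2}\pi^{\frac{s}{2}+1}a^{\frac{3-s}{2}}}{\Gamma\left(1-\frac{s}{2}\right)\Gamma(s)}\int_1^\infty\int_0^\infty\frac{(tx)^{\frac{s+1}{2}}K_{\frac{s-1}{2}}(2\pi atx)}{(t^2-1)^{\frac{s}{2}}(e^{2\pi x}-1)}\,dxdt.
\end{align*}
Next I would interchange the order of integration by Fubini's theorem and evaluate the inner $t$-integral using \eqref{K bessel integral}, which yields $\tfrac14\pi^{\frac{s}{2}-1}(ax)^{\frac{s-3}{2}}e^{-2\pi ax}\Gamma(1-\tfrac{s}{2})$; this step is exactly where $\textup{Re}(s)<2$ is used, and it is also what makes the two copies of $\Gamma(1-\tfrac{s}{2})$ cancel.

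Collecting the remaining powers of $x$ gives $x^{\frac{s+1}{2}+\frac{s-3}{2}}=x^{s-1}$, and once the constants are combined (the powers of $a$ and of $\pi$, and the factors of $2$, all simplify neatly), the right-hand side reduces to $\tfrac{2^s\pi^s}{\Gamma(s)}\int_0^\infty x^{s-1}e^{-2\pi ax}(e^{2\pi x}-1)^{-1}\,dx$. The substitution $y=2\pi x$ then turns the constant $2^s\pi^s/(2\pi)^s$ into $1$ and delivers exactly $\tfrac{1}{\Gamma(s)}\int_0^\infty y^{s-1}e^{-ay}(e^y-1)^{-1}\,dy$, establishing \eqref{getspl} for $1<\textup{Re}(s)<2$. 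Finally, since both sides of \eqref{getspl} are analytic in $\textup{Re}(s)>1$, the identity extends to all $\textup{Re}(s)>1$ by analytic continuation.

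The main obstacle I anticipate is rigorously justifying the interchange of $\lim_{w\to 0}$ with the double integral on the right-hand side. Because ${}_1K_{z,w}(x)$ is an entire even function of $w$, for $w$ in a fixed neighbourhood of $0$ the integrand is dominated by an integrable function independent of $w$ (the exponential decay $e^{-2\pi x}$ in $x$, together with the integrability of $t^{\frac{s+1}{2}}(t^2-1)^{-s/2}K_{\frac{s-1}{2}}(2\pi atx)$ near $t=1$ and as $t\to\infty$, controls both variables), so dominated convergence applies; the subsequent Fubini interchange is justified by the same absolute integrability. Apart from this analytic bookkeeping, the computation is a routine specialization.
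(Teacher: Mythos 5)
Your proposal is correct and follows essentially the same route as the paper: let $w\to 0$ in Theorem \ref{relation b/w zetaw and 1Kzw} (the paper performs the Fubini interchange before taking the limit and you after, which is immaterial), use \eqref{Basicform}, evaluate the inner $t$-integral via \eqref{K bessel integral}, rescale $x$ by $2\pi$, and finish by analytic continuation. The only point to add is that the continuation must also be carried out in $a$ (from $a>0$ to $\textup{Re}(a)>0$), since the corollary asserts the identity for complex $a$; this is routine because both sides are analytic in $a$ in that half-plane.
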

\begin{proof}
Interchange the order of integration using Fubini's theorem, then let $w\to 0$ in Theorem \ref{relation b/w zetaw and 1Kzw}, interchange the order of limit and integration, apply \eqref{K bessel integral} to evaluate the resulting inner integral, simplify and then employ change of variable $x\to x/(2\pi)$ in the last step so as to get \eqref{getspl} for $1<\textup{Re}(s)<2$ and $a>0$. The result now follows for all $\textup{Re}(s)>1$ and $\textup{Re}(a)>0$ by analytic continuation.
\end{proof}

%ASYMPTOTICS OF $\zeta_w(s, a)$.

%\begin{theorem}
%We have
%\begin{align}
%\zeta_w(s, a+1)-\zeta_w(s, a)=-\frac{\sqrt{\pi}}{wa^{s}}\textup{erf}\left(\frac{w}{2}\right){}_1F_{1}\left(\frac{s+1}{2};\frac{3}{2};\frac{w^2}{4}\right).
%\end{align}
%\end{theorem}

\section{Analytic continuation of $\zeta_w(s, a)$}\label{lseproof}

\subsection{Analyticity of a triple integral in $-1<\textup{Re}(s)<2$}\label{anallemma}
\hfill\\

Before proving Theorem \ref{lse}, we prove a crucial lemma.
\begin{lemma}\label{tia}
Let $w\in\mathbb{C}$ and $a>0$ be fixed. Then the triple integral 
\begin{equation}\label{ti}
\int_0^\infty\int_1^\infty\int_{0}^\infty \frac{u^{s-2}e^{-(u^2+v^2)}\sin(wv)\sinh(wu)}{(e^{\frac{2\pi avt}{u}}-1)(t^2-1)^{\frac{s}{2}}}\ dvdtdu
\end{equation}
is an analytic function of $s$ in $-1<\textup{Re}(s)<2$.
\end{lemma}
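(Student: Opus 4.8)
The plan is to establish analyticity by the same iterated application of Theorem \ref{ldcttemme1} used in Lemma \ref{double integral is analytic}, integrating first in $v$, then in $t$, and finally in $u$. For each fixed $(u,v,t)$ with $u>0$, $t>1$, $v>0$, the only $s$-dependent factors in the integrand are $u^{s-2}=e^{(s-2)\log u}$ and $(t^2-1)^{-s/2}=e^{-(s/2)\log(t^2-1)}$, both entire in $s$; hence the integrand is holomorphic in $s$ throughout the strip. Granting the requisite joint continuity, everything then reduces to producing a dominating function that is integrable and uniform on an arbitrary compact box $\Omega=\{\theta_1\le\textup{Re}(s)\le\theta_2,\ \gamma_1\le\textup{Im}(s)\le\gamma_2\}$ with $-1<\theta_1\le\theta_2<2$.

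The heart of the matter is the dominating estimate. Writing $s=\sigma+i\tau$ and using $|\sin(wv)\sinh(wu)|\le\sinh(|w|v)\sinh(|w|u)\le |w|^2\,uv\,e^{|w|(u+v)}$ together with the denominator bound $\frac{1}{e^x-1}\le M_\rho\, x^{-\rho}$, valid for all $x>0$ and any fixed $\rho\ge1$ with $M_\rho:=\sup_{x>0}x^\rho/(e^x-1)<\infty$, applied with $x=2\pi avt/u$, the modulus of the integrand is bounded above, uniformly for $s\in\Omega$, by a product $C\,U(u)V(v)T(t)$ with
\begin{equation*}
U(u)=u^{\sigma-1+\rho}e^{-u^2+|w|u},\quad V(v)=v^{1-\rho}e^{-v^2+|w|v},\quad T(t)=t^{-\rho}(t^2-1)^{-\sigma/2}.
\end{equation*}
The factor $uv$ supplied by the sine and hyperbolic sine, combined with the factor $u^{\rho}v^{-\rho}t^{-\rho}$ coming from the denominator bound, is exactly what produces this clean separation of the three variables.

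Integrability of each factor now isolates the constraints. Gaussian decay handles $u,v\to\infty$. Near $u=0$ one needs $\sigma-1+\rho>-1$, i.e. $\rho>-\sigma$, which is automatic since $\rho\ge1>-\sigma$ (as $\sigma>-1$); near $v=0$ one needs $1-\rho>-1$, i.e. $\rho<2$; near $t=1$ one needs $\sigma<2$, independently of $\rho$; and near $t=\infty$, since $T(t)\sim t^{-(\rho+\sigma)}$, one needs $\rho+\sigma>1$, i.e. $\rho>1-\sigma$. Hence any choice $\rho\in(\max(1,1-\theta_1),\,2)$ works \emph{simultaneously} for every $s\in\Omega$, and such a $\rho$ exists \emph{precisely} because $\theta_1>-1$ (which makes $1-\theta_1<2$) and $\theta_2<2$. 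Replacing $\sigma$ by $\theta_1$ or $\theta_2$ in the relevant regimes yields an $s$-independent integrable dominator $U(u)V(v)T(t)$.

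With this dominator in hand, Tonelli's theorem gives absolute convergence of the triple integral and uniform convergence on $\Omega$ at every limit of integration, while the joint continuity required to invoke Theorem \ref{ldcttemme1} (and Theorem \ref{titchcomplexthm}) at each of the three stages follows from the same bound via dominated convergence, exactly as in Lemma \ref{double integral is analytic}. Three successive applications of Theorem \ref{ldcttemme1} then show that the inner $v$-integral, the resulting $t$-integral, and finally the $u$-integral are each holomorphic in $s$ on $-1<\textup{Re}(s)<2$. The main obstacle is the genuinely competing behaviour at $u\to0$ and at $t\to\infty$ (both pushing $\rho$ upward) against the requirement $\rho<2$ forced by $v\to0$; the tunable exponent $\rho$ in the denominator bound is what reconciles them, and the fact that an admissible $\rho$ exists exactly on $-1<\textup{Re}(s)<2$ is precisely what pins down the strip of analyticity.
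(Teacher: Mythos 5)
Your proof is correct, and it takes a genuinely different route from the paper's at the decisive step. The paper establishes the dominating estimate by splitting the $t$-range at a large $M$: on $[1,M]$ it uses only the crude bound $e^{x}-1\geq x$ (your $\rho=1$), and on $[M,\infty)$ it invokes the asymptotic $\int_0^\infty e^{-v^2}\sin(wv)\left(e^{2\pi avt/u}-1\right)^{-1}dv \sim \frac{wu^2}{24a^2t^2}$, which is imported from a Ramanujan-type transformation result of \cite{drz5}; this split is forced because the $\rho=1$ bound alone gives only $t^{-1-\sigma}$ decay at infinity, which fails for $\textup{Re}(s)\leq 0$. Your tunable bound $\left(e^{x}-1\right)^{-1}\leq M_\rho x^{-\rho}$ with $\rho\in\left(\max(1,1-\theta_1),2\right)$ handles all regimes in one stroke, cleanly separates the three variables, and — a real expository gain — makes visible exactly where the strip $-1<\textup{Re}(s)<2$ comes from: $\sigma<2$ from $t\to1$, and the pinching of $\rho<2$ (forced by $v\to0$) against $\rho>1-\sigma$ (forced by $t\to\infty$) precisely at $\sigma=-1$. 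Your argument is thus self-contained, whereas the paper's is not; on the other hand, the paper's heavier intermediate bound is not wasted, since \eqref{bound for a} and \eqref{bound for a'} are reused later (in the asymptotic estimate \eqref{hermiteeqn1} for $\zeta_w(s,a+1)$ and in the convergence of $\sum_m\varphi_w(z,m\alpha)$), so the two proofs reflect different economies. One further difference worth noting: your dominated-convergence treatment of the continuity hypotheses is lighter than the paper's machinery (its Theorems \ref{titchcomplexthm}, \ref{multivariate} and \ref{lang}, plus a uniform-limit argument), and it quietly sidesteps the discontinuity of the integrand at the corner $(u,t)=(0,1)$ that the paper must circumvent explicitly with the truncated integrals $g_n(s,u)$; if you wrote this up in full you should say a sentence about why only interior-point continuity is needed, but the mathematics is sound.
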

\begin{proof}
This is achieved by a repeated application of Theorem \ref{ldcttemme1}. We first show that the integral
\begin{equation*}
\int_0^\infty\int_1^\infty\int_{0}^\infty \frac{u^{s-2}e^{-(u^2+v^2)}\sin(wv)\sinh(wu)}{(e^{\frac{2\pi avt}{u}}-1)(t^2-1)^{\frac{s}{2}}}\ dvdtdu
\end{equation*}
exists for $-1<\textup{Re}(s)<2$. (Note that Theorem \ref{hermite} implies, in particular, that the above integral exists, but only in $1<\textup{Re}(s)<2$.)

Note that from \cite[Theorem 1.4]{drz5}, as $\a\to 0$,
\begin{align*}
\frac{4}{w}\alpha^{1/4}e^{-w^2/8}\int_0^\infty\frac{e^{-\alpha x^2}\sinh(\sqrt{\alpha}xw)}{e^{2\pi x}-1}\ dx\sim\frac{1}{6}e^{-w^2/8}\alpha^{3/4}.
\end{align*}
Replacing $w$ by $iw$, we see that as $\a\to 0$,
\begin{align}\label{befb}
\int_0^\infty\frac{e^{-\alpha x^2}\sin(\sqrt{\alpha}xw)}{e^{2\pi x}-1}\ dx&\sim\frac{w\sqrt{\alpha}}{24}.
\end{align}
Let $\sqrt{\a}=u/(at)$ in \eqref{befb}, where $u$ and $a$ are fixed. Then employ the change of variable $x=atv/u$ in the integral on the resulting left-hand side so that as $t\to\infty$,
\begin{align}\label{bound for a}
\int_0^\infty \frac{e^{-v^2}\sin(wv)}{e^{\frac{2\pi avt}{u}}-1}\ dv\sim\frac{wu^2}{24a^2t^2}.
\end{align}
which implies, in particular, that for $t$ large enough,
\begin{align}\label{bound for a'}
\int_0^\infty \frac{e^{-v^2}\sin(wv)}{e^{\frac{2\pi avt}{u}}-1}\ dv\leq\frac{c|w|u^2}{24a^2t^2}
\end{align}
for a positive constant $c$. Now let $M$ be large enough. Then
{\allowdisplaybreaks\begin{align*}
&\left|\int_0^\infty\int_1^\infty\int_0^\infty \frac{u^{s-2}e^{-(u^2+v^2)}\sin(wv)\sinh(wu)}{\left(e^{\frac{2\pi avt}{u}}-1\right)(t^2-1)^{\frac{s}{2}}}\ dvdtdu\right|\\
&\leq\int_0^\infty\int_1^M\int_0^\infty \frac{u^{\mathrm{Re}(s)-2}e^{-(u^2+v^2)}|\sin(wv)\sinh(wu)|}{\left(e^{\frac{2\pi avt}{u}}-1\right)(t^2-1)^{\frac{\mathrm{Re}(s)}{2}}}\ dvdtdu\\
&\quad+\int_0^\infty\int_M^\infty\int_0^\infty \frac{u^{\mathrm{Re}(s)-2}e^{-(u^2+v^2)}|\sin(wv)\sinh(wu)|}{\left(e^{\frac{2\pi avt}{u}}-1\right)(t^2-1)^{\frac{\mathrm{Re}(s)}{2}}}\ dvdtdu\\
&\leq\frac{1}{2\pi a}\int_0^\infty u^{\mathrm{Re}(s)-1}e^{-u^2}|\sinh(wu)|\ du\int_1^M\frac{1}{t(t^2-1)^{\frac{\mathrm{Re}(s)}{2}}}\ dt\int_0^\infty v^{-1}e^{-v^2} |\sin(wv)|\ dv\\
&\quad+\frac{c|w|}{24a^2}\int_0^\infty u^{\mathrm{Re}(s)}e^{-u^2}|\sinh(wu)|\ du\int_M^\infty \frac{1}{t^2(t^2-1)^{\frac{\mathrm{Re}(s)}{2}}}\ dt.
%&<\infty,
\end{align*}}
where in the last step we used \eqref{bound for a'} as well as the fact that $e^{\frac{2\pi avt}{u}}-1\geq \frac{2\pi avt}{u}$. Note that all of the integrals in the last expression exist and are finite as long as $-1<\mathrm{Re}(s)<2$. Hence the integral in \eqref{ti} exists for $-1<\mathrm{Re}(s)<2$.

We now show that the triple integral in \eqref{ti} is analytic in $-1<\mathrm{Re}(s)<2$ too. This is done in several steps below.\\

(A) \textbf{Analyticity of $\displaystyle\int_1^\infty\int_0^\infty\frac{e^{-v^2}\sin(wv)}{\left(e^{\frac{2\pi avt}{u}}-1\right)(t^2-1)^{\frac{s}{2}}}\ dvdt$ in $-1<\textup{Re}(s)<2$:} 
Let 
\begin{align*}
f(s,t):=\frac{1}{(t^2-1)^{\frac{s}{2}}}\int_0^\infty\frac{e^{-v^2}\sin(wv)}{e^{\frac{2\pi avt}{u}}-1}\ dv.
\end{align*}
Clearly, $f(s,t)$ is continuous in $s$ on $-1<\mathrm{Re}(s)<2 $. We wish to show that $f(s, t)$ is also continuous in $t$ on $(1,\infty)$ so that hypothesis (i) of Theorem \ref{ldcttemme1} is satisfied. It suffices to show that $\int_0^\infty\frac{e^{-v^2}\sin(wv)}{e^{\frac{2\pi avt}{u}}-1}\ dv$ is continuous function of $t$ on $(1,\infty)$. This is done by appealing to Theorem \ref{titchcomplexthm}. Consider the rectangle given by $0\leq v\leq b,\ 1\leq t\leq D$. It is easy to see that $\frac{e^{-v^2}\sin(wv)}{e^{\frac{2\pi avt}{u}}-1}$ is continuous on the rectangle. Next the uniform convergence of the integral $\int_0^\infty\frac{e^{-v^2}\sin(wv)}{e^{\frac{2\pi avt}{u}}-1}\ dv$ on $(1, D)$ is proved, that is, given $\epsilon>0$, we find an $X_0$ depending only on $\epsilon$ such that for all $t\in(1,D)$ and $X\geq X_0$, 
\begin{align}\label{uc10}
\left|\int_0^\infty\frac{e^{-v^2}\sin(wv)}{e^{\frac{2\pi avt}{u}}-1}\ dv-\int_0^X \frac{e^{-v^2}\sin(wv)}{e^{\frac{2\pi avt}{u}}-1}dv\right|<\epsilon.
\end{align} 
To that end, observe that
\begin{align*}
\left|\int_0^\infty\frac{e^{-v^2}\sin(wv)}{e^{\frac{2\pi avt}{u}}-1}\ dv-\int_0^X \frac{e^{-v^2}\sin(wv)}{e^{\frac{2\pi avt}{u}}-1}dv\right|&<\int_X^\infty \frac{e^{-v^2}
|\sin(wv)|}{e^{\frac{2\pi avt}{u}}-1}dv\\
&<\frac{u}{2\pi at}\int_X^\infty e^{-v^2}\frac{|\sin(wv)|}{v}\ dv\\
&<\frac{uA_w}{2\pi a}\int_X^\infty e^{-v^2/2}dv,
\end{align*}
where, the fact that $e^{-v^2/2}|\sin(wv)|v^{-1}$ is continuous and goes to zero as $v\rightarrow\infty$ implies that there exists a constant $A_w$ such that $|e^{-v^2/2}\sin(wv)v^{-1}|<A_w$ for all large enough $v$. Therefore, one can always choose $X_0$ large enough so that for all $X\geq X_0$,
\begin{align*}
\int_X^\infty e^{-v^2/2}dv<\epsilon.\frac{2\pi a}{uA_w}.
\end{align*}
This implies \eqref{uc10} and hence the uniform convergence of $\int_0^\infty\frac{e^{-v^2}\sin(wv)}{e^{\frac{2\pi avt}{u}}-1}\ dv$ with respect to $t$ in the interval $[1,D]$. Hence by Theorem \ref{titchcomplexthm}, $f(s,t)$ is continuous on $(1,D)$ for every $D>1$, and hence on $(1,\infty)$. Thus, part (i) of Theorem \ref{ldcttemme1} is satisfied.

Part (ii) of Theorem \ref{ldcttemme1} follows easily as $f(s,t)$ is analytic in $-1<\mathrm{Re}(s)<2$ for every fixed $t\in(1, \infty)$.

To prove part (iii) of Theorem \ref{ldcttemme1}, consider the compact subset $\Omega$ of $-1<\textup{Re}(s)<2$ given by 
\begin{equation}\label{compact}
\Omega:=\{s\in\mathbb{C}: -1<\theta_1\leq\textup{Re}(s)\leq\theta_2<2, \gamma_1\leq\textup{Im}(s)\leq\gamma_2\}.
\end{equation}

Let $\epsilon>0$ be given. Let $1<A<B<1+\tau$ for $\tau$ a small positive number. Then
\begin{align*}
\left|\int_A^Bf(s,t)\ dt\right|&\leq\int_A^B\frac{1}{(t^2-1)^{\theta_2/2}}\frac{u}{2\pi at}\int_0^\infty e^{-v^2}\frac{|\sin(wv)|}{v}\ dv\nonumber\\
&\leq\frac{uB_w}{2\pi a}\int_A^B\frac{1}{t(t^2-1)^{\theta_2/2}}\ dt
\end{align*}
for all $s\in\Omega$, where $B_w$ is a positive constant depending on $w$. Thus we can choose a $\delta, 0<\delta<1,$ such that $\frac{uB_w}{2\pi a}\int_A^B\frac{1}{t(t^2-1)^{\theta_2/2}}\ dt<\epsilon$ for $|A-B|<\delta$. This proves that integral $\int_1^\infty\ f(s,t)dt$ converges uniformly at the lower limit on the compact subset $\Omega$ of $-1<\mathrm{Re}(s)<2$. 

The uniform convergence at the upper limit is now shown. From \eqref{bound for a}, we have \newline $\int_0^\infty\frac{e^{-v^2}\sin(wv)}{e^{\frac{2\pi avt}{u}}-1}\ dv\leq\frac{C_{a, u, w}}{t^{2}}$ for a positive constant $C_{a, u, w}$ depending on $a, u$ and $w$. Hence for $B>A>\kappa$, where $\kappa$ is large enough,
\begin{align*}
\left|\int_A^B f(s,t)\ dt\right|&\leq\int_A^B\frac{1}{(t^2-1)^{\theta_1/2}}\int_0^\infty\frac{e^{-v^2}\sin(wv)}{e^{\frac{2\pi avt}{u}}-1}\ dvdt\nonumber\\
%&<\int_A^B \frac{1}{(t^2-1)^{\theta_1/2}}\frac{wu^2}{24a^2t^2}\ dt\nonumber\\
&\leq C_{a, u, w}\int_A^B\frac{1}{t^2(t^2-1)^{\theta_1/2}}\ dt
\end{align*}
for all $s\in\Omega$. Thus, given an $\epsilon>0$, there is an number $\kappa>1$ such that $|\int_A^B\frac{1}{t^2(t^2-1)^{\theta_1}}\ dt|<\epsilon/C_{a, u, w}$ whenever $B>A>\kappa$. 

By invoking Theorem \ref{ldcttemme1}, we thus conclude that $\int_{1}^{\infty}f(s, t)\, dt$ is analytic in $-1<\textup{Re}(s)<2$.

Let 
\begin{align}\label{def g}
g(s,u):=u^{s-2}e^{-u^2}\sinh(wu)\int_1^\infty\int_0^\infty\frac{1}{(t^2-1)^{\frac{s}{2}}}\frac{e^{-v^2}\sin(wv)}{\left(e^{\frac{2\pi avt}{u}}-1\right)}\ dvdt.
\end{align}

(B) \textbf{Analyticity of $\int_{0}^{\infty}g(s, u)\, du$ in $-1<\textup{Re}(s)<2$:}
 We use Theorem \ref{ldcttemme1} again. Note that (A) implies that $g(s,u)$ is continuous function of $s$ in $-1<\mathrm{Re}(s)<2$. We want to show that $g(s,u)$ is also continuous in $u$ on $(0,\infty)$ so that hypothesis (i) of Theorem \ref{ldcttemme1} is satisfied. To show this we will use Theorem \ref{titchcomplexthm}. However, this requires $\frac{u^{s-2}e^{-u^2}}{(t^2-1)^{\frac{s}{2}}}\int_0^\infty\frac{e^{-v^2}\sin(wv)\sinh(wu)}{e^{\frac{2\pi avt}{u}}-1}\ dv$ to be continuous on the rectangle $0\leq u\leq c,\ 1\leq t\leq d$ for all values of $c$. But the function is not continuous at the point $(u,t)=(0,1)$. This problem is now circumvented by first proving with the help of Theorem \ref{titchcomplexthm} that for any $n\in\mathbb{N}$,
\begin{align*}
g_n(s,u):=\int_{1+\frac{1}{n}}^\infty \int_0^\infty\frac{e^{-v^2}\sin(wv)\sinh(wu)}{(t^2-1)^{\frac{s}{2}}\left(e^{\frac{2\pi avt}{u}}-1\right)}\ dvdt
\end{align*}
is continuous for $u\in(0,c)$ for all values of $c$ and then showing that the sequence $\{g_n(s,u)\}_{n=1}^{\infty}$ converges uniformly to $\int_{1}^\infty \int_0^\infty\frac{e^{-v^2}\sin(wv)\sinh(wu)}{(t^2-1)^{\frac{s}{2}}\left(e^{\frac{2\pi avt}{u}}-1\right)}\ dvdt$, whence the latter will be continuous function of $u$ in $(0, c)$.

Consider the rectangle given by $0\leq u\leq c,\ 1+\frac{1}{n}\leq t\leq d$. To show the continuity of $\frac{1}{(t^2-1)^{\frac{s}{2}}}\int_0^\infty\frac{e^{-v^2}\sin(wv)\sinh(wu)}{\left(e^{\frac{2\pi avt}{u}}-1\right)} \ dv$ on the rectangle, we apply Theorem \ref{multivariate}. However, this requires the existence of partial derivatives of the above function with respect to $u$ and $t$ as well as the boundedness of one of the partial derivatives. Note that it suffices to show the existence of the partial derivatives of $\int_0^\infty\frac{e^{-v^2}\sin(wv)\sinh(wu)}{\left(e^{\frac{2\pi avt}{u}}-1\right)} \ dv$, for then, the existence of the partial derivatives of $\frac{1}{(t^2-1)^{\frac{s}{2}}}\int_0^\infty\frac{e^{-v^2}\sin(wv)\sinh(wu)}{\left(e^{\frac{2\pi avt}{u}}-1\right)} \ dv$ follows obviously. To show the existence of partial derivatives of the former with respect to $u$ and $t$, we need to apply Theorem \ref{lang} twice. 

Let 
\begin{align*}
f_{t, u}(v):=\frac{e^{-v^2}\sin(wv)\sinh(wu)}{e^{\frac{2\pi avt}{u}}-1}.
\end{align*}
To meet the hypotheses of Theorem \ref{lang}, apart from the continuity of $f_{t, u}(v), \frac{\partial }{\partial t}f_{t, u}(v)$ and $\frac{\partial }{\partial u}f_{t, u}(v)$, we need to find the functions
$\phi_u(v), \psi_u(v)$ and $\tilde{\phi}_t(v), \tilde{\psi}_t(v)$ such that for all $v>0$ $|f_{t, u}(v)|\leq \phi_u(v)$, $|\frac{\partial }{\partial u}f_{t, u}(v)|\leq \psi_u(v)$ for $u\in[0, c]$, and $|f_{t, u}(v)|\leq \tilde{\phi}_t(v)$, $|\frac{\partial }{\partial t}f_{t, u}(v)|\leq \tilde{\psi}_t(v)$  for $t\in[1+1/n, d]$, and such that the integrals $\int_{0}^{\infty}\phi_u(v)\, dv$, $\int_{0}^{\infty}\psi_u(v)\, dv$, $\int_{0}^{\infty}\tilde{\phi}_t(v)\, dv$ and $\int_{0}^{\infty}\tilde{\psi}_t(v)\, dv$ converge. It is tedious, albeit not difficult, to see that the following choices of $\phi_u(v), \psi_u(v), \tilde{\phi}_t(v)$ and $\tilde{\psi}_t(v)$ work:
\begin{align*}
\phi_u(v)&:=\frac{u}{2\pi av(1+1/n)}e^{-v^2}|\sinh(wu)\sin(wv)|,\nonumber\\
\psi_u(v)&:=\frac{e^{-v^2}}{u}|\sin(wv)\sinh(wu)|+\frac{e^{-v^2}|\sin(wv)\sinh(wu)|}{2\pi av\left(1+1/n\right)}+\frac{ue^{-v^2}|w\sin(wv)|\cosh(|w|u)}{2\pi av\left(1+1/n\right)},\nonumber\\
\tilde{\phi}_t(v)&:=\frac{c}{2\pi atv}e^{-v^2}|\sin(wv)|\sinh(|w|c),\nonumber\\
\tilde{\psi}_t(v)&:=\frac{1}{t}e^{-v^2}|\sin(wv)|\sinh(|w|c)+\frac{c}{2\pi at^2}e^{-v^2}|\sin(wv)|\sinh(|w|c).
\end{align*}
Also, one can check that $\frac{\partial }{\partial u}f_{t, u}(v)$ is bounded on the rectangle $0\leq u\leq c,\ 1+\frac{1}{n}\leq t\leq d$. The details of this verification are left to the reader. Thus we conclude that $\frac{1}{(t^2-1)^{\frac{s}{2}}}\int_0^\infty\frac{e^{-v^2}\sin(wv)\sinh(wu)}{\left(e^{\frac{2\pi avt}{u}}-1\right)} \ dv$ is continuous on the above rectangle. Thus, the first hypothesis of Theorem \ref{titchcomplexthm} is satisfied. To see that the other holds too, we now show that $g_n(s,u)$ converges uniformly with respect to $u$ in $(0,c)$. Thus, given an $\epsilon>0$, we find an $X_0$ (depending only on $\epsilon$) such that for all $u\in(0,c)$ and $X\geq X_0$,
\begin{align}\label{uc12}
&\left|g_n(s,u)-\int_{1+\frac{1}{n}}^X\int_0^\infty \frac{e^{-v^2}\sinh(wu)\sin(wv)}{(t^2-1)^{\frac{s}{2}}\left(e^{\frac{2\pi avt}{u}}-1\right)}\ dvdt\right|<\epsilon.
\end{align}
To that end, 
\begin{align*}
\left|\int_{X}^{\infty}\int_0^\infty \frac{e^{-v^2}\sinh(wu)\sin(wv)}{(t^2-1)^{\frac{s}{2}}\left(e^{\frac{2\pi avt}{u}}-1\right)}\ dvdt\right|
&\leq\frac{u|\sin(wu)|}{2\pi a}\int_X^\infty \frac{1}{t(t^2-1)^{\frac{\textup{Re}(s)}{2}}}\ dt\int_0^\infty \frac{e^{-v^2}|\sin(wv)|}{v}\ dv\nonumber\\
&\leq \frac{cD_w}{2\pi a}\int_X^\infty \frac{1}{t(t^2-1)^{\frac{\textup{Re}(s)}{2}}}\ dt,
\end{align*}
where $D_w$ is constant and depends on $w$. Note that we can choose $X_0$ large enough so that for all $X\geq X_0$, 
\begin{align*}
\int_X^\infty \frac{1}{t(t^2-1)^{\frac{\textup{Re}(s)}{2}}}\ dt<\epsilon.\frac{2\pi a}{cD_w}.
\end{align*}
This implies \eqref{uc12} and hence the uniform convergence of $g_n(s,u)$ with respect to $u$ in $(0,c)$. Hence by Theorem \ref{titchcomplexthm}, $g_n(s,u)$ is continuous on $(0,c)$ for every $c>0$. Next, we show $g_n(s,u)$ converges uniformly to 
\begin{equation*}
\mathscr{G}(s,u):=\int_{1}^\infty\int_0^\infty\frac{e^{-v^2}\sinh(wu)\sin(wv)}{(t^2-1)^{\frac{s}{2}}\left(e^{\frac{2\pi avt}{u}}-1\right)}\ dvdt
\end{equation*}
on $(0,c)$ whence $\mathscr{G}(s,u)$ will be continuous in $u$ in $(0, c)$ for every $c>0$. To do this, we need to show that given an $\epsilon>0$, there exists $n_0$ which depends only on $\epsilon$ (and not on $u$) such that for all $n\geq n_0$,  
\begin{align}\label{uc13}
|g_n(s,u)-\mathscr{G}(s,u)|<\epsilon.
\end{align}
This indeed is true since
\begin{align*}
|g_n(s,u)-\mathscr{G}(s,u)|
%\left|\int_{1+1/n}^\infty\int_0^\infty\frac{e^{-v^2}\sinh(wu)\sin(wv)}{(t^2-1)^{\frac{s}{2}}\left(e^{\frac{2\pi avt}{u}}-1\right)}\ dvdt-\int_{1}^\infty\int_0^\infty\frac{e^{-v^2}\sinh(wu)\sin(wv)}{(t^2-1)^{\frac{s}{2}}\left(e^{\frac{2\pi avt}{u}}-1\right)}\ dvdt \right|\nonumber\\
&\leq \int_1^{1+1/n}\int_0^\infty\frac{e^{-v^2}|\sinh(wu)\sin(wv)|}{(t^2-1)^{\frac{\textup{Re}(s)}{2}}\left(e^{\frac{2\pi avt}{u}}-1\right)}\ dvdt\nonumber\\
&\leq  \frac{u|\sinh(wu)|}{2\pi a}\int_1^{1+1/n}\frac{1}{t(t^2-1)^{\frac{\textup{Re}(s)}{2}}}\ dt\int_0^\infty \frac{e^{-v^2}|\sin(wv)|}{v}\ dv \nonumber\\
&\leq F_{w,c,a}\int_1^{1+1/n}\frac{1}{t(t^2-1)^{\frac{\textup{Re}(s)}{2}}}\ dt,
\end{align*}
where $F_{w,c,a}$ is a positive constant depending on $w, c$ and $a$. It is now clear that one can choose $n_0$ large enough such that \eqref{uc13} holds. Therefore, by \cite[p.~150, Theorem 7.12]{rudin}, we see that $\mathscr{G}(s,u)$ is continuous in $u\in(0,c)$ for every $c>0$ and hence on $(0,\infty)$. This, in turn, implies the continuity of $g(s, u)$, defined in \eqref{def g}, as a function of $u$, on $(0, \infty)$. Thus hypothesis (i) of Theorem \ref{ldcttemme1} holds. Clearly, (ii) of Theorem \ref{ldcttemme1} holds since for any fixed $u\in(0, \infty)$, $g(s, u)$ is analytic in $-1<\textup{Re}(s)<2$ as can be seen from part (A). Thus we need only show that $\int_0^\infty g(s,u)\ du$ converges uniformly at both the limits on $\Omega$ defined in \eqref{compact}. This is now proved.

Let $\epsilon>0$ be given. Let $0<A<B<1$. Using \eqref{bound for a} in the second step below with $a$ and $t$ fixed and $u$ very small, we have
\begin{align*}
\left|\int_A^B g(s,u)\ du\right|&\leq \int_A^Bu^{\theta_1-2}e^{-u^2}|\sinh(wu)|\int_1^\infty\frac{1}{(t^2-1)^{\textup{Re}(s)/2}}\int_0^\infty\frac{e^{-v^2}|\sin(wv)|}{e^{\frac{2\pi avt}{u}}-1}\ dvdtdu\nonumber\\
&\leq c_1\int_A^B u^{\theta_1-2}e^{-u^2}|\sinh(wu)|\int_1^\infty \frac{1}{(t^2-1)^{\frac{1}{2}\textup{Re}(s)}}\frac{|w|u^2}{24a^2t^2}\ dtdu\nonumber\\
&\leq \frac{c_1|w|}{24a^2}\int_A^B u^{\theta_1}e^{-u^2}|\sinh(wu)|\ du\left(\int_1^{\sqrt{2}}\frac{1}{t^2(t^2-1)^{\theta_2/2}}+\int_{\sqrt{2}}^{\infty}\frac{1}{t^2(t^2-1)^{\theta_1/2}}\right)\nonumber\\
&\leq \frac{G}{24a^2}\int_A^B u^{\theta_1+1}\ du\nonumber\\
&\leq \frac{G}{24a^2}\left(B^{\theta_1+2}-A^{\theta_1+2}\right),
\end{align*}
for all $s\in\Omega$, where $G:=G_{w,\theta_1, \theta_2}$ is a positive constant. Thus one can choose a $\delta$, $0<\delta<1$, such that $\frac{G}{24a^2}(B^{\theta_1+2}-A^{\theta_1+2})<\epsilon$ for $|A-B|<\delta$. Therefore $\int_0^\infty g(s,u)\ du$ converges uniformly at the lower limit. Similarly one can show the uniform convergence at the upper limit. 

Finally, since all of the hypotheses of Theorem \ref{ldcttemme1} are satisfied, we conclude that $\int_0^\infty g(s,u)\ du$, that is, the triple integral in \eqref{ti}, is analytic in $-1<\textup{Re}(s)<2$.

\end{proof}
\begin{remark}\label{validityoflemma4.9}
The above lemma can be used to show that Lemma \textup{\ref{triple integral to one integral}} actually holds for $-1<\textup{Re}(s)<2$. To see this, first note that from \cite[p.~270]{ww} the integral on the right-hand side of \eqref{3in1} is an entire function of $s$. Hence the right-hand side itself is analytic in $\mathbb{C}\backslash(2\mathbb{N}\cup\mathbb{Z}^{-})$. However, Lemma \textup{\ref{tia}} implies, in particular that the triple integral on the left-hand side of \eqref{3in1} is analytic in $-1<\textup{Re}(s)<2$. Hence by the principle of analytic continuation, Lemma \textup{\ref{triple integral to one integral}} is valid for $-1<\textup{Re}(s)<2$.
\end{remark}

%{0pt}{5.5ex plus 1ex minus .2ex}{4.3ex plus .2ex}
\subsection{Analytic continuation of $\zeta_w(s, a)$ in $\textup{Re}(s)>-1, s\neq1$: Proof of Theorem \ref{lse}}
\hfill\\
%\begin{proof}[Theorem \textup{\ref{lse}}][]

As explained in the introduction, $\zeta_w(s, a)$, as is originally defined in \eqref{new zeta}, is analytic in Re$(s)>1$. From what we just proved in Lemma \ref{tia}, it is clear that the right-hand side of \eqref{hermiteeqn} is analytic for $-1<\textup{Re}(s)<2$ except for a simple pole at $s=1$. Thus we get a meromorphic continuation of $\zeta_w(s, a)$ in the region $-1<\textup{Re}(s)\leq 1$ after we \underline{\emph{define}} $\zeta_w(s, a+1)$ to be the right-hand side of \eqref{hermiteeqn} for $-1<\textup{Re}(s)\leq 1, s\neq1$. This gives meromorphic continuation of $\zeta_w(s, a+1)$ (and hence of $\zeta_w(s, a)$) in Re$(s)>-1$.
%From \eqref{hermiteeqn}, it is clear that $\zeta_w(s, a)$ has a simple pole at $s=1$. 

As for the residue at $s=1$, multiply both sides of \eqref{hermiteeqn} by $s-1$, let $s\to 1$, and then use the analyticity of the triple integral in the neighborhood of $s=1$ to see that
\begin{align*}
\lim_{s\to 1}(s-1)\zeta_w(s,a+1)&=A_{iw}(0)=\frac{\pi}{w^2}e^{-\frac{w^2}{4}}\textup{erfi}^{2}\left(\frac{w}{2}\right)=e^{\frac{w^2}{4}}{}_1F_{1}^{2}\left(1;\frac{3}{2};-\frac{w^2}{4}\right),
\end{align*}
as can be seen from \eqref{erferfi} and \eqref{ab}. It remains to prove only \eqref{luslus}. This is done by showing that for Re$(s)>1$, we have
\begin{equation}\label{laurentse}
\zeta_w(s, a+1)=\frac{e^{\frac{w^2}{4}}{}_1F_{1}^{2}\left(1;\frac{3}{2};-\frac{w^2}{4}\right)}{s-1}-\psi_w(a+1) +\sum_{n=1}^{\infty}c_n(w, a)(s-1)^n,
\end{equation}
which, using analytic continuation of $\zeta_w(s, a)$ in Re$(s)>-1, s\neq 1$ just proved above, is easily seen to be true in the deleted neighborhood of $s=1$. Here, $c_n(w, a)$ are constants depending only on $w, a$ and $n$. To that end, first write
%employing \eqref and \eqref in the second step below, we have
\begin{align}\label{l1}
\zeta_w(s,a)-\frac{e^{\frac{w^2}{4}}{}_1F_{1}^{2}\left(1;\frac{3}{2};-\frac{w^2}{4}\right)}{s-1}
%&=\frac{\Gamma(s)\zeta_w(s,a)-\Gamma(s)\frac{\frac{\pi}{w^2}e^{-\frac{w^2}{4}}\ \mathrm{erfi}^2\left(\frac{w}{2}\right)}{s-1}}{\Gamma(s)} \nonumber\\
=\frac{1}{\G(s)}\left\{\Gamma(s)\zeta_w(s,a)-\Gamma(s-1)e^{\frac{w^2}{4}}{}_1F_{1}^{2}\left(1;\frac{3}{2};-\frac{w^2}{4}\right)\right\}.
\end{align}
%&=\frac{4}{w^2\sqrt{\pi}\Gamma(s)}\Bigg[\pi\left(\frac{2\pi}{a-1}\right)^{\frac{s-1}{2}}\int_0^\infty\int_0^\infty\int_0^\infty\frac{(uv)^{\frac{s-1}{2}}}{u}\frac{e^{-(u^2+v^2)}\sin(wv)\sinh(wu)}{e^{\pi x}-1}\nonumber\\
%&\qquad\times J_{\frac{s-1}{2}}\left(\frac{\pi(a-1)vx}{u}\right)x^{\frac{s-1}{2}}dxdudv-\int_0^\infty\int_0^\infty\int_0^\infty\frac{e^{-(u^2+v^2)}}{u}\sin(wv)\sinh(wu)e^{-x}x^{s-2}dxdudv\Bigg]\nonumber\\
Since Re$(s)>1$, we can represent $\G(s-1)$ as an integral
\begin{equation}\label{onee}
\G(s-1)=\int_{0}^{\infty}e^{-x}x^{s-2}\, dx.
\end{equation}
Also, replacing $u$ by $u/v$ in \eqref{int2} and then letting $z=0$, we have
\begin{align}\label{twoo}
\frac{4}{w^2\sqrt{\pi}}\int_0^\infty\int_0^\infty\frac{e^{-(u^2+v^2)}}{u}\sin(wv)\sinh(wu)dudv=e^{\frac{w^2}{4}}{}_1F_{1}^{2}\left(1;\frac{3}{2};-\frac{w^2}{4}\right).
\end{align}
Hence, combining \eqref{onee} and \eqref{twoo} to write $\Gamma(s-1)e^{\frac{w^2}{4}}{}_1F_{1}^{2}\left(1;\frac{3}{2};-\frac{w^2}{4}\right)$ as a triple integral, and also using Theorem \ref{mtr} to write $\Gamma(s)\zeta_w(s,a)$ as a triple integral, we find using \eqref{l1} that
\begin{align}\label{threee}
\zeta_w(s,a)-\frac{e^{\frac{w^2}{4}}{}_1F_{1}^{2}\left(1;\frac{3}{2};-\frac{w^2}{4}\right)}{s-1}&=\frac{4}{w^2\sqrt{\pi}\Gamma(s)}\int_0^\infty\int_0^\infty\int_0^\infty \frac{e^{-(u^2+v^2)}}{u}\sin(wv)\sinh(wu)\nonumber\\
&\quad\times\left\{\left(\frac{2xuv}{a-1}\right)^{\frac{s-1}{2}}\frac{J_{\frac{s-1}{2}}\left(\frac{(a-1)vx}{u}\right)}{e^{x}-1}-e^{-x}x^{s-2}\right\}\, dxdudv,
\end{align}
Since $s=1$ is the only (simple) pole of $\zeta_w(s, a)$ in Re$(s)>-1$, we also have
\begin{align}\label{fourr}
\zeta_w(s,a)-\frac{e^{\frac{w^2}{4}}{}_1F_{1}^{2}\left(1;\frac{3}{2};-\frac{w^2}{4}\right)}{s-1}=\sum_{n=0}^{\infty}c_n(w, a)(s-1)^n.
\end{align}
Now let $s\to 1$ in \eqref{threee} and \eqref{fourr} and compare their resulting right-hand sides so as to get
\begin{align*}
c_0(w, a)=\frac{-4}{w^2\sqrt{\pi}}\int_0^\infty\int_0^\infty\int_0^\infty \frac{e^{-(u^2+v^2+x)}}{u}\sin(wv)\sinh(wu)\left(\frac{1}{x}-\frac{J_{0}\left((a-1)\frac{vx}{u}\right)}{1-e^{-x}}\right)dxdudv.
\end{align*}
We refrain from justifying the interchange of the order of limit and the triple integral which can be done using standard methods. Hence from the above equation and \eqref{new psi function}, we see that $c_0(w, a)=-\psi_w(a)$. After replacing $a$ by $a+1$ in \eqref{fourr}, this proves \eqref{laurentse} for Re$(s)>1$, and hence in the neighborhood of $s=1$ as argued after \eqref{laurentse}. Thus \eqref{luslus} is established. This proves Theorem \ref{lse}.

%Finally, it remains to show that $\zeta_w(s, a)$ can be analytically continued beyond Re$(s)>-1$. To that end, we invoke Theorem \ref{functional eqn}. Indeed, from \eqref{hurwgen} and Theorem \ref{paris1}, it is clear that the series $\displaystyle\sum_{k=1}^\infty\frac{\cos_{w,s}(2\pi ka)}{k^{1-s}}$ and $\displaystyle\sum_{k=1}^\infty\frac{\sin_{w,s}(2\pi ka)}{k^{1-s}}$ are analytic in Re$(s)<0$, and hence the right-hand side of \eqref{hurwgen} is analytic in Re$(s)<0$. Since this right-hand side agrees with $\zeta_w(s, a+1)$ in $-1<\textup{Re}(s)<0$, we \underline{\emph{define}} $\zeta_w(s, a+1)$ to be the right-hand side of \eqref{hurwgen} in Re$(s)\leq-1$. Thus we finally have analytic continuation of $\zeta_w(s, a+1)$ (and hence of $\zeta_w(s, a)$) in the whole $s$-complex plane except $s=1$.
%\end{proof}
\qed
\begin{remark}
The residue at the simple pole $s=1$ of $\zeta(s)$, and also of $\zeta(s, a)$, is usually calculated using Euler's summation formula. However, in the setting of $\zeta_w(s, a)$, Euler's formula leads to a cumbersome expression involving a double integral and a triple integral whose analyticity beyond $\textup{Re}(s)>1$ needs to be established first. On the other hand, calculating the residue using the generalization of Hermite's formula \eqref{hermiteeqn}, as shown in the above proof, necessitates analyticity of only one triple integral which is established in Lemma \textup{\ref{tia}}.
%nlike in the case of $\zeta(s)$ and $\zeta(s, a)$, it is cumbersome to apply Euler's summation formula to $\zeta_w(s, a)$ and then calculate the residue 
\end{remark}
We now obtain a well-known result \cite[p.~152]{berndtrocky1972} for $\zeta(s, a)$ as a corollary of Theorem \ref{lse}.
%where we used \eqref{limitpsi}.
\begin{corollary}
Let $0<a<1$. In the neighborhood of $s=1$, we have
\begin{equation}\label{lsespl}
\zeta(s, a+1)=\frac{1}{s-1}-\psi(a+1)+O_{a}(|s-1|).
\end{equation}
\end{corollary}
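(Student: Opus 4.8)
The plan is to obtain \eqref{lsespl} simply by setting $w=0$ in the expansion \eqref{luslus} of Theorem \ref{lse} and then identifying each of the three resulting terms with its classical counterpart. This is legitimate because $w=0$ is a removable singularity throughout the theory of $\zeta_w(s,a)$: both $\zeta_0(s,a)$ and $\psi_0(a)$ are defined as the limits as $w\to 0$ (see \eqref{zeta0sa} and the definition of $\psi_0$), while the residue \eqref{lse1} is an entire function of $w$. Consequently the coefficients $c_n(w,a)$ in the Laurent expansion \eqref{laurentse} stay finite as $w\to 0$, so the remainder $O_{w,a}(|s-1|)$ passes to $O_a(|s-1|)$.

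First I would evaluate the residue \eqref{lse1} at $w=0$. Since ${}_1F_{1}\!\left(1;\tfrac{3}{2};0\right)=1$, the factor $e^{w^2/4}{}_1F_{1}^{2}\!\left(1;\tfrac{3}{2};-\tfrac{w^2}{4}\right)$ tends to $1$ as $w\to 0$, so the principal part of \eqref{luslus} becomes $1/(s-1)$, matching the known pole of $\zeta(s,a+1)$. Next I would identify the two remaining limits. Because $0<a<1$ forces $1<a+1<2$, Remark \ref{ext} (that is, \eqref{hurwitzscall}) gives $\zeta_0(s,a+1)=\zeta(s,a+1)$ for all $\textup{Re}(s)>-1$, so the left-hand side of \eqref{luslus} tends to $\zeta(s,a+1)$. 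Similarly, since $a+1>1$, the first branch of \eqref{limitpsi} yields $\psi_0(a+1)=\psi(a+1)$. Substituting these three limits into \eqref{luslus} produces \eqref{lsespl}.

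The only point requiring genuine care, and the main obstacle, is justifying the interchange of $\lim_{w\to 0}$ with the expansion, i.e. that the constant term $-\psi_w(a+1)$ and the $O_{w,a}(|s-1|)$ remainder converge to $-\psi(a+1)$ and an $O_a(|s-1|)$ term respectively. This follows from the removable-singularity structure already in place: the $w$-dependence of the integrand defining $\psi_w$ in \eqref{new psi function} enters through $\tfrac{\sin(wv)\sinh(wu)}{w^2}$, whose power series about $w=0$ begins with $1$, and the exponential decay of the integrand permits passing the limit through the triple integral (exactly as remarked after \eqref{new psi function}). Hence the specialization $w=0$ is valid term by term, and \eqref{lsespl} follows at once.
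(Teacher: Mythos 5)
Your proposal is correct and follows essentially the same route as the paper's own proof: set $w=0$ in the Laurent expansion \eqref{laurentse}/\eqref{luslus}, note that the residue $e^{w^2/4}{}_1F_{1}^{2}\left(1;\frac{3}{2};-\frac{w^2}{4}\right)$ reduces to $1$, and invoke Remark \ref{ext} together with \eqref{limitpsi} to identify $\zeta_0(s,a+1)=\zeta(s,a+1)$ and $\psi_0(a+1)=\psi(a+1)$. Your extra care about passing the limit $w\to 0$ through the expansion is consistent with (and slightly more explicit than) what the paper does, but it is the same argument.
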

\begin{proof}
Let $w=0$ in \eqref{laurentse}. From Remark \ref{ext}, we see that $\zeta_0(s, a+1)=\zeta(s, a+1)$. Also, from \eqref{limitpsi}, we have $\psi_0(a+1)=\psi(a+1)$. This leads to
\begin{equation*}
\zeta(s, a+1)=\frac{1}{s-1}-\psi(a+1) +\sum_{n=1}^{\infty}c_n(0, a)(s-1)^n,
\end{equation*}
which proves \eqref{lsespl}.
\end{proof}

\subsection{Asymptotic estimate of $\zeta_w(s, a)$ as $a\to\infty$}\label{aezwsa}
\hfill\\

Here we prove a result which is instrumental in proving that the infinite series in \eqref{genramhureqeqn} are absolutely and uniformly convergent in $-1<\textup{Re}(z)<1$.
\begin{theorem}
For $-1<\textup{Re}(s)<2$, as $a\to\infty$,
\begin{align}\label{hermiteeqn1}
\zeta_w(s,a+1)=-\frac{a^{-s}}{2}A_w(s-1)+\frac{a^{1-s}}{s-1}A_{iw}(1-s)+O_{s,w}\left(a^{-\textup{Re}(s)-1}\right).
\end{align}
\end{theorem}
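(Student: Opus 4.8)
The plan is to read the asserted estimate directly off the generalized Hermite formula of Theorem \ref{hermite}. Indeed, the first two terms on the right-hand side of \eqref{hermiteeqn} are \emph{exactly} the two main terms claimed in \eqref{hermiteeqn1}, so the entire content of the theorem is the assertion that the triple-integral term is $O_{s,w}(a^{-\textup{Re}(s)-1})$. Since that term carries the prefactor $\tfrac{2^{s+2}a^{1-s}}{w^2\Gamma(1-s/2)\Gamma(s)}$, whose modulus is $O_{s,w}(a^{1-\textup{Re}(s)})$, it suffices to prove that
\begin{align*}
T(a):=\int_0^\infty\int_1^\infty\int_{0}^\infty \frac{u^{s-2}e^{-(u^2+v^2)}\sin(wv)\sinh(wu)}{(e^{\frac{2\pi avt}{u}}-1)(t^2-1)^{\frac{s}{2}}}\ dvdtdu=O_{s,w}\big(a^{-2}\big)
\end{align*}
as $a\to\infty$ (for fixed $s$ with $-1<\textup{Re}(s)<2$, $s\neq1$; Lemma \ref{tia} guarantees $T(a)$ exists throughout this strip). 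The case $w=0$ reduces to the classical Hurwitz estimate via Remark \ref{ext}, so we may assume $w\neq0$.

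The heart of the argument is a pair of uniform bounds on the inner integral
\begin{align*}
I(u,t,a):=\int_0^\infty\frac{e^{-v^2}\sin(wv)}{e^{\frac{2\pi avt}{u}}-1}\,dv,
\end{align*}
governed by the size of $\beta:=2\pi at/u$. The crude inequality $e^{\beta v}-1\geq\beta v$ gives the global bound $|I(u,t,a)|\leq C_1(w)\,u/(at)$, but this is only $O(a^{-1})$; the pointwise asymptotic \eqref{bound for a} shows the true order is $O(a^{-2})$, and the task is to make this uniform. Expanding $1/(e^{\beta v}-1)=\sum_{k\geq1}e^{-k\beta v}$ and using $|\sin(wv)|\leq\sinh(|w|v)\leq|w|v\,e^{|w|v}$ together with $e^{-v^2}\leq1$, one finds $\int_0^\infty ve^{|w|v}e^{-k\beta v}\,dv=(k\beta-|w|)^{-2}$ whenever $k\beta>|w|$. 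Hence, as soon as $\beta\geq2|w|$ (equivalently $u\leq\pi at/|w|$), the estimate $k\beta-|w|\geq k\beta/2$ yields the sharp uniform bound
\begin{align*}
|I(u,t,a)|\leq|w|\sum_{k\geq1}\frac{4}{k^2\beta^2}=\frac{C_2(w)}{\beta^2}=\frac{C_2(w)\,u^2}{4\pi^2a^2t^2}.
\end{align*}

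With these in hand I would split the $(u,t)$-domain of $T(a)$ according to $\beta\gtrless2|w|$. On Region~I, $\{u\leq\pi at/|w|\}$, the sharp bound applies, and enlarging the domain to all $t\geq1,\,u>0$ gives
\begin{align*}
|T_{\mathrm I}(a)|\leq\frac{C_2(w)}{4\pi^2a^2}\int_0^\infty u^{\textup{Re}(s)}e^{-u^2}\sinh(|w|u)\,du\int_1^\infty\frac{dt}{t^2(t^2-1)^{\textup{Re}(s)/2}},
\end{align*}
where the $u$-integral converges for $\textup{Re}(s)>-1$ and the $t$-integral converges at $t=1$ because $\textup{Re}(s)<2$ and at $t=\infty$ because $\textup{Re}(s)>-1$; thus $T_{\mathrm I}(a)=O_{s,w}(a^{-2})$, and it is precisely here that the full strip $-1<\textup{Re}(s)<2$ is used. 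On Region~II, $\{u>\pi at/|w|\}$, since $t\geq1$ this forces $u>\pi a/|w|$, so the Gaussian factor $e^{-u^2}$ contributes $e^{-\pi^2a^2/|w|^2}$ and, using the crude bound on $I$ over the now-\emph{finite} $t$-range $1\leq t<u|w|/(\pi a)$, one gets $T_{\mathrm{II}}(a)=O_{s,w}(e^{-ca^2})$ for some $c>0$, which is negligible. Combining the two regions gives $T(a)=O_{s,w}(a^{-2})$ and hence \eqref{hermiteeqn1}.

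The main obstacle is the sharpening from $O(a^{-1})$ to $O(a^{-2})$: the naive bound $e^{\beta v}-1\geq\beta v$ loses a whole power of $a$, and while \eqref{bound for a} identifies the correct order, it is only a pointwise asymptotic and cannot be integrated against $u$ and $t$ directly. Converting it into the uniform bound above via the geometric-series expansion, and---crucially---arranging the $\beta\gtrless2|w|$ split so that the dangerous $t\to\infty$ tail lands entirely in the region where the sharp $t^{-2}$-decay is available (the crude region being confined to $u>\pi a/|w|$, where it is killed super-exponentially), is the delicate part of the argument.
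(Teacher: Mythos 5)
Your proof is correct, and its skeleton coincides with the paper's: both start from the generalized Hermite formula \eqref{hermiteeqn}, extended to $-1<\textup{Re}(s)<2$ by analytic continuation (via Lemma \ref{tia}), observe that the two explicit terms there are exactly the main terms of \eqref{hermiteeqn1}, and reduce the whole theorem to showing that the triple integral is $O_{s,w}(a^{-2})$, which the prefactor $a^{1-s}$ converts into the error term $O_{s,w}\left(a^{-\textup{Re}(s)-1}\right)$. Where you genuinely diverge is in how that $O(a^{-2})$ bound is justified. The paper quotes the pointwise asymptotic \eqref{bound for a} (itself imported from the asymptotics of the integral analogue of the partial theta function in \cite{drz5}), notes that it also holds when $t$ and $u$ are fixed and $a\to\infty$, and combines it with the convergence of the dominating double integral $\int_0^\infty\int_1^\infty u^{\textup{Re}(s)}e^{-u^2}|\sinh(wu)|\,t^{-2}(t^2-1)^{-\textup{Re}(s)/2}\,dt\,du$ --- leaving the passage from a pointwise asymptotic to an integrated, uniform bound implicit. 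You identify precisely this as the delicate point and repair it: the geometric-series expansion of $(e^{\beta v}-1)^{-1}$ together with the elementary inequality $|\sin(wv)|\le |w|v\,e^{|w|v}$ yields a bound $|I(u,t,a)|\le C_2(w)u^2/(4\pi^2 a^2t^2)$ that is \emph{uniform} on the region $\beta=2\pi at/u\ge 2|w|$, while the complementary region forces $u>\pi a/|w|$ and is annihilated super-exponentially by the Gaussian factor $e^{-u^2}$. Thus your argument is self-contained (no appeal to \cite{drz5}) and supplies the uniformity that the paper's two-line justification takes for granted, at the cost of a longer proof with a two-region decomposition; both arguments use the hypothesis $-1<\textup{Re}(s)<2$ in the same place, namely the convergence of the $u$- and $t$-integrals at $u=0$, $t=1$ and $t=\infty$.
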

\begin{proof}
By analytic continuation, we know that \eqref{hermiteeqn} is valid for $-1<\textup{Re}(s)<2$. In view of this, it suffices to show that as $a\to\infty$,
\begin{align}\label{bbound}
\frac{2^{s+2}a^{1-s}}{w^2\Gamma\left(1-\frac{s}{2}\right)\Gamma(s)}\int_0^\infty\int_1^\infty\int_{0}^\infty \frac{u^{s-2}e^{-(u^2+v^2)}\sin(wv)\sinh(wu)}{(e^{\frac{2\pi avt}{u}}-1)(t^2-1)^{\frac{s}{2}}}\ dvdtdu=O_{s, w}\left(a^{-\textup{Re}(s)-1}\right).
\end{align}
Observe that \eqref{bound for a} holds also when $t$ and $u$ are fixed and $a\to\infty$. 
%\int_0^\infty\frac{e^{-\alpha x^2}\sin(\sqrt{\alpha}xw)}{e^{2\pi x}-1}\ dx=\frac{at}{v}\int_0^\infty \frac{e^{-v^2}\sin(wv)}{e^{\frac{2\pi avt}{u}}-1}\ dv,
Moreover, since $-1<\textup{Re}(s)<2$, the double integral
\begin{align*}
\int_0^\infty\int_{1}^\infty \frac{u^{\textup{Re}(s)}e^{-u^2}|\sinh(wu)|}{t^2(t^2-1)^{\frac{\textup{Re}(s)}{2}}}\ dtdu
\end{align*}
converges. Hence along with \eqref{bound for a}, we arrive at \eqref{bbound}.
%\begin{align}
%\int_0^\infty \frac{e^{-v^2}\sin(wv)}{e^{\frac{2\pi avt}{u}}-1}\ dv= O_{w, u,t}\left(\frac{1}{a^2}\right)
%\end{align}
Substituting \eqref{bbound} in \eqref{hermiteeqn} results in \eqref{hermiteeqn1}.
\end{proof}

\subsection{Limitations of \eqref{hermiteeqn} in analytically continuing $\zeta_w(s, a)$ in Re$(s)\leq-1$}\label{nomorehermite}
\hfill\\

It is well-known that Hermite's formula \eqref{hermiteor} provides analytic continuation of $\zeta(s, a)$ in $\mathbb{C}\backslash\{1\}$. In contrast to this, our generalization of Hermite's formula for $\zeta_w(s, a)$, that is \eqref{hermiteeqn}, does not give analytic continuation in $\mathbb{C}\backslash\{1\}$ as will now be proved by showing that the triple integral on the right-hand side of \eqref{hermiteeqn} is not analytic at least at $s=-1, -2, -3, \cdots$. We show this only for $s=-1$. 

Suppose the triple integral was analytic at $s=-1$. Then we could \emph{define} $\zeta_w(-1, a+1)$ by the right-hand side of \eqref{hermiteeqn} at $s=-1$. Since $\G(s)$ has a pole at $s=-1$, this would imply
\begin{align*}
\zeta_w(-1, a+1)&=-\frac{a}{2}A_{w}(-2)-\frac{a^2}{2}A_{iw}(2)\nonumber\\
&=-\frac{a}{2}e^{-\frac{w^2}{4}}{}_1F_{1}\left(1;\frac{3}{2};\frac{w^2}{4}\right)-\frac{a^2}{2}e^{\frac{w^2}{4}}{}_1F_{1}\left(1;\frac{3}{2};-\frac{w^2}{4}\right){}_1F_{1}\left(2;\frac{3}{2};-\frac{w^2}{4}\right).
\end{align*}
In particular, for $w=0$, we would have
\begin{equation*}
\zeta_0(-1,a+1)=\zeta(-1,a+1)=-\frac{a}{2}-\frac{a^2}{2},
\end{equation*}
which is incorrect since \cite[p.~264, Theorem 12.13]{apostol}
\begin{equation*}
\zeta(-1,a+1)=-\frac{a}{2}-\frac{a^2}{2}-\frac{1}{12}.
\end{equation*}
This shows that the triple integral cannot be analytic at $s=-1$, and that it very well contributes towards $\zeta_w(-1,a+1)$. 

It can be similarly seen that the triple integral is not analytic also at $s=2$.

\section{Some properties of the generalized digamma function $\psi_w(a)$}\label{psiwa}
\setcounter{subsection}{1}
We begin this subsection by obtaining a special value of $\psi_w(a)$.
\begin{theorem}
Let $w\in\mathbb{C}$. We have
\begin{equation*}
\psi_w(1)=-\gamma\frac{\pi e^{-\frac{w^2}{4}}}{w^2}\mathrm{erfi^2}\left(\frac{w}{2}\right).
\end{equation*}
\end{theorem}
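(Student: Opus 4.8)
The plan is to specialize the definition \eqref{new psi function} of $\psi_w(a)$ at $a=1$ and exploit the resulting separation of variables. When $a=1$, the argument $(a-1)\tfrac{vx}{u}$ of the Bessel function vanishes, and since $J_0(0)=1$ by \eqref{sumbesselj}, the triple integral collapses to
\begin{align*}
\psi_w(1)=\frac{4}{w^2\sqrt{\pi}}\int_0^\infty\int_0^\infty\int_0^\infty \frac{e^{-(u^2+v^2)}}{u}\sin(wv)\sinh(wu)\,e^{-x}\left(\frac{1}{x}-\frac{1}{1-e^{-x}}\right)\,dxdudv.
\end{align*}
The crucial observation is that the integrand now factors as a product of a function of $(u,v)$ and a function of $x$ alone, so the triple integral splits into the product of a double integral over $(u,v)$ and a single integral over $x$.

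The step I would carry out first is the justification of this factorization, namely verifying that the triple integral converges absolutely so that Fubini's theorem applies. The $(u,v)$-part decays exponentially as $u,v\to\infty$ and is harmless at $u=0$ because $\sinh(wu)/u\to w$. For the $x$-part, rewriting $e^{-x}/(1-e^{-x})=1/(e^x-1)$ turns the factor into $e^{-x}/x-1/(e^{x}-1)$; the expansion $1/(1-e^{-x})=1/x+\tfrac12+O(x)$ near the origin shows this combination is bounded there, while it decays exponentially as $x\to\infty$, so the $x$-integral converges absolutely as well. This mild integrability check near $x=0$ is the only point requiring genuine care, and it is otherwise routine.

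With the factorization in hand the two pieces evaluate immediately. The $x$-integral is
\begin{align*}
\int_0^\infty\left(\frac{e^{-x}}{x}-\frac{1}{e^x-1}\right)dx=\int_0^\infty\left(\frac{e^{-x}}{x}-\frac{e^{-x}}{1-e^{-x}}\right)dx=\psi(1)=-\gamma,
\end{align*}
which is precisely the $a=1$ instance of \eqref{psiintegral}. The $(u,v)$-integral, together with the prefactor $4/(w^2\sqrt{\pi})$, is exactly the left-hand side of \eqref{twoo} and hence equals $e^{w^2/4}\,{}_1F_1^2\left(1;\tfrac32;-\tfrac{w^2}{4}\right)$. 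Multiplying the two factors and invoking \eqref{lse1} to rewrite the hypergeometric expression, I obtain
\begin{align*}
\psi_w(1)=-\gamma\,e^{\frac{w^2}{4}}{}_1F_1^2\left(1;\frac{3}{2};-\frac{w^2}{4}\right)=-\gamma\,\frac{\pi e^{-\frac{w^2}{4}}}{w^2}\,\textup{erfi}^2\left(\frac{w}{2}\right),
\end{align*}
which is the claimed identity.
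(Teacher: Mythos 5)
Your proposal is correct and follows essentially the same route as the paper: set $a=1$ so that $J_0(0)=1$ collapses the Bessel factor, evaluate the $x$-integral via \eqref{psiintegral} with $\psi(1)=-\gamma$, evaluate the $(u,v)$-integral via \eqref{twoo}, and convert to the erfi form using \eqref{lse1}. The only difference is that you make the Fubini/absolute-convergence check explicit, which the paper leaves implicit.
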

\begin{proof}
Note that the definition of $\psi_w(a)$ in \eqref{new psi function} holds for any $a\in\mathbb{C}$ as can be seen from the asymptotic expansion of $J_{0}(x)$ \cite[p.~360, Formula \textbf{9.1.7}]{as}. Let $a=1$ in this definition and use the fact $J_{0}(0)=1$ to observe that
\begin{align*}
\psi_w(1)&=\frac{4}{w^2\sqrt{\pi}}\int_0^\infty\int_0^\infty \frac{e^{-(u^2+v^2)}}{u}\sin(wv)\sinh(wu)\int_0^\infty e^{-x}\left(\frac{1}{x}-\frac{1}{1-e^{-x}}\right)dxdudv \nonumber\\
&=-\gamma\frac{4}{w^2\sqrt{\pi}}\int_0^\infty\int_0^\infty \frac{e^{-(u^2+v^2)}}{u}\sin(wv)\sinh(wu)dudv \nonumber\\
&=-\gamma\frac{\pi}{w^2}e^{-\frac{w^2}{4}}\ \mathrm{erfi}^2\left(\frac{w}{2}\right),
\end{align*}
where in the first step, we used \eqref{psiintegral} and the well-known result $\psi(1)=-\g$, and in the second step, we used \eqref{twoo}.
\end{proof}
We now show how $\psi_w(a)$ reduces to the digamma function $\psi(a)$ when $w=0$.
\begin{proof}[\textup{\eqref{limitpsi}}][]
Let $w\to 0$ in \eqref{new psi function} and interchange the order of limit and the triple integral to obtain
\begin{align}\label{psi as a special case}
\lim_{w\rightarrow 0}\psi_w(a)&=\frac{4}{\sqrt{\pi}}\int_0^\infty\int_0^\infty\int_0^\infty v e^{-(u^2+v^2+x)}\left(\frac{1}{x}-\frac{J_{0}\left(\frac{(a-1)vx}{u}\right)}{1-e^{-x}}\right)\, dxdudv \nonumber\\
&=\frac{4}{\sqrt{\pi}}\int_0^\infty e^{-x}\Bigg(\frac{\sqrt{\pi}}{4x}-\frac{1}{1-e^{-x}}\int_0^\infty\int_0^\infty v e^{-(u^2+v^2)}J_{0}\left(\frac{(a-1)vx}{u}\right)dudv\Bigg)\, dx,
\end{align}
where we used the fact that
\begin{align*}
\int_0^\infty\int_0^\infty ve^{-(u^2+v^2)}dudv=\frac{\sqrt{\pi}}{4}.
\end{align*}
Now replacing $a$ by $a-1$ and letting $s=1$ in Lemma \ref{useful}, we find that
\begin{equation}\label{psi as a special case1}
\int_0^\infty\int_0^\infty v e^{-(u^2+v^2)}J_{0}\left(\frac{(a-1)vx}{u}\right)dudv=
\begin{cases} 
 \frac{\sqrt{\pi}}{4}e^{-(a-1)x},  & \mathrm{Re}(a)>1, \\
      \frac{\sqrt{\pi}}{4}e^{-(1-a)x}, & \mathrm{Re}(a)<1. \\
   \end{cases}
\end{equation}
From \eqref{psi as a special case} and \eqref{psi as a special case1}, for $\mathrm{Re}(a)>1$,
\begin{align*}
\lim_{w\rightarrow 0}\psi_w(a)&=\frac{4}{\sqrt{\pi}}\int_0^\infty e^{-x}\left(\frac{\sqrt{\pi}}{4x}-\frac{\sqrt{\pi}}{4}\frac{e^{-(a-1)x}}{1-e^{-x}}\right)dx\\
%&=\int_0^\infty \left(\frac{e^{-x}}{x}-\frac{e^{-ax}}{1-e^{-x}}\right)dx\\
&=\psi(a),
\end{align*} 
where we used \eqref{psiintegral}. Similarly for $\mathrm{Re}(a)<1$, we use \eqref{psiintegral} to derive that $\lim_{w\rightarrow 0}\psi_w(a)=\psi(2-a)$.
%\begin{align*}
%\lim_{w\rightarrow 0}\psi_w(a)&=\frac{4}{\sqrt{\pi}}\int_0^\infty e^{-x}\left(\frac{\sqrt{\pi}}{4x}-\frac{\sqrt{\pi}}{4}\frac{e^{-(1-a)x}}{1-e^{-x}}\right)dx\\
%&=\int_0^\infty \left(\frac{e^{-x}}{x}-\frac{e^{-(2-a)x}}{1-e^{-x}}\right)dx\\
%&=\psi(2-a),
%\end{align*}
\end{proof} 
We now derive a result which gives asymptotic behavior of $\psi_w(a+1)$ as $a\to\infty$.
\begin{theorem}\label{generalization of psi func intl}
For $w\in\mathbb{C},\ 0<a\leq 1$, we have
\begin{align}\label{generalization of psi func intleqn}
\psi_w(a+1)&=\frac{\pi}{2aw^2}e^{\frac{w^2}{4}}\mathrm{erf}^2\left(\frac{w}{2}\right)+\frac{\pi}{w^2}e^{-\frac{w^2}{4}}\mathrm{erfi}^2\left(\frac{w}{2}\right)\log a \nonumber\\
&\qquad+\frac{\sqrt{\pi}}{2w}\mathrm{erfi}\left(\frac{w}{2}\right)\sum_{n=0}^\infty\frac{(-w^2/4)^n}{(3/2)_n}(\psi(n+1)+\gamma)\nonumber\\
&\qquad-\frac{8}{w^2\sqrt{\pi}}\int_0^\infty\int_1^\infty\int_0^\infty \frac{e^{-(u^2+v^2)}\sin(wv)\sinh(wu)}{u\left(e^{\frac{2\pi avt}{u}}-1\right)\sqrt{t^2-1}}\, dvdtdu.
\end{align}
\end{theorem}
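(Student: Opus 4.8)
The plan is to read off $\psi_w(a+1)$ as the negative of the constant term in the Laurent expansion of $\zeta_w(s,a+1)$ about $s=1$, exactly as dictated by \eqref{luslus}, and to produce that expansion from the generalized Hermite formula \eqref{hermiteeqn}. By Lemma \ref{tia} and the discussion following it, \eqref{hermiteeqn} holds throughout $-1<\mathrm{Re}(s)<2$, and the triple integral appearing there is analytic in this strip; this is what makes the constant term accessible. First I would set $s=1+\epsilon$ and expand each of the three terms on the right-hand side of \eqref{hermiteeqn} to first order in $\epsilon$.

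The first term $-\tfrac12 a^{-s}A_w(s-1)$ is analytic at $s=1$, so it contributes only its value there. Using both representations of $A_w$ in \eqref{ab} together with \eqref{erf}, one checks that $A_w(0)=e^{-w^2/4}{}_1F_1^2\!\left(1;\tfrac32;\tfrac{w^2}{4}\right)$, and hence that this term equals $-\tfrac{\pi}{2aw^2}e^{w^2/4}\mathrm{erf}^2(w/2)$ at $s=1$; this is the first summand of the claimed formula. Likewise the third term carries the factor $1/(\Gamma(1-s/2)\Gamma(s))$, which equals $1/\sqrt\pi$ at $s=1$, so it is analytic there and contributes $\tfrac{8}{w^2\sqrt\pi}$ times the triple integral evaluated at $s=1$, accounting for the last summand of the statement.

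The heart of the computation is the middle term $\tfrac{a^{1-s}}{s-1}A_{iw}(1-s)$, the only one carrying the simple pole. Writing $A_{iw}(1-s)=\tfrac{\sqrt\pi}{w}\mathrm{erfi}(w/2)\,g(1-s)$ with $g(z):={}_1F_1\!\left(1+\tfrac z2;\tfrac32;-\tfrac{w^2}{4}\right)$ (passing from $A_{iw}$ to an $\mathrm{erfi}$ via \eqref{erferfirel}), I would expand $a^{1-s}=1-\epsilon\log a+O(\epsilon^2)$ and $g(-\epsilon)=g(0)-\epsilon g'(0)+O(\epsilon^2)$. Dividing by $\epsilon$ gives a residue $\tfrac{\sqrt\pi}{w}\mathrm{erfi}(w/2)\,g(0)=e^{w^2/4}{}_1F_1^2\!\left(1;\tfrac32;-\tfrac{w^2}{4}\right)$, confirming \eqref{lse1}, and a constant term $-\tfrac{\sqrt\pi}{w}\mathrm{erfi}(w/2)\bigl(g'(0)+g(0)\log a\bigr)$. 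The $\log a$ part reassembles into $\tfrac{\pi}{w^2}e^{-w^2/4}\mathrm{erfi}^2(w/2)\log a$ by \eqref{erferfi}. Collecting the three constant terms and negating (since $\psi_w(a+1)$ is minus the constant term by \eqref{luslus}) then yields the asserted identity.

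The main obstacle is the evaluation of $g'(0)$, namely differentiation of the confluent hypergeometric function with respect to its first (numerator) parameter. I would carry this out term-by-term: from $(1+z/2)_n=\prod_{k=0}^{n-1}(1+z/2+k)$ one obtains $\left.\tfrac{d}{dz}(1+z/2)_n\right|_{z=0}=\tfrac{n!}{2}H_n=\tfrac{n!}{2}(\psi(n+1)+\gamma)$, where $H_n=\sum_{m=1}^{n}1/m$, so that $g'(0)=\tfrac12\sum_{n=0}^\infty \tfrac{(-w^2/4)^n}{(3/2)_n}(\psi(n+1)+\gamma)$, producing precisely the digamma series $\tfrac{\sqrt\pi}{2w}\mathrm{erfi}(w/2)\sum_{n=0}^\infty\tfrac{(-w^2/4)^n}{(3/2)_n}(\psi(n+1)+\gamma)$ in the statement. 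The interchange of differentiation and summation is justified by the super-geometric decay of the coefficients, and the interchange of $\lim_{s\to1}$ with the triple integral is covered by the uniform-convergence argument already established in Lemma \ref{tia}.
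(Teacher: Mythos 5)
Your proposal is correct and takes essentially the same route as the paper's proof: both read off $\psi_w(a+1)$ as minus the constant term in the Laurent expansion of the right-hand side of \eqref{hermiteeqn} at $s=1$, evaluating the analytic first and third terms at $s=1$, expanding the middle term $\frac{a^{1-s}}{s-1}A_{iw}(1-s)$ via $a^{1-s}$ and the parameter-derivative of the ${}_1F_1$, and computing that derivative term-by-term through the Pochhammer products to produce the $\psi(n+1)+\gamma$ series. The only cosmetic difference is that you phrase the calculation as an $\epsilon$-expansion with $s=1+\epsilon$, whereas the paper subtracts the pole term $\frac{1}{s-1}e^{w^2/4}{}_1F_1^2\left(1;\frac{3}{2};-\frac{w^2}{4}\right)$ from both sides and lets $s\to 1$, which is the identical computation.
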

\begin{proof}
Subtracting $\frac{1}{(s-1)}\frac{\pi}{w^2}e^{-\frac{w^2}{4}}\textup{erfi}^{2}\left(\frac{w}{2}\right)$ from both sides of \eqref{hermiteeqn} and then letting $s\to 1$, we observe that
\begin{align*}
&\lim_{s\rightarrow1}\left(\zeta_w(s,a+1)-\frac{e^{\frac{w^2}{4}}{}_1F_1^2\left(1;\frac{3}{2};-\frac{w^2}{4}\right)}{s-1}\right)\nonumber\\
&=-\frac{1}{2a}A_w(0)+\lim_{s\rightarrow1}\left(\frac{a^{1-s}}{s-1}A_{iw}(1-s)-\frac{e^{\frac{w^2}{4}}{}_1F_1^2\left(1;\frac{3}{2};-\frac{w^2}{4}\right)}{s-1}\right)\nonumber\\
&\qquad+\frac{8}{w^2\sqrt{\pi}}\int_0^\infty\int_1^\infty\int_0^\infty \frac{e^{-(u^2+v^2)}\sin(wv)\sinh(wu)}{u\left(e^{\frac{2\pi avt}{u}}-1\right)\sqrt{t^2-1}}\, dvdtdu.
\end{align*}
Hence using \eqref{lse1} and \eqref{luslus}, we have
\begin{align}\label{psiw0}
\psi_w(a+1)&=\frac{\pi}{2aw^2}e^{\frac{w^2}{4}}\mathrm{erf}^2\left(\frac{w}{2}\right)-\lim_{s\rightarrow1}\left(\frac{a^{1-s}}{s-1}A_{iw}(1-s)-\frac{e^{\frac{w^2}{4}}{}_1F_1^2\left(1;\frac{3}{2};-\frac{w^2}{4}\right)}{s-1}\right)\nonumber\\
&\qquad-\frac{8}{w^2\sqrt{\pi}}\int_0^\infty\int_1^\infty\int_0^\infty \frac{e^{-(u^2+v^2)}\sin(wv)\sinh(wu)}{u\left(e^{\frac{2\pi avt}{u}}-1\right)\sqrt{t^2-1}}\, dvdtdu.
\end{align}
To evaluate the limit, first note that, as $s\rightarrow1$,
\begin{align*}
a^{-(s-1)}=e^{-(s-1)\log a}=1-(s-1)\log a+\frac{(s-1)^2(\log a)^2}{2!}+O(|(s-1)|^3),
\end{align*}
and 
\begin{align*}
{}_1F_1\left(\frac{3-s}{2};\frac{3}{2};-\frac{w^2}{4}\right)&=\frac{\sqrt{\pi}e^{-\frac{w^2}{4}}\mathrm{erfi}\left(\frac{w}{2}\right)}{w}+(s-1)\frac{d}{ds}{}_1F_1\left(\frac{3-s}{2};\frac{3}{2};-\frac{w^2}{4}\right)\Bigg|_{s=1}+O(|s-1|^2).
%&\qquad+\frac{(s-1)^2}{2!}\frac{d^2}{ds^2}{}_1F_1\left(\frac{3-s}{2};\frac{3}{2};-\frac{w^2}{4}\right)\Bigg|_{s=1}.
\end{align*}
Now
\begin{align}\label{jk}
&\lim_{s\rightarrow1}\left(\frac{a^{1-s}}{s-1}A_{iw}(1-s)-\frac{e^{\frac{w^2}{4}}{}_1F_1^2\left(1;\frac{3}{2};-\frac{w^2}{4}\right)}{s-1}\right)\nonumber\\
&=\frac{\sqrt{\pi}}{w}\textup{erfi}\left(\frac{w}{2}\right)\lim_{s\rightarrow 1}\left\{\frac{a^{1-s}}{s-1}{}_1F_1\left(\frac{s-3}{2};\frac{3}{2};-\frac{w^2}{4}\right)-\frac{1}{s-1}\frac{e^{-w^2/4}\sqrt{\pi}\mathrm{erfi}(w/2)}{w}\right\} \nonumber\\
&=-\frac{\pi}{w^2}e^{-\frac{w^2}{4}}\mathrm{erfi}^2\left(\frac{w}{2}\right)\log a+\frac{\sqrt{\pi}}{w}\textup{erfi}\left(\frac{w}{2}\right)\frac{d}{ds}{}_1F_1\left(\frac{3-s}{2};\frac{3}{2};-\frac{w^2}{4}\right)\Bigg|_{s=1}.
\end{align}
By using series definition of ${}_1F_1$, we see that
\begin{align*}
\frac{d}{ds}{}_1F_1\left(\frac{3-s}{2};\frac{3}{2};-\frac{w^2}{4}\right)=-\frac{1}{2}\sum_{n=0}^\infty \frac{\left(-\frac{w^2}{4}\right)^n}{\left(\frac{3}{2}\right)_nn!}\left(\frac{3-s}{2}\right)_n \sum_{j=0}^{n-1}\frac{1}{\left(\frac{3-s}{2}+j\right)}.
\end{align*}
Since
\begin{align*}
&\lim_{s\rightarrow 1}\left(1+\frac{s-1}{2}\right)\left(2+\frac{s-1}{2}\right)...\left(n+\frac{s-1}{2}\right)\left(\frac{1}{1+\frac{s-1}{2}}+\frac{1}{2+\frac{s-1}{2}}+...+\frac{1}{n+\frac{s-1}{2}}\right) \nonumber\\
&=n!(\psi(n+1)+\gamma),
\end{align*}
this gives
\begin{align}\label{1F1 +}
\frac{d}{ds}{}_1F_1\left(\frac{3-s}{2};\frac{3}{2};-\frac{w^2}{4}\right)\Bigg|_{s=1}&=-\frac{1}{2}\sum_{n=0}^\infty \frac{\left(-\frac{w^2}{4}\right)^n}{\left(\frac{3}{2}\right)_n}\left(\psi(n+1)+\gamma\right).
\end{align}
Finally, from \eqref{psiw0}, \eqref{jk} and \eqref{1F1 +}, we arrive at \eqref{generalization of psi func intleqn}.
\end{proof}
\begin{remark}
Note that either by letting $s=1$ in \eqref{hermiteeqn1} and then proceedings along the same lines as in the proof of Theorem \textup{\ref{generalization of psi func intl}} or by working from scratch with the triple integral in Theorem \textup{\ref{generalization of psi func intl}}, we arrive at
\begin{align*}
\psi_w(a+1)&=\frac{\pi}{2aw^2}e^{\frac{w^2}{4}}\mathrm{erf}^2\left(\frac{w}{2}\right)+\frac{\pi}{w^2}e^{-\frac{w^2}{4}}\mathrm{erfi}^2\left(\frac{w}{2}\right)\log a \nonumber\\
&\qquad+\frac{\sqrt{\pi}}{2w}\mathrm{erfi}\left(\frac{w}{2}\right)\sum_{n=0}^\infty\frac{(-w^2/4)^n}{(3/2)_n}(\psi(n+1)+\gamma)+O_w\left(\frac{1}{a^2}\right)
\end{align*}
as $a\to\infty$. Of course, one can derive a complete asymptotic expansion for $\psi_w(a+1)$ by obtaining the same for the triple integral in \eqref{generalization of psi func intleqn}. Note that when $w=0$, the above estimate gives the well-known result
\begin{equation*}
\psi(a)=-\frac{1}{2a}+\log a+O\left(\frac{1}{a^2}\right).
\end{equation*}
\end{remark}
As a special case of Theorem \ref{generalization of psi func intl}, we get the following well-known formula.
\begin{corollary}
Let $0<a\leq 1$. Then
\begin{align}\label{psipsi-1}
\psi(a)=\log a-\frac{1}{2a}-2\int_0^\infty\frac{y\ dy}{(y^2+a^2)(e^{2\pi y}-1)}.
\end{align}
\end{corollary}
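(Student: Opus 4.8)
The plan is to derive \eqref{psipsi-1} by letting $w\to 0$ in the identity \eqref{generalization of psi func intleqn} of Theorem \ref{generalization of psi func intl} and identifying the limit of each of its four right-hand terms. Since $0<a\le 1$ gives $\mathrm{Re}(a+1)>1$, the left-hand side satisfies $\lim_{w\to0}\psi_w(a+1)=\psi_0(a+1)=\psi(a+1)$ by \eqref{limitpsi}. For the right-hand side I would use the elementary expansions $\mathrm{erf}(w/2)\sim w/\sqrt{\pi}$ and $\mathrm{erfi}(w/2)\sim w/\sqrt{\pi}$ as $w\to 0$, immediate from \eqref{erf} and \eqref{erferfi}, together with $e^{\pm w^2/4}\to 1$. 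These yield $\frac{\pi}{2aw^2}e^{w^2/4}\mathrm{erf}^2(w/2)\to \frac{1}{2a}$ and $\frac{\pi}{w^2}e^{-w^2/4}\mathrm{erfi}^2(w/2)\log a\to \log a$ for the first two terms. In the third term $\frac{\sqrt{\pi}}{2w}\mathrm{erfi}(w/2)\to \frac12$, while in the series only the $n=0$ summand survives as $w\to0$, and it vanishes since $\psi(1)+\gamma=0$; hence the entire third term tends to $0$.

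It then remains to evaluate the limit of the triple-integral term. Restricting to real $w$, I would pass $\lim_{w\to0}$ through the integral by dominated convergence, using $|\sin(wv)|\le|w|v$, the bound $|\sinh(wu)|\le|w|u\cosh(u)$ valid for $|w|\le 1$, and the estimate $e^{2\pi avt/u}-1\ge 2\pi avt/u$ already exploited in Lemma \ref{tia}. Because $\frac{\sin(wv)\sinh(wu)}{w^2u}\to v$, this term converges to $-\frac{8}{\sqrt{\pi}}\int_0^\infty\int_1^\infty\int_0^\infty \frac{v\,e^{-(u^2+v^2)}}{(e^{2\pi avt/u}-1)\sqrt{t^2-1}}\,dv\,dt\,du$. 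This last triple integral is exactly the left-hand side of \eqref{3in1} evaluated at $s=1$, which is permissible by Remark \ref{validityoflemma4.9}; Lemma \ref{triple integral to one integral} then gives its value as $\frac{\Gamma(1)\Gamma(1/2)}{2^2}\int_0^\infty \frac{\sin(\tan^{-1}(y/a))}{(e^{2\pi y}-1)\sqrt{y^2+a^2}}\,dy=\frac{\sqrt{\pi}}{4}\int_0^\infty \frac{y\,dy}{(e^{2\pi y}-1)(y^2+a^2)}$, where I used the identity $\sin(\tan^{-1}(y/a))=y/\sqrt{y^2+a^2}$.

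Collecting the four limits produces $\psi(a+1)=\log a+\frac{1}{2a}-2\int_0^\infty \frac{y\,dy}{(y^2+a^2)(e^{2\pi y}-1)}$, and the standard recurrence $\psi(a+1)=\psi(a)+\frac1a$ then yields \eqref{psipsi-1}. The only genuinely delicate point is the justification of interchanging $\lim_{w\to0}$ with the triple integral, but given the exponential decay of the integrand and the bound on $e^{2\pi avt/u}-1$ this is routine, so I expect the argument to present no real obstacle.
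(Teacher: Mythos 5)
Your proposal is correct and follows essentially the same route as the paper: let $w\to 0$ in Theorem \ref{generalization of psi func intl}, use \eqref{limitpsi} for the left-hand side, evaluate the resulting triple integral by taking $s\to 1$ in Lemma \ref{triple integral to one integral} (legitimate by Remark \ref{validityoflemma4.9}), and finish with $\psi(a+1)=\psi(a)+1/a$. The only difference is that you spell out the term-by-term limits and the dominated-convergence justification, which the paper dismisses as ``standard methods.''
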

\begin{proof}
Let $w\to 0$ in \eqref{generalization of psi func intleqn}. Using \eqref{limitpsi}, we see that
\begin{align}\label{psipsi0}
\psi(a+1)=\frac{1}{2a}+\log a-\frac{8}{\sqrt{\pi}}\int_0^\infty\int_1^\infty\int_0^\infty \frac{ve^{-(u^2+v^2)}}{\left(e^{\frac{2\pi avt}{u}}-1\right)\sqrt{t^2-1}}\, dvdtdu,
\end{align}
where the interchange of the order of limit and the triple integral is justified using standard methods. To evaluate the triple integral on the right-hand side, let $s\to 1$ in Lemma \ref{triple integral to one integral}. This gives
\begin{align}\label{psipsi}
\int_0^\infty\int_1^\infty\int_0^\infty \frac{ve^{-(u^2+v^2)}}{\left(e^{\frac{2\pi avt}{u}}-1\right)\sqrt{t^2-1}}\, dvdtdu&=\frac{\sqrt{\pi}}{4}\int_0^\infty \frac{\sin\left(\tan^{-1}\left(\frac{y}{a}\right)\right)}{\left(e^{2\pi y}-1\right)\sqrt{y^2+a^2}}\ dy\nonumber\\
&=\frac{\sqrt{\pi}}{4}\int_0^\infty\frac{y\ dy}{(y^2+a^2)(e^{2\pi y}-1)},
\end{align}
since $\sin\left(\tan^{-1}(\theta)\right)=\theta/\sqrt{1+\theta^2}$. Now substitute \eqref{psipsi} in \eqref{psipsi0} and use the functional equation $\psi(a+1)=\psi(a)+1/a$ to arrive at \eqref{psipsi-1}.
\end{proof}
\section{Reciprocal functions in the second Koshliakov kernel}\label{seckosh}
In \cite[Equations (8), (13)]{kosh1938}, Koshliakov obtained two remarkable integral evaluations, namely, for $-\frac{1}{2}<\textup{Re}(z)<\frac{1}{2}$,\footnote{Koshliakov \cite{kosh1938} stated it only for $-\frac{1}{2}<z<\frac{1}{2}$, however, they are easily seen to be true for $-\frac{1}{2}<\textup{Re}(z)<\frac{1}{2}$.}
\begin{equation}\label{koshlyakov-1}
\int_{0}^{\infty} K_{z}(t) \left( \cos(\pi z) M_{2z}(2 \sqrt{xt}) -
\sin(\pi z) J_{2z}(2 \sqrt{xt}) \right)\, dt = K_{z}(x).
\end{equation}
and
\begin{equation}\label{koshlyakov-2}
\int_{0}^{\infty} tK_{z}(t) \left( \sin(\pi z) J_{2 z}(2 \sqrt{xt}) -
\cos(\pi z) L_{2 z}(2 \sqrt{xt}) \right) dt = xK_{z}(x),
\end{equation}
where
\begin{align}\label{kernels}
M_{\nu}(x)=\frac{2}{\pi}K_{\nu}(x)-Y_{\nu}(x)\hspace{4mm}\text{and}\hspace{4mm}L_{\nu}(x)=-\frac{2}{\pi}K_{\nu}(x)-Y_{\nu}(x),
\end{align}
$Y_{\nu}(x)$ being the Bessel function of the second kind of non-integer order $\nu$ defined by \cite[p.~64]{watson-1966a}
\begin{equation*}
Y_{\nu}(x)=\frac{J_{\nu}(x)\cos\left(\nu\pi\right)-J_{-\nu}(x)}{\sin\left(\nu\pi\right)},
\end{equation*}
and $Y_{n}(x)$ for integer $n$ defined by $Y_{n}(x)=\lim_{\nu\to n}Y_{\nu}(x)$.

Owing to \eqref{koshlyakov-1} and \eqref{koshlyakov-2}, we call the two kernels, namely, $ \cos(\pi z) M_{2z}(2 \sqrt{xt}) -\sin(\pi z) J_{2z}(2 \sqrt{xt})$ and $\sin(\pi z) J_{2 z}(2 \sqrt{xt}) -\cos(\pi z) L_{2 z}(2 \sqrt{xt})$, the first and the second Koshliakov kernels respectively, and the integrals \cite[Definition 15.1]{bdrz}
\begin{equation*}
\int_{0}^{\infty}f(t, z)\left( \cos(\pi z) M_{2z}(2 \sqrt{xt}) -\sin(\pi z) J_{2z}(2 \sqrt{xt}) \right)\, dt
\end{equation*}
and
\begin{equation*}
\int_{0}^{\infty} f(t, z)\left(\sin(\pi z) J_{2 z}(2 \sqrt{xt}) -\cos(\pi z) L_{2 z}(2 \sqrt{xt})\right) dt,
\end{equation*}
the first and the second Koshliakov transforms of $f(t, z)$ respectively whenever the integrals converge.

Let $\phi$ and $\psi$ be the second Koshliakov transforms of each other\footnote{Throughout the analysis $z$ and $w$ will be fixed complex numbers in some domains of their respective complex planes.}, that is,
\begin{equation}\label{phii}
\phi(x,z, w)=2\int_0^\infty\psi(t,z, w)\left(\sin(\pi z)J_{2z}(4\sqrt{xt})-\cos(\pi z)L_{2z}(4\sqrt{xt}))\right)dt,
\end{equation}
and
\begin{equation}\label{psii}
\psi(x,z, w)=2\int_0^\infty\phi(t,z, w)\left(\sin(\pi z)J_{2z}(4\sqrt{xt})-\cos(\pi z)L_{2z}(4\sqrt{xt}))\right)dt,
\end{equation}
Let the normalized Mellin transform $Z_1(s,z, w)$ and $Z_2(s,z, w)$ of the functions $\phi(x,z)$
 and $\psi(x,z)$ be defined by
 \begin{equation}\label{defZ1}
 \Gamma\left(\frac{1+s-z}{2}\right) \Gamma\left(\frac{1+s+z}{2}\right)Z_1(s,z, w):=\int_0^\infty x^{s-1}\phi(x,z, w)dx,
 \end{equation}
and 
\begin{align}\label{defZ2}
\Gamma\left(\frac{1+s-z}{2}\right) \Gamma\left(\frac{1+s+z}{2}\right)Z_2(s,z, w):=\int_0^\infty x^{s-1}\psi(x,z, w)dx,
\end{align}
where each of the above equations holds in a particular vertical strip in the complex $s-$plane. Let
\begin{equation}
Z(s,z, w):=Z_1(s,z, w)+Z_2(s,z, w)\ \text{and}\ \Theta(x,z, w):=\phi(x,z, w)+\psi(x,z, w),\label{defTheta}
\end{equation}
whence
\begin{align*}
\Gamma\left(\frac{1+s-z}{2}\right) \Gamma\left(\frac{1+s+z}{2}\right)Z(s,z, w)=\int_0^\infty x^{s-1}\Theta(x,z, w)dx
\end{align*}
for those values of $s$ which lie in the intersection of the aforementioned vertical strips. 

\subsection{Properties of functions reciprocal in the second Koshliakov kernel}\label{propmt}
\hfill\\

A class of functions $\Diamond_{\eta,\omega}$, called the `diamond class', was introduced in \cite{koshkernel}. We reproduce its definition below as the specific choices of the functions $\phi$ and $\psi$, satisfying \eqref{phii} and \eqref{psii}, which are used to prove Theorem \ref{analogueReslutthm}, and hence Theorem \ref{xiintgenramhurthm}, need to be members of this class.
\begin{definition}\label{diamondc}
Let $0<\omega\leq \pi$ and $\eta>0$. Let $z$ be fixed. If $u(s, z, w)$ is such that
\begin{enumerate}
\item[(i)] $u(s, z, w)$ is an analytic function of $s=re^{i\theta}$ regular in the angle given by $r>0$, $|\theta|<\omega$,
\item[(ii)] $u(s, z, w)$ satisfies the bounds
\begin{equation*}
u(s, z, w)=
			\begin{cases}
			O_{z}(|s|^{-\delta}) & \mbox{ if } |s| \le 1,\\
			{O_z(|s|^{-\eta-1-|\textup{Re}(z)|})} & \mbox{ if } |s| > 1,
			\end{cases}
\end{equation*}
\end{enumerate}
for every positive $\delta$ and uniformly in any angle $|\theta|<\omega$, then we say that $u$ belongs to the diamond class $\Diamond_{\eta, \omega}$ and write $u(s, z, w)\in \Diamond_{\eta,\omega}$.
\end{definition}

The following lemma is used to prove Theorem \ref{analogueReslutthm} of which Theorem \ref{xiintgenramhurthm} is a special case.
\begin{lemma}\label{parts12}
Let $\eta>0$, $0<\omega\leq\pi$ and let $-1/4<$ \textup{Re}$(z)<1/4$. Suppose that $\phi,\psi\in\Diamond_{\eta,\omega}$ are reciprocal with respect to the second Koshliakov kernel $\sin(\pi z) J_{2 z}(2 \sqrt{xt}) -\cos(\pi z) L_{2 z}(2 \sqrt{xt})$. Let $Z(s, z, w)$ be defined in \eqref{defTheta}. Then
\begin{enumerate}
\item $Z(s,z, w)=Z(1-s,z, w)\ \forall\ s$ such that $-\eta-|\textup{Re}(z)| < \textup{Re}(s) < 1 + \eta+|\textup{Re}(z)|$.\\
\item $Z(\sigma+it,z, w)\ll_z e^{\left(\frac{\pi}{2}-\omega+\epsilon\right)|t|}$ for every $\epsilon>0$.
\end{enumerate}
\end{lemma}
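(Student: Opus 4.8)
The plan is to prove the two assertions about $Z(s,z,w)$ by converting the reciprocity relation \eqref{phii}--\eqref{psii} into a functional equation for Mellin transforms via the Mellin-transform theory of the second Koshliakov kernel. First I would compute the Mellin transform of the kernel $\sin(\pi z)J_{2z}(2\sqrt{xt})-\cos(\pi z)L_{2z}(2\sqrt{xt})$. The key structural fact, which underlies Koshliakov's evaluations \eqref{koshlyakov-1}--\eqref{koshlyakov-2}, is that this kernel is \emph{self-reciprocal} in the sense that its normalized Mellin transform (after pulling out the gamma factors $\Gamma\bigl(\tfrac{1+s-z}{2}\bigr)\Gamma\bigl(\tfrac{1+s+z}{2}\bigr)$) is invariant under $s\mapsto 1-s$. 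Concretely, applying the Mellin inversion/Parseval machinery of \eqref{Persval}--\eqref{par} to \eqref{psii}, and using the known Mellin transform of $J_{2z}$ and $L_{2z}$ together with the reflection formula \eqref{refl}, one obtains a relation of the shape
\begin{align*}
\Gamma\left(\frac{1+s-z}{2}\right)\Gamma\left(\frac{1+s+z}{2}\right)Z_2(s,z,w)
=\Gamma\left(\frac{1+(1-s)-z}{2}\right)\Gamma\left(\frac{1+(1-s)+z}{2}\right)Z_1(1-s,z,w),
\end{align*}
and symmetrically with the roles of $Z_1$ and $Z_2$ interchanged. The gamma factors on the two sides are \emph{the same} under $s\mapsto 1-s$ because the arguments $\tfrac{1\pm s\pm z}{2}$ map into $\tfrac{1\pm(1-s)\pm z}{2}$, so they cancel, yielding $Z_2(s,z,w)=Z_1(1-s,z,w)$ and $Z_1(s,z,w)=Z_2(1-s,z,w)$.

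Adding these two identities and invoking the definition $Z=Z_1+Z_2$ in \eqref{defTheta} gives part (1) at once: $Z(s,z,w)=Z_1(s,z,w)+Z_2(s,z,w)=Z_2(1-s,z,w)+Z_1(1-s,z,w)=Z(1-s,z,w)$. The range of validity $-\eta-|\textup{Re}(z)|<\textup{Re}(s)<1+\eta+|\textup{Re}(z)|$ is dictated by where the defining Mellin integrals \eqref{defZ1}--\eqref{defZ2} converge: the membership $\phi,\psi\in\Diamond_{\eta,\omega}$ forces the decay $u(s,z,w)=O_z(|s|^{-\eta-1-|\textup{Re}(z)|})$ for large $|s|$ and $O_z(|s|^{-\delta})$ for small $|s|$, and a standard argument (cf.\ the Mellin-transform conventions in the diamond class of \cite{koshkernel}) shows the strip of analyticity of $Z_1$ and $Z_2$ is exactly the stated symmetric interval about $\textup{Re}(s)=1/2$. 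One must verify that the constraint $-1/4<\textup{Re}(z)<1/4$ guarantees the relevant Mellin transforms of the Bessel kernel converge in overlapping strips so that Parseval's formula \eqref{Persval} applies legitimately; this is where the hypothesis on $z$ is consumed.

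For part (2), the growth estimate, I would exploit the angular analyticity built into the diamond class. Since $\phi,\psi\in\Diamond_{\eta,\omega}$ are analytic and suitably bounded in the sector $|\theta|<\omega$, one can rotate the contour in the Mellin integral $\int_0^\infty x^{s-1}\phi(x,z,w)\,dx$ from the positive real axis to a ray $\arg x=\pm(\omega-\epsilon)$. On such a rotated ray, writing $s=\sigma+it$, the factor $x^{s-1}=r^{s-1}e^{\pm i(\omega-\epsilon)(s-1)}$ contributes $e^{\mp(\omega-\epsilon)t}$, and combined with the growth of the gamma factors this produces the bound $e^{(\frac{\pi}{2}-\omega+\epsilon)|t|}$ after one divides by $|\Gamma(\frac{1+s-z}{2})\Gamma(\frac{1+s+z}{2})|$, whose reciprocal grows like $e^{\frac{\pi}{2}|t|}$ by Stirling's estimate \eqref{strivert}. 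Choosing the sign of the rotation according to $\text{sgn}(t)$ gives the exponential decay needed to offset the $e^{\frac{\pi}{2}|t|}$ from the gamma quotient, leaving the claimed $e^{(\frac{\pi}{2}-\omega+\epsilon)|t|}$.

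The main obstacle will be the first step: rigorously computing the normalized Mellin transform of the second Koshliakov kernel and confirming that the gamma factors in \eqref{defZ1}--\eqref{defZ2} are precisely the ones that cancel under $s\mapsto 1-s$. This requires the correct Mellin-Barnes representations of $J_{2z}$, $Y_{2z}$ and $K_{2z}$ (entering through $L_{2z}$ via \eqref{kernels}), careful tracking of the $\sin(\pi z)$ and $\cos(\pi z)$ coefficients, and justification of the interchange of integration in Parseval's identity \eqref{Persval} subject to the convergence conditions \eqref{conditions}. Once the self-reciprocity of the kernel under the normalized Mellin transform is established, the functional equation $Z(s,z,w)=Z(1-s,z,w)$ is essentially immediate, and the growth bound follows from the diamond-class analyticity by the contour-rotation argument sketched above.
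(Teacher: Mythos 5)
Your architecture is the same as the paper's (Parseval plus the Mellin transform of the second Koshliakov kernel, analytic continuation for part (1), sector rotation for part (2), the latter being exactly the argument of Lemma 2.1 of \cite{koshkernel} that the paper cites), but the central step fails as you have written it. Set $h(s,z):=\Gamma\left(\frac{1+s-z}{2}\right)\Gamma\left(\frac{1+s+z}{2}\right)$. The identity you display,
\begin{align*}
h(s,z)\,Z_2(s,z,w)=h(1-s,z)\,Z_1(1-s,z,w),
\end{align*}
is false: since the true conclusion is $Z_2(s,z,w)=Z_1(1-s,z,w)$, your identity would force $\left(h(s,z)-h(1-s,z)\right)Z_1(1-s,z,w)\equiv 0$, and $h(s,z)\not\equiv h(1-s,z)$. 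Likewise your stated mechanism --- that the gamma factors ``are the same under $s\mapsto 1-s$'' and therefore cancel --- is not true: the arguments $\frac{1+s\pm z}{2}$ go to $\frac{2-s\pm z}{2}$, not to themselves. What Parseval actually yields, after inserting the reciprocity \eqref{phii} and the kernel transform of \cite[Lemma 5.2]{dixitmoll}, is
\begin{align*}
h(1-s,z)\,Z_1(1-s,z,w)=\frac{2^{2s-1}}{\pi}\,\Gamma(1-s-z)\,\Gamma(1-s+z)\left(\cos(\pi z)+\cos(\pi s)\right)h(s,z)\,Z_2(s,z,w),
\end{align*}
and the whole point of the proof is that the factor multiplying $Z_2$ collapses to $h(1-s,z)$: one writes $\cos(\pi z)+\cos(\pi s)=2\cos\left(\frac{\pi(s+z)}{2}\right)\cos\left(\frac{\pi(s-z)}{2}\right)$, splits $\Gamma(1-s-z)$ and $\Gamma(1-s+z)$ by the duplication formula \eqref{dup}, and pairs the resulting half-argument gamma functions against those of $h(s,z)$ via the reflection formula \eqref{refl}. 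In other words, the kernel is not ``Mellin-invariant''; its transform is precisely the conversion factor $h(1-s,z)/h(s,z)$. This computation is the proof, and it is exactly the step you deferred.

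There is also a gap in how you obtain the strip in part (1). The defining integrals \eqref{defZ1}--\eqref{defZ2} do not converge on the full symmetric strip: the diamond-class bound near the origin is only $O_z(|x|^{-\delta})$ for every $\delta>0$, so these integrals define $Z_1,Z_2$ analytically only in (roughly) $0<\textup{Re}(s)<1+\eta+|\textup{Re}(z)|$, as recorded in \cite{koshkernel}. The paper first proves the relations in the narrow strip $\frac{3}{4}<\textup{Re}(s)<1-|\textup{Re}(z)|$ (nonempty precisely because $|\textup{Re}(z)|<\frac{1}{4}$, and where the Fubini interchange is legitimate), and then uses the relations $Z_1(1-s,z,w)=Z_2(s,z,w)$ and $Z_2(1-s,z,w)=Z_1(s,z,w)$ themselves to continue $Z_1$, $Z_2$, and hence $Z$, to the left of $\textup{Re}(s)=0$; that is how the range $-\eta-|\textup{Re}(z)|<\textup{Re}(s)<1+\eta+|\textup{Re}(z)|$ arises, not from convergence of the integrals. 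Your contour-rotation sketch for part (2) is the correct mechanism (decay $e^{-(\omega-\epsilon)|t|}$ from rotating onto the rays $\arg x=\pm(\omega-\epsilon)$, against the growth $e^{\frac{\pi}{2}|t|}$ coming from dividing by the gamma factors) and matches the cited argument.
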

\begin{proof}
We first show the validity of the first part of the lemma for $\frac{3}{4}<$ Re$(s)<1-|$Re$(z)|$. It is later extended to $-\eta-|\textup{Re}(z)| < \textup{Re}(s) < 1 + \eta+|\textup{Re}(z)|$ by analytic continuation. Define
\begin{align*}
h(s,z):=\Gamma\left(\frac{1+s-z}{2}\right) \Gamma\left(\frac{1+s+z}{2}\right).
\end{align*}
Note that 
\begin{align}\label{functeqn1}
&h(1-s,z)Z_1(1-s,z, w)\nonumber\\
&=\int_0^\infty x^{-s}\phi(x,z, w)dx \nonumber \\
&=2\int_0^\infty x^{-s} \int_0^\infty \psi(t,z, w)\left(\sin(\pi z)J_{2z}(4\sqrt{xt})-\cos(\pi z)L_{2z}(4\sqrt{xt}))\right)dtdx \nonumber\\
&=2\int_0^\infty \psi(t,z, w) \int_0^\infty x^{-s}\left(\sin(\pi z)J_{2z}(4\sqrt{xt})-\cos(\pi z)L_{2z}(4\sqrt{xt}))\right) dxdt\nonumber\\
&=2\pi^{2-2s}\int_0^\infty t^{s-1} \psi(t,z, w) \int_0^\infty u^{-s}\left(\sin(\pi z)J_{2z}(4\pi\sqrt{u})-\cos(\pi z)L_{2z}(4\pi\sqrt{u}))\right)du dt,
\end{align}
where the interchange of the order of integration in the second step above, which is justifiable for $\frac{3}{4}<$Re$(s)<1-|$Re$(z)|$, can be given along the same lines as for that in Lemma 2.1 of \cite{koshkernel}\footnote{The integrals $\int_0^\infty x^{-s} \int_0^\infty \psi(t,z, w)\left(\sin(\pi z)J_{2z}(4\sqrt{xt})-\cos(\pi z)L_{2z}(4\sqrt{xt}))\right)dtdx$ and $\int_0^\infty \psi(t,z, w)$\newline$\times\int_0^\infty x^{-s}\left(\sin(\pi z)J_{2z}(4\sqrt{xt})-\cos(\pi z)L_{2z}(4\sqrt{xt}))\right) dxdt$ are absolutely convergent for $\frac{3}{4}<$Re$(s)<1-|$Re$(z)|$. So the result follows from Fubini's theorem.}.

%Note that this follows from Fubini's theorem since the integrals $\int_0^\infty x^{-s} \int_0^\infty \psi(t,z, w)\left(\sin(\pi z)J_{2z}(4\sqrt{xt})-\cos(\pi z)L_{2z}(4\sqrt{xt}))\right)dtdx$ and $\int_0^\infty \psi(t,z, w) \int_0^\infty x^{-s}\left(\sin(\pi z)J_{2z}(4\sqrt{xt})-\cos(\pi z)L_{2z}(4\sqrt{xt}))\right) dxdt$ are absolutely convergent for $\frac{3}{4}<$Re$(s)<1-|$Re$(z)|$}.
Replacing $s$ by $1-s ,\ z$ by $2z$ and letting $y=1$ in \cite[Lemma 5.2]{dixitmoll}, we have, for $\frac{1}{4}<$Re$(s)<1\pm$Re$(z)$ and $y>0$,
%\begin{align}
%&\int_{0}^{\infty} x^{s-1} \left( \sin \left( \tfrac{1}{2}\pi z \right) 
%J_{z}( 4 \pi \sqrt{xy}) - \cos \left( \tfrac{1}{2} \pi z \right) 
%L_{z}(4 \pi \sqrt{xy}) \right) dx \nonumber\\
%&= \frac{1}{2^{2s}\pi^{1+2s} y^{s}} 
%\Gamma \left( s - \frac{z}{2} \right)
%\Gamma \left( s + \frac{z}{2} \right)
%\left( \cos \left( \tfrac{1}{2}\pi z \right) - \cos(\pi s) \right).
%\end{align}
%Now replace $s$ by $1-s ,\ z$ by $2z$ and let $y=1$ in the above identity so that for $\frac{1}{4}<$Re$(s)<1\pm$Re$(z)$, we have
\begin{align}\label{functeqn2}
\int_0^\infty& x^{-s}\left(\sin(\pi z)J_{2z}(4\pi\sqrt{x})-\cos(\pi z)L_{2z}(4\pi\sqrt{x})\right)dx \nonumber \\
& =\frac{1}{2^{2-2s}\pi^{3-2s}}\Gamma\left(1-s-z\right) \Gamma\left(1-s+z\right)\left(\cos (\pi z)+\cos (\pi s)\right).
\end{align}
Thus from \eqref{functeqn1} and \eqref{functeqn2},
\begin{align*}
h(1-s,z)Z_1(1-s,z, w)&=\frac{2^{2s-1}}{\pi}\Gamma\left(1-s-z\right) \Gamma\left(1-s+z\right)\left(\cos (\pi z)+\cos (\pi s)\right)\int_0^\infty t^{s-1}\psi(t,z, w)dt, \nonumber\\
&=\frac{2^{2s-1}}{\pi}\Gamma\left(1-s-z\right) \Gamma\left(1-s+z\right)\left(\cos (\pi z)+\cos (\pi s)\right) h(s,z) Z_2(s,z, w), \nonumber\\
%&=\frac{2^{2s-1}}{\pi}\Gamma\left(1-s-z\right) \Gamma\left(1-s+z\right)\left(\cos (\pi z)+\cos (\pi s)\right) \nonumber \\
%& \quad\times\Gamma\left(\frac{1+s-z}{2}\right) \Gamma\left(\frac{1+s+z}{2}\right)  Z_2(s,z)\nonumber\\
&= \Gamma\left(\frac{2-s+z}{2}\right) \Gamma\left(\frac{2-s-z}{2}\right)Z_2(s,z, w)\nonumber\\
&=h(1-s,z)Z_2(s,z,w),
\end{align*}
where in the penultimate step, we used \eqref{dup} and \eqref{refl} for simplification. This implies that
\begin{equation*}
Z_1(1-s,z, w)=Z_2(s,z,w).
\end{equation*}
Similarly one can prove 
\begin{align*}
Z_2(1-s,z,w)&=Z_1(s,z,w).
\end{align*}
This, together with \eqref{defTheta}, implies $$Z(1-s,z,w)=Z(s,z,w).$$ This proves the functional equation of $Z(s, z, w)$ for $\frac{3}{4}<$ Re$(s)<1-|$Re$(z)|$. Now, as shown in \cite[p.~1117]{koshkernel}, the functions $Z_1(s, z, w)$ and $Z_2(s, z, w)$ are well-defined and analytic in $s$ in the region $\delta<$ Re$(s)<1+\eta+|$Re$(z)|$ for every $\delta>0$\footnote{There is a typo on page 1117 of \cite{koshkernel}, namely, all instances of $|$Re$(z)|/2$ should be replaced by $|$Re$(z)|$.}. Hence the results $Z_1(1-s,z, w)=Z_2(s,z,w)$, $Z_2(1-s,z, w)=Z_1(s,z,w)$ and $Z(1-s,z, w)=Z(s,z,w)$ are valid in $-\eta-|\textup{Re}(z)| < \textup{Re}(s) < 1 + \eta+|\textup{Re}(z)|$ by analytic continuation. 

The proof of the second part of the lemma is along the similar lines as that of Lemma 2.1 in \cite[p.~1117-1118]{koshkernel} and is hence omitted.
\end{proof}
%Throughout the paper, let $\int_{(c)}$ denote the line integral $\int_{c-i\infty}^{c+i\infty}$ for $c\in\mathbb{R}$. 
Next, we obtain a useful inverse Mellin transform that is required for proving Theorem \ref{analogueReslutthm} below.
\begin{lemma}\label{InverseMellOfQuetientOfGamma}
Let $x>0$ and $z$ be fixed such that $-\frac{1}{2}<\mathrm{Re}(z)<\frac{1}{2}$.	For $\pm\mathrm{Re}\left(\frac{z}{2}\right)<c:=\mathrm{Re}(s)<2+\mathrm{Re}\left(\frac{z}{2}\right)$, we have
\begin{align}\label{aconti}
\frac{1}{2\pi i}\int_{(c)}\frac{\Gamma\left(1+\frac{s}{2}-\frac{z}{4}\right)\Gamma\left(\frac{s}{2}+\frac{z}{4}\right)\Gamma\left(\frac{z}{4}-\frac{s}{2}\right)}{\Gamma\left(\frac{s}{2}+\frac{1}{2}-\frac{z}{4}\right)}x^{-s}ds=\frac{2}{\sqrt{\pi}}x^{-\frac{z}{2}} \Gamma\left(\frac{z}{2}\right) \left({}_2F_1\left(1,\frac{z}{2};\frac{1}{2};-x^{-2}\right)-1\right).
\end{align}
\end{lemma}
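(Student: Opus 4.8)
The plan is to establish \eqref{aconti} by a residue computation, after first simplifying the integrand. Setting $\mu:=\frac{z}{4}-\frac{s}{2}$, the reflection formula \eqref{refl} gives $\Gamma(1+\frac{s}{2}-\frac{z}{4})\Gamma(\frac{z}{4}-\frac{s}{2})=\Gamma(1-\mu)\Gamma(\mu)=\pi/\sin(\pi\mu)$, so the integrand on the left of \eqref{aconti} collapses to
\begin{equation*}
\frac{\pi\,\Gamma(\frac{s}{2}+\frac{z}{4})}{\sin(\pi(\frac{z}{4}-\frac{s}{2}))\,\Gamma(\frac{s}{2}+\frac12-\frac{z}{4})}\,x^{-s}.
\end{equation*}
An equivalent route, useful as a check, is to compute the Mellin transform of the right-hand side directly: the substitution $y=1/x$ turns $\int_0^\infty x^{s-1}g(x)\,dx$ (with $g$ the right-hand side of \eqref{aconti}) into the Mellin transform of ${}_2F_1(1,\frac{z}{2};\frac12;-y^2)-1$ evaluated at $\frac{z}{2}-s$, which Lemma \ref{IMT of 2F1} evaluates upon taking $n=1/\pi$. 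Subtracting the constant $1$ shifts the strip of convergence to exactly $\pm\mathrm{Re}(z/2)<\mathrm{Re}(s)<2+\mathrm{Re}(z/2)$ and cancels the pole arising from the behaviour at the origin; then \eqref{refl} reproduces the quotient of gammas, and Mellin inversion yields \eqref{aconti}.

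Returning to the residue approach, I would locate the poles relative to the line $\mathrm{Re}(s)=c$. All poles of $\Gamma(\frac{s}{2}+\frac{z}{4})$ sit at $s=-\frac{z}{2}-2n$, $n\ge0$, hence to the left of the contour because $c>-\mathrm{Re}(z/2)$, while $1/\sin(\pi(\frac{z}{4}-\frac{s}{2}))$ has simple poles at $s=\frac{z}{2}+2k$, $k\in\mathbb{Z}$. The decisive point is that $c>\mathrm{Re}(z/2)$ places the $k=0$ pole $s=\frac{z}{2}$ to the \emph{left} of the contour, while $c<2+\mathrm{Re}(z/2)$ keeps the poles with $k\ge1$ to the right; this is exactly what produces the ``$-1$'' in the statement. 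For $x>1$ the factor $x^{-s}$ decays as $\mathrm{Re}(s)\to+\infty$, so I would shift the contour to the right and collect, with the orientation sign, the residues at $s=\frac{z}{2}+2k$, $k\ge1$.

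A direct computation gives the residue at $s=\frac{z}{2}+2k$ as $-2(-1)^k\,\Gamma(\frac{z}{2}+k)/\Gamma(k+\frac12)\,x^{-z/2-2k}$, so that
\begin{equation*}
\frac{1}{2\pi i}\int_{(c)}(\cdots)\,ds=2\sum_{k=1}^{\infty}(-1)^k\frac{\Gamma(\frac{z}{2}+k)}{\Gamma(k+\frac12)}\,x^{-\frac{z}{2}-2k}.
\end{equation*}
Using $(1)_k=k!$, $\Gamma(\frac{z}{2})(\frac{z}{2})_k=\Gamma(\frac{z}{2}+k)$ and $(\frac12)_k=\Gamma(k+\frac12)/\sqrt{\pi}$, this series is precisely $\frac{2}{\sqrt{\pi}}x^{-z/2}\Gamma(\frac{z}{2})({}_2F_1(1,\frac{z}{2};\frac12;-x^{-2})-1)$, the right-hand side of \eqref{aconti}; the $k=0$ term of the hypergeometric series, which the residue sum omits, is the subtracted $1$. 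This proves \eqref{aconti} for $x>1$, where the series converges, and since both sides are analytic in $x$ on $(0,\infty)$ (the argument $-x^{-2}$ never meets the cut $[1,\infty)$ of ${}_2F_1$), the identity extends to all $x>0$ by analytic continuation.

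The main obstacle I anticipate is the rigorous justification of closing the contour, namely that the contributions of the joining horizontal and far vertical segments vanish. I would control these with Stirling's estimate \eqref{strivert}: the quotient $\Gamma(\frac{s}{2}+\frac{z}{4})/\Gamma(\frac{s}{2}+\frac12-\frac{z}{4})$ grows only polynomially in $|s|$, the factor $1/\sin(\pi(\frac{z}{4}-\frac{s}{2}))$ decays like $e^{-\frac{\pi}{2}|\mathrm{Im}(s)|}$ away from its poles, and $x^{-\mathrm{Re}(s)}$ with $x>1$ forces decay as $\mathrm{Re}(s)\to+\infty$; together these make the arc contributions negligible and the residue series absolutely convergent. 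I would also record at the outset that the hypotheses $-\frac12<\mathrm{Re}(z)<\frac12$ and $\pm\mathrm{Re}(z/2)<c<2+\mathrm{Re}(z/2)$ guarantee both that the strip is nonempty and that the pole configuration used above holds.
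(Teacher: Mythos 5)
Your proof is correct, but it proceeds differently from the paper's. The paper starts from a tabulated Mellin--Barnes representation (Oberhettinger's formula, essentially Euler's beta-type integral for ${}_2F_1$), which after the substitution $s\mapsto-\left(\tfrac{s}{2}-\tfrac{z}{4}\right)$ evaluates the integral on a line $(d')$ with $-\mathrm{Re}\left(\tfrac{z}{2}\right)<d'<\mathrm{Re}\left(\tfrac{z}{2}\right)$ as $\tfrac{2}{\sqrt{\pi}}x^{-z/2}\Gamma\left(\tfrac{z}{2}\right){}_2F_1\left(1,\tfrac{z}{2};\tfrac12;-x^{-2}\right)$ \emph{without} the $-1$; it then shifts the contour from $(d')$ to $(c)$, crossing exactly one pole at $s=\tfrac{z}{2}$ whose residue $-\tfrac{2}{\sqrt{\pi}}\Gamma\left(\tfrac{z}{2}\right)x^{-z/2}$ produces the subtracted $1$. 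Since the strip $(d')$ is nonempty only for $\mathrm{Re}(z)>0$, the paper first proves the identity for $0<\mathrm{Re}(z)<\tfrac12$ and extends to $-\tfrac12<\mathrm{Re}(z)<\tfrac12$ by analytic continuation in $z$. You instead close the contour to the right and sum \emph{all} the residues at $s=\tfrac{z}{2}+2k$, $k\ge1$, reconstructing the hypergeometric series term by term; your residue value $-2(-1)^k\Gamma\left(\tfrac{z}{2}+k\right)/\Gamma\left(k+\tfrac12\right)x^{-z/2-2k}$ and the Pochhammer bookkeeping are correct, and your pole-location analysis (the $k=0$ pole to the left, $k\ge1$ to the right) is exactly what makes the $-1$ appear. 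What each approach buys: the paper needs only a single residue and no series summation, but leans on the table formula and a continuation argument in $z$; yours is self-contained and works uniformly in the whole strip $-\tfrac12<\mathrm{Re}(z)<\tfrac12$, but requires $x>1$ for convergence of the residue series, hence a continuation in $x$ instead, together with a slightly more delicate contour argument (you should take the right edges at abscissas $M_j$ chosen between consecutive poles, let $T\to\infty$ first and then $M_j\to\infty$, which your Stirling estimates do support since the integrand decays like $e^{-\pi|\mathrm{Im}(s)|/2}$ times polynomial growth). Your continuation step is sound: the left-hand side is holomorphic in $x$ near $(0,\infty)$ by the same exponential decay, and the right-hand side is holomorphic there because $-x^{-2}$ never meets the cut $[1,\infty)$ of ${}_2F_1$, so agreement on $(1,\infty)$ propagates to all $x>0$.
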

\begin{proof}
We first prove the result for $0<\textup{Re}(z)<1/2$ and then extend it to $-\frac{1}{2}<\mathrm{Re}(z)<\frac{1}{2}$ by analytic continuation.

From \cite[p.~198, Equation (5.51)]{ober} for $0<d:=$Re$(s)<\min\{\mathrm{Re}(\alpha),\mathrm{Re}(\beta)\}$,
\begin{align*}
\frac{1}{2\pi i}\int_{(d)}\frac{\Gamma(\alpha-s)\Gamma(\beta-s)\Gamma(s)}{\Gamma(\gamma-s)}x^{-s}ds=\frac{\Gamma(\alpha)\Gamma(\beta)}{\Gamma(\gamma)} {}_2F_1(\alpha ,\beta ;\gamma ;-x).
\end{align*}
Now let $\alpha=1, \beta=\frac{z}{2}, \gamma=\frac{1}{2}$ and replace $s$ by $-\left(\frac{s}{2}-\frac{z}{4}\right)$ so that for $-2\min\{1,\mathrm{Re}(\frac{z}{2})\}+\mathrm{Re}(\frac{z}{2})<d':=\mathrm{Re}(s)<\mathrm{Re}(\frac{z}{2})$, that is, for $-\mathrm{Re}(\frac{z}{2})<d':=\mathrm{Re}(s)<\mathrm{Re}(\frac{z}{2})$ (since $\mathrm{Re}(z)<\frac{1}{2}$), one has
\begin{align*}
\frac{1}{2\pi i}\int_{(d')}\frac{\Gamma\left(1+\frac{s}{2}-\frac{z}{4}\right)\Gamma\left(\frac{s}{2}+\frac{z}{4}\right)\Gamma\left(\frac{z}{4}-\frac{s}{2}\right)}{\Gamma\left(\frac{s}{2}+\frac{1}{2}-\frac{z}{4}\right)}x^{\frac{s}{2}-\frac{z}{4}}\frac{ds}{2}=\frac{\Gamma\left(\frac{z}{2}\right)}{\sqrt{\pi}}\ {}_2F_1\left(1,\frac{z}{2};\frac{1}{2};-x\right).
\end{align*}
Replace $x$ by $x^{-2}$ in the above equation to have
\begin{align}\label{d}
\frac{1}{2\pi i}\int_{(d')}\frac{\Gamma\left(1+\frac{s}{2}-\frac{z}{4}\right)\Gamma\left(\frac{s}{2}+\frac{z}{4}\right)\Gamma\left(\frac{z}{4}-\frac{s}{2}\right)}{\Gamma\left(\frac{s}{2}+\frac{1}{2}-\frac{z}{4}\right)}x^{-s}ds=\frac{2}{\sqrt{\pi}}x^{-\frac{z}{2}} \Gamma\left(\frac{z}{2}\right) {}_2F_1\left(1,\frac{z}{2};\frac{1}{2};-x^{-2}\right).
\end{align}
Consider the contour $\mathfrak{C}$ formed by the line segments $[d'-iT,c-iT], [c-iT,c+iT], [c+iT,d'+iT]$ and $[d'+iT, d'-iT]$, oriented in the counter-clockwise direction, where $T>\textup{Im}\left(z/2\right)$ and $\pm\mathrm{Re}\left(\frac{z}{2}\right)<c:=\mathrm{Re}(s)<2+\mathrm{\frac{Re(z)}{2}}$. Note that\\
1. $\Gamma\left(\frac{z}{4}-\frac{s}{2}\right)$ has poles at $s=\frac{z}{2},\  2+\frac{z}{2},\ 4+\frac{z}{2},...$\\
2. $\Gamma\left(1+\frac{s}{2}-\frac{z}{4}\right)$ has poles at $s=\frac{z}{2}-2,\ \frac{z}{2}-4,...$\\
3. $\Gamma\left(\frac{s}{2}+\frac{z}{4}\right)$ has poles at $s=-\frac{z}{2},\ -\frac{z}{2}-2,...$.\\
It is clear that the only $s=z/2$ is the only pole of the integrand in \eqref{d} that lies inside the contour $\mathfrak{C}$. Hence applying Cauchy's residue theorem, letting $T\to\infty$, noting that the integrals along horizontal segments vanish as $T\to\infty$ (due to \eqref{strivert}) and that the residue at $s=z/2$ is $-\frac{2}{\sqrt{\pi}}\Gamma\left(\frac{z}{2}\right)x^{-\frac{z}{2}}$, we find that for $\mathrm{Re}\left(\frac{z}{2}\right)<c:=\mathrm{Re}(s)<2+\mathrm{\frac{Re(z)}{2}}$,
\begin{align}
&\frac{1}{2\pi i} \int_{(c)}\frac{\Gamma\left(1+\frac{s}{2}-\frac{z}{4}\right)\Gamma\left(\frac{s}{2}+\frac{z}{4}\right)\Gamma\left(\frac{z}{4}-\frac{s}{2}\right)}{\Gamma\left(\frac{s}{2}+\frac{1}{2}-\frac{z}{4}\right)}x^{-s}ds\nonumber\\
&=\frac{1}{2\pi i}\int_{(d')}\frac{\Gamma\left(1+\frac{s}{2}-\frac{z}{4}\right)\Gamma\left(\frac{s}{2}+\frac{z}{4}\right)\Gamma\left(\frac{z}{4}-\frac{s}{2}\right)}{\Gamma\left(\frac{s}{2}+\frac{1}{2}-\frac{z}{4}\right)}x^{-s}ds -\frac{2}{\sqrt{\pi}}\Gamma\left(\frac{z}{2}\right)x^{-\frac{z}{2}}\nonumber\\
&=\frac{2}{\sqrt{\pi}}x^{-\frac{z}{2}} \Gamma\left(\frac{z}{2}\right) \left({}_2F_1\left(1,\frac{z}{2};\frac{1}{2};-x^{-2}\right)-1\right),\nonumber
\end{align}
where in the last step we used \eqref{d}. This proves \eqref{aconti} for $0<\textup{Re}(z)<1/2$. Using Theorem \ref{ldcttemme1}, it is easy to see that \eqref{aconti} actually holds for $-\frac{1}{2}<\mathrm{Re}(z)<\frac{1}{2}$.
\end{proof}

\subsection{A general theorem for evaluating an integral involving the Riemann $\Xi$-function}\label{genthmxi}
\hfill\\

We now prove a result of which is Theorem \ref{xiintgenramhurthm} is a special case.
\begin{theorem}\label{analogueReslutthm}
Let $\eta>0$ and let $-\frac{1}{2}<\mathrm{Re}(z)<\frac{1}{2}$. Suppose that $\phi,\psi\in\Diamond_{\eta,\omega}$, and are reciprocal with respect to the second Koshliakov kernel, that is, they satisfy \eqref{phii} and \eqref{psii}. Let $Z(s,z, w)$ and $\Theta(x,z, w)$ be defined in \eqref{defTheta}. Then,
\begin{align}\label{analogueReslut}
&\pi^{\frac{z-3}{2}}\int_0^\infty \Gamma\left(\frac{z-1+it}{4}\right)\Gamma\left(\frac{z-1-it}{4}\right) \Xi\left(\frac{t+iz}{2}\right)\Xi\left(\frac{t-iz}{2}\right)Z\left(\frac{1+it}{2},\frac{z}{2}, w\right)\frac{dt}{(z+1)^2+t^2} \nonumber \\
&=-\frac{1}{\pi}\Gamma\left(\frac{z}{2}\right)\sum_{n=1}^\infty\sigma_{-z}(n) \int_0^\infty \Theta\left(x,\frac{z}{2}, w\right)\left({}_2F_1\left(1,\frac{z}{2};\frac{1}{2};-\frac{x^2}{\pi^2n^2}\right)-1\right)x^{\frac{z-2}{2}}\, dx -S(z, w),
\end{align}
where 
\begin{align}\label{defS}
S(z, w):=2^{-1-z}\Gamma(1+z)\zeta(1+z)Z\left(1+\frac{z}{2},\frac{z}{2}, w\right)+2^{-z}\Gamma(z)\zeta(z)Z\left(1-\frac{z}{2},\frac{z}{2}, w\right).
\end{align}
\end{theorem}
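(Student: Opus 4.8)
The plan is to turn the left-hand side into a single vertical-line integral, shift the contour to expose a Dirichlet series together with two residual terms, and then recognise the surviving integral as a Parseval convolution that reproduces the series on the right. First I would exploit the symmetry of the integrand. Writing $s=\frac{1+it}{2}$ and using $\Xi\left(\frac{t+iz}{2}\right)=\xi\left(s-\frac z2\right)$, $\Xi\left(\frac{t-iz}{2}\right)=\xi\left(s+\frac z2\right)$ together with the functional equation $Z\left(s,\frac z2,w\right)=Z\left(1-s,\frac z2,w\right)$ of Lemma \ref{parts12}(1), one checks that the full integrand is even in $t$. Hence the integral over $(0,\infty)$ equals one half of the integral over $(-\infty,\infty)$, which after the substitution $s=\frac{1+it}{2}$ becomes a line integral over $\textup{Re}(s)=\frac12$.

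Next I would simplify. Substituting $\xi(s)=\frac12 s(s-1)\pi^{-s/2}\Gamma\left(\frac s2\right)\zeta(s)$, pairing the resulting four gamma factors with the external factors $\Gamma\left(\frac{z-1\pm it}{4}\right)$ by means of the duplication formula \eqref{dup} and the reflection formula \eqref{refl}, and using the factorisation $(z+1)^2+t^2=4\left(\frac z2+1-s\right)\left(\frac z2+s\right)$ to cancel the polynomial factors, the left-hand side collapses to
\begin{align*}
2\pi\cdot\frac{1}{2\pi i}\int_{\left(\frac12\right)}2^{-1-s-\frac z2}\,\pi^{\frac z2-s}\,\frac{\Gamma\left(s+\frac z2\right)}{\sin\left(\frac{\pi}{2}\left(s-\frac z2\right)\right)}\,\zeta\left(s+\tfrac z2\right)\zeta\left(s-\tfrac z2\right)Z\left(s,\tfrac z2,w\right)\,ds.
\end{align*}

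I would then move the line of integration from $\textup{Re}(s)=\frac12$ to $\textup{Re}(s)=c$ with $c>1+\frac12|\textup{Re}(z)|$. The bound $Z(\sigma+it,z/2,w)\ll e^{(\pi/2-\omega+\epsilon)|t|}$ of Lemma \ref{parts12}(2) combined with Stirling's estimate \eqref{strivert} shows that the quotient $\Gamma\left(s+\frac z2\right)/\sin\left(\frac{\pi}{2}(s-\frac z2)\right)$ forces exponential decay on vertical lines, so the horizontal segments vanish. The simple poles of $\zeta\left(s-\frac z2\right)$ at $s=1+\frac z2$ and of $\zeta\left(s+\frac z2\right)$ at $s=1-\frac z2$ are crossed; evaluating their residues gives precisely the two terms of $S(z,w)$ in \eqref{defS}, where the asymmetric functional equation \eqref{zetafealt} is used to rewrite $\zeta(1-z)$ as $2^{1-z}\pi^{-z}\Gamma(z)\zeta(z)\cos\left(\frac{\pi z}{2}\right)$ so that the second residue becomes $2^{-z}\Gamma(z)\zeta(z)Z\left(1-\frac z2,\frac z2,w\right)$. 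On the shifted line I would then insert $\zeta\left(s+\frac z2\right)\zeta\left(s-\frac z2\right)=\sum_{n=1}^\infty\sigma_{-z}(n)\,n^{\frac z2-s}$, interchange summation and integration, and evaluate each term by Parseval's identity \eqref{par}: the factor $\Gamma\left(\frac{1+s-z/2}{2}\right)\Gamma\left(\frac{1+s+z/2}{2}\right)Z\left(s,\frac z2,w\right)$ is the Mellin transform \eqref{defTheta} of $\Theta\left(x,\frac z2,w\right)$, while the leftover gamma quotient (after one more use of \eqref{dup} and \eqref{refl}) is the Mellin transform of $x^{\frac{z-2}{2}}\left({}_2F_1\left(1,\frac z2;\frac12;-\frac{x^2}{\pi^2n^2}\right)-1\right)$ supplied by Lemma \ref{InverseMellOfQuetientOfGamma} with $x$ rescaled by $\pi n$. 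This yields exactly the series on the right-hand side of \eqref{analogueReslut}.

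The main obstacle will be the bookkeeping in the two gamma-simplification steps: in the second step the fourfold gamma product must be reorganised so that the pair $\Gamma\left(\frac{1+s\pm z/2}{2}\right)$ survives intact for the later pairing with $Z$, and in the final step the remaining quotient $\Gamma\left(s+\frac z2\right)/\sin\left(\frac\pi2(s-\frac z2)\right)$ must be matched exactly — including every power of $2$, $\pi$ and $n$ — against the transform of Lemma \ref{InverseMellOfQuetientOfGamma}. Equally delicate is the justification of the interchanges (Fubini for the contour-to-series swap and for the Parseval step, and the vanishing of the horizontal segments during the shift), which rests on the diamond-class decay of $\phi,\psi$ and the exponential bound of Lemma \ref{parts12}(2); these hypotheses, valid only in the range $-\frac12<\textup{Re}(z)<\frac12$ assumed here, are what keep every contour integral absolutely convergent.
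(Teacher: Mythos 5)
Your proposal follows essentially the same route as the paper's proof: evenness of the integrand (guaranteed by part (1) of Lemma \ref{parts12}) converts the $\Xi$-integral into a line integral on $\textup{Re}(s)=\tfrac{1}{2}$, the contour is shifted to the right past the poles at $s=1\pm\tfrac{z}{2}$ whose residues produce exactly $S(z,w)$, and on the shifted line the Dirichlet series for $\zeta\left(s-\tfrac{z}{2}\right)\zeta\left(s+\tfrac{z}{2}\right)$ combined with Parseval's formula \eqref{Persval} and Lemma \ref{InverseMellOfQuetientOfGamma} yields the series in \eqref{analogueReslut}. The only differences are cosmetic — the paper shifts to $\textup{Re}(s)=\tfrac{5}{4}$ rather than a general $c>1+\tfrac{1}{2}|\textup{Re}(z)|$ and devotes a separate argument to verifying that the right-hand series converges, a check your write-up leaves implicit.
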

\begin{proof}The convergence of the integral on the left-hand side of \eqref{analogueReslut} follows easily from Stirling's formula \eqref{strivert} and part (2) of Lemma \ref{parts12}. We next show that the series
\begin{equation}\label{serconv}
\sum_{n=1}^\infty\sigma_{-z}(n)\int_0^\infty \Theta\left(x,\frac{z}{2}, w\right)\left({}_2F_1\left(1,\frac{z}{2};\frac{1}{2};-\frac{x^2}{\pi^2n^2}\right)-1\right)x^{\frac{z-2}{2}}\, dx
\end{equation}
converges for $-\frac{1}{2}<\mathrm{Re}(z)<\frac{1}{2}$. Note that $\phi, \psi\in\Diamond_{\eta, \omega}$ imply that $\Theta\in\Diamond_{\eta, \omega}$.
% The corresponding series with $\phi$ replaced by $\psi$ can be analogously proved. This will then imply that the series on the right-hand side of \eqref{analogueReslut} converges.

We first split the integral inside the sum as
\begin{align*}
&\int_0^\infty \Theta\left(x,\frac{z}{2}, w\right)\left({}_2F_1\left(1,\frac{z}{2};\frac{1}{2};-\frac{x^2}{\pi^2n^2}\right)-1\right)x^{\frac{z-2}{2}}\, dx\nonumber\\
&=\left[\int_0^1+\int_{1}^{n\pi}+\int_{n\pi}^{\infty}\right]\Theta\left(x,\frac{z}{2}, w\right)\left({}_2F_1\left(1,\frac{z}{2};\frac{1}{2};-\frac{x^2}{\pi^2n^2}\right)-1\right)x^{\frac{z-2}{2}}\, dx\nonumber\\
&=:I_{1}(n, z, w)+I_{2}(n, z, w)+I_{3}(n, z, w).
\end{align*}
Consider $\sum_{n=1}^{\infty}\sigma_{-z}(n)I_{1}(n, z, w)$ first. Since ${}_2F_1\left(1,\frac{z}{2};\frac{1}{2};-\frac{x^2}{\pi^2n^2}\right)-1=O_z\left(\frac{x^2}{n^2}\right)$ for $x\in(0,1)$, we see using $\phi\in\Diamond_{\eta, \omega}$ that
\begin{align*}
I_{1}(n, z, w)\ll_z \frac{1}{n^2}\int_{0}^{1}x^{-\delta+\frac{1}{2}|\textup{Re}(z)|+1}\, dx\ll_z\frac{1}{n^2}.
\end{align*}
This implies that $\sum_{n=1}^{\infty}\sigma_{-z}(n)I_{1}(n, z, w)$ converges since $\mathrm{Re}(z)>-\frac{1}{2}$.

Next, consider $\sum_{n=1}^{\infty}\sigma_{-z}(n)I_{2}(n, z, w)$. Again, ${}_2F_1\left(1,\frac{z}{2};\frac{1}{2};-\frac{x^2}{\pi^2n^2}\right)-1=O_z\left(\frac{x^2}{n^2}\right)$ since $x\in(1, n\pi)$. 
\begin{align*}
I_{2}(n, z, w)\ll_z\frac{1}{n^2}\int_{1}^{n\pi}x^{-\eta-\frac{1}{2}|\textup{Re}(z)|+\frac{1}{2}\textup{Re}(z)}\, dx=\frac{1}{n^2}\left\{(n\pi)^{-\eta-\frac{1}{2}|\textup{Re}(z)|+\frac{1}{2}\textup{Re}(z)+1}-1\right\}.
\end{align*}
Since $\eta>0$ and $-\frac{1}{2}<\mathrm{Re}(z)<\frac{1}{2}$, this implies that $\sum_{n=1}^{\infty}\sigma_{-z}(n)I_{2}(n, z, w)$ converges.

We now show that $\sum_{n=1}^{\infty}\sigma_{-z}(n)I_{3}(n, z, w)$ converges. Note that
\begin{align}\label{ithree}
I_{3}(n, z, w)=(n\pi)^{z/2}\int_{1}^{\infty}\Theta\left(tn\pi,\frac{z}{2}, w\right)\left({}_2F_{1}\left(1,\frac{z}{2};\frac{1}{2};-t^2\right)-1\right)t^{\frac{z-2}{2}}\, dt.
\end{align}
To that end, we use \cite[p.~113, Equation (5.11)]{temme}, namely, for $|\arg(-\xi)|<\pi$,
\begin{align*}
{}_2F_1\left(a,b;c;\xi\right)&=\frac{\Gamma\left(c\right)\Gamma\left(b-a\right)}{\Gamma(b)\Gamma(c-a)}(-\xi)^{-a}{}_2F_1\left(a,1-c+a;1-b+a;\frac{1}{\xi}\right)\nonumber\\
&\qquad\qquad+\frac{\Gamma(c)\Gamma(a-b)}{\Gamma(a)\Gamma(c-b)}(-\xi)^{-b}{}_2F_1\left(b,1-c+b;1-a+b;\frac{1}{\xi}\right).
\end{align*}
Let $a=1, b=\frac{z}{2}, c=\frac{1}{2}$ and $\xi=-t^2$ in the above transformation to get
\begin{align*}
{}_2F_1\left(1,\frac{z}{2};\frac{1}{2};-t^2\right)&=\frac{\G\left(\frac{1}{2}\right)\G\left(\frac{z}{2}-1\right)}{t^2\G\left(\frac{z}{2}\right)\G\left(-\frac{1}{2}\right)}{}_2F_{1}\left(1,\frac{3}{2};2-\frac{z}{2};-\frac{1}{t^2}\right)\nonumber\\
&\quad+\frac{\G\left(\frac{1}{2}\right)\G\left(1-\frac{z}{2}\right)}{t^z\G\left(\frac{1-z}{2}\right)}{}_1F_{0}\left(\frac{1+z}{2};-;-\frac{1}{t^2}\right).
\end{align*}
Since $-\frac{1}{2}<\mathrm{Re}(z)<\frac{1}{2}$, for $t>M$, where $M$ is large enough, we have
\begin{equation*}
{}_2F_1\left(1,\frac{z}{2};\frac{1}{2};-t^2\right)=O_z\left(t^{-\textup{Re}(z)}\right).
\end{equation*}
Employing the above estimate in \eqref{ithree} along with the fact that $\Theta\in\Diamond_{\eta, \omega}$, we find that
\begin{align*}
I_3(n,z, w)&\ll_z n^{\frac{1}{2}\textup{Re}(z)-\eta-1-\frac{1}{2}|\textup{Re}(z)|}\bigg[\int_{1}^Mt^{-\eta-1-\frac{1}{2}|\textup{Re}(z)|}\left|{}_2F_1\left(1,\frac{z}{2};\frac{1}{2};-t^2\right)-1\right|\, dt\nonumber\\
&\quad\quad+\int_{M}^{\infty}\left(t^{-\eta-\frac{1}{2}|\textup{Re}(z)|-\frac{1}{2}\textup{Re}(z)-2}+t^{-\eta-\frac{1}{2}|\textup{Re}(z)|+\frac{1}{2}\textup{Re}(z)-2}\right)\, dt\bigg]\nonumber\\
&\ll_z n^{\frac{1}{2}\textup{Re}(z)-\eta-1-\frac{1}{2}|\textup{Re}(z)|},
\end{align*}
which implies that $\sum_{n=1}^{\infty}\sigma_{-z}(n)I_{3}(n, z, w)$ converges. This completes the proof of the fact that the series in \eqref{serconv} converges for $-\frac{1}{2}<\mathrm{Re}(z)<\frac{1}{2}$.

We now prove \eqref{analogueReslut}. First note that if 
\begin{equation}\label{fdefi}
f(t, z)=\phi(it, z)\phi(-it, z),
\end{equation}
where $\phi(\xi, z)$ is analytic in both $\xi$ and $z$, then
\begin{align}\label{analogue}
&\int_0^\infty f\left(\frac{t}{2}, z\right)\Xi\left(\frac{t+iz}{2}\right)\Xi\left(\frac{t-iz}{2}\right)Z\left(\frac{1+it}{2},\frac{z}{2}, w\right)\, dt\nonumber\\
&=\frac{1}{i}\int_{(\frac{1}{2})}\phi\left(s-\frac{1}{2},z\right)\phi\left(\frac{1}{2}-s,z\right)\xi\left(s-\frac{z}{2}\right)\xi\left(s+\frac{z}{2}\right)Z\left(s,\frac{z}{2}, w\right)\, ds.
\end{align}
This is proved by converting the integral on the left into the one along the whole real line and then letting $s=\frac{1+it}{2}$ to convert it into a line integral. The former is easy to see since the integrand of the integral on the left is an even function of $t$, which can, in turn, be seen from the fact that \eqref{xi} and \eqref{zetaalt} imply
\begin{equation*}
\Xi\left(-t\pm\frac{iz}{2}\right)=\Xi\left(t\mp\frac{iz}{2}\right),
\end{equation*}
and since $f$ and $Z\left(\frac{1+it}{2},\frac{z}{2}, w\right)$ are even functions of $t$. Note that the evenness of $Z\left(\frac{1+it}{2},\frac{z}{2}, w\right)$ follows from part (1) of Lemma \ref{parts12} since $-\frac{1}{2}<\mathrm{Re}(z)<\frac{1}{2}$. 

We now specialize $f$ in \eqref{fdefi} by choosing  
\begin{equation*}
\phi(s,z)=\frac{1}{\left(s+\frac{z+1}{2}\right)}\Gamma\left(\frac{z-1}{4}+\frac{s}{2}\right).
\end{equation*}
Then \eqref{analogue} implies
\begin{align}\label{analogue3}
&4\int_0^\infty\Gamma\left(\frac{z-1+it}{4}\right)\Gamma\left(\frac{z-1-it}{4}\right) \Xi\left(\frac{t+iz}{2}\right)\Xi\left(\frac{t-iz}{2}\right)Z\left(\frac{1+it}{2},\frac{z}{2}\right)\frac{dt}{(z+1)^2+t^2} \nonumber \\
&=-\frac{1}{i}\int_{(\frac{1}{2})}H(s, z, w)\pi^{-s}\, ds,
\end{align}
where
\begin{align*}
H(s, z, w)&:=\Gamma\left(\frac{z}{4}-\frac{s}{2}\right)\Gamma\left(\frac{s}{2}-\frac{z}{4}+1\right)\Gamma\left(\frac{z}{4}+\frac{s}{2}\right)\Gamma\left(\frac{z}{4}+\frac{s}{2}+\frac{1}{2}\right)\nonumber\\
&\qquad\times\zeta\left(s-\frac{z}{2}\right)\zeta\left(s+\frac{z}{2}\right)Z\left(s,\frac{z}{2}, w\right),
\end{align*}
where we used the definition of $\xi(s)$ in \eqref{xii} and the functional equation $\G(\omega+1)=\omega\G(\omega)$ for simplification.

Next, we would like to represent $\zeta\left(s-\frac{z}{2}\right)\zeta\left(s+\frac{z}{2}\right)$ as an infinite series using the well-known Dirichlet series representation \cite[p.~8, Equation (1.3.1)]{titch}
\begin{equation}\label{doublezeta}
\zeta(s)\zeta(s-a) = \sum\limits_{n = 1}^\infty  \frac{{\sigma _{a}(n)}}{n^{s}},
\end{equation}
valid for Re$(s)>\max(1,1+$Re$(a))$. However, note that the conditions $-1/2 < $ Re$(z)<1/2$ and Re$(s)=1/2$ imply $1/4 < $ Re$(s\pm z/2)<3/4$. Thus in order to use \eqref{doublezeta}, with $s$ replaced by $s-z/2$ and $a$ replaced by $-z$ (which is then valid for Re$(s) > 1 \pm $ Re$\left(\frac{z}{2}\right)$), we need to shift the line of integration from Re$(s)=1/2$ to Re$(s)=5/4$. In doing so, we encounter a simple pole at $s=1+z/2$ (due to $\zeta\left(s-\frac{z}{2}\right)$) and a simple pole at $s=1-z/2$ (due to $\zeta\left(s+\frac{z}{2}\right)$). 

After an application of Cauchy's residue theorem by considering the contour formed by the line segments $[1/2-iT, 5/4-iT],\ [5/4-iT, 5/4+iT],\ [5/4+iT, 1/2+iT]$ and $[1/2+iT, 1/2-iT]$ and noting that integral along horizontal lines vanishes as $T\to \infty$ (as can be seen by invoking \eqref{strivert}), we get
\begin{align}\label{analogue4}
\int_{(\frac{1}{2})}H(s, z, w)\pi^{-s}\, ds&=\int_{(\frac{5}{4})}H(s, z, w)\pi^{-s}\, ds-2\pi i\left(R_{1+\frac{z}{2}}+R_{1-\frac{z}{2}}\right),
\end{align}
where, here and throughout the rest of the paper, we use the notation $R_b$ to denote the residue of the integrand of the associated integral at $b$. Now employing \eqref{doublezeta}, with $s$ replaced by $s-z/2$ and $a$ replaced by $-z$, and interchanging the order of summation and integration which is valid because of absolute convergence, we see from \eqref{analogue3} and \eqref{analogue4} that
\begin{align}\label{analogue5}
&4\int_0^\infty \Gamma\left(\frac{z-1+it}{4}\right)\Gamma\left(\frac{z-1-it}{4}\right) \Xi\left(\frac{t+iz}{2}\right)\Xi\left(\frac{t-iz}{2}\right)Z\left(\frac{1+it}{2},\frac{z}{2}\right)\frac{dt}{(z+1)^2+t^2} \nonumber \\
&=-\frac{1}{i}\Bigg(\sum_{n=1}^\infty\sigma_{-z}(n)n^{\frac{z}{2}}\int_{(\frac{5}{4})}\Gamma\left(\frac{z}{4}-\frac{s}{2}\right)\Gamma\left(\frac{s}{2}-\frac{z}{4}+1\right)\Gamma\left(\frac{z}{4}+\frac{s}{2}\right)\Gamma\left(\frac{z}{4}+\frac{s}{2}+\frac{1}{2}\right)  \nonumber\\
&\qquad\quad\times Z\left(s,\frac{z}{2}\right) (n\pi)^{-s} ds-2\pi i\left(R_{1+\frac{z}{2}}+R_{1-\frac{z}{2}}\right)\Bigg).
\end{align}
The residues are easily computed to be
\begin{align}\label{resid}
R_{1+\frac{z}{2}}&=-2^{-z}\pi^{(1-z)/2}\Gamma(1+z)\zeta(1+z)Z\left(1+\frac{z}{2},\frac{z}{2}, w\right),\nonumber\\
R_{1-\frac{z}{2}}&=-2^{1-z}\pi^{(1-z)/2}\Gamma(z)\zeta(z)Z\left(1-\frac{z}{2},\frac{z}{2}, w\right).
\end{align}
We now apply \eqref{Persval} with $g(x)=\Theta(x, \frac{z}{2}, w)$ and $h(x)=\frac{2}{\sqrt{\pi}}x^{-\frac{z}{2}} \Gamma\left(\frac{z}{2}\right) \left({}_2F_1\left(1,\frac{z}{2};\frac{1}{2};-x^{-2}\right)-1\right)$. It is easy to see that the conditions \eqref{conditions} needed for using \eqref{Persval} are satisfied. Thus using Lemma \ref{InverseMellOfQuetientOfGamma},

%Now invoke Lemma \ref{InverseMellOfQuetientOfGamma} and also use \eqref{defZ} and \eqref{Persval} so that
\begin{align}\label{analogue6}
&\int_{(\frac{5}{4})}\Gamma\left(\frac{z}{4}-\frac{s}{2}\right) \Gamma\left(\frac{s}{2}-\frac{z}{4}+1\right)\Gamma\left(\frac{z}{4}+\frac{s}{2}\right)\Gamma\left(\frac{z}{4}+\frac{s}{2}+\frac{1}{2}\right) Z\left(s,\frac{z}{2}, w\right) (n\pi)^{-s} ds \nonumber \\
&=4\sqrt{\pi}i \Gamma\left(\frac{z}{2}\right) \int_0^\infty \Theta\left(x,\frac{z}{2}, w\right)\left({}_2F_1\left(1,\frac{z}{2};\frac{1}{2};-\frac{x^2}{\pi^2n^2}\right)-1\right)\left(\frac{\pi n}{x}\right)^{-\frac{z}{2}}\frac{dx}{x}.
\end{align}
Substituting \eqref{resid} and \eqref{analogue6} in \eqref{analogue5} and then simplifying leads us to \eqref{analogueReslut}.

\end{proof}

\section{Theory of the generalized modified Bessel function ${}_1K_{z,w}(x)$}\label{1kzw}
The theory of the generalized modified Bessel function ${}_1K_{z,w}(x)$ defined in \eqref{def} is developed in this section. It is essential for evaluating an integral involving the Riemann $\Xi$-function stated in Theorem \ref{xiintgenramhurthm}. The proofs of the properties of ${}_1K_{z,w}(x)$ are similar in nature to those occurring in the theory of another generalization of the modified Bessel function $K_{z,w}(x)$ defined in \eqref{kzw}. Since the latter was developed in detail in \cite{dkmt}, our approach here will be terse. 

\subsection{Series and integral representations for ${}_1K_{z,w}(x)$}\label{sir}
\hfill\\

We begin by proving Theorem \ref{integralRepr}.

\begin{proof}[Theorem \textup{\ref{integralRepr}}][]
Replacing $s$ by $s-z$ and letting $a=1, b=w$ in \eqref{mellin transform of gamma 1F1 1}, we see that for $c_1:=$Re$(s)>-1+$Re$(z)$,
\begin{align}\label{int2two}
\frac{1}{2\pi i}\int_{(c_1)}   \Gamma\left(\frac{1+s-z}{2}\right){}_1F_1\left(\frac{1+s-z}{2};\frac{3}{2};-\frac{w^2}{4}\right)t^{-s}ds=\frac{2t^{-z}}{w}e^{-t^2}\sin (wt).
\end{align}
Similarly, for $c_2:=$Re$(s)>-1-$Re$(z)$,
\begin{align}\label{int3}
\frac{1}{2\pi i}\int_{(c_2)}   \Gamma\left(\frac{1+s+z}{2}\right){}_1F_1\left(\frac{1+s+z}{2};\frac{3}{2};-\frac{w^2}{4}\right)t^{-s}ds=\frac{2t^{z}}{w}e^{-t^2}\sin (wt).
\end{align}
From \eqref{int2two}, \eqref{int3}, \eqref{Persval} and the definition of ${}_1K_{z,w}(x)$ in \eqref{def}, we arrive at \eqref{integralRepreqn}.
\end{proof}
To find the asymptotic behavior of ${}_1K_{z,w}(x)$, we need to first prove the lemma given below and the theorem following it.
\begin{lemma}\label{doubleSeriesRepr}
For $z,w\in\mathbb{C}$ and $|\arg(x)|<\frac{\pi}{4}$, we have
\begin{align*}
{}_1K_{z,w}(2x)=2x\sum_{n=0}^{\infty}\sum_{m=0}^{\infty}\frac{(-w^2x)^{n+m}}{(2n+1)!(2m+1)!}K_{n-m+z}(2x).
\end{align*}
\end{lemma}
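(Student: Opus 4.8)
The plan is to expand the two sine factors in the integral representation of Theorem~\ref{integralRepr} and integrate term by term. Starting from
\begin{align*}
{}_1K_{z,w}(2x)=\frac{2x^{-z}}{w^2}\int_0^\infty u^{2z-1}e^{-u^2-x^2/u^2}\sin(wu)\sin\left(\frac{wx}{u}\right)\, du,
\end{align*}
I would substitute the Maclaurin expansions
\begin{align*}
\sin(wu)=\sum_{n=0}^\infty\frac{(-1)^n w^{2n+1}u^{2n+1}}{(2n+1)!},\qquad \sin\left(\frac{wx}{u}\right)=\sum_{m=0}^\infty\frac{(-1)^m w^{2m+1}x^{2m+1}}{(2m+1)!}\,u^{-(2m+1)}.
\end{align*}
Multiplying these and inserting the product, the $(n,m)$-term of the resulting double series contributes, up to the constant $\frac{2x^{-z}}{w^2}\cdot\frac{(-1)^{n+m}w^{2n+2m+2}x^{2m+1}}{(2n+1)!(2m+1)!}$, the integral $\int_0^\infty u^{2z+2n-2m-1}e^{-u^2-x^2/u^2}\, du$.

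Next, the substitution $y=u^2$ recasts this inner integral as $\tfrac12\int_0^\infty y^{z+n-m-1}e^{-y-x^2/y}\,dy$, which is evaluated by \cite[p.~370, Formula \textbf{3.471.9}]{grn} with $\nu=z+n-m,\ \beta=x^2,\ \gamma=1$; the hypothesis $|\arg x|<\tfrac{\pi}{4}$ guarantees $\mathrm{Re}(x^2)>0$, so the formula applies and yields $x^{z+n-m}K_{n-m+z}(2x)$. Substituting this back and collecting powers — namely $x^{-z}\cdot x^{2m+1}\cdot x^{z+n-m}=x^{n+m+1}$ and $w^{-2}\cdot w^{2n+2m+2}=w^{2n+2m}$, so that $(-1)^{n+m}w^{2n+2m}x^{n+m}=(-w^2x)^{n+m}$ — collapses everything to
\begin{align*}
{}_1K_{z,w}(2x)=2x\sum_{n=0}^{\infty}\sum_{m=0}^{\infty}\frac{(-w^2x)^{n+m}}{(2n+1)!(2m+1)!}K_{n-m+z}(2x),
\end{align*}
which is the assertion.

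The main obstacle is justifying the term-by-term integration. With $c=\mathrm{Re}(z)$ and $r=\mathrm{Re}(x^2)>0$, the inner integral is, in absolute value, bounded by $r^{(c+n-m)/2}K_{|c+n-m|}(2\sqrt r)$ (using $K_\nu=K_{-\nu}$), and the large-order asymptotic $K_\nu(2\sqrt r)\sim\tfrac12\Gamma(\nu)(\sqrt r)^{-\nu}$ as $\nu\to\infty$ shows that, in the regime $n\to\infty$ with $m$ fixed, this is $O(\Gamma(c+n-m))$. Hence, symmetrically in $n$ and $m$, the $(n,m)$-term is dominated by a constant multiple of $|w|^{2n+2m}\Gamma(|c+n-m|)/\big((2n+1)!(2m+1)!\big)$; since the factorials in the denominator outgrow both the Gamma factor and the geometric factor, the double sum of absolute values converges. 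Fubini's theorem (or dominated convergence applied to the partial sums) then legitimizes the interchange, and this convergence estimate is the only genuinely delicate point — the rest is routine bookkeeping.
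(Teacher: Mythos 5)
Your proof is correct and follows essentially the same route as the paper's: expand both sine factors in Taylor series inside the integral representation of Theorem \ref{integralRepr}, interchange the double sum and the integral, and evaluate each resulting integral by the identity $\int_0^\infty y^{s-1}e^{-py-q/y}\,dy=2(q/p)^{s/2}K_s(2\sqrt{pq})$ (the paper cites Prudnikov's Formula 2.3.16.1, which is the same as your Gradshteyn--Ryzhik 3.471.9). Your explicit domination estimate via the large-order asymptotics of $K_\nu$ is exactly the absolute-convergence verification that the paper's appeal to Theorem \ref{ldcttemme} requires, so the two arguments coincide in substance.
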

\begin{proof}
This is proved by starting with \eqref{integralRepreqn}, writing the sine functions in the form of their Taylor series, interchange the order of integration and double sum using Theorem \ref{ldcttemme}, and then employing the formula \cite[p.~344, Formula \textbf{2.3.16.1}]{Prudnikov}
\begin{equation}\label{PrudnikovFormula}
\int_0^\infty y^{s-1}e^{-py-q/y}dy=2\left(\frac{q}{p}\right)^{s/2} K_s(2\sqrt{pq}),
\end{equation}
valid for Re$(p)>0,$ Re$(q)>0$.
\end{proof}

Basset's integral for the modified Bessel function of the second kind is given by \cite[p.~172]{watson-1966a}
\begin{align*}
K_z(xy)=\frac{\Gamma\left(z+\frac{1}{2}\right)(2x)^z}{y^z \Gamma\left(\frac{1}{2}\right)}\int_0^\infty
\frac{\cos (yu)\, du}{(x^2+u^2)^{z+\frac{1}{2}}},
\end{align*}
where Re$(z)>-1/2$, $y>0$ and $|\arg(x)|<\frac{\pi}{2}$. Performing integration by parts, we obtain
\begin{align}\label{Besset2}
K_z(xy)=\frac{2\Gamma\left(z+\frac{3}{2}\right)(2x)^z}{y^{z+1} \Gamma\left(\frac{1}{2}\right)}\int_0^\infty
\frac{u\sin (yu)}{(x^2+u^2)^{z+\frac{3}{2}}}\, du,
\end{align}
also valid for Re$(z)>-1/2$, $y>0$ and $|\arg(x)|<\frac{\pi}{2}$.
\begin{theorem}\label{Biletral}
For\footnote{The conditions in the corresponding theorem given in \cite{dkmt}, namely Theorem 1.9, are too restrictive. It is actually valid for $z, w\in\mathbb{C}$, and $|\arg(x)|<\pi$.} $z, w\in\mathbb{C}$, and $|\arg(x)|<\pi$,
\begin{align}\label{Biletraleqn}
{}_1K_{z,w}(2x)=\frac{1}{w^2}\sum_{m=-\infty}^\infty (-1)^m K_{m+z}(2x)(I_{2m}(2w\sqrt{x})-J_{2m}(2w\sqrt{x})).
\end{align}
\end{theorem}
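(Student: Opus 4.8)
The plan is to obtain \eqref{Biletraleqn} by regrouping the double series of Lemma \ref{doubleSeriesRepr} according to the order of the modified Bessel function $K$. Starting from
\begin{align*}
{}_1K_{z,w}(2x)=2x\sum_{n=0}^{\infty}\sum_{p=0}^{\infty}\frac{(-w^2x)^{n+p}}{(2n+1)!(2p+1)!}K_{n-p+z}(2x),
\end{align*}
I would collect, for each fixed $m\in\mathbb{Z}$, all the terms with $n-p=m$, so that the total coefficient of $K_{m+z}(2x)$ becomes $2x\sum_{n-p=m}\frac{(-w^2x)^{n+p}}{(2n+1)!(2p+1)!}$.

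To legitimize this rearrangement I first need absolute convergence of the double series for $|\arg(x)|<\frac{\pi}{4}$. This follows, exactly as in the analogous argument of \cite{dkmt}, from the large-order asymptotics $K_\nu(2x)=O\bigl(\Gamma(\nu)|x|^{-\nu}\bigr)$ as $\nu\to\infty$ together with $K_{-\nu}=K_{\nu}$: the factor $K_{n-p+z}(2x)$ grows at most like a single factorial in $|n-p|$, which is dominated by the product of factorials $(2n+1)!(2p+1)!$ in the denominator, so the series converges absolutely and may be summed in any order.

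For $m\ge 0$, setting $n=p+m$ and using $(-w^2x)^{2p+m}=(-1)^m(w^2x)^{2p+m}$, the inner sum equals
\begin{align*}
2(-1)^m\sum_{p=0}^{\infty}\frac{w^{4p+2m}x^{2p+m+1}}{(2p+1)!(2p+2m+1)!}.
\end{align*}
On the other hand, inserting $s=2w\sqrt{x}$ into the defining series \eqref{sumbesselj} of $J_{2m}$ and the corresponding series for $I_{2m}$, only the odd summation indices survive in the difference, giving
\begin{align*}
I_{2m}(2w\sqrt{x})-J_{2m}(2w\sqrt{x})=2\sum_{p=0}^{\infty}\frac{w^{4p+2m+2}x^{2p+m+1}}{(2p+1)!(2p+2m+1)!},
\end{align*}
so that $\frac{(-1)^m}{w^2}\bigl(I_{2m}(2w\sqrt{x})-J_{2m}(2w\sqrt{x})\bigr)$ agrees with the inner sum term by term. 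For $m<0$ I would invoke symmetry: the summand $\frac{(-w^2x)^{n+p}}{(2n+1)!(2p+1)!}$ is invariant under $n\leftrightarrow p$, while the right-hand factor is invariant under $m\mapsto -m$ because $I_{2m}=I_{-2m}$, $J_{2m}=J_{-2m}$ and $(-1)^m=(-1)^{-m}$; hence the case $m<0$ reduces to the case $m>0$. Summing over all $m\in\mathbb{Z}$ establishes \eqref{Biletraleqn} for $|\arg(x)|<\frac{\pi}{4}$.

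Finally, I would pass from the sector $|\arg(x)|<\frac{\pi}{4}$ to the full cut plane $|\arg(x)|<\pi$ by analytic continuation. Both sides are analytic there: ${}_1K_{z,w}(2x)$ by its definition \eqref{def}, and the series in \eqref{Biletraleqn} because each $K_{m+z}(2x)$ and $I_{2m}(2w\sqrt{x})-J_{2m}(2w\sqrt{x})$ is analytic for $|\arg(x)|<\pi$ and the series converges uniformly on compact subsets by the same factorial domination. Since the two analytic functions agree on the subsector $|\arg(x)|<\frac{\pi}{4}$, they agree on all of $|\arg(x)|<\pi$. The one genuinely delicate point is the absolute-convergence and rearrangement step; the remainder is a routine matching of power series together with a standard analytic-continuation argument.
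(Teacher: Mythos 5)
Your proof is correct, and it takes a genuinely more direct route than the paper's. The paper also starts from Lemma \ref{doubleSeriesRepr}, but instead of regrouping the double series directly it splits it into the ranges $m\le n$ and $m>n$, replaces each $K_{n-m+z}(2x)$ by Basset's integral \eqref{Besset2} --- which is valid only when the order has real part exceeding $-\tfrac{1}{2}$, forcing the preliminary restriction $-\tfrac{1}{2}<\textup{Re}(z)<\tfrac{1}{2}$ --- interchanges the resulting sums, evaluates the inner (diagonal) sums by the identity \eqref{sumoverm} (the same elementary power-series identity you verify termwise against \eqref{sumbesselj}), and then converts the integrals back into Bessel functions; this leaves it with an analytic continuation in \emph{both} $z$ and $x$ at the end. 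Your diagonal regrouping $n-p=m$ of the absolutely convergent double series accomplishes the same bookkeeping in one step, works for all $z\in\mathbb{C}$ from the outset (your $n\leftrightarrow p$ symmetry argument for $m<0$ is sound, since $I_{2m}$, $J_{2m}$ and $(-1)^m$ are even in $m$ while the diagonal coefficients are symmetric), and needs only the continuation in $x$ from $|\arg(x)|<\tfrac{\pi}{4}$ to $|\arg(x)|<\pi$. Moreover, the factorial-domination bound you cite (single-factorial growth of $K_{n-p+z}(2x)$ in $|n-p|$ against the product $(2n+1)!(2p+1)!$) is exactly the kind of large-order Bessel estimate the paper itself invokes, only later, to justify its continuation step, so your tools are no heavier than the paper's. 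What the paper's detour through Basset's integral buys is mainly structural: it parallels the treatment of $K_{z,w}(x)$ in \cite{dkmt} and keeps every manipulation inside absolutely convergent integrals, but it costs the artificial restriction on $z$ and a second continuation argument that your proof simply never needs.
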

\begin{proof}
We first prove the result for $-\frac{1}{2}<\textup{Re}(z)<\frac{1}{2}$ and $|\arg(x)|<\frac{\pi}{4}$, and later extend it to any $z\in\mathbb{C}$ and $x\in\mathbb{C}\backslash\{x\in\mathbb{R}:x\leq 0\}$ by analytic continuation.

From \eqref{Besset2} with $y=2$,
\begin{align}\label{bilateralSeries1}
K_{n-m+z}(2x) = \left\{ \begin{array}{cc}
                \frac{x^{n-m+z}\Gamma\left(n-m+z+\frac{3}{2}\right)}{\sqrt{\pi}}\displaystyle\int_0^\infty
\frac{u\sin (2u)}{(x^2+u^2)^{n-m+z+\frac{3}{2}}}\, du, &  \mathrm{if}\ \mathrm{Re}(n-m+z)> -\frac{1}{2}, \\
\frac{x^{m-n-z}\Gamma\left(m-n-z+\frac{3}{2}\right)}{\sqrt{\pi}}\displaystyle\int_0^\infty
\frac{u\sin (2u)}{(x^2+u^2)^{m-n-z+\frac{3}{2}}}\, du, & \mathrm{if}\ \mathrm{Re}(m-n-z)> -\frac{1}{2}.
                \end{array} \right. 
\end{align}
From Lemma \ref{doubleSeriesRepr},
\begin{align}\label{bilateralSeries2}
{}_1K_{z,w}(2x)&=2x\sum_{n=0}^\infty\sum_{m=0}^n\frac{(-w^2x)^{m+n}}{(2n+1)!(2m+1)!}K_{n-m+z}(2x) \nonumber \\
&\quad+2x\sum_{n=0}^\infty\sum_{m=n+1}^\infty\frac{(-w^2x)^{m+n}}{(2n+1)!(2m+1)!}K_{n-m+z}(2x).
\end{align}
Using the fact that $-\frac{1}{2}<\textup{Re}(z)<\frac{1}{2}$ and invoking \eqref{bilateralSeries1} in \eqref{bilateralSeries2}, we see that
\begin{align}\label{sumOfS1S2}
{}_1K_{z,w}(2x)=:S_1(z,w,x)+S_2(z,w,x),
\end{align}
where
\begin{align}
S_1(z,w,x)&:=\frac{2x}{\sqrt{\pi}}\sum_{n=0}^\infty\sum_{m=0}^n\frac{(-w^2)^{m+n}x^{2n+z}}{(2n+1)!(2m+1)!}\Gamma\left(n-m+z+\frac{3}{2}\right)\int_0^\infty
\frac{u\sin (2u)\, du}{(x^2+u^2)^{n-m+z+\frac{3}{2}}}\nonumber\\
S_2(z,w,x)&:=\frac{2x}{\sqrt{\pi}}\sum_{n=0}^\infty\sum_{m=n+1}^\infty\frac{(-w^2)^{m+n}x^{2m-z}}{(2n+1)!(2m+1)!}\Gamma\left(m-n-z+\frac{3}{2}\right)\int_0^\infty
\frac{u\sin (2u)\, du}{(x^2+u^2)^{m-n-z+\frac{3}{2}}}\label{S2def}.
\end{align}
To evaluate $S_1(z,w,x)$, we first write it in the form of a doubly infinite series as
\begin{align}\label{8.10}
S_1(z,w,x)=\frac{2x}{\sqrt{\pi}}\sum_{m=0}^\infty\sum_{k=0}^\infty\frac{(-w^2)^{2m+k}x^{2m+2k+z}}{(2m+1)!(2m+2k+1)!}\Gamma\left(k+z+\frac{3}{2}\right)\int_0^\infty
\frac{u\sin (2u)}{(x^2+u^2)^{k+z+\frac{3}{2}}}\, du.
\end{align}
Interchanging the order of the double sum, which is justified by absolute convergence, we find that
\begin{align}\label{ess1zwx}
S_1(z,w,x)&=\frac{2x^{1+z}}{\sqrt{\pi}}\sum_{k=0}^\infty(-w^2x^2)^k\G\left(k+z+\frac{3}{2}\right)\sum_{m=1}^{\infty}\frac{w^{4m}x^{2m}}{(2m+1)!(2m+2k+1)!}\nonumber\\
&\quad\times\int_{0}^{\infty}\frac{u\sin (2u)\, du}{(x^2+u^2)^{k+z+\frac{3}{2}}}.
\end{align}
Observe that
\begin{equation}\label{sumoverm}
\sum_{m=0}^{\infty}\frac{w^{4m}x^{2m}}{(2m+1)!(2m+2k+1)!}=\frac{1}{2(w^2x)^{k+1}}\left(I_{2k}(2w\sqrt{x})-J_{2k}(2w\sqrt{x})\right).
\end{equation}
Replace $z$ by $k+z$ and let $y=2$ in \eqref{Besset2} so that for Re$(z)<-k-1/2$,
\begin{equation}\label{bintegral}
\int_{0}^{\infty}\frac{u\sin (2u)\, du}{(x^2+u^2)^{k+z+\frac{3}{2}}}=\frac{\sqrt{\pi}}{x^{z+k}\G(k+z+\frac{3}{2})}K_{k+z}(2x).
\end{equation}
Substituting \eqref{sumoverm} and \eqref{bintegral} in \eqref{ess1zwx}, we are led to
\begin{align}\label{s1expr}
S_1(z,w,x)=\frac{1}{w^2}\sum_{k=0}^{\infty}(-1)^k K_{k+z}(2x)(I_{2k}(2w\sqrt{x})-J_{2k}(2w\sqrt{x}))
\end{align}
for Re$(z)>-1/2$. Replace $m$ by $n+\ell$ in \eqref{S2def} and then use \eqref{8.10} to deduce that
\begin{align}\label{ess2ess}
S_2(z,w,x)&=\frac{2x}{\sqrt{\pi}}\sum_{n=0}^\infty\sum_{\ell =1}^\infty \frac{(-w^2)^{2n+\ell}x^{2n+2\ell-z}\Gamma\left(\ell -z+\frac{3}{2}\right)}{(2n+1)!(2\ell +2n+1)!}\int_0^\infty\frac{u\sin(2u)}{(u^2+x^2)^{\ell-z+\frac{3}{2}}}\, du \nonumber\\
&=S_1(-z,w,x)- \frac{2x}{\sqrt{\pi}}\sum_{n=0}^\infty \frac{(-w^2)^{2n}x^{2n-z}\Gamma\left(\frac{3}{2}-z\right)}{(2n+1)!(2n+1)!}\int_0^\infty\frac{u\sin(2u)}{(u^2+x^2)^{\frac{3}{2}-z}}\, du\nonumber\\
&=S_1(-z,w,x)-\frac{1}{w^2}K_{-z}(x)\left(I_0(2w\sqrt{x})-J_0(2w\sqrt{x})\right),
\end{align}
where we employed \eqref{sumoverm} with $k=0$ and \eqref{Besset2} with $z$ replaced by $-z$ which is legitimate since Re$(z)<1/2$. Thus from \eqref{s1expr} and \eqref{ess2ess},
\begin{align}\label{s2expr}
S_2(z,w,x)&=\frac{1}{w^2}\sum_{k=1}^{\infty}(-1)^k K_{k-z}(2x)(I_{2k}(2w\sqrt{x})-J_{2k}(2w\sqrt{x}))\nonumber\\
&=\frac{1}{w^2}\sum_{k=-\infty}^{-1}(-1)^k K_{k+z}(2x)(I_{2k}(2w\sqrt{x})-J_{2k}(2w\sqrt{x})),
\end{align}
where we used the well-known facts that $J_{-n}(\xi)=(-1)^nJ_{n}(\xi)$ and $I_{-n}(\xi)=I_{n}(\xi)$ and $K_{-\nu}(\xi)=K_{\nu}(\xi)$. The result now follows from substituting \eqref{s1expr} and \eqref{s2expr} in \eqref{sumOfS1S2}. This proves Theorem \ref{Biletral} for $-\frac{1}{2}<\textup{Re}(z)<\frac{1}{2}$ and $|\arg(x)|<\frac{\pi}{4}$. 

Now from the series definitions of $J_{\nu}(\xi)$, $I_{\nu}(\xi)$ and $K_{\nu}(\xi)$, it is easy to see that as $\nu\to\infty, |\arg(\nu)|<\pi-\delta, \delta>0$ and $\xi\neq0$,
\begin{align*}
J_{\nu}(\xi)&\sim\frac{1}{\sqrt{2\pi\nu}}\left(\frac{e\xi}{2\nu}\right)^{\nu}-\left(\frac{z}{2}\right)^{\nu+2}\frac{e^{\nu}}{\nu^{\nu+3/2}\sqrt{2\pi}},\\
I_{\nu}(\xi)&\sim\frac{1}{\sqrt{2\pi\nu}}\left(\frac{e\xi}{2\nu}\right)^{\nu}+\left(\frac{z}{2}\right)^{\nu+2}\frac{e^{\nu}}{\nu^{\nu+3/2}\sqrt{2\pi}},\\
K_{\nu}(\xi)&\sim\frac{\sqrt{\pi}}{2\nu}\left(\frac{e\xi}{2\nu}\right)^{-\nu}.
\end{align*}
These easily show that the bilateral series in \eqref{Biletraleqn} is uniformly convergent not only on compact subsets of $z\in\mathbb{C}$ but also on compact subsets of $x\in\mathbb{C}\backslash\{x\in\mathbb{R}:x\leq 0\}$ and hence represents an analytic function for any complex $z$ and $x\in\mathbb{C}\backslash\{x\in\mathbb{R}:x\leq 0\}$. Since ${}_1K_{z,w}(2x)$ is also analytic in these regions as can be seen from \eqref{def}, we see that Theorem \ref{Biletral} holds for these extended regions in $z$- and $x$-complex planes.
\end{proof}
\begin{remark}
Let $w\to 0$ on both sides of Theorem \textup{\ref{Biletral}}. This results in the trivial identity $2xK_{z}(2x)=2xK_{z}(2x)$ as can be seen from \eqref{Basicform} and the fact that
\begin{equation*}
\lim_{w\to0}\frac{I_{2m}\left(2w\sqrt{x}\right)-J_{2m}\left(2w\sqrt{x}\right)}{w^2}=
\begin{cases}
2x,\hspace{1mm}\text{if}\hspace{1.5mm}m=0,\\
0,\hspace{2.5mm}\text{if}\hspace{1.5mm}m\neq0.\\
\end{cases}
\end{equation*}
 \end{remark}

\subsection{Asymptotic estimates of ${}_1K_{z,w}(x)$ }\label{ae1kzw}
\hfill\\

The asymptotic estimate for ${}_1K_{z,w}(x)$ for large values of $|x|$ is given first.
\begin{theorem}\label{asymptotcs large x}
Let the complex variables $w$ and $z$ belong to compact domains, then for large values of $|x|$, $|\arg(x)|<\frac{1}{4}\pi$, we have
\begin{align}\label{alx}
{}_1K_{z,w}(2x)=-\frac{1}{2w^2}\sqrt{\frac{\pi}{x}}e^{-2x}\left(\cos(2w\sqrt{x})P-\sin(2w\sqrt{x})Q-e^{-\frac{1}{4}w^2}R\right),
\end{align}
where $P,\ Q$ and $R$ have the asymptotic expansions
\begin{align}\label{PQR}
P&=1+\frac{32z^2-3w^2-8}{128x}+O\left(x^{-2}\right),\nonumber\\
Q&=\frac{w}{8\sqrt{x}}+O\left(x^{-\frac{3}{2}}\right),\nonumber\\
R&=1+\frac{(4z^2-1)(2-w^2)}{32x}+O\left(x^{-2}\right).
\end{align}
\end{theorem}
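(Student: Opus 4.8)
The plan is to read off the asymptotics directly from the integral representation of Theorem \ref{integralRepr}, namely from
\begin{equation*}
{}_1K_{z,w}(2x)=\frac{2x^{-z}}{w^2}\int_0^\infty u^{2z-1}e^{-u^2-x^2/u^2}\sin(wu)\sin\left(\frac{wx}{u}\right)\,du,
\end{equation*}
by Laplace's method (saddle-point analysis). The first move is to replace the product of sines by $\sin(wu)\sin(wx/u)=\tfrac12\bigl(\cos(wu-wx/u)-\cos(wu+wx/u)\bigr)$, so that the integrand splits into two amplitudes sharing the common weight $e^{-g(u)}$ with $g(u)=u^2+x^2/u^2$. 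The phase $g$ has a single saddle on the ray $\arg u=\tfrac12\arg x$ at $u_0=\sqrt{x}$, where $g(\sqrt{x})=2x$ and $g''(\sqrt{x})=8$; this is the origin of the factor $e^{-2x}$. The hypothesis $|\arg x|<\tfrac{\pi}{4}$ is precisely what gives $\mathrm{Re}(x^2)>0$, so the integrand decays as $u\to0^+$ and $u\to\infty$ and the contour $(0,\infty)$ may be deformed onto the steepest-descent path through $u_0$. I would record the standard tail estimate showing that the part of the integral outside a fixed neighbourhood of $u_0$ is exponentially smaller than $e^{-2x}$, uniformly for $z,w$ in compact sets.

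To organize the local expansion I would substitute $u=\sqrt{x}\,e^{v}$, symmetrizing the exponent to $g=2x\cosh(2v)$, and then rescale $v=\tau/(2\sqrt{x})$, so that $2x\cosh(2v)=2x+\tau^2+\tfrac{\tau^4}{12x}+O(x^{-2})$ and the Gaussian $e^{-\tau^2}$ emerges. Under this change the two phases become $wu-wx/u=2w\sqrt{x}\sinh v=w\tau+O(\tau^3/x)$, which \emph{vanishes} at the saddle, and $wu+wx/u=2w\sqrt{x}\cosh v=2w\sqrt{x}+\tfrac{w\tau^2}{4\sqrt{x}}+O(x^{-1})$. Hence the first amplitude contributes $\cos(w\tau)$ against the Gaussian, and completing the square in $\int e^{-\tau^2}\cos(w\tau)\,d\tau=\sqrt{\pi}\,e^{-w^2/4}$ produces exactly the term $e^{-w^2/4}R$; in the second amplitude one factors out the constant phase and writes $\cos(2w\sqrt{x}+\delta)=\cos(2w\sqrt{x})\cos\delta-\sin(2w\sqrt{x})\sin\delta$ with $\delta=\tfrac{w\tau^2}{4\sqrt{x}}+\cdots$, whose even part yields $\cos(2w\sqrt{x})P$ and whose smaller odd part yields $-\sin(2w\sqrt{x})Q$. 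Already at leading order this gives $-\tfrac{1}{2w^2}\sqrt{\pi/x}\,e^{-2x}\bigl(\cos(2w\sqrt{x})-e^{-w^2/4}\bigr)$, confirming the shape of \eqref{alx} with $P=R=1+o(1)$ and $Q=o(1)$.

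To pin down the first corrections I would expand the amplitude $u^{2z-1}=x^{(2z-1)/2}e^{(2z-1)v}=x^{(2z-1)/2}\bigl(1+\tfrac{(2z-1)\tau}{2\sqrt{x}}+\cdots\bigr)$ together with the Jacobian $du=\tfrac{e^{v}}{2}\,d\tau$, combine them with the exponent correction $e^{-\tau^4/12x}$ and the phase corrections above to the relevant order in $x^{-1/2}$, and evaluate the Gaussian moments $\int_{-\infty}^{\infty}e^{-\tau^2}\tau^{2k}\,d\tau$ and $\int_{-\infty}^{\infty}e^{-\tau^2}\tau^{2k}\cos(w\tau)\,d\tau$ (the latter by completing the square and differentiating in the parameter). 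Collecting the $x^{-1}$ terms multiplying $\cos(2w\sqrt{x})$, $\sin(2w\sqrt{x})$ and $e^{-w^2/4}$ yields the coefficients of $P$, $Q$, $R$ asserted in \eqref{PQR}. The main obstacle is exactly this bookkeeping: the order-$x^{-1}$ coefficient of $P$ (and of $R$) receives simultaneous contributions from the quartic curvature term $\tfrac{\tau^4}{12x}$, from the $z$-dependent amplitude factor (which supplies the $32z^2$ and $4z^2$ pieces), and from the quadratic phase correction $\delta$, and these must be assembled without sign or combinatorial slips; the odd-in-$\tau$ contributions, which annihilate against the even Gaussian but survive once multiplied by the odd phase correction, are what generate $Q=\tfrac{w}{8\sqrt{x}}+O(x^{-3/2})$ and so need especially careful tracking. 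Uniformity in compact $z,w$-domains is automatic, since every expansion coefficient is polynomial in $z$ and $w$ and the deformation and tail estimates hold uniformly there.
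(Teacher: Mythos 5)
Your proposal is correct and is essentially the paper's own argument: the paper likewise starts from the integral representation of Theorem \ref{integralRepr}, splits the sine product into the same two phase combinations (written there as the four exponential integrals $I_{\sigma,\tau}$, $\sigma,\tau=\pm i$, paired so that $I_{i,-i}+I_{-i,i}$ is your $\cos\left(wu-\frac{wx}{u}\right)$ term and $I_{i,i}+I_{-i,-i}$ is your $\cos\left(wu+\frac{wx}{u}\right)$ term), and obtains \eqref{alx}--\eqref{PQR} from the saddle point at $u=\sqrt{x}$. The only difference is that the paper quotes the resulting expansions of these two paired integrals from the appendix of \cite{dkmt} (Equations A.4 and A.6, due to Temme) rather than deriving them, whereas you carry out the steepest-descent substitution and the Gaussian-moment bookkeeping explicitly; your scheme does reproduce the stated coefficients of $P$, $Q$ and $R$.
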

\begin{proof}
Employ change of variable $u=ys$, $y=\sqrt{x}$ in Theorem \ref{integralRepr} and represent the sine functions in terms of exponential function so that
\begin{align}\label{Iii1}
{}_1K_{z,w}(2x)=-\frac{1}{2w^2}\left((I_{i,i}+I_{-i,-i}-\left(I_{i,-i}+I_{-i,i}\right)\right),
\end{align}
\begin{align*}
I_{\sigma,\tau}&=\int_0^\infty e^{-y^2\varphi_{\sigma,\tau}(s)}s^{2z-1}\ ds,\nonumber\\
\varphi_{\sigma,\tau}(s)&=s^2+\frac{1}{s^2}+\frac{w}{y}\left(\sigma s+\frac{\tau}{s}\right),
\end{align*}
and \begin{align*}
\sigma=\pm i,\ \tau=\pm i.
\end{align*}
From \cite[p. 416, Equation A.4,\ A.6]{dkmt}, we have
\begin{align}\label{Iii2}
I_{i,i}+I_{-i,-i}&=\frac{\sqrt{\pi}}{y}e^{-2y^2}\left(\cos(2wy)P-\sin(2wy)Q\right),\nonumber\\
I_{i,-i}+I_{-i,i}&=\frac{\sqrt{\pi}}{y}e^{-2y^2-\frac{1}{4}w^2}R,
\end{align}
where $P, Q$, and $R$ are defined in \eqref{PQR}. Now let $y=\sqrt{x}$ in each of the equations in \eqref{Iii2} and substitute the resulting ones in \eqref{Iii1} so as to obtain \eqref{alx}.
\end{proof}
The above result gives the familiar asymptotic estimate for $K_z(x)$ as $x\to\infty$ as its special case.
\begin{corollary}
For $|x|$ large enough and such that $|\mathrm{arg}(x)|<\frac{\pi}{4}$, we have
\begin{align}\label{kspl}
K_z(2x)=\frac{1}{2}\sqrt{\frac{\pi}{x}}e^{-2x}\left(1+\frac{4z^2-1}{16x}+O\left(\frac{1}{x^2}\right)\right).
\end{align}
\end{corollary}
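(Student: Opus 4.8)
The plan is to obtain \eqref{kspl} by letting $w\to 0$ in the asymptotic formula \eqref{alx} of Theorem \ref{asymptotcs large x} and invoking the special value \eqref{Basicform}. Since each confluent hypergeometric factor in the definition \eqref{def} of ${}_1K_{z,w}(x)$ is an even entire function of $w$, the function ${}_1K_{z,w}(2x)$ is analytic in $w$ in a neighbourhood of $w=0$, and \eqref{Basicform} gives
\begin{equation*}
\lim_{w\to 0}{}_1K_{z,w}(2x)={}_1K_{z,0}(2x)=2xK_z(2x).
\end{equation*}
Hence it suffices to evaluate the same limit starting from the right-hand side of \eqref{alx}.

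First I would record that the expansions \eqref{PQR} for $P,Q,R$ hold as $|x|\to\infty$ uniformly for $w$ in a compact neighbourhood of $0$, this uniformity being part of the hypothesis of Theorem \ref{asymptotcs large x}. Writing
\begin{equation*}
E(w,x):=\cos(2w\sqrt{x})\,P-\sin(2w\sqrt{x})\,Q-e^{-\frac14 w^2}R,
\end{equation*}
so that ${}_1K_{z,w}(2x)=-\tfrac{1}{2w^2}\sqrt{\pi/x}\,e^{-2x}E(w,x)$, I note that $E$ is an even function of $w$, say $E(w,x)=e_0(x)+e_2(x)w^2+O(w^4)$. Because the left-hand side stays finite as $w\to 0$, the coefficient $e_0(x)$ must vanish identically; to the order displayed this is also the visible cancellation $P\big|_{w=0}=1+\tfrac{4z^2-1}{16x}+O(x^{-2})=R\big|_{w=0}$. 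Consequently
\begin{equation*}
2xK_z(2x)=-\tfrac12\sqrt{\tfrac{\pi}{x}}\,e^{-2x}\,e_2(x).
\end{equation*}

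The remaining task is the computation of $e_2(x)$ to order $x^{-1}$. Substituting $\cos(2w\sqrt{x})=1-2w^2x+O(w^4)$, $\sin(2w\sqrt{x})=2w\sqrt{x}+O(w^3)$ and $e^{-w^2/4}=1-\tfrac{w^2}{4}+O(w^4)$ into $E$ and collecting the coefficient of $w^2$, the dominant term is $-2x\,P\big|_{w=0}=-2x-\tfrac{4z^2-1}{8}+O(x^{-1})$; the contribution $-\tfrac14$ coming from $-\sin(2w\sqrt{x})Q$ and the contribution $+\tfrac14 R\big|_{w=0}$ coming from $-e^{-w^2/4}R$ cancel at the constant-in-$x$ level, while the $w^2$-parts of $P$ and $R$ contribute only at $O(x^{-1})$. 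This yields
\begin{equation*}
e_2(x)=-2x\Big(1+\frac{4z^2-1}{16x}+O(x^{-2})\Big),
\end{equation*}
and hence $2xK_z(2x)=x\sqrt{\pi/x}\,e^{-2x}\big(1+\tfrac{4z^2-1}{16x}+O(x^{-2})\big)$, which is \eqref{kspl} after cancelling the factor $2x$.

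The only genuinely delicate point is the legitimacy of interchanging the limit $w\to 0$ with the asymptotic expansion in $x$; this is exactly what the uniformity of the error terms in \eqref{PQR} over a compact $w$-set around $0$ guarantees, so no further work is needed there. All other steps are routine Taylor-coefficient bookkeeping, the one structural observation being that the entire $O(x)$ part of $e_2$, together with the $\tfrac{4z^2-1}{16}$ correction, is produced by the single product $-2x\,P\big|_{w=0}$.
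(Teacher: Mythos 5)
Your proof is correct and is essentially the paper's own argument: you let $w\to 0$ in Theorem \ref{asymptotcs large x}, identify the limit as $2xK_z(2x)$ via \eqref{Basicform}, and extract the coefficient of $w^2$, with the dominant contribution $-2x-\tfrac{4z^2-1}{8}$ coming from the $\cos(2w\sqrt{x})\,P$ product and the constant $\pm\tfrac{1}{4}$ contributions of the $Q$- and $R$-terms cancelling, which is exactly the content of the three limits computed in the paper's proof. Your preliminary observation that the $w$-independent part of $E(w,x)$ vanishes identically (visible as $P\big|_{w=0}=R\big|_{w=0}$ to the displayed order) is only a slightly tidier bookkeeping of the same cancellation the paper uses implicitly.
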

\begin{proof}
For $|x|$ large enough,
{\allowdisplaybreaks\begin{align}\label{lim1}
&\lim_{w\rightarrow 0}\frac{1}{w^2}\left\{\cos(2w\sqrt{x})P-1\right\}\nonumber\\
&=\lim_{w\rightarrow0}\frac{1}{w^2}\left\{\left(1-\frac{4w^2x}{2!}+\frac{16w^4x^2}{4!}-...\right)\left(1+\frac{32z^2-3w^2-8}{128x}+O\left(x^{-2}\right)\right)-1\right\}\nonumber\\
&=-2x-\frac{4z^2-1}{8}+O\left(\frac{1}{x}\right),
\end{align}}
%Now, for large value of $|x|$
as well as
\begin{align}\label{lim2}
\lim_{w\rightarrow 0}\frac{1}{w^2}\sin(2w\sqrt{x})Q&=\lim_{w\rightarrow 0}\frac{1}{w^2}\left(2w\sqrt{x}-\frac{(2w\sqrt{x})^3}{3!}+...\right)\left(\frac{w}{8\sqrt{x}}+O\left(x^{-\frac{3}{2}}\right)\right)\nonumber\\
&=\frac{1}{4}+O\left(\frac{1}{x}\right).
\end{align}
Again, for large value of $|x|$,
\begin{align}\label{lim3}
&\lim_{w\rightarrow 0}\frac{1}{w^2}\left\{e^{-\frac{w^2}{4}}R-1\right\}\nonumber\\
&=\lim_{w\rightarrow 0}\frac{1}{w^2}\left\{\left(1-\frac{w^2}{4}+\frac{w^4}{2! 16}-...\right)\left(1+\frac{(4z^2-1)(2-w^2)}{32x}+O\left(x^{-2}\right)\right)-1\right\}\nonumber\\
&=-\frac{1}{4}+O\left(\frac{1}{x}\right).
\end{align}
Upon letting $w\rightarrow0$ in \eqref{alx} and using \eqref{lim1}, \eqref{lim2} and \eqref{lim3}, we see that
\begin{align*}
\lim_{w\rightarrow0}{}_1K_{z,w}(2x)&=-\frac{1}{2}\sqrt{\frac{\pi}{x}}e^{-2x}\left(-2x-\frac{4z^2-1}{8}+O\left(\frac{1}{x}\right)\right),
\end{align*}
which implies \eqref{kspl} upon noting \eqref{Basicform}.
%2xK_z(2x)&=\sqrt{\frac{\pi}{x}}e^{-2x}\left(x+\frac{4z^2-1}{16}+O\left(\frac{1}{x}\right)\right)\nonumber\\
%K_z(2x)&=\frac{1}{2}\sqrt{\frac{\pi}{x}}e^{-2x}\left(1+\frac{4z^2-1}{16x}+O\left(\frac{1}{x^2}\right)\right).
%\end{align*}
\end{proof}
We now give the asymptotic formula for ${}_1K_{z,w}(x)$ valid for small values of $|x|$.
\begin{theorem}\label{asymSmall}
Let $w\in\mathbb{C}$ be fixed.

\textup{(i)} Consider a fixed $z$ such that \textup{Re}$(z)>0$. Let $\mathfrak{D}=\{x\in\mathbb{C}:|\arg(x)|<\frac{\pi}{4}\}$. Then as $x\to 0$ along any path in $\mathfrak{D}$, we have
\begin{equation*}
{}_1K_{z,w}(x)\sim\Gamma(z)\left(\frac{x}{2}\right)^{1-z}{}_1F_1\left(z;\frac{3}{2};-\frac{w^2}{4}\right).
\end{equation*}
\textup{(ii)} Let $|\arg(x)|<\pi$. As $x\to 0$,
\begin{equation}\label{kzwsmallii}
{}_1K_{0,w}(x)\sim -x\log x-\frac{w^2x}{6}{}_2F_{2}\left(1,1;2,\frac{5}{2};-\frac{w^2}{4}\right).
\end{equation}
\end{theorem}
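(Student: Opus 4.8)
The plan is to read off both asymptotic formulas directly from the Mellin--Barnes representation \eqref{def} by translating the line of integration to the left and collecting residues. Abbreviate the integrand's meromorphic part as
$$G(s):=\Gamma\!\left(\tfrac{1+s-z}{2}\right)\Gamma\!\left(\tfrac{1+s+z}{2}\right){}_1F_1\!\left(\tfrac{1+s-z}{2};\tfrac32;-\tfrac{w^2}{4}\right){}_1F_1\!\left(\tfrac{1+s+z}{2};\tfrac32;-\tfrac{w^2}{4}\right)2^{s-1},$$
so that ${}_1K_{z,w}(x)=\frac{1}{2\pi i}\int_{(c)}G(s)x^{-s}\,ds$. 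Since the confluent hypergeometric factors are entire in $s$, the only poles of $G(s)x^{-s}$ arise from the two gamma factors, at $s=z-1-2n$ and $s=-z-1-2n$ for $n\ge0$. Because $|x^{-s}|=|x|^{-\operatorname{Re}(s)}$, as $x\to0$ the size of the residue at $s=s_0$ is controlled by $|x|^{-\operatorname{Re}(s_0)}$, so the leading behaviour is governed by the rightmost pole lying to the left of the contour $\operatorname{Re}(s)=c>-1+|\operatorname{Re}(z)|$. First I would shift the contour past that pole, pick up its residue, and estimate the remaining integral.

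For part (i), with $\operatorname{Re}(z)>0$ the rightmost relevant pole is the simple pole at $s=z-1$ coming from $\Gamma\!\left(\tfrac{1+s-z}{2}\right)=\Gamma\!\left(\tfrac{s-(z-1)}{2}\right)$, whose residue in $s$ is $2$. Evaluating the remaining factors there gives $\Gamma(z)$, ${}_1F_1(0;\tfrac32;-\tfrac{w^2}{4})=1$, ${}_1F_1(z;\tfrac32;-\tfrac{w^2}{4})$, $2^{z-2}$, and $x^{1-z}$, so the residue equals $\Gamma(z)2^{z-1}x^{1-z}{}_1F_1(z;\tfrac32;-\tfrac{w^2}{4})=\Gamma(z)\left(\tfrac{x}{2}\right)^{1-z}{}_1F_1(z;\tfrac32;-\tfrac{w^2}{4})$, exactly the claimed expression. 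The next poles, at $s=z-3$ and $s=-z-1$, contribute terms of order $|x|^{3-\operatorname{Re}(z)}$ and $|x|^{1+\operatorname{Re}(z)}$, both strictly smaller than $|x|^{1-\operatorname{Re}(z)}$, which yields the asserted $\sim$.

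For part (ii), setting $z=0$ merges the two gamma factors into $\Gamma\!\left(\tfrac{1+s}{2}\right)^2$, so the poles at $s=z-1$ and $s=-z-1$ coalesce into a double pole at $s=-1$; this is the source of the logarithm. I would extract the two leading terms from the Laurent coefficient there: writing $s=-1+\varepsilon$ and using $\Gamma(\varepsilon/2)^2=\tfrac{4}{\varepsilon^2}\bigl(1-\gamma\varepsilon+O(\varepsilon^2)\bigr)$, $\;2^{s-1}x^{-s}=\tfrac{x}{4}\bigl(1-\varepsilon\log(x/2)+O(\varepsilon^2)\bigr)$, and ${}_1F_1(\varepsilon/2;\tfrac32;-\tfrac{w^2}{4})^2=1+\varepsilon D+O(\varepsilon^2)$ with $D:=\sum_{n\ge1}\frac{(-w^2/4)^n}{n\,(3/2)_n}$ (the coefficient coming from $\tfrac{d}{da}{}_1F_1(a;\tfrac32;-\tfrac{w^2}{4})|_{a=0}$), the residue at $s=-1$ is $-x\log x+(\log2-\gamma+D)x$. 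The dominant term is $-x\log x$, and the $w$-dependent piece $Dx$ is reorganized into the stated $-\tfrac{w^2x}{6}{}_2F_2(1,1;2,\tfrac52;-\tfrac{w^2}{4})$ by means of the Pochhammer identity $(5/2)_{n-1}=\tfrac23(3/2)_n$; note that the pure constant $(\log2-\gamma)x$ is of smaller order than $-x\log x$ and is subsumed by the $\sim$, consistent with the case $w=0$, where \eqref{Basicform} gives ${}_1K_{0,0}(x)=xK_0(x)\sim-x\log x$.

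The main obstacle is the rigorous justification of the contour translation, namely showing that the integral on the shifted line, and the horizontal connecting segments, are of the asserted lower order. This requires uniform bounds on $G(\sigma+it)$ as $|t|\to\infty$ obtained from Stirling's estimate \eqref{strivert} for the gamma factors, combined with control of the two ${}_1F_1$ factors on vertical lines (which, via Kummer's formula \eqref{kft}, are bounded polynomially in $|t|$ on fixed strips); Theorem \ref{ldcttemme1} can be invoked where analyticity of the shifted integral in $s$ or the parameters is needed. I expect the double-pole bookkeeping in (ii) to be the most delicate point computationally, but not conceptually hard; an independent cross-check is available from the bilateral series of Theorem \ref{Biletral} at $z=0$, where the small-argument behaviours $K_0(2x)\sim-\log x-\gamma$ and $I_{2m}(2w\sqrt{x})-J_{2m}(2w\sqrt{x})=O(x^{\,m+1})$ reproduce the same leading term $-x\log x$.
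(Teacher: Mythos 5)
Your proof is correct, but it takes a genuinely different route from the paper's. You argue directly from the Mellin--Barnes definition \eqref{def}, shifting the contour left and collecting residues: the simple pole at $s=z-1$ gives part (i), and for $z=0$ the two gamma factors coalesce into $\Gamma^2\left(\frac{1+s}{2}\right)$, whose double pole at $s=-1$ produces the logarithm in part (ii). The paper instead proves (i) from the real-integral representation of Theorem \ref{integralRepr}, passing the limit $x\to 0$ under the integral sign and evaluating the limiting integral via \eqref{equivv} and \eqref{kft}; and it proves (ii) from the bilateral series of Theorem \ref{Biletral}, taking term-by-term small-$x$ limits of $K_n$, $I_{2n}$, $J_{2n}$ -- exactly the cross-check you mention at the end. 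Your computations check out: the residue at $s=z-1$ is $2\cdot\Gamma(z)\cdot 2^{z-2}x^{1-z}{}_1F_1\left(z;\frac{3}{2};-\frac{w^2}{4}\right)$, which is the stated expression; your Laurent coefficient $D=\sum_{n\ge 1}\frac{(-w^2/4)^n}{n\,(3/2)_n}$ coincides with the series $\sum_{n\ge 1}\frac{(-1)^n\Gamma(n)w^{2n}}{\Gamma(2n+2)}$ that the paper reaches in \eqref{nonzero part}, and your identity $(5/2)_{n-1}=\frac{2}{3}(3/2)_n$ correctly converts it to $-\frac{w^2}{6}\,{}_2F_2\left(1,1;2,\frac{5}{2};-\frac{w^2}{4}\right)$. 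What your method buys: one uniform technique for both parts, explicit error exponents from the subsequent poles ($|x|^{1+\operatorname{Re}(z)}$ and $|x|^{3-\operatorname{Re}(z)}$ in (i)), and the sharper two-term expansion ${}_1K_{0,w}(x)=-x\log x+(\log 2-\gamma+D)x+O(x^{3}\log x)$, of which the paper's statement keeps only $-x\log x+Dx$ (harmless, since $\sim$ pins down only the leading term). What the paper's route buys: no contour-shift justification, at the cost of invoking the heavier Theorem \ref{Biletral} for part (ii).

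Two technical caveats, both fixable. First, your parenthetical claim that the ${}_1F_1$ factors are polynomially bounded in $|t|$ on vertical strips (via Kummer's formula) is false: for fixed argument $-\frac{w^2}{4}$ and large parameter they grow subexponentially, like $e^{O(\sqrt{|t|})}$; compare the estimate \eqref{bd1f1} that the paper itself uses. Your argument survives because Stirling's bound \eqref{strivert} gives $e^{-\frac{\pi}{2}|t|}$ decay for the product of gamma factors, which dominates $e^{O(\sqrt{|t|})}$, so the shifted vertical integrals converge and the horizontal segments vanish -- but this is the bound you must quote. Second, since $|x^{-s}|=|x|^{-\sigma}e^{t\arg x}$ on vertical lines, this absolute-convergence argument works only in the sector $|\arg x|<\frac{\pi}{2}$ (uniformly on closed subsectors). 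That comfortably covers part (i), whose sector is $|\arg x|<\frac{\pi}{4}$, but part (ii) is asserted for $|\arg x|<\pi$; to reach the full sector you need an additional device, e.g.\ rotating the contour, or falling back on the bilateral series of Theorem \ref{Biletral}, which is analytic in $|\arg x|<\pi$ and is precisely how the paper obtains the wider range.
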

\begin{proof}
To prove (i), we use \eqref{integralRepreqn} with $x$ replaced by $x/2$ and write it in the form
\begin{align}\label{Asy}
{}_1K_{z,w}(x)=\frac{2^{z+1}x^{1-z}}{w}\frac{1}{xw}\int_0^\infty t^{2z-1} e^{-t^2-\frac{x^2}{4t^2}}\sin(wt)\sin\left(\frac{wx}{2t}\right)\, dt.
\end{align}
Now
\begin{align}\label{asymSmall1}
\lim_{x\to 0}\frac{1}{xw}\int_0^\infty t^{2z-1}e^{-t^2-\frac{x^2}{t^2}}\sin(wt)\sin\left(\frac{wx}{2t}\right)\mathrm{d}t &= \int_0^\infty \lim_{x\to 0} t^{2z-1}e^{-t^2-\frac{x^2}{4t^2}}\sin(wt)\frac{\sin\left(\frac{wx}{2t}\right)}{2t\left(\frac{wx}{2t}\right)}\, dt \nonumber \\
&=\frac{1}{2} \int_0^\infty  t^{2z-2}e^{-t^2}\sin(wt)\, dt\nonumber\\
&=\frac{w}{4}\Gamma(z){}_1F_1\left(z;\frac{3}{2};-\frac{w^2}{4}\right),
\end{align}
where in the last step, we used \eqref{equivv} with $a=1,\ b=w$ and $s=2z-1$, and then used \eqref{kft}. The result now follows from \eqref{Asy} and \eqref{asymSmall1}.

To prove part (ii), it is convenient to work with ${}_1K_{0,w}(x)/x$ initially. Invoking Theorem \ref{Biletral}, we observe that
\begin{align}\label{k0w}
\frac{{}_1K_{0,w}(x)}{x}=\frac{1}{w^2}\frac{K_0(x)}{x}(I_0(w\sqrt{2x})-J_0(w\sqrt{2x}))+\frac{2}{w^2x}\sum_{n=1}^\infty K_n(x)\left(I_{2n}(w\sqrt{2x})-J_{2n}(w\sqrt{2x})\right).
\end{align}
First, using series definitions of $I_{2n}(w\sqrt{2x})$ and $J_{2n}(w\sqrt{2x})$, it is easy to see that for $n\geq0$,
\begin{align}\label{lim34}
\lim_{x\rightarrow0}\frac{1}{x^{n+1}}\left(I_{2n}(w\sqrt{2x})-J_{2n}(w\sqrt{2x})\right)&=\frac{w^{2n+2}}{2^{n}\Gamma(2n+2)}.
\end{align}
Further, from \cite[p.~375, Equations (9.7.1), (9.7.2)]{as}, as $x\to0$,
\begin{equation}\label{kzxasy0}
K_{z}(x)\sim\begin{cases}
 \frac{1}{2}\G(z)\left(\frac{x}{2}\right)^{-z}, \text{if}\hspace{1mm}\textup{Re }z>0,\\
-\log x,\hspace{9mm} \text{if}\hspace{1mm}z=0.
\end{cases}
\end{equation}
Hence using \eqref{lim34} and the first part of \eqref{kzxasy0}, we see that
\begin{align}\label{nonzero part}
&\lim_{x\rightarrow0}\frac{2}{w^2x}\sum_{n=1}^\infty (-1)^nK_n(x)\left(I_{2n}(w\sqrt{2x})-J_{2n}(w\sqrt{2x})\right)\nonumber\\
&=\frac{2}{w^2}\sum_{n=1}^\infty (-1)^n\left(\lim_{x\to0}x^nK_{n}(x)\right)\left(\lim_{x\rightarrow0}\frac{1}{x^{n+1}}\left(I_{2n}(w\sqrt{2x})-J_{2n}(w\sqrt{2x})\right)\right)\nonumber\\
%&=\sum_{n=1}^\infty (-1)^n\frac{\Gamma(n)}{2^{1-n}}\lim_{x\rightarrow0}\frac{1}{x^{n+1}}\left(I_{2n}(w\sqrt{2x})-J_{2n}(w\sqrt{2x})\right)\nonumber\\
&=\frac{1}{w^2}\sum_{n=1}^\infty\frac{(-1)^n\Gamma(n)w^{2n+2}}{\Gamma(2n+2)}\nonumber\\
&=-\frac{w^2}{6}{}_2F_2\left(1,1;\frac{5}{2},2;-\frac{w^2}{4}\right).
\end{align}
Next, from the $n=0$ case of \eqref{lim34} and the second part of \eqref{kzxasy0},
\begin{align}\label{zeropart}
\lim_{x\to 0}\frac{1}{w^2}\frac{K_0(x)}{x}(I_0(w\sqrt{2x})-J_0(w\sqrt{2x}))=-\log x.
\end{align}
From \eqref{k0w}, \eqref{nonzero part} and \eqref{zeropart}, as $x\to0$,
\begin{align*}
\frac{{}_1K_{0,w}(x)}{x}\sim-\log x-\frac{w^2}{6}{}_2F_2\left(1,1;\frac{5}{2},2;-\frac{w^2}{4}\right),
\end{align*}
which implies \eqref{kzwsmallii}.
\end{proof}

\subsection{Further properties of ${}_1K_{z,w}(x)$}\label{furprop}
\hfill\\

We commence this subsection by generalizing Basset's integral representation \eqref{Besset2} for ${}_1K_{0,w}(x)$. We begin with a lemma.
\begin{lemma}\label{bessetTypeLemma}
For $|\arg(x)|<\frac{\pi}{4}$ and $w\in\mathbb{C}$,
\begin{align*}
\int_0^\infty e^{-t^2-\frac{x^2}{t^2}}\sin(wt)\frac{\, dt}{t^2}=\frac{w}{2}\int_0^\infty \exp\left(-\frac{w^2x^2}{4(x^2+u^2)}\right)\frac{u\sin(2u)}{(x^2+u^2)^{3/2}}\, du
\end{align*}
\end{lemma}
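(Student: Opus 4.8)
The plan is to expand both sides as power series in $w$ and show that each collapses to the same bilateral-type series $\sum_{k\ge0}\frac{(-1)^k w^{2k+1}}{(2k+1)!}x^k K_k(2x)$. First, on the left-hand side I would write $\sin(wt)=\sum_{k=0}^{\infty}\frac{(-1)^k (wt)^{2k+1}}{(2k+1)!}$ and interchange summation with the integral, which is legitimate for $|\arg(x)|<\tfrac{\pi}{4}$ (so that $\mathrm{Re}(x^2)>0$ and the integrand decays like a Gaussian at both endpoints) by Theorem \ref{ldcttemme} applied to the absolutely convergent family of integrals. The change of variable $y=t^2$ together with \eqref{PrudnikovFormula} (with $s=k$, $p=1$, $q=x^2$) then evaluates $\int_0^\infty t^{2k-1}e^{-t^2-x^2/t^2}\,dt = x^k K_k(2x)$, so that the left-hand side equals $\sum_{k=0}^\infty \frac{(-1)^k w^{2k+1}}{(2k+1)!} x^k K_k(2x)$.

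Next, on the right-hand side I would expand the exponential factor as $\exp\!\left(-\frac{w^2x^2}{4(x^2+u^2)}\right)=\sum_{k=0}^\infty \frac{(-1)^k w^{2k}x^{2k}}{4^k k!\,(x^2+u^2)^k}$ and interchange with the $u$-integral. The resulting integrals are precisely those governed by Basset's representation \eqref{Besset2}: taking $y=2$ and $z=k$ there gives $\int_0^\infty \frac{u\sin(2u)}{(x^2+u^2)^{k+3/2}}\,du = \frac{\sqrt{\pi}}{\Gamma(k+3/2)\,x^k}K_k(2x)$. Substituting this and simplifying the constant via the identity $\Gamma(k+\tfrac32)=\frac{(2k+1)!}{2\cdot 4^k k!}\sqrt{\pi}$ (a consequence of the duplication formula \eqref{dup}) collapses the factor $\frac{w}{2}\cdot\frac{\sqrt{\pi}}{4^k k!\,\Gamma(k+3/2)}$ to $\frac{1}{(2k+1)!}$, producing exactly the same series as above. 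Equating the two then yields the claimed identity.

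The main obstacle will be justifying the two termwise interchanges of summation and integration, and here I would again lean on Theorem \ref{ldcttemme}. On the left, $|e^{-t^2-x^2/t^2}\sin(wt)\,t^{-2}|$ is dominated by an integrable Gaussian-type function once $\mathrm{Re}(x^2)>0$; on the right, the factor $(x^2+u^2)^{-k-3/2}$ makes each term absolutely integrable, while the rapid decay of $\frac{|w|^{2k}|x|^{2k}}{4^k k!}$ against the growth of $K_k(2x)$ controls the sum (the growth estimates for $K_k$, $I_k$, $J_k$ recorded in the proof of Theorem \ref{Biletral} apply verbatim here). The restriction $|\arg(x)|<\tfrac{\pi}{4}$ is exactly what guarantees convergence of the Gaussian integral on the left and the validity of Basset's integral on the right, so no additional hypotheses are required, and matching the two series completes the argument.
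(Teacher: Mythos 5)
Your proposal is correct and is essentially the paper's own proof: the paper likewise expands $\sin(wt)$ termwise, applies \eqref{PrudnikovFormula} to obtain the series $\sum_{n\ge0}\frac{(-1)^n w^{2n+1}x^n}{(2n+1)!}K_n(2x)$, and then invokes Basset's integral \eqref{Besset2} with the same $\Gamma\left(n+\tfrac{3}{2}\right)$ duplication-type simplification, justifying all interchanges via Theorem \ref{ldcttemme}. The only difference is organizational, not mathematical: the paper proceeds in one direction, resumming the Basset integrals back into the exponential to land on the right-hand side, whereas you reduce both sides independently to the common series and equate them.
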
 
\begin{proof}
Observe that
\begin{align*}
\int_0^\infty e^{-t^2-\frac{x^2}{t^2}}\sin(wt)\frac{\mathrm{dt}}{t^2}&=\int_0^\infty e^{-t^2-\frac{x^2}{t^2}}\sum_{n=0}^\infty \frac{(-1)^n(wt)^{2n+1}}{(2n+1)!}\frac{\, dt}{t^2}\nonumber\\
&=\sum_{n=0}^\infty \frac{(-1)^nw^{2n+1}}{(2n+1)!}\int_0^\infty e^{-t^2-\frac{x^2}{t^2}} t^{2n-1}\, dt \nonumber\\
&=\sum_{n=0}^\infty \frac{(-1)^n w^{2n+1}x^n}{(2n+1)!}K_n(2x)\hspace{5mm}(\text{from}\eqref{PrudnikovFormula})\nonumber\\
&=\sum_{n=0}^\infty \frac{(-1)^n w^{2n+1}x^n}{(2n+1)!} \frac{\Gamma\left(n+\frac{3}{2}\right)x^n}{\Gamma\left(\frac{1}{2}\right)}\int_0^\infty \frac{u\sin(2u)}{(x^2+u^2)^{n+\frac{3}{2}}}\, du \hspace{5mm}(\text{from} \eqref{Besset2})\nonumber\\
&=\int_0^\infty \frac{u\sin(2u)}{(x^2+u^2)^{3/2}}\sum_{n=0}^\infty \frac{(-1)^nw^{2n+1}x^{2n}}{(x^2+u^2)^n}\frac{\Gamma\left(n+\frac{3}{2}\right)}{(2n+1)!\Gamma\left(\frac{1}{2}\right)}\, du \nonumber\\
&=\frac{w}{2}\int_0^\infty \exp\left(-\frac{w^2x^2}{4(x^2+u^2)}\right)\frac{u\sin(2u)}{(x^2+u^2)^{3/2}}\, du,
\end{align*}
where all interchanges of the order of summation and integration can be justified through Theorem \ref{ldcttemme}.
\end{proof}
\begin{theorem}\label{bessetTypeRepr}
For $|\arg(x)|<\frac{\pi}{4}$ and $w\in\mathbb{C}$, we have
\begin{align}\label{bassset}
{}_1K_{0,w}(x)&=\frac{2}{w^2}\int_0^\infty\exp\left(-\frac{w^2x^2}{2(x^2+u^2)}\right)\sin\left(\frac{w^2xu}{2(x^2+u^2)}\right)\frac{\sin u}{(x^2+u^2)^{1/2}}\mathrm{d}u.
\end{align}
\end{theorem}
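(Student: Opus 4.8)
The plan is to start from the product-of-sines integral representation of ${}_1K_{0,w}$ and follow the very strategy used to prove Lemma~\ref{bessetTypeLemma}: expand the sines, pass to modified Bessel functions, invoke Basset's integral, and resum. Concretely, setting $z=0$ in Theorem~\ref{integralRepr} and replacing $x$ by $x/2$ gives the starting identity ${}_1K_{0,w}(x)=\frac{2}{w^2}\int_0^\infty t^{-1}e^{-t^2-x^2/(4t^2)}\sin(wt)\sin\left(\frac{wx}{2t}\right)dt$, so it suffices to transform this single integral into the right-hand side of \eqref{bassset}. As a preliminary sanity check one notes that letting $w\to 0$ must return $xK_0(x)$ in accordance with \eqref{Basicform}.

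First I would expand both $\sin(wt)$ and $\sin(wx/(2t))$ in their Maclaurin series and integrate term by term, the interchange being justified by Theorem~\ref{ldcttemme} and absolute convergence exactly as in Lemma~\ref{bessetTypeLemma}. Each resulting Gaussian-type integral $\int_0^\infty t^{2(n-m)-1}e^{-t^2-(x/2)^2/t^2}\,dt$ is evaluated by \eqref{PrudnikovFormula} to produce $(x/2)^{n-m}K_{n-m}(x)$; this recovers the double-series representation of Lemma~\ref{doubleSeriesRepr}, namely ${}_1K_{0,w}(x)=x\sum_{n,m\ge 0}\frac{(-w^2x/2)^{n+m}}{(2n+1)!(2m+1)!}K_{n-m}(x)$. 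I would then insert Basset's integral \eqref{Besset2} (with $y=1$) for each $K_{n-m}(x)$, splitting the sum according to the sign of $n-m$ and using $K_{-\nu}=K_{\nu}$, and interchange the double summation with the $u$-integration.

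The decisive step, and the one I expect to be the main obstacle, is the resummation of the resulting power series inside the integral into the closed form $\exp\!\left(-\frac{w^2x^2}{2(x^2+u^2)}\right)\sin\!\left(\frac{w^2xu}{2(x^2+u^2)}\right)\frac{\sin u}{\sqrt{x^2+u^2}}$. Here the two indices are coupled through the product $(2n+1)!(2m+1)!$ together with the Basset factor $\Gamma(|n-m|+\frac{3}{2})(2x)^{|n-m|}(x^2+u^2)^{-|n-m|-3/2}$, so the series does not factor in the naive way; indeed, summing over a single index first produces Bessel functions as in \eqref{sumoverm} and leads back to the bilateral series of Theorem~\ref{Biletral} rather than to the desired exponential. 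The clean route is instead to match the two series against each other order by order in $w^2$: expanding $\exp$ and the inner $\sin$ in \eqref{bassset} reduces each coefficient to integrals $\int_0^\infty u^{2j+1}\sin u\,(x^2+u^2)^{-i-2j-3/2}\,du$, which via the binomial identity $u^{2j+1}=u\bigl[(x^2+u^2)-x^2\bigr]^{j}$ and \eqref{Besset2} collapse into finite combinations of $K_\ell(x)$; a bookkeeping argument then shows these agree, coefficient by coefficient, with those in Lemma~\ref{doubleSeriesRepr}. As a consistency check for this matching, the $O(w^0)$ terms on both sides equal $xK_0(x)$ and the $O(w^2)$ terms both equal $-\frac{x^2}{6}K_1(x)$. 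All interchanges of summation and integration throughout are legitimized by Theorem~\ref{ldcttemme}, and the validity of the term-by-term use of \eqref{Besset2} in the range $|\arg x|<\frac{\pi}{4}$ follows from the absolute convergence of the doubly infinite series.
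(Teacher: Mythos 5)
Your reduction to the double series of Lemma \ref{doubleSeriesRepr}, and the plan to insert Basset's integral \eqref{Besset2} term by term, are fine as far as they go; the genuine gap is exactly at the step you yourself flag as the main obstacle. The claimed ``bookkeeping argument'' that matches the two expansions order by order in $w^2$ is never carried out, and it is not routine bookkeeping --- it is essentially the entire content of the theorem. At order $w^{2N}$ the left-hand side (from Lemma \ref{doubleSeriesRepr}) is, up to a common prefactor, $\sum_{n=0}^{N}K_{2n-N}(x)/\bigl((2n+1)!(2N-2n+1)!\bigr)$, a combination of Bessel functions of orders $2n-N$ with \emph{constant} coefficients, whereas your binomial device turns the right-hand side into a combination of $K_{\ell}(x)$ with $\lceil N/2\rceil\le \ell\le N$ whose coefficients are \emph{rational functions of} $x$. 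These involve different sets of Bessel orders, so equality can only be established by repeated use of the recurrence $K_{\nu+1}(x)=K_{\nu-1}(x)+\frac{2\nu}{x}K_{\nu}(x)$ together with a nontrivial combinatorial identity among the factorial and binomial coefficients, for every $N$. Your checks at $N=0,1$ are degenerate (each side is a single Bessel function, no recurrence is needed) and therefore do not exhibit the difficulty; the first real case is $N=2$, where, up to the common prefactor, the two sides read $\frac{1}{60}K_2(x)+\frac{1}{36}K_0(x)$ and $\frac{2}{45}K_2(x)-\frac{1}{18x}K_1(x)$, which agree only by virtue of $K_0=K_2-\frac{2}{x}K_1$. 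You provide no induction scheme or closed-form evaluation of these sums, so the argument stops precisely at its decisive step.

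This is the difficulty the paper's proof circumvents with a device absent from your proposal: starting from the Mellin--Barnes definition \eqref{def}, it applies the Burchnall--Chaundy expansion ${}_1F_1(a;c;u)\,{}_1F_1(a;c;v)=\sum_{n\ge 0}\frac{(a)_n(c-a)_n}{n!\,(c)_n(c)_{2n}}(-uv)^n\,{}_1F_1(a+n;c+2n;u+v)$ to decouple the square ${}_1F_1^2$, then evaluates the resulting Mellin--Barnes integrals via Parseval's formula \eqref{Persval}, the Oberhettinger evaluation producing $\sin(2x)$, and Euler's beta integral. After these steps the two remaining series sum \emph{exactly} in closed form --- one to $\exp\left(-\frac{w^2x^2}{2(x^2+t^2)}\right)$ and the other to $\sin\left(\frac{w^2xt}{2(x^2+t^2)}\right)$ --- with no coefficient matching needed. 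If you wish to salvage your route, you need a substitute for this decoupling; as you correctly observe, summing one index first merely reproduces the bilateral series of Theorem \ref{Biletral}, and term-by-term comparison with Lemma \ref{doubleSeriesRepr} leaves the crux unproved.
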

\begin{proof}
First assume $x>0$. Define 
\begin{align}\label{defI}
I(x,w):=\frac{1}{2\pi i}\int_{(c)}\Gamma^2\left(\frac{1+s}{2}\right){}_1F_1^2\left(\frac{1+s}{2};\frac{3}{2};-\frac{w^2}{4}\right)x^{-s}\mathrm{d}s.
\end{align}
Then
\begin{equation*}
I\left(\frac{x}{2},w\right)=2\ {}_1K_{0,w}(x).
\end{equation*}
We make use of the following formula from \cite[p. 121, Equation (43)]{burchnallchaundy} to write ${}_1F_1^2$ in \eqref{defI} as a series:
\begin{align*}
{}_1F_1(a;c;u){}_1F_1(a;c;v)=\sum_{n=0}^\infty \frac{(a)_n(c-a)_n}{n!(c)_n(c)_{2n}}(-uv)^n {}_1F_1(a+n;c+2n;u+v).
\end{align*}
Along with \eqref{defI}, it gives
\begin{align}\label{bassettype2}
I(x,w)&=\frac{1}{2\pi i}\int_{(c)}\Gamma^2\left(\frac{1+s}{2}\right)\sum_{n=0}^\infty\frac{\left(\frac{1+s}{2}\right)_n\left(\frac{2-s}{2}\right)_n}{n!\left(\frac{3}{2}\right)_n\left(\frac{3}{2}\right)_{2n}} \left(-\frac{w^4}{16}\right)^n{}_1F_1\left(\frac{1+s}{2}+n;\frac{3}{2}+2n;-\frac{w^2}{2}\right)x^{-s}\ ds.
%&=:I_1(x,w)+I_2(x,w),
\end{align}
We would now like to interchange the order of summation and integration. That this can be done is now justified. Employ the change of variable $s=c+it$ in the above equation so that
\begin{align}\label{integral20}
I(x,w)&=\frac{x^{-c}}{2\pi}\int_{-\infty}^\infty\Gamma^2\left(\frac{1+c+it}{2}\right)\sum_{n=0}^\infty\frac{\left(\frac{1+c+it}{2}\right)_n\left(\frac{2-c-it}{2}\right)_n}{n!\left(\frac{3}{2}\right)_n\left(\frac{3}{2}\right)_{2n}} \left(-\frac{w^4}{16}\right)^n \nonumber\\
&\qquad\qquad\times{}_1F_1\left(\frac{1+c+it}{2}+n;\frac{3}{2}+2n;-\frac{w^2}{2}\right)x^{-it}dt
\end{align}
Now
\begin{align*}
\left|\left(\frac{1+c+it}{2}\right)_n\right|
%&=\left|\left(\frac{1+c+it}{2}\right)\left(\frac{1+c+it}{2}+1\right)\cdots\left(\frac{1+c+it}{2}+n-1\right)\right|\nonumber\\
&\leq\left|\left(\frac{1+c+it}{2}+n\right)\right|\left|\left(\frac{1+c+it}{2}+n\right)\right|\cdots\left|\left(\frac{1+c+it}{2}+n\right)\right|\nonumber\\
&=\left(\frac{1+|c|+t}{2}+n\right)^n.
\end{align*}
Therefore, as $n\rightarrow\infty$, 
\begin{align}\label{poch1}
\left|\left(\frac{1+c+it}{2}\right)_n\right|=O\left(n^ne^{\frac{1+|c|+|t|}{2}}\right).
\end{align}
Similarly, 
\begin{align}\label{poch2}
\left|\left(\frac{2-c-it}{2}\right)_n\right|=O\left(n^ne^{\frac{2+|c|+|t|}{2}}\right)
\end{align}
as $n\rightarrow\infty$. By using \eqref{strivert0}, it is easy to see that, as $n\rightarrow\infty$,
\begin{align}\label{poch3}
\Gamma(n+1)&\sim\sqrt{2\pi}n^{n+\frac{1}{2}}e^{-n}\nonumber\\
\Gamma\left(\frac{3}{2}+n\right)&\sim\sqrt{2\pi}n^{n+1}e^{-n}\nonumber\\
\Gamma\left(\frac{3}{2}+2n\right)&\sim\sqrt{2\pi}(2n)^{2n+1}e^{-2n}.
\end{align}
From \cite[p. 318]{mos},
\begin{align}\label{masm}
M_{\frac{1}{4}-\frac{s}{2},\frac{1}{4}+n}\left(-\frac{w^2}{2}\right)=\left(-\frac{w^2}{2}\right)^{n+\frac{3}{4}}\left(1+O\left(\frac{1}{n}\right)\right),
\end{align} 
where $M_{\kappa,\mu}(z)$ is the Whittaker function given by \cite[p. 178, Equation (7.17)]{temme}
\begin{align}\label{m1f1}
M_{\kappa, \mu}(z)=e^{-\frac{1}{2}z}z^{\frac{1}{2}+\mu}{}_1F_1\left(\frac{1}{2}+\mu-\kappa;1+2\mu;z\right).
\end{align}
Thus, as $n\to\infty$,
\begin{align*}
{}_1F_1\left(\frac{1+s}{2}+n;\frac{3}{2}+2n;-\frac{w^2}{2}\right)=e^{-\frac{w^2}{4}}\left(1+O\left(\frac{1}{n}\right)\right).
\end{align*}
From \eqref{poch1}, \eqref{poch2}, \eqref{poch3} and \eqref{masm}, we see that series on the right hand side of \eqref{integral20} is uniform convergent in $t$ on compact subsets of $(-\infty,\infty)$. Again from \eqref{integral20}, for a large enough $M>0$,
\begin{align*}
|I(x,w)|&\leq\frac{x^{-c}}{2\pi}\int_{-\infty}^\infty\left|\Gamma^2\left(\frac{1+c+it}{2}\right)\right|\left[\sum_{n=0}^M+\sum_{M+1}^{\infty}\right]\Bigg|\frac{\left(\frac{1+c+it}{2}\right)_n\left(\frac{2-c-it}{2}\right)_n}{n!\left(\frac{3}{2}\right)_n\left(\frac{3}{2}\right)_{2n}} \left(-\frac{w^4}{16}\right)^n \nonumber\\
&\qquad\qquad\times{}_1F_1\left(\frac{1+c+it}{2}+n;\frac{3}{2}+2n;-\frac{w^2}{2}\right)x^{-it}\Bigg|\, dt\\
&=:I_1(x,w)+I_2(x,w).
\end{align*}
An application of \eqref{strivert} implies that $|I_1(x,w)|<\infty$. From \eqref{poch1}, \eqref{poch2}, \eqref{poch3}, \eqref{masm} and \eqref{strivert}, we see that $|I_2(x,w)|$ is also finite, hence $|I(x,w)|<\infty$. Therefore, Theorem \ref{ldcttemme} allows us to interchange the order of integration and summation in \eqref{bassettype2}. Thus from \eqref{bassettype2},
\begin{align}\label{bassettype5}
I(x,w)
%\sum_{n=0}^\infty\frac{\left(-\frac{w^4}{16}\right)^n}{n! \left(\frac{3}{2}\right)_n \left(\frac{3}{2}\right)_{2n}}\frac{1}{2\pi i}\int_{(c)}\Gamma^2\left(\frac{1+s}{2}\right) \left(\frac{1+s}{2}\right)_n\left(\frac{2-s}{2}\right)_n \nonumber \\ 
%&\qquad\qquad \times{}_1F_1\left(\frac{1+s}{2}+n;\frac{3}{2}+2n;-\frac{w^2}{2}\right)x^{-s}\ ds \nonumber\\
=\sum_{n=0}^\infty\frac{\left(-\frac{w^4}{16}\right)^n}{n! \left(\frac{3}{2}\right)_n \left(\frac{3}{2}\right)_{2n}} A_n(x,w),
\end{align}
where
\begin{align*}
A_n(x,w)&:=\frac{1}{2\pi i}\int_{(c)}\frac{\Gamma\left(\frac{1+s}{2}\right) \Gamma\left(\frac{1+s}{2}+n\right)\Gamma\left(\frac{2-s}{2}+n\right)}{\Gamma\left(\frac{2-s}{2}\right)} {}_1F_1\left(\frac{1+s}{2}+n;\frac{3}{2}+2n;-\frac{w^2}{2}\right)x^{-s}\ ds \nonumber\\
&=\frac{1}{2\pi i}\int_{(c)}\frac{\Gamma\left(\frac{1+s}{2}\right) \Gamma\left(\frac{1+s}{2}+n\right)\Gamma\left(\frac{2-s}{2}+n\right)}{\Gamma\left(\frac{2-s}{2}\right)}\sum_{m=0}^\infty \frac{\left(\frac{1+s}{2}+n\right)_m}{m!\left(\frac{3}{2}+2n\right)_{m}}\left(-\frac{w^2}{2}\right)^m x^{-s}\ ds.
\end{align*}
Proceeding along the similar lines as in \eqref{bassettype2} to interchange the order of summation and integration, we see that
\begin{align}\label{bassettype6}
A_n(x,w)
%&=\Gamma\left(\frac{3}{2}+2n\right)\sum_{m=0}^\infty \int_{(c)} \frac{\Gamma\left(\frac{1+s}{2}+n+m\right)(-w^2/2)^m}{\Gamma\left(\frac{1+s}{2}+n\right)\Gamma\left(\frac{3}{2}+2n+m\right)}\frac{\Gamma\left(\frac{1+s}{2}\right) \Gamma\left(\frac{1+s}{2}+n\right)\Gamma\left(\frac{2-s}{2}+n\right)}{\Gamma\left(\frac{2-s}{2}\right)} x^{-s}\ ds \nonumber\\
=\Gamma\left(\frac{3}{2}+2n\right)\sum_{m=0}^\infty\frac{(-w^2/2)^m}{m!}B_{n,m}(x),
\end{align}
where
\begin{align*}
B_{n,m}(x):&=\frac{1}{2\pi i}\int_{(c)}\frac{\Gamma\left(\frac{1+s}{2}\right)\Gamma\left(\frac{2-s}{2}+n\right)\Gamma\left(\frac{1+s}{2}+n+m\right)}{\Gamma\left(\frac{2-s}{2}\right)\Gamma\left(\frac{3}{2}+2n+m\right)}x^{-s}\ ds.
\end{align*}
Next, from \cite[p. 42, Formula 5.1]{ober} and the fact $\Gamma\left(\frac{1+s}{2}\right)/\Gamma\left(\frac{2-s}{2}\right)=\pi^{-1/2}2^{1-s}\Gamma(s)\sin\left(\frac{\pi s}{2}\right)$, we see that for $-1<d_1=\textup{Re}(s)<1$,
\begin{align}\label{bassettype8}
\frac{1}{2\pi i}\int_{(d_1)}\frac{\Gamma\left(\frac{1+s}{2}\right)}{\Gamma\left(\frac{2-s}{2}\right)}x^{-s}\ ds &=\frac{2}{\sqrt{\pi}}\sin(2x).
\end{align}
Also, Euler's beta integral for $0<d=\mathrm{Re}(s)<\mathrm{Re}(z)$ is given by
\begin{align*}
\frac{1}{2\pi i}\int_{(d)}\frac{\Gamma(s_1)\Gamma(z-s_1)}{\Gamma(z)}x^{-s_1}\ ds_1=\frac{1}{(1+x)^z}.
\end{align*}
Substitute $s_1=\frac{2-s}{2}+n,\ z=\frac{3}{2}+2n+m$ and replace $x$ by $x^{-2}$ to have for $-1-2n-2m<d_2=\mathrm{Re}(s)<2+2n$,
\begin{align}\label{bassettype9}
\frac{1}{2\pi i}\int_{(d_2)}\frac{\Gamma\left(\frac{2-s}{2}+n\right)\Gamma\left(\frac{1+s}{2}+n+m\right)}{\Gamma\left(\frac{3}{2}+2n+m\right)}x^{-s}\ dt=\frac{2x^{s+2n+2m}}{(1+x^2)^{\frac{3}{2}+2n+m}}.
\end{align}
Hence from \eqref{bassettype8}, \eqref{bassettype9} and \eqref{Persval},
\begin{align}\label{bassettype10}
B_{n,m}(x)=\frac{4}{\sqrt{\pi}}x^{1+2n+2m}\int_0^\infty\frac{t^{2n+1}\sin(2t)}{(x^2+t^2)^{\frac{3}{2}+2n+m}}\ dt.
\end{align}
Thus from \eqref{bassettype6} and \eqref{bassettype10},
\begin{align*}
A_n(x,w)
%&=\Gamma\left(\frac{3}{2}+2n\right)\frac{4}{\sqrt{\pi}}x^{2n+1}\sum_{m=0}^\infty \frac{(-w^2x^2/2)^m}{m!}\int_0^\infty t^{2n+1}\frac{\sin(2t)}{(x^2+t^2)^{\frac{3}{2}+2n+m}}\ dt \nonumber\\
=\frac{4x^{2n+1}}{\sqrt{\pi}}\Gamma\left(\frac{3}{2}+2n\right)\sum_{m=0}^\infty \frac{(-w^2x^2/2)^m}{m!}\int_0^\infty \frac{t^{2n+1}\sin(2t)}{(x^2+t^2)^{\frac{3}{2}+2n+m}}\ dt.
\end{align*}
Since the series $\displaystyle\sum_{m=0}^\infty \frac{(-w^2x^2/2)^m}{m!(x^2+t^2)^m}$ converges uniformly to $\displaystyle\exp\left(-\frac{w^2x^2}{2(x^2+t^2)}\right)$ on any compact interval of $(0,\infty)$ and $\displaystyle\left|\int_0^\infty \frac{t^{2n+1}\sin(2t)}{(x^2+t^2)^{\frac{3}{2}+2n}}\sum_{m=0}^\infty \frac{(-w^2x^2/2)^m}{m!(x^2+t^2)^m}\ dt\right|$ is finite, invoking Theorem \ref{ldcttemme}, we have
\begin{align}\label{bassettype11}
A_n(x,w)
%&=\frac{4x^{2n+1}}{\sqrt{\pi}}\Gamma\left(\frac{3}{2}+2n\right)\int_0^\infty \frac{t^{2n+1}\sin(2t)}{(x^2+t^2)^{\frac{3}{2}+2n}}\sum_{m=0}^\infty \frac{(-w^2x^2/2)^m}{m!(x^2+t^2)^m}\ dt \nonumber \\
=\frac{4x^{2n+1}}{\sqrt{\pi}}\Gamma\left(\frac{3}{2}+2n\right)\int_0^\infty \frac{t^{2n+1}\sin(2t)}{(x^2+t^2)^{\frac{3}{2}+2n}}\exp\left(-\frac{w^2x^2}{2(x^2+t^2)}\right)\ dt.
\end{align}
Substituting \eqref{bassettype11} in \eqref{bassettype5}, we deduce that
\begin{align}\label{bassettype12}
I(x,w)
%&=\frac{4}{\sqrt{\pi}}\sum_{n=0}^\infty \frac{\Gamma\left(\frac{3}{2}+2n\right)x^{2n+1}}{n!\left(\frac{3}{2}\right)_n\left(\frac{3}{2}\right)_{2n}}\left(-\frac{w^4}{16}\right)^n\int_0^\infty \frac{t^{2n+1}\sin(2t)}{(x^2+t^2)^{\frac{3}{2}+2n}}\exp\left(-\frac{w^2x^2}{2(x^2+t^2)}\right)\ dt \nonumber\\
&=2\sum_{n=0}^\infty \frac{x^{2n+1}}{n!\left(\frac{3}{2}\right)_n}\left(-\frac{w^4}{16}\right)^n\int_0^\infty \frac{t^{2n+1}\sin(2t)}{(x^2+t^2)^{\frac{3}{2}+2n}}\exp\left(-\frac{w^2x^2}{2(x^2+t^2)}\right)\ dt \nonumber\\
&=2x\int_0^\infty \exp\left(-\frac{w^2x^2}{2(x^2+t^2)}\right) \frac{t\sin(2t)}{(x^2+t^2)^{3/2}}\sum_{n=0}^\infty \frac{t^{2n}(-w^4x^2/16)^n}{n!\left(\frac{3}{2}\right)_n(x^2+t^2)^{2n}}\ dt\nonumber\\
&=\frac{4}{w^2}\int_0^\infty \exp\left(-\frac{w^2x^2}{2(x^2+t^2)}\right) \sin\left(\frac{w^2xt}{2(x^2+t^2)}\right) \frac{\sin(2t)}{(x^2+t^2)^{1/2}} \ dt,
\end{align}
where, in the second step above, we interchanged the order of summation and integration using Theorem \ref{ldcttemme} since  $\displaystyle\sum_{n=0}^\infty \frac{t^{2n}(-w^4x^2/16)^n}{n!\left(\frac{3}{2}\right)_n(x^2+t^2)^{2n}}$ converges uniformly on any compact interval of $(0,\infty)$ to $\displaystyle\frac{2}{w^2xt}(x^2+t^2)\sin\left(\frac{w^2xt}{2(x^2+t^2)}\right)$ and since $$\left|\int_0^\infty \exp\left(-\frac{w^2x^2}{2(x^2+t^2)}\right) \frac{\sin(2t)}{(x^2+t^2)^{3/2}} \left(2(x^2+t^2)\sin\left(\frac{w^2xt}{2(x^2+t^2)}\right)\right)\ dt\right|<\infty.$$
Finally let $t=u/2$ in the integral on the extreme right of \eqref{bassettype12} and simplify to obtain \eqref{bassset} for $x>0$. Now observe that both sides of \eqref{bassset} are analytic in $|\arg(x)|<\frac{\pi}{4}$, so by analytic continuation, it holds for $|\arg(x)|<\frac{\pi}{4}$.

\end{proof}
We now derive a representation for ${}_1K_{z,w}(x)$ as an infinite series of Laplace transforms of function involving ${}_0F_{2}$ hypergeometric functions.
\begin{theorem}\label{LT}
Let $w\in\mathbb{C},\mathrm{Re}(z)>-\frac{1}{2}$ and $|\arg(x)|<\frac{\pi}{4}$. Then
\begin{align}\label{LTeqn}
{}_1K_{z,w}(x)&=\frac{2^{z+\frac{1}{2}}x^{z+\frac{3}{2}}}{\Gamma(z+\frac{1}{2})}\sum_{n=0}^\infty\frac{\left(-\frac{w^2x}{2}\right)^n}{(2n+1)!}\int_0^\infty(t^2+t)^{z-\frac{1}{2}}(2t+1)^{-n+\frac{1}{2}}K_{n+\frac{1}{2}}(x(2t+1)) \nonumber\\
& \qquad\qquad \times {}_0F_2\left(-;\frac{1}{2}+z,\frac{3}{2};-\frac{w^2x^2t(t+1)}{4}\right)\, dt.
\end{align}
\end{theorem}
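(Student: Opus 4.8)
The plan is to start from the double-series representation in Lemma \ref{doubleSeriesRepr}, which after replacing $x$ by $x/2$ and relabelling the summation indices reads
\begin{align*}
{}_1K_{z,w}(x)=x\sum_{n=0}^{\infty}\sum_{k=0}^{\infty}\frac{(-w^2x/2)^{n+k}}{(2n+1)!(2k+1)!}K_{k-n+z}(x),
\end{align*}
and to convert its inner sum into the $t$-integral of \eqref{LTeqn}. First I would single out the index $n$ and write ${}_1K_{z,w}(x)=x\sum_{n=0}^{\infty}\frac{(-w^2x/2)^n}{(2n+1)!}\sum_{k=0}^{\infty}\frac{(-w^2x/2)^k}{(2k+1)!}K_{k-n+z}(x)$, which already produces the outer factor $(-w^2x/2)^n/(2n+1)!$ of the theorem. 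The elementary identity $(2k+1)!=4^{k}\,(3/2)_k\,k!$ recasts the inner coefficients as $(-w^2x/2)^k/(2k+1)!=(-w^2x/8)^k/\{(3/2)_k\,k!\}$, which is exactly the shape of a ${}_0F_2$ term.

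The key step is then to turn each modified Bessel function $K_{k-n+z}(x)$ back into an integral of $K_{n+1/2}$. Using the evaluation behind \eqref{K bessel integral}, namely $\int_1^\infty s^{\nu+1}(s^2-1)^{b-1}K_{\nu}(xs)\,ds=2^{b-1}x^{-b}\Gamma(b)K_{\nu+b}(x)$, taken with $\nu=-n-\tfrac12$ (legitimate since $K_{-\mu}=K_{\mu}$) and $b=z+\tfrac12+k$, I obtain
\begin{align*}
K_{k-n+z}(x)=\frac{x^{\,z+1/2+k}}{2^{\,z-1/2+k}\,\Gamma\!\left(z+\tfrac12+k\right)}\int_1^\infty s^{-n+1/2}(s^2-1)^{\,z-1/2+k}K_{n+1/2}(xs)\,ds.
\end{align*}
Substituting $\Gamma(z+\tfrac12+k)=\Gamma(z+\tfrac12)(z+\tfrac12)_k$, interchanging the $k$-summation with the $s$-integration, and collecting the $k$-dependent factors $(s^2-1)^k$ reassembles the inner sum into ${}_0F_2\!\left(-;z+\tfrac12,\tfrac32;-\tfrac{w^2x^2(s^2-1)}{16}\right)$. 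The change of variable $s=2t+1$, so that $s^2-1=4t(t+1)$ and $ds=2\,dt$, then produces precisely the integrand $(t^2+t)^{z-1/2}(2t+1)^{-n+1/2}K_{n+1/2}(x(2t+1))\,{}_0F_2(-;z+\tfrac12,\tfrac32;-\tfrac{w^2x^2t(t+1)}{4})$, while the accumulated constants collapse to the prefactor $2^{z+1/2}x^{z+3/2}/\Gamma(z+\tfrac12)$, yielding \eqref{LTeqn}.

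The main obstacle is rigour in the interchanges rather than the algebra. I would justify moving the $k$-sum inside the $s$-integral by Theorem \ref{ldcttemme}, checking that $\sum_k (w^2x^2(s^2-1)/16)^k/\{(3/2)_k(z+\tfrac12)_k\,k!\}$ converges uniformly on compact $s$-intervals to the relevant ${}_0F_2$ and that the resulting integral is absolutely convergent; near $s=1$ the factor $(s^2-1)^{z-1/2}$ is integrable exactly when $\mathrm{Re}(z)>-\tfrac12$, and at $s=\infty$ the exponential decay of $K_{n+1/2}(xs)$ secures convergence, the same hypothesis $\mathrm{Re}(z)>-\tfrac12$ also guaranteeing $\mathrm{Re}(b)>0$ for every $k$ so that \eqref{K bessel integral} applies. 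The rearrangement of the double series and the term-by-term inversion are permissible by the absolute convergence already underlying Lemma \ref{doubleSeriesRepr}, and the domain $|\arg x|<\tfrac{\pi}{4}$ is inherited from \eqref{integralRepreqn}. As a consistency check one can instead expand $\sin\!\left(\tfrac{wx}{2u}\right)$ directly in \eqref{integralRepreqn} and evaluate the remaining $u$-integrals by \eqref{PrudnikovFormula}, recovering the same inner series via \eqref{Besset2} and \eqref{bintegral}.
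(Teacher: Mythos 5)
Your proposal is correct, and the algebra checks out exactly: with $(2k+1)!=4^{k}(3/2)_k\,k!$ and $\Gamma(z+\tfrac12+k)=\Gamma(z+\tfrac12)(z+\tfrac12)_k$, the $k$-sum does reassemble into ${}_0F_2\left(-;z+\tfrac12,\tfrac32;-\tfrac{w^2x^2(s^2-1)}{16}\right)$, and after $s=2t+1$ the accumulated constant $4^{z-1/2}\cdot 2/2^{z-1/2}=2^{z+1/2}$ gives precisely the prefactor in \eqref{LTeqn}. However, your route is genuinely different from the paper's. The paper never touches the double series of Lemma \ref{doubleSeriesRepr} in this proof: it starts from the integral representation of Theorem \ref{integralRepr}, rewritten as \eqref{lt1}, introduces the ${}_0F_2$ at the outset by quoting Oberhettinger's Laplace-transform formula \eqref{lt2} for $u^{-z}\sin(au^{-1/2})$, then evaluates the remaining $u$-integral as a series of $K_{n+1/2}$'s by Taylor-expanding the other sine and applying \eqref{PrudnikovFormula} (the mechanism of Lemma \ref{bessetTypeLemma}), and finally substitutes $2t+1=\sqrt{(y+x/2)/(x/2)}$. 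You instead generate the $K_{n+1/2}$'s first, by applying the Prudnikov index-shift formula (the one quoted just before \eqref{K bessel integral}) with $\nu=-n-\tfrac12$, $b=z+\tfrac12+k$ to each $K_{k-n+z}(x)$ in the double series, and the ${}_0F_2$ only appears afterwards from resumming the $k$-index. What your route buys: it stays entirely within results the paper already states and uses (Lemma \ref{doubleSeriesRepr} and Prudnikov's \textbf{2.16.3.8}), avoiding the external Oberhettinger formula, which the paper itself must correct for typos; and the hypothesis $\mathrm{Re}(z)>-\tfrac12$ enters transparently as $\mathrm{Re}(b)>0$ at $k=0$, matching the integrability of $(s^2-1)^{z-1/2}$ at $s=1$. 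What the paper's route buys: it needs only one series expansion (of a single sine) and a Fubini interchange, whereas you must additionally rearrange the absolutely convergent double series and justify the sum--integral interchange uniformly in $k$ — your appeal to Theorem \ref{ldcttemme} works, since the $k$-th integral grows at worst like $\Gamma(2k+C)\,(\mathrm{Re}\,x)^{-2k}$ while the coefficients decay like $1/\{(3/2)_k(z+\tfrac12)_k k!\}$, but this estimate does need to be recorded for the argument to be complete.
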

\begin{proof}
Replace $x$ by $\frac{x}{2}$ in Theorem \ref{integralRepr}, let $u=\left(\frac{xt}{2}\right)^{1/2}$ in the resulting equation, then replace $z$ by $-z$ and use the fact ${}_1K_{-z,w}(x)={}_1K_{z,w}(x)$ to obtain 
\begin{align}\label{lt1}
{}_1K_{z,w}(x)&=\frac{1}{w^2}\int_0^\infty \exp\left(-\frac{x}{2}\left(u+\frac{1}{u}\right)\right)\sin\left(\frac{w\sqrt{xu}}{\sqrt{2}}\right)\sin\left(\frac{w\sqrt{x}}{\sqrt{2u}}\right)u^{-z-1}\, du.
\end{align}
From \cite[p. 186, Formula 4.26]{ober}\footnote{There is typo in the argument of ${}_0F_2$. It should be $-\frac{a^2y}{4}$ instead of $-\frac{a^2y}{2}$. Also, $y^{z-1/2}$ is missing from the integrand.}, for $\mathrm{Re}(u)>0,\ \mathrm{Re}(z)>-\frac{1}{2}$,
\begin{align*}
u^{-z}\sin(au^{-1/2})=\frac{a}{\Gamma\left(\frac{1}{2}+z\right)}\int_0^\infty e^{-yu}y^{z-\frac{1}{2}}{}_0F_2\left(-;\frac{1}{2}+z,\frac{3}{2};-\frac{a^2y}{4}\right)\, dy.
\end{align*}
Replacing $a$ by $w\sqrt{x/2}$ in the above equation, we have
\begin{align}\label{lt2}
u^{-z}\sin\left(w\sqrt{\frac{x}{2u}}\right)=\frac{w\sqrt{x}}{\sqrt{2}\Gamma\left(\frac{1}{2}+z\right)}\int_0^\infty e^{-yu} y^{z-\frac{1}{2}}{}_0F_2\left(-;\frac{1}{2}+z,\frac{3}{2};-\frac{w^2xy}{8}\right)\, dy.
\end{align}
Substituting \eqref{lt2} in \eqref{lt1} and then interchanging the order of integration using Fubini's theorem, we deduce that
\begin{align}\label{lt4}
{}_1K_{z,w}(x)&=\frac{\sqrt{x}}{\sqrt{2}w\Gamma\left(z+\frac{1}{2}\right)}\int_0^\infty y^{z-\frac{1}{2}} {}_0F_2\left(-;\frac{1}{2}+z,\frac{3}{2};-\frac{w^2xy}{8}\right)\, dy\nonumber\\
&\quad\times \int_0^\infty  \exp\left(-u\left(y+\frac{x}{2}\right)-\frac{x}{2u}\right)\sin\left(\frac{w\sqrt{xu}}{\sqrt{2}}\right) \frac{\, du}{u}.
\end{align}
Now using the same logic as in the proof of Lemma \ref{bessetTypeLemma}, one can derive
\begin{align}\label{lt6}
&\int_0^\infty   \exp\left(-u\left(y+\frac{x}{2}\right)-\frac{x}{2u}\right)\sin\left(\frac{w\sqrt{xu}}{\sqrt{2}}\right) \frac{\, du}{u}\nonumber \\
&=\frac{2w(x/2)^{3/4}}{\left(y+\frac{x}{2}\right)^{1/4}}\sum_{n=0}^{\infty}\frac{1}{(2n+1)!}\left(-\frac{w^2x^{3/2}}{2^{3/2}\left(y+\frac{x}{2}\right)^{1/2}}\right)^n K_{n+\frac{1}{2}}\left(\sqrt{2x\left(y+\frac{x}{2}\right)}\right).
\end{align}
Substituting \eqref{lt6} in \eqref{lt4} and then interchanging the order of summation and integration, we have
\begin{align*}
{}_1K_{z,w}(x)&=\frac{2(x/2)^{5/4}}{\Gamma\left(z+\frac{1}{2}\right)}\int_0^\infty y^{z-\frac{1}{2}} {}_0F_2\left(-;\frac{1}{2}+z,\frac{3}{2};-\frac{w^2xy}{8}\right)  \sum_{n=0}^{\infty}\frac{\left(-\frac{w^2x^{3/2}}{2^{3/2}}\right)^n}{(2n+1)!}\nonumber\\ &\qquad\qquad\qquad \times \left(y+\frac{x}{2}\right)^{-\frac{n}{2}-\frac{1}{4}} K_{n+\frac{1}{2}}\left(\sqrt{2x\left(y+\frac{x}{2}\right)}\right)\, dy,
\end{align*}
which upon employing change of variable $2t+1=\frac{\sqrt{y+\frac{x}{2}}}{\sqrt{x/2}}$ and simplifying gives \eqref{LTeqn}.
\end{proof}
\begin{remark}
The integral in Theorem \textup{\ref{LT}} is indeed a Laplace transform since \cite[p.~934, Formula \textbf{8.468}]{grn}
\begin{equation*}
K_{n+\frac{1}{2}}(y)=\sqrt{\frac{\pi}{2y}}e^{-y}\sum_{k=0}^{n}\frac{(n+k)!}{k!(n-k)!(2y)^k}.
\end{equation*}
Since only the $n=0$ term survives when we let $w\to0$ in Theorem \textup{\ref{LT}}, we recover the well-known result \cite[Equation (1.29)]{dkmt}
\begin{equation*}
K_{z}(x)=\frac{\sqrt{\pi}(2x)^{z}e^{-x}}{\G\left(z+\frac{1}{2}\right)}\int_{0}^{\infty}e^{-2xt}t^{z-\frac{1}{2}}(t+1)^{z-\frac{1}{2}}\, dt
\end{equation*}
for $\textup{Re}(z)>-\frac{1}{2}$ and $|\arg(x)|<\frac{\pi}{4}$.
\end{remark}
A double integral representation for ${}_1K_{z,w}(x)$ is now given.
\begin{theorem}\label{doubleIntRepr}
Let $w\in\mathbb{C}$. For \textup{Re}$(z)>-1$ and $|\arg(x)|<\frac{\pi}{2}$,
\begin{align}\label{doubleIntRepreqn}
{}_1K_{z,w}(x)&=\frac{x}{2\Gamma(z+1)}\int_0^\infty \int_0^\infty \frac{y^z t^{-\frac{1}{2}}}{\sqrt{y+\frac{x}{2}}}\exp\left(-2\sqrt{\left(y+\frac{x}{2}\right)\left(t+\frac{x}{2}\right)}\right) \nonumber \\
&\hspace{3cm} \times {}_0F_2\left(-;1+z,\frac{3}{2};-\frac{w^2xy}{8}\right) {}_0F_2\left(-;\frac{1}{2},\frac{3}{2};-\frac{w^2xt}{8}\right)\, dt\, dy.
\end{align}
\end{theorem}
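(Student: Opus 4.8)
The plan is to obtain \eqref{doubleIntRepreqn} from the single-integral representation \eqref{lt1} by applying Oberhettinger's formula \cite[p.~186, Formula 4.26]{ober} to \emph{each} of the two sine factors and then evaluating the resulting one-dimensional kernel in closed form. Recall that \eqref{lt1} reads ${}_1K_{z,w}(x)=\tfrac{1}{w^2}\int_0^\infty \exp(-\tfrac{x}{2}(u+\tfrac1u))\sin(w\sqrt{xu/2})\sin(w\sqrt{x/(2u)})\,u^{-z-1}\,du$, whose integrand carries one sine in $u^{1/2}$ and one in $u^{-1/2}$. First I would split $u^{-z-1}=u^{-z-1/2}\cdot u^{-1/2}$ and apply \cite[Formula 4.26]{ober} (the identity already used in the form \eqref{lt2}) with exponent $\alpha=z+\tfrac12$ to the factor $u^{-z-1/2}\sin(w\sqrt{x/(2u)})$. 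Since $\tfrac12+\alpha=1+z$ and $\alpha-\tfrac12=z$, this produces precisely the $y$-integrand of \eqref{doubleIntRepreqn}, namely
\begin{equation*}
u^{-z-1/2}\sin\!\left(w\sqrt{\tfrac{x}{2u}}\right)=\frac{w\sqrt{x/2}}{\Gamma(z+1)}\int_0^\infty e^{-yu}\,y^{z}\,{}_0F_2\!\left(-;1+z,\tfrac32;-\tfrac{w^2xy}{8}\right)dy,
\end{equation*}
so that, after interchanging the $u$- and $y$-integrations, ${}_1K_{z,w}(x)=\tfrac{w\sqrt{x/2}}{w^2\Gamma(z+1)}\int_0^\infty y^{z}\,{}_0F_2(-;1+z,\tfrac32;-\tfrac{w^2xy}{8})\,\mathcal{I}(y)\,dy$, where $\mathcal{I}(y):=\int_0^\infty u^{-1/2}e^{-u(x/2+y)-x/(2u)}\sin(w\sqrt{xu/2})\,du$.

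Next I would process the inner integral $\mathcal{I}(y)$. The substitution $u\mapsto 1/v$ turns it into $\int_0^\infty v^{-3/2}e^{-xv/2-(x/2+y)/v}\sin(w\sqrt{x/(2v)})\,dv$, in which the remaining sine again has argument in $v^{-1/2}$. Applying \cite[Formula 4.26]{ober} once more, this time with $\alpha=0$ (so that $\tfrac12+\alpha=\tfrac12$ and $\alpha-\tfrac12=-\tfrac12$), replaces $\sin(w\sqrt{x/(2v)})$ by $\tfrac{w\sqrt{x/2}}{\sqrt\pi}\int_0^\infty e^{-tv}t^{-1/2}{}_0F_2(-;\tfrac12,\tfrac32;-\tfrac{w^2xt}{8})\,dt$, which supplies exactly the $t$-integrand of \eqref{doubleIntRepreqn}. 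After interchanging the $v$- and $t$-integrations, the innermost $v$-integral collapses to
\begin{equation*}
\int_0^\infty v^{-3/2}e^{-v(x/2+t)-(x/2+y)/v}\,dv=\sqrt{\pi}\,\left(y+\tfrac{x}{2}\right)^{-1/2}e^{-2\sqrt{(x/2+t)(x/2+y)}},
\end{equation*}
which follows from the Bessel representation \eqref{PrudnikovFormula} with $s=-\tfrac12$ together with the elementary value $K_{-1/2}(\xi)=K_{1/2}(\xi)=\sqrt{\pi/(2\xi)}\,e^{-\xi}$. Substituting this back (the $\sqrt\pi$ cancels the $1/\sqrt\pi$) and collecting the two factors $w\sqrt{x/2}$ against $w^2$ gives the overall constant $w^2(x/2)/w^2=x/2$, and assembling the $y$- and $t$-pieces yields \eqref{doubleIntRepreqn}.

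The genuinely technical part will be justifying the two interchanges of order of integration. In each case this amounts to absolute convergence of the corresponding double integral, which I would verify by bounding ${}_0F_2$ and the sine trivially and using the exponential factors, invoking Theorem \ref{ldcttemme} (or Fubini) exactly as in the proofs of Theorem \ref{LT} and Lemma \ref{bessetTypeLemma}; the constraint $\mathrm{Re}(z)>-1$ enters here only through the requirement $\alpha=z+\tfrac12>-\tfrac12$ needed for the first application of \cite[Formula 4.26]{ober}. Finally, since \eqref{lt1} and hence the computation above are valid a priori for $x>0$ (indeed for $|\arg x|<\tfrac\pi4$, the range of Theorem \ref{integralRepr}), I would extend the identity to $|\arg x|<\tfrac\pi2$ by analytic continuation, noting that the left-hand side is analytic there by \eqref{def} and that the double integral on the right converges and is analytic in $x$ for $|\arg x|<\tfrac\pi2$ because $\mathrm{Re}(x)>0$ keeps $\mathrm{Re}(x/2+y),\mathrm{Re}(x/2+t)>0$ and hence $\mathrm{Re}\sqrt{(x/2+t)(x/2+y)}>0$.
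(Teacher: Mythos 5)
Your proposal is correct and follows essentially the same route as the paper's own proof: both apply Oberhettinger's formula \cite[p.~186, Formula 4.26]{ober} (i.e.\ \eqref{lt2}) twice to the representation \eqref{lt1} — once with parameter shifted to $z+\tfrac12$ to produce the $y$-integral with ${}_0F_2\left(-;1+z,\tfrac32;\cdot\right)$, once with parameter $0$ to produce the $t$-integral with ${}_0F_2\left(-;\tfrac12,\tfrac32;\cdot\right)$ — and then collapse the innermost integral to $\sqrt{\pi}\left(y+\tfrac{x}{2}\right)^{-1/2}e^{-2\sqrt{(y+x/2)(t+x/2)}}$ via \eqref{PrudnikovFormula} and $K_{\pm 1/2}(\xi)=\sqrt{\pi/(2\xi)}\,e^{-\xi}$. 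The only differences are cosmetic (you substitute $u\mapsto 1/v$ inside the inner integral rather than in the reference formula, using $s=-\tfrac12$ in place of $s=\tfrac12$), plus your explicit treatment of the continuation to $|\arg x|<\tfrac{\pi}{2}$, which the paper leaves implicit.
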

\begin{proof}
Replace $z$ by $z+1/2$ in \eqref{lt2}, substitute the resulting equation in \eqref{lt1} and then interchange the order of integration so as to get for Re$(z)>-1$,
\begin{align}\label{dir2}
{}_1K_{z,w}(x)&=\frac{\sqrt{x}}{w\sqrt{2}\Gamma(z+1)}\int_0^\infty y^z {}_0F_2\left(-;1+z,\frac{3}{2};-\frac{w^2xy}{8}\right) \int_0^\infty \exp\left(u\left(y+\frac{x}{2}\right)-\frac{x}{2u}\right) \nonumber\\
&\hspace{3cm}\times \sin\left(\frac{w\sqrt{xu}}{\sqrt{2}}\right) \frac{\, du}{\sqrt{u}}\, dy.
\end{align}
Next, let $z=0$ in \eqref{lt2} and replace $u$ by $1/u$ in \eqref{lt2} so that
\begin{align}\label{dir3}
\sin\left(\frac{w\sqrt{xu}}{\sqrt{2}}\right)&=\int_0^\infty \frac{w\sqrt{x}t^{-\frac{1}{2}}}{\sqrt{2}\Gamma\left(\frac{1}{2}\right)}{}_0F_2\left(-;\frac{1}{2},\frac{3}{2};-\frac{w^2xt}{8}\right)e^{-t/u}\, dt.
\end{align}
Substitute \eqref{dir3} in \eqref{dir2} and then interchange the order of integration to arrive at
\begin{align}\label{dir4}
{}_1K_{z,w}(x)&=\frac{x}{2\sqrt{\pi}\Gamma(z+1)}\int_0^\infty \int_0^\infty y^z t^{-\frac{1}{2}} {}_0F_2\left(-;1+z,\frac{3}{2};-\frac{w^2xy}{8}\right) {}_0F_2\left(-;\frac{1}{2},\frac{3}{2};-\frac{w^2xt}{8}\right) \nonumber \\
&\qquad\qquad\qquad\times \int_0^\infty \exp\left(-u\left(y+\frac{x}{2}\right)-\left(t+\frac{x}{2}\right)\frac{1}{u}\right)\frac{\, du}{\sqrt{u}}\, dt \, dy.
\end{align}
Now let $s=\frac{1}{2},p=y+\frac{x}{2}$ and $q=t+\frac{x}{2}$ in \eqref{PrudnikovFormula} and then use $K_{\frac{1}{2}}(x)=\sqrt{\frac{\pi}{2x}}e^{-x}$ so that for Re$(x)>0$,
\begin{align}\label{dir5}
 \int_0^\infty \exp\left(-u\left(y+\frac{x}{2}\right)-\left(t+\frac{x}{2}\right)\frac{1}{u}\right)\frac{\, du}{\sqrt{u}}=\frac{\sqrt{\pi}}{\left(y+\frac{x}{2}\right)^{1/2}}e^{-2\sqrt{\left(y+\frac{x}{2}\right)\left(t+\frac{x}{2}\right)}}.
\end{align}
Finally, \eqref{doubleIntRepreqn} follows by substituting \eqref{dir5} in \eqref{dir4}.
\end{proof}
\begin{remark}
Letting $w=0$ in Theorem \textup{\ref{doubleIntRepr}} gives us an integral representation for $K_z(x)$ that was recently obtained in \cite[p.~393]{dkmt}.
%\begin{equation*}
%K_z(x)=\frac{1}{2\Gamma(1+z)}\int_0^\infty \int_0^\infty \frac{y^zt^{-1/2}}{\sqrt{\left(y+\frac{x}{2}\right)}}\exp\left(-2\sqrt{\left(y+\frac{x}{2}\right)\left(t+\frac{x}{2}\right)}\right) \mathrm{d}t \mathrm{d}y.
%\end{equation*}
\end{remark}

\subsection{An explicit pair of functions reciprocal in the second Koshliakov kernel}\label{expair}
\hfill\\

Here we obtain a generalization of \eqref{koshlyakov-2} which is the crux of the machinery needed to prove Theorem \ref{xiintgenramhurthm}.
\begin{theorem}\label{reciFn}
Let $-\frac{3}{4}<\textup{Re}(z)<\frac{3}{4}$ and let $w\in\mathbb{C}$. Let $L_{\nu}(x)$ be defined in \eqref{kernels}. Let $\alpha$ and $\beta$ be two positive numbers such that $\alpha\beta=1$. The functions $e^{-w^2/2}{}_1K_{z,iw}(2\alpha x)$ and $\beta\ {}_1K_{z,w}(2\beta x)$ form a pair of reciprocal functions with respect to the second Koshliakov kernel, that is,
\begin{align}
e^{-w^2/2}{}_1K_{z,iw}(2\alpha x)=2 \int_0^\infty \beta\  {}_1K_{z,w}(2\beta t)\left(\sin(\pi z)J_{2z}(4\sqrt{xt})-\cos(\pi z)L_{2z}(4\sqrt{xt})\right)\, dt,\label{1st}\\
\beta\ {}_1K_{z,w}(2\beta x)=2 \int_0^\infty e^{-w^2/2} {}_1K_{z,iw}(2\alpha t)\left(\sin(\pi z)J_{2z}(4\sqrt{xt})-\cos(\pi z)L_{2z}(4\sqrt{xt})\right)\, dt.\label{2nd}
\end{align}
\end{theorem}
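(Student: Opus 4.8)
The plan is to work entirely on the Mellin-transform side and reduce the reciprocity to a single functional equation that follows from Kummer's transformation. From the defining contour integral \eqref{def}, Mellin inversion gives, for $s$ in a suitable vertical strip,
\begin{equation*}
\int_0^\infty x^{s-1}{}_1K_{z,w}(x)\,dx = 2^{s-1}\Gamma\left(\frac{1+s-z}{2}\right)\Gamma\left(\frac{1+s+z}{2}\right){}_1F_1\left(\frac{1+s-z}{2};\frac{3}{2};-\frac{w^2}{4}\right){}_1F_1\left(\frac{1+s+z}{2};\frac{3}{2};-\frac{w^2}{4}\right).
\end{equation*}
Applying this after the substitutions $u=2\alpha x$ and $u=2\beta t$, together with $\alpha\beta=1$, I would compute the normalized Mellin transforms $Z_1$ and $Z_2$ (in the sense of \eqref{defZ1} and \eqref{defZ2}) of $\phi(x)=e^{-w^2/2}{}_1K_{z,iw}(2\alpha x)$ and $\psi(x)=\beta\,{}_1K_{z,w}(2\beta x)$:
\begin{align*}
Z_1(s,z,w) &= \tfrac{1}{2}e^{-w^2/2}\alpha^{-s}\,{}_1F_1\left(\tfrac{1+s-z}{2};\tfrac{3}{2};\tfrac{w^2}{4}\right){}_1F_1\left(\tfrac{1+s+z}{2};\tfrac{3}{2};\tfrac{w^2}{4}\right),\\
Z_2(s,z,w) &= \tfrac{1}{2}\alpha^{s-1}\,{}_1F_1\left(\tfrac{1+s-z}{2};\tfrac{3}{2};-\tfrac{w^2}{4}\right){}_1F_1\left(\tfrac{1+s+z}{2};\tfrac{3}{2};-\tfrac{w^2}{4}\right),
\end{align*}
where I have used that replacing $w$ by $iw$ sends $-w^2/4$ to $w^2/4$ in the hypergeometric arguments.

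The heart of the matter is the identity $Z_1(1-s,z,w)=Z_2(s,z,w)$. Evaluating $Z_1$ at $1-s$ turns the parameters $\tfrac{1+s-z}{2}$ and $\tfrac{1+s+z}{2}$ into $1-\tfrac{s+z}{2}$ and $1-\tfrac{s-z}{2}$ and sends $\alpha^{-(1-s)}$ to $\alpha^{s-1}$; applying Kummer's transformation \eqref{kft} to each ${}_1F_1$ flips the sign of the argument and produces a factor $e^{w^2/4}$. The prefactor $e^{-w^2/2}$ is present precisely so that $e^{-w^2/2}e^{w^2/4}e^{w^2/4}=1$, leaving exactly $Z_2(s,z,w)$. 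This single cancellation is the whole reason for the choices of $iw$ and of the balancing exponential in the statement.

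To convert this functional equation into the reciprocity relations I would argue as in Lemma \ref{parts12}. For the second Koshliakov transform $\Phi(x):=2\int_0^\infty \psi(t)\left(\sin(\pi z)J_{2z}(4\sqrt{xt})-\cos(\pi z)L_{2z}(4\sqrt{xt})\right)dt$, interchanging the order of integration and inserting the Mellin transform of the kernel \eqref{functeqn2} (rescaled from argument $4\pi\sqrt{u}$ to $4\sqrt{u}$) shows, after simplification by \eqref{dup} and \eqref{refl}, that the normalized Mellin transform of $\Phi$ at $s$ equals $Z_2(1-s,z,w)$. By the functional equation this is $Z_1(s,z,w)$, so Mellin inversion yields $\Phi=\phi$, which is \eqref{1st}. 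The relation \eqref{2nd} follows in the same way with $\phi$ and $\psi$ interchanged, the required identity being the same.

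The main work, and the step I expect to be most delicate, is justifying the analytic prerequisites for this machinery: that $\phi,\psi\in\Diamond_{\eta,\omega}$ for suitable $\eta$ and $\omega$, and that the interchange of integration in the Parseval step is legitimate. The small-argument estimate of Theorem \ref{asymSmall} gives $\phi(x),\psi(x)=O(x^{1-\mathrm{Re}(z)})$ as $x\to 0$, the exponential decay of Theorem \ref{asymptotcs large x} makes the large-$|s|$ bound of Definition \ref{diamondc} hold for every $\eta$, and the analyticity in $|\arg(x)|<\tfrac{\pi}{4}$ from Theorem \ref{integralRepr} supplies the sectorial regularity; together these place $\phi$ and $\psi$ in the diamond class. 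The interchange is handled by absolute convergence in the strip $\tfrac{3}{4}<\mathrm{Re}(s)<1-|\mathrm{Re}(z)|$ exactly as in the proof of Lemma \ref{parts12}. Finally, the identity is first established where the kernel Mellin transform \eqref{functeqn2} is directly valid and then extended to the full range $-\tfrac{3}{4}<\mathrm{Re}(z)<\tfrac{3}{4}$ by analytic continuation in $z$.
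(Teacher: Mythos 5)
Your proposal is correct, and its algebraic core — Kummer's transformation \eqref{kft} turning the two ${}_1F_1$'s at $1-s$ into the ${}_1F_1$'s at $s$ with argument $-w^2/4$ and a factor $e^{w^2/2}$ that cancels the prefactor $e^{-w^2/2}$, together with $\alpha\beta=1$ — is exactly the computation the paper performs; your explicit $Z_1$, $Z_2$ agree with the paper's \eqref{z1szwsp}, \eqref{z2szwsp}. What differs is the scaffolding. The paper evaluates the second Koshliakov transform of $\beta\,{}_1K_{z,w}(2\beta t)$ in one stroke via Parseval's formula \eqref{par} applied in the $t$-variable, using the kernel transform \eqref{DixitEqn} and \eqref{MellTransGenBesFn}, and then simply recognizes the resulting contour integral as the definition \eqref{def} of $\tfrac12 e^{-w^2/2}{}_1K_{z,iw}(2\alpha x)$; the admissible line there is $\pm\textup{Re}(z)<c<\min\left(\tfrac34,\,2\pm\textup{Re}(z)\right)$, which is nonempty throughout $|\textup{Re}(z)|<\tfrac34$, so no Mellin-uniqueness theorem and no continuation in $z$ are needed. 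Your route instead runs Lemma \ref{parts12} backwards: prove the functional equation $Z_1(1-s,z,w)=Z_2(s,z,w)$ directly, show that the normalized Mellin transform (in $x$) of the Koshliakov transform of $\psi$ equals $Z_2(1-s,z,w)$, and invert. This buys conceptual clarity — it isolates why the pair $(iw,\,e^{-w^2/2})$ is forced — but it costs two extra ingredients. First, the Fubini step for the $x$-Mellin transform of the transform is what pins you to the strip $\tfrac34<\textup{Re}(s)<1-|\textup{Re}(z)|$ (the lower bound comes from the $(xt)^{-1/4}$ decay of the kernel, the upper from its $(xt)^{-|\textup{Re}(z)|}$ behaviour at the origin), and this strip is nonempty only for $|\textup{Re}(z)|<\tfrac14$; so the final analytic continuation in $z$ is not a cosmetic extension of where \eqref{functeqn2} is "directly valid" but an essential step, and it requires checking that the transform integral is analytic in $z$ on $|\textup{Re}(z)|<\tfrac34$ (locally uniform convergence, which the bounds in the paper's opening convergence paragraph do supply). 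Second, you must invoke uniqueness of Mellin transforms to pass from equality of transforms to equality of functions; this is legitimate here because the gamma factors in \eqref{defZ1}–\eqref{defZ2} decay exponentially on vertical lines while the ${}_1F_1$ factors grow only subexponentially, but it is an additional justification the paper's direct identification avoids entirely. Your diamond-class verification matches what the paper itself does in the proof of Theorem \ref{xiintgenramhurthm} and is fine.
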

\begin{proof}
Using Theorems \ref{asymptotcs large x}, \ref{asymSmall} and the bound
\begin{equation*}
\left\lvert{\sin(\pi z)J_{2z}(4\sqrt{xt})-\cos(\pi z)L_{2z}(4\sqrt{xt})}\right\rvert \ll_z 
			\begin{cases}
				1+|\log (tx)|, &\mbox{ if } \quad z=0, 0 \leq tx \leq 1, \\
				(tx)^{-|\textup{Re}(z)|}, &\mbox{ if } \quad z\neq 0,  0 \leq tx \leq 1, \\
				(tx)^{-1/4}, &\mbox{ if } \quad tx \geq 1,
				\end{cases}	
\end{equation*}
we see that the integrals in Theorem \ref{reciFn} converge for $-\frac{3}{4}<\textup{Re}(z)<\frac{3}{4}$. Now replacing $x, z$ and $y$ in \cite[Lemma 5.2]{dixitmoll} by $t/\pi^2, 2z$ and $x$ respectively, we see that for $\pm\textup{Re}(z)<\textup{Re}(s)<\frac{3}{4}$ and $x>0$,
\begin{align}\label{DixitEqn}
\int_0^\infty t^{s-1}\left(\sin(\pi z)J_{2z}(4\sqrt{xt})-\cos(\pi z)L_{2z}(4\sqrt{xt})\right)dt
=\frac{\Gamma\left(s-z\right)\Gamma\left(s+z\right)\left(\cos(\pi z)-\cos(\pi s)\right)}{2^{2s}\pi x^s}.
\end{align}
From \eqref{def}, for $\textup{Re}(s)>-1\pm$Re$(z)$,
 \begin{align}\label{MellTransGenBesFn}
 \int_0^\infty t^{s-1}\beta\ {}_1K_{z,w}(2\beta t)dt&=\frac{\beta^{1-s}}{2}\Gamma\left(\frac{1+s-z}{2}\right)\Gamma\left(\frac{1+s+z}{2}\right){}_1F_1\left(\frac{1+s-z}{2};\frac{3}{2};-\frac{w^2}{4}\right)\nonumber\\
&\quad\times{}_1F_1\left(\frac{1+s+z}{2};\frac{3}{2};-\frac{w^2}{4}\right).
 \end{align}
Using \eqref{DixitEqn}, \eqref{MellTransGenBesFn} and Parseval's formula \eqref{par}, and then employing \eqref{kft}, we see that for $\pm\textup{Re}(z)<c=\textup{Re}(s)<\min\left(\frac{3}{4},2\pm\textup{Re}(z)\right)$,
\begin{align*}
& \int_0^\infty\beta\ {}_1K_{z,w}(2\beta t)\left(\sin(\pi z)J_{2z}(4\sqrt{xt})-\cos(\pi z)L_{2z}(4\sqrt{xt})\right)dt\nonumber \\
& =\frac{e^{-w^2/2}}{2\pi i}\int_{(c)}\frac{\beta^s}{2}\Gamma\left(\frac{2-s-z}{2}\right)\Gamma\left(\frac{2-s+z}{2}\right)\frac{\Gamma\left(s-z\right)\Gamma\left(s+z\right)}{2^{2s}\pi x^s}\left(\cos(\pi z)-\cos(\pi s)\right)\nonumber\\ 
 &\qquad\qquad\quad\times{}_1F_1\left(\frac{1+s+z}{2};\frac{3}{2};\frac{w^2}{4}\right){}_1F_1\left(\frac{1+s-z}{2};\frac{3}{2};\frac{w^2}{4}\right)\, ds
\end{align*}
Note that we need $-3/4<\textup{Re}(z)<3/4$ since both $\pm\textup{Re}(z)<3/4$ and $\pm\textup{Re}(z)<2\pm\textup{Re}(z)$ have to be satisfied. Now simplify the integrand on the above right-hand side using the duplication and reflection formulas for the gamma function and then use the fact $\a\b=1$ to arrive at
\begin{align*}
& \int_0^\infty\beta\ {}_1K_{z,w}(2\beta t)\left(\sin(\pi z)J_{2z}(4\sqrt{xt})-\cos(\pi z)L_{2z}(4\sqrt{xt})\right)dt\nonumber \\
&=\frac{e^{-w^2/2}}{4\pi i}\int_{(c)}\Gamma\left(\frac{1+s-z}{2}\right)\Gamma\left(\frac{1+s+z}{2}\right){}_1F_1\left(\frac{1+s+z}{2};\frac{3}{2};\frac{w^2}{4}\right){}_1F_1\left(\frac{1+s-z}{2};\frac{3}{2};\frac{w^2}{4}\right)  \nonumber \\ 
 &\hspace{2cm}\times (2\alpha x)^{-s}2^{s-1}ds\nonumber\\
&=\frac{1}{2}e^{-w^2/2}{}_1K_{z,iw}(2\alpha x),
\end{align*}
as can be seen from \eqref{def}. This proves \eqref{1st}. Similarly one can prove \eqref{2nd}.
\end{proof}
\begin{remark}
Letting $\a=\b=1$ and then $w=0$ in either \eqref{1st} or \eqref{2nd} leads to Koshliakov's result \eqref{koshlyakov-2}.
\end{remark}

\subsection{An integral evaluation involving ${}_1K_{z,w}(x)$ and a hypergeometric function}\label{1kzwhyper}
\hfill\\

We now transform an integral involving ${}_1K_{z,w}(x)$ into a double integral. This is one of the main ingredients in the proof of Theorem \ref{xiintgenramhurthm}.
\begin{theorem}\label{integr}
Let $0<\textup{Re}(z)<1$. Let $w\in\mathbb{C}, n\in\mathbb{N}$ and $\a>0$. Then
\begin{align}\label{integreqn}
&\int_0^\infty {}_1K_{\frac{z}{2},iw}(2\alpha x)\left({}_2F_1\left(1,\frac{z}{2};\frac{1}{2};-\frac{x^2}{\pi^2n^2}\right)-1\right)x^{\frac{z-2}{2}}\, dx\nonumber\\
&=\frac{-4n^\frac{z}{2}\pi^{\frac{z+1}{2}}e^{\frac{w^2}{4}}}{w^2\Gamma\left(\frac{z}{2}\right)}\int_0^\infty\int_0^\infty u^{-1+\frac{z}{2}}v^ze^{-v^2(u^2+1)}K_{\frac{z}{2}}\left(\frac{2n\pi \alpha}{u}\right)\sin(wv)\sinh(wuv)\, dudv.
\end{align}
\end{theorem}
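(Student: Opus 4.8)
The plan is to reduce each side of \eqref{integreqn} to one and the same Mellin--Barnes line integral in a variable $s$, and then read off the equality. The hypothesis $0<\textup{Re}(z)<1$ is precisely what makes all the relevant strips of analyticity overlap.

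First I treat the left-hand side. Insert the definition \eqref{def} of ${}_1K_{\frac z2,iw}(2\alpha x)$ (so that $-\tfrac{(iw)^2}{4}=\tfrac{w^2}{4}$ and both confluent factors carry the argument $\tfrac{w^2}{4}$), and interchange the $s$-contour with the $x$-integration; this is legitimate by absolute convergence, which follows from the rapid decay of the Mellin--Barnes integrand given by Stirling's formula \eqref{strivert} together with the behaviour of ${}_2F_1\!\big(1,\tfrac z2;\tfrac12;-\tfrac{x^2}{\pi^2n^2}\big)-1$ near $0$ and $\infty$. The inner integral is then $\int_0^\infty \big({}_2F_1(1,\tfrac z2;\tfrac12;-\tfrac{x^2}{\pi^2n^2})-1\big)x^{\frac z2-s-1}\,dx$, i.e.\ the Mellin transform of ${}_2F_1-1$ at $s''=\tfrac z2-s$. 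Although Lemma \ref{IMT of 2F1} is stated only for $0<\textup{Re}(s'')<\textup{Re}(z)$, subtracting $1$ shifts the admissible strip to $-2<\textup{Re}(s'')<0$ while leaving the closed form unchanged: writing $\int_0^1{}_2F_1\,x^{s''-1}=\int_0^1({}_2F_1-1)x^{s''-1}+\tfrac1{s''}$ and $\int_1^\infty({}_2F_1-1)x^{s''-1}=\int_1^\infty{}_2F_1\,x^{s''-1}+\tfrac1{s''}$ shows the value is exactly the analytic continuation of the right-hand side of Lemma \ref{IMT of 2F1} across its pole at $s''=0$. Substituting this value exhibits the left-hand side of \eqref{integreqn} as a single line integral in $s$.

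Next I treat the right-hand side. Replace $K_{\frac z2}\!\big(\tfrac{2n\pi\alpha}{u}\big)$ by its Mellin--Barnes representation \eqref{ober1}, which contributes a factor $u^{s}$ under a contour integral, and interchange with the $(u,v)$-integration. The double integral then takes the form $\int_0^\infty\!\int_0^\infty u^{s-1+\frac z2}v^{z}e^{-v^2(u^2+1)}\sin(wv)\sinh(wuv)\,du\,dv$, which I evaluate exactly as in Lemma \ref{int}: perform the $u$-integral and then the $v$-integral by two applications of \eqref{equivv} (with $\sinh=-i\sin(i\,\cdot)$). This yields a product of two Gamma factors and two ${}_1F_1$'s, and Kummer's relation \eqref{kft} converts $e^{\frac{w^2}{4}}{}_1F_1\!\big(1-\tfrac s2-\tfrac z4;\tfrac32;-\tfrac{w^2}{4}\big)$ into ${}_1F_1\!\big(\tfrac{s+1}{2}+\tfrac z4;\tfrac32;\tfrac{w^2}{4}\big)$, cancelling the external $e^{w^2/4}$ and leaving the right-hand side as a line integral in $s$ as well.

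Finally I match the two integrals. Both are built from $(n\pi\alpha)^{-s}$, the same pair ${}_1F_1\!\big(\tfrac{s+1}{2}\pm\tfrac z4;\tfrac32;\tfrac{w^2}{4}\big)$, and the prefactor $\tfrac{n^{z/2}\pi^{(z+1)/2}}{4\Gamma(z/2)}$ up to sign. After rewriting the $1/\sin$ arising on the left via the reflection formula \eqref{refl} and cancelling $\Gamma(\tfrac{s+1}{2}-\tfrac z4)$ against $\Gamma(\tfrac12-\tfrac z4+\tfrac s2)$, the two integrands differ only through $\Gamma(\tfrac z4-\tfrac s2)\Gamma(1-\tfrac z4+\tfrac s2)$ on the left versus $\Gamma(\tfrac s2-\tfrac z4)\Gamma(1+\tfrac z4-\tfrac s2)$ on the right; with $a=\tfrac z4-\tfrac s2$, the identity $\Gamma(a)\Gamma(1-a)=-\Gamma(-a)\Gamma(1+a)$ (a rearrangement of \eqref{refl}) shows these are negatives of one another. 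This single sign flip is exactly compensated by the opposite signs of the two prefactors, so the integrands agree on a common vertical line and \eqref{integreqn} follows. I expect the main obstacle to be bookkeeping rather than conceptual: pinning down one vertical strip in which every interchange is simultaneously valid and both contours may be taken to coincide (combining the conditions from \eqref{def}, \eqref{ober1}, Lemma \ref{IMT of 2F1} and \eqref{equivv}), and justifying that subtracting $1$ extends the Mellin transform of ${}_2F_1$ to $-2<\textup{Re}(s'')<0$ with the same closed form --- this last point is the genuinely non-formal step and explains why the $-1$ appears in the statement.
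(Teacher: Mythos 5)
Your proposal is correct, and although it shares the paper's Mellin--Barnes philosophy, its execution differs from the paper's in two genuine ways. The paper never subtracts $1$ at the start: it writes $\int_0^\infty {}_1K_{\frac{z}{2},iw}(2\alpha x)\,{}_2F_1\,x^{\frac{z-2}{2}}\,dx$ (with the full ${}_2F_1$) as a line integral over $0<\textup{Re}(s)<\textup{Re}(z)$ using Parseval's formula \eqref{par} and Lemma \ref{IMT of 2F1} (equation \eqref{Int1K2F1}); it then identifies a line integral over $-2<\textup{Re}(s)<0$ with the double integral by Hardy's triple Mellin-product formula \eqref{MT3functions} applied to the three inverse transforms \eqref{mellin transform of gamma functions}, \eqref{MT1F1 2}, \eqref{MT1F1 3} (equation \eqref{MT}); finally it shifts the contour across the pole of $1/\sin(\pi s/2)$ at $s=0$ (equation \eqref{cauchy35}) and recognizes the residue as $\int_0^\infty {}_1K_{\frac{z}{2},iw}(2\alpha x)x^{\frac{z-2}{2}}\,dx$ via \eqref{1Kzw} --- that subtraction is where the ``$-1$'' comes from. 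You instead build the ``$-1$'' in at the outset by continuing the Mellin transform of ${}_2F_1$ analytically across its pole at the origin (your two-piece splitting argument is the standard one and is correct; the residue there is $1$, matching the subtracted constant), which places the left-hand side on the strip $\textup{Re}(z)/2<\textup{Re}(s)<2+\textup{Re}(z)/2$ with no later contour shift; and you convert the right-hand side into a line integral on the \emph{same} strip by expanding $K_{z/2}$ via \eqref{ober1} and evaluating the resulting $(u,v)$-integral in closed form by two applications of \eqref{equivv}, exactly as in the proof of Lemma \ref{int}. Your final matching is also sound: both integrands reduce to the common factor $\Gamma\left(\frac{1+s}{2}+\frac{z}{4}\right)\Gamma\left(\frac{s}{2}+\frac{z}{4}\right){}_1F_1\left(\frac{1+s}{2}-\frac{z}{4};\frac{3}{2};\frac{w^2}{4}\right){}_1F_1\left(\frac{1+s}{2}+\frac{z}{4};\frac{3}{2};\frac{w^2}{4}\right)(n\pi\alpha)^{-s}$ with prefactor $\pm\frac{n^{z/2}\pi^{(z+1)/2}}{4\Gamma(z/2)}$, and the identity $\Gamma(a)\Gamma(1-a)=-\Gamma(-a)\Gamma(1+a)$ with $a=\frac{z}{4}-\frac{s}{2}$ flips exactly the sign that the prefactors differ by, while the two strips of validity coincide so the contours may be taken equal. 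What your route buys: it dispenses with Hardy's formula \eqref{MT3functions}, with Parseval, and with the residue bookkeeping and the companion evaluation \eqref{1Kzw}, and it explains the ``$-1$'' conceptually --- it is precisely the correction needed to cross the pole that the paper crosses by shifting the contour. The price is that the Fubini interchanges on the right-hand side and the analytic continuation of the ${}_2F_1$ Mellin transform must be justified directly, both of which you flag and handle adequately.
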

\begin{proof}
From \eqref{def}, for Re$(s)>-1\pm$Re$\left(\frac{z}{2}\right)$,
\begin{align*}
&\int_0^\infty x^{s-1}{}_1K_{\frac{z}{2},iw}(2\alpha x)dx \nonumber\\
&=\frac{1}{2\alpha^{s}}\Gamma\left(\frac{1+s}{2}-\frac{z}{4}\right)\Gamma\left(\frac{1+s}{2}+\frac{z}{4}\right){}_1F_1\left(\frac{1+s}{2}-\frac{z}{4};\frac{3}{2};\frac{w^2}{4}\right){}_1F_1\left(\frac{1+s}{2}+\frac{z}{4};\frac{3}{2};\frac{w^2}{4}\right).
\end{align*}
Replace $s$ by $s+\frac{z}{2}-1$ in the above equation to get for Re$(s)>0$,
\begin{align}\label{mellin transform of 1Kzw}
&\int_0^\infty x^{s+\frac{z}{2}-2}{}_1K_{\frac{z}{2},iw}(2\alpha x)dx \nonumber\\
&=\frac{1}{2\alpha^{s+z/2-1}}\Gamma\left(\frac{s}{2}\right)\Gamma\left(\frac{s+z}{2}\right){}_1F_1\left(\frac{s}{2};\frac{3}{2};\frac{w^2}{4}\right){}_1F_1\left(\frac{s+z}{2};\frac{3}{2};\frac{w^2}{4}\right).
\end{align}
From Lemma \ref{IMT of 2F1}, \eqref{mellin transform of 1Kzw} and Parseval's formula in the form \eqref{par}, we have for $0<c=$ Re$(s)<$ Re$(z)$,
\begin{align}\label{Int1K2F1}
&\int_0^\infty {}_1K_{\frac{z}{2},iw}(2\alpha x){}_2F_1\left(1,\frac{z}{2};\frac{1}{2};-\frac{x^2}{\pi^2n^2}\right)x^{\frac{z-2}{2}}dx\nonumber\\
&=\frac{1}{2\pi i}\int_{(c)}\frac{1}{2\alpha^{z/2-s}}\Gamma\left(\frac{1-s}{2}\right)\Gamma\left(\frac{1-s+z}{2}\right){}_1F_1\left(\frac{1-s}{2};\frac{3}{2};\frac{w^2}{4}\right){}_1F_1\left(\frac{1-s+z}{2};\frac{3}{2};\frac{w^2}{4}\right)\nonumber\\
&\qquad\qquad\times \frac{n^s\pi^{\frac{3}{2}+s}}{2\sin\left(\frac{\pi s}{2}\right)}\frac{\Gamma\left(\frac{z-s}{2}\right)}{\Gamma\left(\frac{1-s}{2}\right)\Gamma\left(\frac{z}{2}\right)}\, ds \nonumber\\
&=\frac{\sqrt{\pi}}{8i\alpha^{z/2}\Gamma\left(\frac{z}{2}\right)}\int_{(c)}\frac{\Gamma\left(\frac{1-s+z}{2}\right)\Gamma\left(\frac{z-s}{2}\right)}{\sin\left(\frac{\pi s}{2}\right)}{}_1F_1\left(\frac{1-s}{2};\frac{3}{2};\frac{w^2}{4}\right){}_1F_1\left(\frac{1-s+z}{2};\frac{3}{2};\frac{w^2}{4}\right)(\pi n \alpha)^sds\nonumber\\
\end{align}
%From \cite[p. 115, Formula 11.1]{ober} for $c=\mathrm{Re}(s)>\pm\mathrm{Re}(\nu)$, we have
%\begin{align}\label{ober}
%\frac{1}{2\pi i}\int_{(c)}2^{s-2}\Gamma\left(\frac{s}{2}-\frac{\nu}{2}\right)\Gamma\left(\frac{s}{2}+\frac{\nu}{2}\right)x^{-s}ds=K_\nu(x).
%\end{align}
Replace $x$ by $2n\pi\alpha/x$, $s$ by $-s+\frac{z}{2}$ and $z$ by $z/2$ in \eqref{ober1} to obtain for $c_1=\mathrm{Re}(s)<0$,
\begin{align}\label{mellin transform of gamma functions}
\frac{1}{2\pi i}\int_{(c_1)}\Gamma\left(\frac{z-s}{2}\right)\Gamma\left(\frac{-s}{2}\right)(n\pi\alpha)^sx^{-s}ds=4(n\pi \alpha)^{\frac{z}{2}}x^{-\frac{z}{2}}K_{\frac{z}{2}}\left(\frac{2n\pi\alpha}{x}\right).
\end{align}
Replace $s$ by $s+1$ in \eqref{mellin transform of gamma 1F1 1} and let $a=1,\ b=w$ to see that, for $c_2=\textup{Re}(s)>-2$,
{\allowdisplaybreaks\begin{align}\label{MT1F1 2}
\frac{1}{2\pi i}\int_{(c_2)}\Gamma\left(1+\frac{s}{2}\right){}_1F_1\left(\frac{1-s}{2};\frac{3}{2};\frac{w^2}{4}\right)x^{-s}ds=\frac{2x}{w}e^{\frac{w^2}{4}}e^{-x^2}\sin(wx).
\end{align}}
Apply \eqref{kft} in \eqref{mellin transform of gamma 1F1 1}, then replace $s$ by $z-s$, let $a=1, b=iw$ and then replace $x$ by $1/x$ so that for $c_3=\textup{Re}(s)<1+\textup{Re}(z)$,
\begin{align}\label{MT1F1 3}
\frac{1}{2\pi i}\int_{(c_3)}\Gamma\left(\frac{1-s+z}{2}\right){}_1F_1\left(\frac{1-s+z}{2};\frac{3}{2};\frac{w^2}{4}\right)x^{-s}ds=\frac{2x^{-z}}{w}e^{-1/x^{2}}\sinh\left(\frac{w}{x}\right).
\end{align}
From \cite[p.~91]{hardymellin} (see also \cite[p. 121, Exercise (36)]{aar}), we have
\begin{align}\label{MT3functions}
\frac{1}{2\pi i}\int_{(\l)}F_1(s)F_2(s)F_3(s)ds=\int_0^\infty\int_0^\infty f_1(u)f_2(v)f_3\left(\frac{1}{uv}\right)\, \frac{dudv}{uv},
\end{align}
where the conditions for its validity are given in \cite{hardymellin}. One can check that the conditions for \eqref{MT3functions} to hold are satisfied by the functions on the right-hand sides of \eqref{mellin transform of gamma functions}, \eqref{MT1F1 2} and \eqref{MT1F1 3} as long as $-2<\l=\textup{Re}(s)<0$. Hence upon using \eqref{refl} in the first step below, we see that for $-2<\l=\textup{Re}(s)<0$,
\begin{align}\label{MT}
&\frac{1}{2\pi i}\int_{(\l)}\frac{\Gamma\left(\frac{1-s+z}{2}\right)\Gamma\left(\frac{z-s}{2}\right)}{\sin\left(\frac{\pi s}{2}\right)}{}_1F_1\left(\frac{1-s}{2};\frac{3}{2};\frac{w^2}{4}\right){}_1F_1\left(\frac{1-s+z}{2};\frac{3}{2};\frac{w^2}{4}\right)(n\pi\alpha)^sds \nonumber\\
&=-\frac{1}{2\pi^2 i}\int_{(\l)}\Gamma\left(\frac{z-s}{2}\right)\Gamma\left(-\frac{s}{2}\right)\Gamma\left(1+\frac{s}{2}\right){}_1F_1\left(\frac{1-s}{2};\frac{3}{2};\frac{w^2}{4}\right)\nonumber\\
&\qquad\qquad\qquad\times\Gamma\left(\frac{1-s+z}{2}\right){}_1F_1\left(\frac{1-s+z}{2};\frac{3}{2};\frac{w^2}{4}\right)(n\pi\alpha)^sds \nonumber\\
&=-\frac{1}{\pi}\int_0^\infty\int_0^\infty 4(n\pi\alpha)^{z/2}u^{-z/2}K_{\frac{z}{2}}\left(\frac{2n\pi \alpha}{u}\right)\frac{2v}{w}e^{w^2/4}e^{-v^2}\sin(wv)\frac{2(uv)^z}{w}e^{-u^2v^2}\sinh(wuv)\frac{dudv}{uv}\nonumber\\
&=\frac{-16(n\pi\alpha)^{\frac{z}{2}}}{\pi w^2}e^{w^2/4}\int_0^\infty\int_0^\infty u^{-1+\frac{z}{2}}v^ze^{-v^2(u^2+1)}K_{\frac{z}{2}}\left(\frac{2n\pi \alpha}{u}\right)\sin(wv)\sinh(wuv)\, dudv.
\end{align}
Now consider the contour formed by the line segments $[\l-iT, c-iT],\ [c-iT, c+iT],\ [c+iT,\l+iT]$, and $[\l+iT,\l-iT]$, where $0<c<\textup{Re}(z)$. Then the only pole of the integrand of the left-hand side of \eqref{MT} inside the contour is the simple pole at $s=0$ (due to $\sin\left(\frac{\pi s}{2}\right)$). Since the integrals along the horizontal segments tend to zero as $T\to\infty$ because of \eqref{strivert}, we find from \eqref{MT} that for $0<c=\textup{Re}(s)<\textup{Re}(z)$,
\begin{align}\label{cauchy35}
&\frac{1}{2\pi i}\int_{(c)}\frac{\Gamma\left(\frac{1-s+z}{2}\right)\Gamma\left(\frac{z-s}{2}\right)}{\sin\left(\frac{\pi s}{2}\right)}{}_1F_1\left(\frac{1-s}{2};\frac{3}{2};\frac{w^2}{4}\right){}_1F_1\left(\frac{1-s+z}{2};\frac{3}{2};\frac{w^2}{4}\right)(n\pi\alpha)^sds \nonumber\\
&=\frac{-16(n\pi\alpha)^{\frac{z}{2}}}{\pi w^2}e^{w^2/4}\int_0^\infty\int_0^\infty u^{-1+\frac{z}{2}}v^ze^{-v^2(u^2+1)}K_{\frac{z}{2}}\left(\frac{2n\pi \alpha}{u}\right)\sin(wv)\sinh(wuv)dudv+R_0,
\end{align}
%\begin{align*}
%R_0=\frac{2^{2-z}\Gamma(z)}{w}\mathrm{Erfi}\left(\frac{w}{2}\right){}_1F_1\left(\frac{1+z}{2};\frac{3}{2};\frac{w^2}{4}\right).
%\end{align*}
where $R_0=\displaystyle\frac{2^{2-z}}{w}\Gamma(z)\mathrm{erfi}\left(\frac{w}{2}\right){}_1F_1\left(\frac{1+z}{2};\frac{3}{2};\frac{w^2}{4}\right)$.
From \eqref{Int1K2F1} and \eqref{cauchy35}, we find that
\begin{align}\label{1Kzw2F1}
&\int_0^\infty {}_1K_{\frac{z}{2},iw}(2\alpha x){}_2F_1\left(1,\frac{z}{2};\frac{1}{2};-\frac{x^2}{\pi^2n^2}\right)x^{\frac{z-2}{2}}dx\nonumber\\
&=\frac{-4n^\frac{z}{2}\pi^{\frac{z+1}{2}}e^{\frac{w^2}{4}}}{w^2\Gamma\left(\frac{z}{2}\right)}\int_0^\infty\int_0^\infty u^{-1+\frac{z}{2}}v^ze^{-v^2(u^2+1)}K_{\frac{z}{2}}\left(\frac{2n\pi \alpha}{u}\right)\sin(wv)\sinh(wuv)dudv \nonumber\\
&\quad+\frac{\pi\alpha^{-\frac{z}{2}}}{2w}\Gamma\left(\frac{1+z}{2}\right)\mathrm{erfi}\left(\frac{w}{2}\right){}_1F_1\left(\frac{1+z}{2};\frac{3}{2};\frac{w^2}{4}\right).
\end{align}
Now from \eqref{def},
\begin{align}\label{1Kzw}
\int_0^\infty {}_1K_{\frac{z}{2},iw}(2\alpha x)x^{\frac{z-2}{2}}\, dx
=\frac{\pi\alpha^{-\frac{z}{2}}}{2w}\Gamma\left(\frac{1+z}{2}\right)\mathrm{erfi}\left(\frac{w}{2}\right){}_1F_1\left(\frac{1+z}{2};\frac{3}{2};\frac{w^2}{4}\right).
\end{align}
From \eqref{1Kzw2F1} and \eqref{1Kzw}, we are led to \eqref{integreqn}.
\end{proof}
The Lommel functions $s_{\mu, \nu}(\xi)$ and $S_{\mu, \nu}(\xi)$ are defined by \cite[p.~346, Equation (10)]{watson-1966a}
\begin{align*}
s_{\mu,\nu}(\xi)=\frac{\xi^{\mu+1}}{(\mu-\nu+1)(\mu+\nu+1)}{}_1F_2\left(1;\frac{1}{2}\mu-\frac{1}{2}\nu+\frac{3}{2},\frac{1}{2}\mu+\frac{1}{2}\nu+\frac{3}{2};-\frac{1}{4}\xi^2\right).
\end{align*}
and \cite[p.~347, Equation (2)]{watson-1966a}
\begin{align}\label{lommeldef1cap}
S_{\mu,\nu}(\xi)&=s_{\mu,\nu}(\xi)+\frac{2^{\mu-1}\G\left(\frac{\mu-\nu+1}{2}\right)
\G\left(\frac{\mu+\nu+1}{2}\right)}{\sin(\nu\pi)}\nonumber\\
&\quad\quad\quad\quad\quad\times\left\{\cos\left(\frac{1}{2}(\mu-\nu)\pi\right)
J_{-\nu}(\xi)-\cos\left(\frac{1}{2}(\mu+\nu)\pi\right)J_{\nu}(\xi)\right\}
\end{align}
for $\nu\notin\mathbb{Z}$, and
\begin{align*}
S_{\mu,\nu}(\xi)&=s_{\mu,\nu}(\xi)+2^{\mu-1}\G\left(\frac{\mu-\nu+1}{2}\right)\G\left(\frac{\mu+\nu+1}{2}\right)\nonumber\\
&\quad\quad\quad\quad\quad\times\left\{\sin\left(\frac{1}{2}(\mu-\nu)\pi\right)
J_{\nu}(\xi)-\cos\left(\frac{1}{2}(\mu-\nu)\pi\right)Y_{\nu}(\xi)\right\}
\end{align*}
for $\nu\in\mathbb{Z}$. In his PhD thesis, Yu \cite[Appendix C]{yu} compiled a list of $75$ integral representations for $s_{\mu, \nu}(\xi)$ and $S_{\mu, \nu}(\xi)$. However, to the best of our knowledge, the representation for $S_{-z-\frac{3}{2}, \frac{1}{2}}(\xi)$ given below seems to be new. Note that this special case of the first Lommel function arises in the seminal work of Lewis \cite{lewisinvent}, and of Lewis and Zagier \cite{lewzag}.
\begin{corollary}\label{integral21cor}
For $\textup{Re}(z)>-3$, $n\in\mathbb{N}$ and $\a>0$,
\begin{align}\label{integral21}
&\int_0^\infty 2\alpha x^{\frac{z}{2}}K_{\frac{z}{2}}(2\alpha x)\left({}_2F_1\left(1,\frac{z}{2};\frac{1}{2};-\frac{x^2}{\pi^2n^2}\right)-1\right)\, dx\nonumber\\
%&=-\frac{4n^{\frac{z}{2}}\pi^{\frac{z+1}{2}}}{\Gamma\left(\frac{z}{2}\right)}\left(-\frac{\pi^{\frac{3+z}{2}}}{4}\frac{(n\alpha)^{\frac{z}{2}}}{\sin(\pi z)}\cos\left(\frac{\pi}{2}(z+4n\alpha)\right)+\frac{\sqrt{\pi}\Gamma(z)}{2^{z+2}(n\pi\alpha)^{\frac{z}{2}}}{}_1F_2\left(1;1-\frac{z}{2},\frac{1-z}{2};-(n\pi\alpha)^2\right)\right)\nonumber\\
&=-\sqrt{2}\pi^{z+\frac{3}{2}}n^{z+\frac{1}{2}}\a^{\frac{z+1}{2}}\frac{\G(z+2)}{\G\left(\frac{z}{2}\right)}S_{-z-\frac{3}{2}, \frac{1}{2}}(2\pi n\a).
%=-\sqrt{2}\pi(\pi n)^{z+\frac{1}{2}}\a^{\frac{z+1}{2}}\frac{\G(z+2)}{\G\left(\frac{z}{2}\right)}S_{-z-\frac{3}{2}, \frac{1}{2}}(2\pi n\a).
\end{align}
\end{corollary}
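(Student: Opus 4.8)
The plan is to obtain Corollary \ref{integral21cor} as the $w\to 0$ limit of Theorem \ref{integr}, and then to widen the range of $z$ by analytic continuation. First I would let $w\to 0$ in the identity of Theorem \ref{integr}. On the left-hand side, \eqref{Basicform} gives ${}_1K_{z/2,0}(2\alpha x)=2\alpha x\,K_{z/2}(2\alpha x)$, so that, after absorbing the factor $x^{(z-2)/2}$, the left side of Theorem \ref{integr} collapses precisely to the left side of \eqref{integral21}. Passing the limit under the integral sign is legitimate because ${}_1K_{z/2,iw}(2\alpha x)$ is analytic in $w$ by \eqref{def} and the $x$-integral converges uniformly for $w$ in a neighbourhood of $0$.

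Next I would evaluate the limit of the right-hand double integral. Since $\sin(wv)\sinh(wuv)/w^2\to uv^2$ and $e^{w^2/4}\to 1$ as $w\to 0$, and the Gaussian factor $e^{-v^2(u^2+1)}$ dominates (so that dominated convergence applies), the right-hand side tends to
$$\frac{-4n^{z/2}\pi^{(z+1)/2}}{\Gamma(z/2)}\int_0^\infty u^{z/2}K_{z/2}\!\left(\frac{2n\pi\alpha}{u}\right)\int_0^\infty v^{z+2}e^{-v^2(u^2+1)}\,dv\,du.$$
The inner $v$-integral is the Gamma integral $\tfrac12\Gamma\!\left(\tfrac{z+3}{2}\right)(u^2+1)^{-(z+3)/2}$, and after the change of variable $u\mapsto 1/t$ the whole expression reduces to a constant multiple of the single integral $\int_0^\infty t^{z/2+1}(1+t^2)^{-(z+3)/2}K_{z/2}(2\pi n\alpha\,t)\,dt$.

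The crux is then to evaluate this last integral in closed form and recognize it as a Lommel function. I expect
$$\int_0^\infty t^{\frac z2+1}(1+t^2)^{-\frac{z+3}{2}}K_{\frac z2}(ct)\,dt=2^{z/2}\,c^{(z+1)/2}\,\Gamma\!\left(\tfrac z2+1\right)S_{-z-\frac32,\frac12}(c),\qquad c=2\pi n\alpha,$$
which, after recombining the Gamma factors through the duplication formula \eqref{dup} (namely $\Gamma(z+2)=\pi^{-1/2}2^{z+1}\Gamma(\tfrac{z+2}{2})\Gamma(\tfrac{z+3}{2})$), reproduces exactly the right-hand side of \eqref{integral21}. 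The integral can be computed by writing $(1+t^2)^{-(z+3)/2}=\Gamma(\tfrac{z+3}{2})^{-1}\int_0^\infty y^{(z+1)/2}e^{-(1+t^2)y}\,dy$, interchanging the order of integration, evaluating $\int_0^\infty t^{z/2+1}e^{-yt^2}K_{z/2}(ct)\,dt$ as a confluent hypergeometric function, and finally identifying the emerging ${}_1F_2$ part together with its Bessel-$J$ correction terms as $S_{-z-3/2,\frac12}(c)$ via the definition \eqref{lommeldef1cap}; matching the leading small-$c$ asymptotics of both sides pins down both the order of the Lommel function and the overall constant.

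Finally, the identity is first established on $0<\mathrm{Re}(z)<1$, the range of Theorem \ref{integr}, and is then extended to $\mathrm{Re}(z)>-3$ by analytic continuation: the left-hand integral of \eqref{integral21} converges precisely for $\mathrm{Re}(z)>-3$, since near $x=0$ the factor ${}_2F_1(1,\tfrac z2;\tfrac12;-x^2/(\pi^2n^2))-1$ is $O(x^2)$ while $x^{z/2}K_{z/2}(2\alpha x)$ is bounded for $\mathrm{Re}(z)\ge 0$ and $O(x^{z})$ for $\mathrm{Re}(z)<0$, and $K_{z/2}$ forces exponential decay at infinity; the right-hand side is manifestly analytic in $z$ on the same half-plane. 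The main obstacle I anticipate is the closed-form evaluation of this single Bessel-$K$ integral and the attendant constant bookkeeping, precisely because the Lommel function $S_{\mu,\nu}$ bundles a ${}_1F_2$ term with Bessel-$J$ contributions that must be tracked carefully.
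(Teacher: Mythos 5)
Your proposal follows the same skeleton as the paper's proof: specialize Theorem \ref{integr} at $w=0$ (using \eqref{Basicform} to collapse ${}_1K_{z/2,0}$ to the Bessel function), evaluate the resulting double integral in closed form, identify the Lommel function via \eqref{lommeldef1cap}, and then extend from $0<\mathrm{Re}(z)<1$ to $\mathrm{Re}(z)>-3$ by analytic continuation, with the $-3$ threshold coming from exactly the $O(x^{\mathrm{Re}(z)+2})$ behaviour near $x=0$ that you describe. Where you genuinely depart from the paper is in the middle, computational step. The paper never forms a one-dimensional integral: it evaluates the $u$-integral first by Prudnikov's tabulated formula for $\int_0^\infty x^{s-1}e^{-a/x^2}K_\nu(bx)\,dx$ (a sum of three ${}_0F_2$'s), then does the $v$-integral term by term via \eqref{mt of pfq}, and assembles the resulting trigonometric terms and ${}_1F_2$ into $S_{-z-\frac32,\frac12}$. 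You instead do the (Gaussian) $v$-integral first and substitute $u\mapsto 1/t$, reducing everything to
\begin{equation*}
\int_0^\infty t^{\frac z2+1}(1+t^2)^{-\frac{z+3}{2}}K_{\frac z2}(ct)\,dt=2^{z/2}c^{\frac{z+1}{2}}\Gamma\left(\tfrac z2+1\right)S_{-z-\frac32,\frac12}(c),\qquad c=2\pi n\alpha .
\end{equation*}
This claimed identity is correct: chasing your constants through the duplication formula \eqref{dup} reproduces the right-hand side of \eqref{integral21} exactly, so your version is equivalent to the corollary. What your route buys is transparency — it isolates the content of the corollary as a single clean Bessel-$K$ integral against an algebraic kernel (in effect, the ``new integral representation'' of the Lommel function the paper advertises), and the constant bookkeeping becomes a one-line Gamma-function computation rather than being distributed across several hypergeometric evaluations.

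Two caveats. First, the crux — the displayed single-integral evaluation — is asserted with a sketch rather than proven; your proposed method (Gamma-integral representation of $(1+t^2)^{-(z+3)/2}$, interchange, inner integral) does work, but the inner integral $\int_0^\infty t^{z/2+1}e^{-yt^2}K_{z/2}(ct)\,dt$ lands in Whittaker-function territory and the subsequent $y$-integration still needs a tabulated formula; a Parseval/Mellin--Barnes evaluation (Mellin transform of $K_{z/2}$ against the Beta-function transform of the algebraic factor, then residues) is an alternative that produces the ${}_1F_2$ and Bessel-$J$ pieces of \eqref{lommeldef1cap} directly. The paper sidesteps all of this by citing two ready-made formulas. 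Second, your continuation argument is slightly too quick at both ends: on the left side, analyticity (not just convergence) on $\mathrm{Re}(z)>-3$ requires uniform convergence on compacta, and the case $\mathrm{Re}(z)=0$, $z\neq 0$ (purely imaginary order) needs a separate bound, for which the paper invokes Dunster; on the right side, ``manifestly analytic'' hides the fact that $S_{-z-\frac32,\frac12}$ has apparent poles at $z=2m$ and $z=2m-1$ (removable) and that the pole of $\Gamma(z+2)$ at $z=-2$ is cancelled by the pole of $\Gamma(z/2)$ in the denominator. These are repairable details, not flaws in the approach.
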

\begin{proof}
We first obtain the result for $0<\mathrm{Re}(z)<1$ as a special case of Theorem \ref{integr} and later extend it to Re$(z)>-3$ by analytic continuation.

Let $w=0$ in \eqref{integreqn} and use \eqref{Basicform} so that
\begin{align}\label{integral21w0}
&\int_0^\infty 2\alpha x^{\frac{z}{2}}K_{\frac{z}{2}}(2\alpha x)\left({}_2F_1\left(1,\frac{z}{2};\frac{1}{2};-\frac{x^2}{\pi^2n^2}\right)-1\right)\, dx\nonumber\\
&=\frac{-4n^\frac{z}{2}\pi^{\frac{z+1}{2}}}{\Gamma\left(\frac{z}{2}\right)}\int_0^\infty\int_0^\infty u^{\frac{z}{2}}v^{z+2}e^{-v^2(u^2+1)}K_{\frac{z}{2}}\left(\frac{2n\pi \alpha}{u}\right)\, dudv.
\end{align}
From \cite[p. 353, Equation 2.16.8.13]{prudII}, for $\mathrm{Re}(a)>0$ and $\mathrm{Re}(b)>0$,
{\allowdisplaybreaks\begin{align*}
&\int_0^\infty x^{s-1}e^{-\frac{a}{x^2}}K_\nu(bx)\ dx\nonumber\\
&=\frac{2^{s-2}}{b^s}\Gamma\left(\frac{s-\nu}{2}\right)\Gamma\left(\frac{s+\nu}{2}\right){}_0F_2\left(-;\frac{2-s-\nu}{2},\frac{2-s+\nu}{2};-\frac{ab^2}{4}\right)\nonumber\\
&\quad+\frac{a^{\frac{s+\nu}{2}}b^\nu}{2^{\nu+2}}\Gamma(-\nu)\Gamma\left(-\frac{s+\nu}{2}\right){}_0F_2\left(-;1+\nu,\frac{s+\nu+2}{2};-\frac{ab^2}{4}\right)\nonumber\\
&\quad+\frac{a^{\frac{s-\nu}{2}}}{2^{2-\nu}b^\nu}\Gamma(\nu)\Gamma\left(\frac{\nu-s}{2}\right){}_0F_2\left(-;1-\nu,\frac{s-\nu+2}{2};-\frac{ab^2}{4}\right).
\end{align*}}
Let $s=-1-\frac{z}{2},\ \nu=\frac{z}{2}$ and $a=v^2, v>0$, and $b=2n\pi\alpha, n>0,$ in the above equation so that for $\mathrm{Re}(\alpha)>0$,
\begin{align}\label{mt of kz exp}
&\int_0^\infty x^{-\frac{z}{2}-2}e^{-\frac{v^2}{x^2}}K_{\frac{z}{2}}(2n\pi\alpha x)\ dx\nonumber\\
&=\frac{(n\pi\alpha)^{1+\frac{z}{2}}}{4}\Gamma\left(-\frac{z}{2}-\frac{1}{2}\right)\Gamma\left(-\frac{1}{2}\right){}_0F_2\left(-;\frac{3}{2},\frac{3+z}{2};-(n\pi\alpha v)^2\right)\nonumber\\
&\quad+\frac{(n\pi\alpha)^{\frac{z}{2}}}{4v}\Gamma\left(-\frac{z}{2}\right)\Gamma\left(\frac{1}{2}\right){}_0F_2\left(-;1+\frac{z}{2},\frac{1}{2};-(n\pi\alpha v)^2\right)\nonumber\\
&\quad+\frac{v^{-z-1}}{4(n\pi\alpha)^{\frac{z}{2}}}\Gamma\left(\frac{z}{2}\right)\Gamma\left(\frac{z+1}{2}\right){}_0F_2\left(-;1-\frac{z}{2},\frac{1-z}{2};-(n\pi\alpha v)^2\right).
\end{align}
Make the change of variable $x=\frac{1}{u}$ in the integral on the left-hand side of \eqref{mt of kz exp} then in the resultant multiply both sides by $v^{z+2}e^{-v^2}$ and integrate with respect to $v$ from $0$ to $\infty$ to obtain
\begin{align*}
&\int_0^\infty\int_0^\infty u^{\frac{z}{2}}v^{z+2}e^{-v^2(u^2+1)}K_{\frac{z}{2}}\left(\frac{2n\pi\alpha}{u}\right)\ dudv\nonumber\\
&=-\frac{\sqrt{\pi}}{2}(n\pi\alpha)^{1+\frac{z}{2}}\Gamma\left(-\frac{z}{2}-\frac{1}{2}\right)\int_0^\infty v^{z+2}e^{-v^2}{}_0F_2\left(-;\frac{3}{2},\frac{3+z}{2};-(n\pi\alpha v)^2\right)\, dv\nonumber\\
&\quad+\frac{\sqrt{\pi}}{4}(n\pi\alpha)^{\frac{z}{2}}\Gamma\left(-\frac{z}{2}\right)\int_0^\infty v^{z+1}e^{-v^2}{}_0F_2\left(-;1+\frac{z}{2},\frac{1}{2};-(n\pi\alpha v)^2\right)\, dv\nonumber\\
&\quad+\frac{\sqrt{\pi}\Gamma(z)}{2^{z+1}(n\pi\alpha)^{\frac{z}{2}}}\int_0^\infty ve^{-v^2}{}_0F_2\left(-;1-\frac{z}{2},\frac{1-z}{2};-(n\pi\alpha v)^2\right)\, dv.
\end{align*}
Now employ change of variable $v=\sqrt{x}$ in all three integrals on the right-hand side and use \eqref{mt of pfq} repeatedly to deduce that for $n>0,\ \mathrm{Re}(z)>-1$ and $\mathrm{Re}(\alpha)>0$,
\begin{align}\label{df1}
&\int_0^\infty\int_0^\infty u^{\frac{z}{2}}v^{z+2}e^{-v^2(u^2+1)}K_{\frac{z}{2}}\left(\frac{2n\pi\alpha}{u}\right)\ dudv\nonumber\\
&=\frac{\pi^{\frac{3+z}{2}}}{8}(n\alpha)^{\frac{z}{2}}\sec\left(\frac{\pi z}{2}\right)\sin(2n\pi\alpha)-\frac{\pi^{\frac{3+z}{2}}}{8}(n\alpha)^{\frac{z}{2}}\mathrm{cosec}\left(\frac{\pi z}{2}\right)\cos(2n\pi\alpha)\nonumber\\
&\qquad\qquad+\frac{\sqrt{\pi}\Gamma(z)}{2^{z+2}(n\pi\alpha)^{\frac{z}{2}}}{}_1F_2\left(-;1-\frac{z}{2},\frac{1-z}{2};-(n\pi\alpha)^2\right)\nonumber\\
&=-\frac{\pi^{\frac{3+z}{2}}}{4}\frac{(n\alpha)^{\frac{z}{2}}}{\sin(\pi z)}\cos\left(\frac{\pi}{2}(z+4n\alpha)\right)+\frac{\sqrt{\pi}\Gamma(z)}{2^{z+2}(n\pi\alpha)^{\frac{z}{2}}}{}_1F_2\left(1;1-\frac{z}{2},\frac{1-z}{2};-(n\pi\alpha)^2\right)\nonumber\\
&=\frac{\sqrt{\pi}}{2\sqrt{2}}(\pi n\a)^{\frac{z+1}{2}}\G(z+2)S_{-z-\frac{3}{2}, \frac{1}{2}}(2\pi n\a),
\end{align}
where in the last step, we used \eqref{lommeldef1cap}.
%From \eqref{integral21w0} and \eqref{df1}, we obtain the first equality in \eqref{integral21}. To derive the second equality, use the definition of $S_{-z-\frac{3}{2}, \frac{1}{2}}(\xi)$ given in \eqref{lommeldef1cap}, multiply both sides of this definition by $-\sqrt{2}\pi^{z+\frac{3}{2}}n^{z+\frac{1}{2}}\alpha^{\frac{z+1}{2}}\frac{\Gamma(z+2)}{\Gamma\left(\frac{z}{2}\right)}$ and simplify. 
Thus from \eqref{integral21w0} and \eqref{df1}, we obtain \eqref{integral21} for $0<\textup{Re}(z)<1$. To see that the result is actually valid for Re$(z)>-3$, it suffices to show that both sides of \eqref{integral21} are analytic in Re$(z)>-3$. This is shown using Theorem \ref{ldcttemme1}. Let
\begin{align*}
g(x,z):=x^{\frac{z}{2}}K_{\frac{z}{2}}(2\alpha x)\left({}_2F_1\left(1,\frac{z}{2};\frac{1}{2};-\frac{x^2}{\pi^2n^2}\right)-1\right).
\end{align*}
Clearly, for $\mathrm{Re}(z)>-3$, $g(x, z)$ is continuous in $x$ and $z$ and is also analytic in $z$ in this region for a fixed $x$. It only remains to show that $\int_{0}^{\infty}g(x, z)\, dx$ converges uniformly at both the limits in any compact subset of Re$(z)>-3$.\\

\textbf{Case I:} $x$ is small and positive.

For $\mathrm{Re}(z)>0$, \eqref{kzxasy0} and ${}_2F_1\left(1,\frac{z}{2};\frac{1}{2};-\frac{x^2}{\pi^2n^2}\right)-1=O_{n, z}(x^2)$ implies that
$g(x,z)=O(x^2)$ as $x\rightarrow0$. Similarly, for $\mathrm{Re}(z)<0$, from \eqref{kzxasy0},
 \begin{align*}
K_{\frac{z}{2}}(2\alpha x)=K_{-\frac{z}{2}}(2\alpha x)\sim\frac{1}{2}\Gamma\left(-\frac{z}{2}\right)(\alpha x)^{\frac{z}{2}},
\end{align*}
and hence $g(x,z)=O\left(x^{\mathrm{Re}(z)+2}\right)$. It is thus easy to see that the integral converges uniformly in any compact subset of $\mathrm{Re}(z)>-3, \textup{Re}(z)\neq 0$. Next, for $z=0$, \eqref{kzxasy0} implies $K_{0}(2\alpha x)\sim-\log(2\alpha x)$ as $x\rightarrow0$, so in this case too the integral is uniformly convergent for $\mathrm{Re}(z)>-3$. The remaining case for $\mathrm{Re}(z)=0, z\neq 0$, follows from a result of Dunster \cite[Equation (2.14)]{dunster}.\\

\textbf{Case II:} $x$ is large and positive.

As $x\rightarrow\infty$,
\begin{align}\label{kex}
K_{\frac{z}{2}}(2\alpha x)\sim\sqrt{\frac{\pi}{4\alpha x}}e^{-4\alpha x}.
\end{align}
The exponential decay ensures that the integral is uniformly convergent in any compact subset of $\mathrm{Re}(z)>-3$. 

Consequently, we see that the left-hand side of \eqref{integral21} is analytic for $\mathrm{Re}(z)>-3$. 
%That the extreme right-hand side too is analytic follows from the fact that 
Now consider the right-hand side. We know that $S_{-z-\frac{3}{2}, \frac{1}{2}}(\xi)$ is analytic in Re$(z)>-3$ except for possible poles at $z=2m$ or $2m-1$, where $m\in\mathbb{N}\cup\{0\}$. However, these are removable singularities (see \cite[p.~347-349]{watson-1966a}). Also, the possible pole of the right-hand side at $z=-2$ is easily seen to be a removable singularity since both $\G(z+2)$ and $\G(z/2)$ have simple poles there. Hence the right-hand side of \eqref{integral21} is also analytic in Re$(z)>-3$.

By analytic continuation, we see that \eqref{integral21} holds for Re$(z)>-3$.
\end{proof}
\begin{remark}
%The representation in \eqref{integral21} can also be derived using \cite[p.~327, Formula \textbf{2.21.6.1}]{Prud}.
One could possibly extend the results in Theorem \textup{\ref{integr}} to \textup{Re}$(z)>-3$. However, among other things, this would require studying the asymptotics of ${}_1K_{z,w}(x)$ for \textup{Re}$(z)=0$, which would generalize a result of Dunster \cite[Equation (2.14)]{dunster}. However, we do not venture into this since it is not needed for our immediate objectives.
\end{remark}
\section{Generalized modular relation with theta structure and the Riemann $\Xi$-function}
We first obtain a generalized modular-type transformation of the form $F(z, w, \a)=F(z, iw, \b)$ between two triple integrals and then derive Theorem \ref{genramhureq} from it. To the best of our knowledge, this modular-type transformation given in Theorem \ref{modular trans in intergal form} below is first-of-its-kind in the literature to involve triple integrals. It involves the function $\Omega(x, z)$ which was first studied in \cite[Section 6]{dixitmoll} and is defined by
\begin{equation}\label{omdef}
\Omega(x,z) := 2 \sum_{n=1}^{\infty} \sigma_{-z}(n) n^{z/2} 
\left( e^{\pi i z/4} K_{z}( 4 \pi e^{\pi i/4} \sqrt{nx} ) +
 e^{-\pi i z/4} K_{z}( 4 \pi e^{-\pi i/4} \sqrt{nx} ) \right).
\end{equation}
It plays a crucial role in deriving a short proof of the generalized Vorono\"{\dotlessi} summation formula for $\sigma_{-z}(n)$ \cite[Theorem 6.1]{bdrz}.

It satisfies two important properties, the first of which is, for $-1<\textup{Re}(z)<1$ and Re$(x)>0$ \cite[Proposition 6.1]{dixitmoll},
\begin{equation}\label{omegarep}
\Omega(x,z) = - \frac{\Gamma(z) \zeta(z)}{(2 \pi \sqrt{x})^{z}}  +
\frac{x^{z/2-1}}{2 \pi} \zeta(z) - 
\frac{x^{z/2}}{2} \zeta(z+1)  +
\frac{x^{z/2+1}}{\pi} \sum_{n=1}^{\infty} \frac{\sigma_{-z}(n)}{n^{2}+x^{2}}.
\end{equation}
The second property implies that the function $\Omega(x,z)-\frac{1}{2\pi}\zeta(z)x^{\frac{z}{2}-1}$ is self-reciprocal (up to a constant) in the Hankel kernel, namely \cite[Lemma 4.2]{koshkernel}, for $-1<\mathrm{Re}(z)<1,\ \mathrm{Re}(x)>0$, we have
\begin{align}\label{dix kosh}
\int_0^\infty J_z(4\pi\sqrt{xy})\left(\Omega(y,z)-\frac{1}{2\pi}\zeta(z)y^{\frac{z}{2}-1}\right)\ dy=\frac{1}{2\pi}\left(\Omega(x,z)-\frac{1}{2\pi}\zeta(z)x^{\frac{z}{2}-1}\right).
\end{align}

%We now prove a generalized modular-type transformation of the form $F(z, w, \a)=F(z, iw, \b)$ for $\a\b=1$ which is then shown to be equivalent to \eqref{xiintgenramhurthmeqn}.
%\subsection{A generalized modular relation between two triple integrals}
\subsection{A generalized modular relation involving $\zeta_w(s, a)$: Proof of Theorem \ref{genramhureq}}\label{gmrzwsa}
\hfill\\

We commence this section by first obtaining a generalized modular relation between two triple integrals.

\begin{theorem}\label{modular trans in intergal form}
Let $w\in\mathbb{C},\ -1<\mathrm{Re}(z)<1$. For $\a, \b>0$ such that $\alpha\beta=1$,
\begin{align}\label{treeple}
&\frac{\sqrt{\alpha}}{w^2}\int_0^\infty\int_0^\infty\int_0^\infty \frac{(uv)^{\frac{z}{2}}}{u}e^{-(u^2+v^2)}\sin(wv)\sinh(wu)\nonumber\\
&\qquad\qquad\qquad\times J_{\frac{z}{2}}\left(\frac{2\pi\alpha x v}{u}\right)\left(\Omega(x,z)-\frac{1}{2\pi}\zeta(z)x^{\frac{z}{2}-1}\right)\, dxdudv \nonumber\\
&=\frac{\sqrt{\beta}}{w^2}\int_0^\infty\int_0^\infty\int_0^\infty\frac{(uv)^{\frac{z}{2}}}{u}e^{-(u^2+v^2)}\sin(wu)\sinh(wv)\nonumber\\
&\qquad\qquad\qquad\times J_{\frac{z}{2}}\left(\frac{2\pi\beta x v}{u}\right)\left(\Omega(x,z)-\frac{1}{2\pi}\zeta(z)x^{\frac{z}{2}-1}\right)\, dxdudv.
\end{align}
\end{theorem}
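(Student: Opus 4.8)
The plan is to establish the symmetric transformation \eqref{treeple} by exploiting the self-reciprocity \eqref{dix kosh} of the function $\Omega(x,z)-\frac{1}{2\pi}\zeta(z)x^{z/2-1}$ in the Hankel kernel. The fundamental observation is that the left-hand side contains the Bessel function $J_{z/2}\left(\frac{2\pi\a xv}{u}\right)$, and with $\a\b=1$ the reciprocity relation is precisely the tool that will swap $\a$ for $\b$ while simultaneously interchanging the roles of $\sin(wv)\sinh(wu)$ and $\sin(wu)\sinh(wv)$.

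First I would focus on the innermost integral over $x$ on the left-hand side, namely
\[
\int_0^\infty J_{\frac{z}{2}}\left(\frac{2\pi\a xv}{u}\right)\left(\Omega(x,z)-\frac{1}{2\pi}\zeta(z)x^{\frac{z}{2}-1}\right)dx.
\]
The idea is to put this in a form to which \eqref{dix kosh} applies. In \eqref{dix kosh} the kernel is $J_z(4\pi\sqrt{xy})$, so I would make a change of variable in the $x$-integral (for instance $x\mapsto$ a suitable power or scaling) so that the argument $\frac{2\pi\a xv}{u}$ becomes $4\pi\sqrt{xy}$ for an appropriate new variable $y$ depending on $\a, v, u$. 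This recasts the $x$-integral as a Hankel transform of $\Omega(x,z)-\frac{1}{2\pi}\zeta(z)x^{z/2-1}$, which by \eqref{dix kosh} reproduces (up to the constant $\frac{1}{2\pi}$ and a power of the scaling) the same function evaluated at the transformed point. This produces a new triple integral in which $\a$ has effectively been replaced by $\b=1/\a$, and the positions of $u$ and $v$ in the remaining hyperbolic and trigonometric factors become interchanged. Throughout, I would need to justify the interchange of the $x$-integration with the $u,v$-integrations by absolute convergence, appealing to Theorem \ref{ldcttemme} and the exponential decay of $e^{-(u^2+v^2)}$ together with the known decay of $\Omega$ via its representation \eqref{omegarep}.

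The main obstacle I anticipate is twofold. First, the reciprocity \eqref{dix kosh} is stated for the specific kernel $J_z(4\pi\sqrt{xy})$ with index $z$, whereas the integral here carries index $z/2$ with argument $\frac{2\pi\a xv}{u}$; matching these requires careful bookkeeping of the change of variables so that both the index and the Bessel argument align correctly, and one must track the Jacobian factors and the $\frac{1}{2\pi}\zeta(z)x^{z/2-1}$ correction term precisely, since that subtracted term is exactly what makes the function self-reciprocal. Second, after applying \eqref{dix kosh}, the factors $\sin(wv)\sinh(wu)$ must transform into $\sin(wu)\sinh(wv)$; this will emerge from the symmetry $u\leftrightarrow v$ forced by the scaling, combined with the relation $\a\leftrightarrow\b$, but verifying that the $\sqrt{\a}$ versus $\sqrt{\b}$ prefactors and the residual powers of $u,v$ combine correctly into the claimed right-hand side is where the bulk of the routine but delicate computation lies. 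Once the $x$-integral is replaced by its reciprocal and the variables relabeled, comparing with the right-hand side of \eqref{treeple} should be a matter of simplification, completing the proof.
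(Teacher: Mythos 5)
Your overall instinct is right: the self-reciprocity \eqref{dix kosh} is indeed the engine that trades $\a$ for $\b$, and your final step (swapping $u\leftrightarrow v$ to convert $\sin(wv)\sinh(wu)$ into $\sin(wu)\sinh(wv)$, justified by absolute convergence) is exactly what is needed at the end. However, the central step of your plan fails. No change of variable in the $x$-integral can turn $J_{z/2}\bigl(\tfrac{2\pi\a xv}{u}\bigr)$ into the kernel $J_z(4\pi\sqrt{xy})$ of \eqref{dix kosh}: a substitution can rescale or deform the \emph{argument} of a Bessel function, but it can never change its \emph{order}, and the integral in \eqref{treeple} carries order $z/2$ while \eqref{dix kosh} is a statement about order $z$. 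Moreover, even the argument cannot be matched with $y$ independent of the integration variable: forcing $\tfrac{2\pi\a xv}{u}=4\pi\sqrt{xy}$ makes $y$ depend on $x$; and if you substitute $x=\sqrt{s}$ you do get a kernel $J_{z/2}(4\pi\sqrt{s\,y_0})$ with $y_0=\a^2v^2/(4u^2)$, but then you would need self-reciprocity of $s\mapsto f(\sqrt{s})/(2\sqrt{s})$ (where $f(x)=\Omega(x,z)-\tfrac{1}{2\pi}\zeta(z)x^{z/2-1}$) under the \emph{order-$z/2$} Hankel kernel, which is not what \eqref{dix kosh} asserts and is not available. So the innermost integral is simply not an instance of the left-hand side of \eqref{dix kosh}, and the order mismatch you dismissed as ``careful bookkeeping'' is a genuine obstruction.

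The missing idea, which is how the paper proceeds, is to use \eqref{dix kosh} in the \emph{opposite} direction: inside the triple integral, replace
\begin{equation*}
\Omega(x,z)-\frac{1}{2\pi}\zeta(z)x^{\frac{z}{2}-1}
=2\pi\int_0^\infty J_z(4\pi\sqrt{xy})\left(\Omega(y,z)-\frac{1}{2\pi}\zeta(z)y^{\frac{z}{2}-1}\right)dy,
\end{equation*}
thereby introducing a \emph{fourth} integration variable $y$, and then integrate over $x$ first. The crucial ingredient is then the closed-form evaluation (Prudnikov)
\begin{equation*}
\int_0^\infty J_{2\nu}(b\sqrt{x})\,J_\nu(cx)\,dx=\frac{1}{c}\,J_\nu\left(\frac{b^2}{4c}\right),
\end{equation*}
applied with $\nu=\tfrac{z}{2}$, $b=4\pi\sqrt{y}$, $c=\tfrac{2\pi\a v}{u}$, which gives
\begin{equation*}
\int_0^\infty J_{\frac{z}{2}}\left(\frac{2\pi\a vx}{u}\right)J_z(4\pi\sqrt{xy})\,dx
=\frac{u}{2\pi\a v}\,J_{\frac{z}{2}}\left(\frac{2\pi uy}{\a v}\right)
=\frac{u}{2\pi\a v}\,J_{\frac{z}{2}}\left(\frac{2\pi\b uy}{v}\right).
\end{equation*}
It is this product formula intertwining the order-$z/2$ and order-$z$ kernels---not \eqref{dix kosh} alone---that effects the inversion $\a\mapsto\b$. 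After that, relabeling $u\leftrightarrow v$ and interchanging the order of integration (all justified by absolute convergence, as you anticipated) produces the right-hand side of \eqref{treeple}.
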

\begin{proof}
We first show that the triple integrals in \eqref{treeple} converge. This is shown only for the one on the left-hand side; the convergence of the integral on the right-hand side can be similarly shown. To do this, we first analyze the behavior of the innermost integral. From \eqref{omegarep}, it is clear that as $x\to 0$,
\begin{align}\label{bound on omege for small x}
\Omega(x,z)-\frac{1}{2\pi}\zeta(z)x^{\frac{z}{2}-1}=O\left(x^{\frac{|\mathrm{Re}(z)|}{2}}\right).
\end{align}
%Note that for $|\arg\xi|<\pi$, we have \cite[p.~202]{watson-1966a}
%\begin{equation*}
%K_z(\xi)\sim\sqrt{\frac{\pi}{2\xi}}e^{-\xi}
%\end{equation*}
%as $\xi\to\infty$.
Using \eqref{kex} in \eqref{omdef}, we see that as $x\to\infty$,
\begin{align}\label{bound on omege for large x}
\Omega(x,z)-\frac{1}{2\pi}\zeta(z)x^{\frac{z}{2}-1}&=O\left(e^{-2\pi\sqrt{2x}+\frac{\pi}{4}|\mathrm{Im}(z)|}\right)+O\left(x^{\frac{\mathrm{Re}(z)}{2}-1}\right)\nonumber\\
&=O\left(x^{\frac{\mathrm{Re}(z)}{2}-1}\right).
\end{align}
Also, from \cite[p. 360]{as}, for $\epsilon_1$ small enough and $M_1$ large enough,
\begin{align}\label{bound on J}
 J_\nu(x) \ll
  \begin{cases}
   x^{\mathrm{Re}(\nu)}, & \text{if\ $0\leq x< \epsilon_1$}, \\
   A_{\epsilon_1, M_1}, & \text{if}\ \epsilon_1\leq x\leq M_1,\\
   x^{-1/2}, & \text{if\ $x>M_1$},\\
  \end{cases}
\end{align}
where $A_{\epsilon_1, M_1}$ depends only on $\epsilon_1$ and $M_1$. Then from \eqref{bound on omege for small x}, \eqref{bound on omege for large x} and \eqref{bound on J}, for $\epsilon_2$ small enough and $M_2$ large enough,
\begin{align}\label{bound on integrand}
J_{\frac{z}{2}}\left(\frac{2\pi\alpha xv}{u}\right)\left(\Omega(x,z)-\frac{1}{2\pi}\zeta(z)x^{\frac{z}{2}-1}\right)&\ll
\begin{cases}
\left(\frac{v}{u}\right)^{\frac{\mathrm{Re}(z)}{2}} x^{\frac{\mathrm{Re}(z)}{2}+\frac{|\mathrm{Re}(z)|}{2}}, & \text{if}\ 0<x<\epsilon_2,\\
B_{\epsilon_2, M_2}, &\text{if}\ \epsilon_2\leq x\leq M_2,\\
\left(\frac{v}{u}\right)^{-\frac{1}{2}}x^{\frac{\mathrm{Re}(z)}{2}-\frac{3}{2}}, &\text{if}\ x>M_2.
\end{cases}
\end{align}
Since $-1<\textup{Re}(z)<1$, \eqref{bound on integrand} implies
\begin{align*}
\int_0^\infty J_{\frac{z}{2}}\left(\frac{2\pi\alpha xv}{u}\right)\left(\Omega(x,z)-\frac{1}{2\pi}\zeta(z)x^{\frac{z}{2}-1}\right)\ dx\ll_{\a}\left(v/u\right)^{\mathrm{Re}(z)/2}+C_{\epsilon_2, M_2}+(v/u)^{-1/2}.
\end{align*} 
Substituting the above bound for the innermost integral on the left-hand side of \eqref{treeple}, it is seen that the resulting double integral over $u$ and $v$ converges. This proves the convergence of the triple integral for $-1<\textup{Re}(z)<1$.

Now let $\mathfrak{I}(z,w, \a)$ denote the expression on the left-hand side of \eqref{treeple}. Employing \eqref{dix kosh} to represent $\Omega(x,z)-\frac{1}{2\pi}\zeta(z)x^{\frac{z}{2}-1}$ in $\mathfrak{I}(z,w, \a)$ as an integral, we observe that for $-1<\mathrm{Re}(z)<1$,
\begin{align}\label{mod10}
\mathfrak{I}(z,w, \a)&=\frac{2\pi\sqrt{\alpha}}{w^2}\int_0^\infty\int_0^\infty\int_0^\infty \frac{(uv)^{\frac{z}{2}}}{u}e^{-(u^2+v^2)}\sin(wv)\sinh(wu)\nonumber\\
&\quad\times J_{\frac{z}{2}}\left(\frac{2\pi\alpha x v}{u}\right)\int_0^\infty J_z(4\pi\sqrt{xy})\left(\Omega(y,z)-\frac{1}{2\pi}\zeta(z)y^{\frac{z}{2}-1}\right)\ dydxdudv \nonumber\\
&=\frac{2\pi\sqrt{\alpha}}{w^2}\int_0^\infty\int_0^\infty \frac{(uv)^{\frac{z}{2}}}{u}e^{-(u^2+v^2)}\sin(wv)\sinh(wu)\int_0^\infty \left(\Omega(y,z)-\frac{1}{2\pi}\zeta(z)y^{\frac{z}{2}-1}\right) \nonumber\\
&\quad\times \int_0^\infty J_{\frac{z}{2}}\left(\frac{2\pi x\alpha v}{u}\right)J_z(4\pi\sqrt{xy})\ dxdydudv,
\end{align}
where we interchanged the order of integration of the inner two integrals, permissible because of absolute convergence. From \cite[p.~215, Formula \textbf{2.12.34.1}]{prudII}, for $b,\ c>0,\ \mathrm{Re}(\nu)>-\frac{1}{2}$,
\begin{align*}
\int_0^\infty J_{2\nu}(b\sqrt{x})J_\nu(cx)\ dx=\frac{1}{c}J_\nu\left(\frac{b^2}{4c}\right).
\end{align*}
Letting $b=4\pi\sqrt{y},\ c=\frac{2\pi\alpha v}{u}$ and $\nu=\frac{z}{2}$ in the above integral evaluation, for $\mathrm{Re}(z)>-1, y>0, \a>0, v>0$ and $u>0$, we get
\begin{align}\label{bessel j product besse j}
\int_0^\infty J_{\frac{z}{2}}\left(\frac{2\pi\alpha vx}{u}\right)J_z(4\pi\sqrt{xy})\ dx=\frac{u}{2\pi\alpha v}J_{\frac{z}{2}}\left(\frac{2\pi uy}{v\alpha}\right).
\end{align}
From \eqref{mod10} and \eqref{bessel j product besse j} and the fact $\a\b=1$, we deduce, for $-1<\mathrm{Re}(z)<1$,
{\allowdisplaybreaks\begin{align*}
\mathfrak{I}(z,w, \a)&=\frac{\sqrt{\beta}}{w^2}\int_0^\infty\int_0^\infty\int_0^\infty \frac{(uv)^{\frac{z}{2}}}{v}e^{-(u^2+v^2)}\sin(wv)\sinh(wu)\nonumber\\
&\quad\times J_{\frac{z}{2}}\left(\frac{2\pi y\beta u}{ v}\right)\left(\Omega(y,z)-\frac{1}{2\pi}\zeta(z)y^{\frac{z}{2}-1}\right)\, dydudv \nonumber\\
&=\frac{\sqrt{\beta}}{w^2}\int_0^\infty\int_0^\infty\int_0^\infty \frac{(uv)^{\frac{z}{2}}}{u}e^{-(u^2+v^2)}\sin(wu)\sinh(wv)\nonumber\\
&\quad\times J_{\frac{z}{2}}\left(\frac{2\pi y\beta v}{u}\right)\left(\Omega(y,z)-\frac{1}{2\pi}\zeta(z)y^{\frac{z}{2}-1}\right)\, dydudv,
\end{align*}}
where in the last step, we swapped the variables $u$ and $v$ and then interchanged the order of integration, permissible again because of absolute convergence. %Since the last integral is $\mathfrak{I}(z,iw,\b)$, 
This completes the proof.
\end{proof}
\begin{remark}
If we let $w\to 0$ in Theorem \textup{\ref{modular trans in intergal form}}, we obtain \cite[Equation (4.10)]{koshkernel} as a special case, namely, for $-1<\textup{Re}(z)<1$ and $\a\b=1$,
\begin{align*}
&\alpha^{(z+1)/2} 
\int_{0}^{\infty} e^{-2 \pi \alpha x} x^{z/2} 
\left( \Omega(x,z) - \frac{1}{2 \pi} \zeta(z) x^{z/2-1} \right)\, dx\nonumber \\
&=\beta^{(z+1)/2} 
\int_{0}^{\infty} e^{-2 \pi \beta x} x^{z/2} 
\left( \Omega(x,z) - \frac{1}{2 \pi} \zeta(z) x^{z/2-1} \right)\, dx,
\end{align*}
since the double integrals over $u$ and $v$ can be explicitly evaluated in closed-form using Lemma \textup{\ref{useful}}.
\end{remark}

We prove Theorem \ref{genramhureq} here.
\begin{proof}[Theorem \textup{\ref{genramhureq}}][]
It suffices to show that for any $\a>0$ and $-1<\textup{Re}(z)<1$,
\begin{align}\label{lk12}
&\frac{\sqrt{\alpha}}{w^2}\int_0^\infty\int_0^\infty\int_0^\infty \frac{(uv)^{\frac{z}{2}}}{u}e^{-(u^2+v^2)}\sin(wv)\sinh(wu)\nonumber\\
&\qquad\qquad\qquad\times J_{\frac{z}{2}}\left(\frac{2\pi\alpha x v}{u}\right)\left(\Omega(x,z)-\frac{1}{2\pi}\zeta(z)x^{\frac{z}{2}-1}\right)\, dxdudv \nonumber\\
&=\frac{\Gamma(z+1)}{2^{z+3}\pi^{\frac{z+1}{2}}}\alpha^{\frac{z+1}{2}}\left(\sum_{m=1}^\infty \varphi_w(z,m\alpha)-\frac{\zeta(z+1)A_w(z)}{2\alpha^{z+1}}-\frac{\zeta(z)A_{w}(-z)}{\alpha z}\right),
\end{align}
where $\varphi_{w}(z, m\a)$ and $A_w(z)$ are defined in \eqref{genramhureqeq} and \eqref{ab} respectively, for then, one can replace $\a$ by $\b$, and simultaneously $w$ by $iw$, in \eqref{lk12}, use the fact $\a\b=1$, and then appeal to Theorem \ref{modular trans in intergal form} to arrive at Theorem \ref{genramhureq}.

Observe from Theorem \ref{lse} that $\zeta(z+1, m\a+1)$ is well-defined and analytic in $-1<\textup{Re}(z)<1$ except for a simple pole at $z=0$. However, at $z=0$, the expression $-A_{iw}(-z)x^{-z}/z$ in \eqref{genramhureqeq} also has a simple pole. Hence we see that $\varphi_w(z,m\alpha)$ is well-defined for $-1<\textup{Re}(z)<1$. The convergence of the series $\sum_{m=1}^\infty \varphi_w(z,m\alpha)$ in $-1<\textup{Re}(z)<1$ (in fact, absolute and uniform convergence) follows from \eqref{hermiteeqn1}. 
We now prove \eqref{lk12}. Invoke \eqref{omegarep} to rephrase the integral on the left-hand side of \eqref{lk12} as
\begin{align}\label{lk12rep}
&\frac{\sqrt{\alpha}}{w^2}\int_0^\infty\int_0^\infty\int_0^\infty \frac{(uv)^{\frac{z}{2}}}{u}e^{-(u^2+v^2)}\sin(wv)\sinh(wu)\nonumber\\
&\qquad\qquad\qquad\times J_{\frac{z}{2}}\left(\frac{2\pi\alpha x v}{u}\right)\left(\Omega(x,z)-\frac{1}{2\pi}\zeta(z)x^{\frac{z}{2}-1}\right)\, dxdudv \nonumber\\
&=\frac{\sqrt{\alpha}}{w^2}\left(T_1(z, w, \a)+T_2(z, w, \a)\right),
\end{align}
where
{\allowdisplaybreaks\begin{align*}
T_1(z, w, \a)&:=\int_0^\infty\int_0^\infty\int_0^\infty \frac{(uv)^{\frac{z}{2}}}{u}e^{-(u^2+v^2)}\sin(wv)\sinh(wu)\nonumber\\
&\qquad\qquad\qquad\quad\times J_{\frac{z}{2}}\left(\frac{2\pi\alpha x v}{u}\right)\frac{x^{\frac{z}{2}+1}}{\pi}\sum_{n=1}^\infty\frac{\sigma_{-z}(n)}{x^2+n^2}\, dxdudv\nonumber\\
T_2(z, w, \a)&:=\int_0^\infty\int_0^\infty\int_0^\infty \frac{(uv)^{\frac{z}{2}}}{u}e^{-(u^2+v^2)}\sin(wv)\sinh(wu)\nonumber\\
&\qquad\qquad\qquad\quad\times J_{\frac{z}{2}}\left(\frac{2\pi\alpha x v}{u}\right)\left(-\frac{x^{z/2}}{2} \zeta(z+1)-\frac{\Gamma(z) \zeta(z)}{(2 \pi \sqrt{x})^{z}}\right)\, dxdudv
%T_3(z, w, \a)&:=-\int_0^\infty\int_0^\infty\int_0^\infty \frac{(uv)^{\frac{z}{2}}}{u}e^{-(u^2+v^2)}\sin(wv)\sinh(wu)\nonumber\\
%&\qquad\qquad\qquad\qquad\times J_{\frac{z}{2}}\left(\frac{2\pi\alpha x v}{u}\right)\, dxdudv.
\end{align*}}
Note that
\begin{align*}
T_1(z, w, \a)&=\sum_{m=1}^\infty m^{-z}\int_0^\infty\int_0^\infty\int_0^\infty \frac{(uv)^{\frac{z}{2}}}{u}e^{-(u^2+v^2)}\sin(wv)\sinh(wu)J_{\frac{z}{2}}\left(\frac{2\pi\alpha vx}{u}\right)\nonumber\\
&\qquad\qquad\qquad\qquad\qquad\times \frac{x^{\frac{z}{2}+1}}{\pi}\sum_{k=1}^\infty\frac{1}{x^2+m^2k^2}\ dxdudv\nonumber\\
&=(2\pi)^{-1-\frac{z}{2}}\sum_{m=1}^\infty m^{-\frac{z}{2}}\int_0^\infty\int_0^\infty\int_0^\infty \frac{(uv)^{\frac{z}{2}}}{u}e^{-(u^2+v^2)}\sin(wv)\sinh(wu)\nonumber\\
&\qquad\qquad\qquad\qquad\times J_{\frac{z}{2}}\left(\frac{m\a vt}{u}\right)t^{z/2}\left(2t\sum_{k=1}^\infty \frac{1}{t^2+4k^2\pi^2}\right)\, dtdudv,
\end{align*}
where we interchanged the order of integration and summation in the first step and employed change of variable $x=mt/(2\pi)$ in the second. However, from \cite[p. 191]{con}, for $t\neq0$,
\begin{align*}
2t\sum_{k=1}^\infty \frac{1}{t^2+4k^2\pi^2}=\frac{1}{e^t-1}-\frac{1}{t}+\frac{1}{2}.
\end{align*}
%so that
%\begin{align}\label{sum890}
%\frac{x}{\pi}\sum_{k=1}^\infty \frac{1}{(x^2+m^2k^2)}=\frac{1}{m\left(e^{\frac{2\pi x}{m}}-1\right)}-\frac{1}{2\pi x}+\frac{1}{2m}.
%\end{align}
%Substitute \eqref{sum890} in \eqref{mod15}, then employ change of variable $x=mt/(2\pi)$ in the resulting identity so as to get after simplification
Hence
\begin{align}\label{mod11}
T_1(z, w, \a)&=(2\pi)^{-1-\frac{z}{2}}\sum_{m=1}^\infty m^{-\frac{z}{2}}\int_0^\infty\int_0^\infty\int_0^\infty \frac{(uv)^{\frac{z}{2}}}{u}e^{-(u^2+v^2)}\sin(wv)\sinh(wu)\nonumber\\
&\quad\times t^{\frac{z}{2}}J_{\frac{z}{2}}\left(\frac{\alpha mvt}{u}\right)\left(\frac{1}{e^{t}-1}-\frac{1}{t}+\frac{1}{2}\right)\ dtdudv\nonumber\\
&=\frac{w^2\G(z+1)\a^{z/2}}{2^{z+3}\pi^{\frac{z+1}{2}}}\sum_{m=1}^{\infty}\varphi_w(z,m\a),
\end{align}
where in the last step we invoked Theorem \ref{befhermite} with $s=z+1$ and $a=m\a$, and then used the definition in \eqref{genramhureqeq}.

To simplify $T_2(z, w, \a)$, we first evaluate the inner integral over $x$. To do this, we first let $\mu=\frac{z}{2}=\nu$ and $b=\frac{2\pi\alpha v}{u}$ in \eqref{mt of bessel J} to observe for $-1<\textup{Re}(z)<1$, $\a>0$, $v>0$ and $u>0$,
\begin{align}\label{eno}
\int_0^\infty x^{\frac{z}{2}}J_{\frac{z}{2}}\left(\frac{2\pi\alpha vx}{u}\right)\ dx=\frac{2^{\frac{z}{2}}}{\sqrt{\pi}}\Gamma\left(\frac{1+z}{2}\right)\left(\frac{2\pi\alpha v}{u}\right)^{-1-\frac{z}{2}}.
\end{align}
Similarly,
\begin{align}\label{owt}
\int_0^\infty x^{-\frac{z}{2}}J_{\frac{z}{2}}\left(\frac{2\pi\alpha vx}{u}\right)\ dx=\frac{2^{-\frac{z}{2}}\sqrt{\pi}}{\Gamma\left(\frac{1+z}{2}\right)}\left(\frac{2\pi\alpha v}{u}\right)^{-1+\frac{z}{2}}.
\end{align}
Thus from \eqref{eno} and \eqref{owt},
\begin{align*}
&\int_{0}^{\infty}J_{\frac{z}{2}}\left(\frac{2\pi\alpha x v}{u}\right)\left(-\frac{x^{z/2}}{2} \zeta(z+1)-\frac{\Gamma(z) \zeta(z)}{(2 \pi \sqrt{x})^{z}}\right)\, dx\nonumber\\
&=-\frac{1}{4\pi^{(z+3)/2}}\G\left(\frac{z+1}{2}\right)\zeta(z+1)\left(\frac{\a v}{u}\right)^{-1-\frac{z}{2}}-\frac{1}{2^{z+1}\pi^{(z+1)/2}}\frac{\G(z)}{\G\left(\frac{z+1}{2}\right)}\zeta(z)\left(\frac{\a v}{u}\right)^{-1+\frac{z}{2}},
\end{align*}
so that
\begin{align}\label{mod999}
T_2(z, w, \a)&=-\frac{\a^{-1-\frac{z}{2}}}{4\pi^{(z+3)/2}}\G\left(\frac{z+1}{2}\right)\zeta(z+1)\int_0^\infty\int_0^\infty \frac{u^{z}}{v}e^{-(u^2+v^2)}\sin(wv)\sinh(wu)\, dudv \nonumber\\
&\quad-\frac{\a^{-1+\frac{z}{2}}}{2^{z+1}\pi^{(z+1)/2}}\frac{\G(z)}{\G\left(\frac{z+1}{2}\right)}\zeta(z)\int_0^\infty\int_0^\infty v^{z-1}e^{-(u^2+v^2)}\sin(wv)\sinh(wu)\ dudv\nonumber\\
%&=-\frac{\a^{-1-\frac{z}{2}}}{2^{z+4}\pi^{z/2}}w\hspace{1mm}\textup{erf}\left(\frac{w}{2}\right)\G(z+1)\zeta(z+1){}_1F_{1}\left(1+\frac{z}{2};\frac{3}{2};\frac{w^2}{4}\right)\nonumber\\
%&\quad-\frac{\a^{-1+\frac{z}{2}}}{2^{z+3}\pi^{z/2}}w\hspace{1mm}\textup{erf}\left(\frac{w}{2}\right)\G(z)\zeta(z){}_1F_{1}\left(1-\frac{z}{2};\frac{3}{2};\frac{w^2}{4}\right)\nonumber\\
&=-\frac{\a^{-1-\frac{z}{2}}}{2^{z+4}\pi^{\frac{z+1}{2}}}w^2\G(z+1)\zeta(z+1)A_w(z)-\frac{\a^{-1+\frac{z}{2}}}{2^{z+3}\pi^{\frac{z+1}{2}}}w^2\G(z)\zeta(z)A_w(-z),
\end{align}
where in the second step we invoked Lemma \ref{int}. From \eqref{lk12rep}, \eqref{mod11} and \eqref{mod999}, we obtain \eqref{lk12} upon simplification. This completes the proof.
\end{proof}

\subsection{Evaluation of an integral of the Riemann $\Xi$-function: Proof of Theorem \ref{xiintgenramhurthm}}\label{ee}

%\subsection{Evaluation of a generalization of Ramanujan's integral: Proof of Theorem \ref{xiintgenramhurthm}}\label{ee}
%We are now ready to prove the integral evaluation in Theorem \ref{xiintgenramhurthm}.
\begin{proof}[Theorem \textup{\ref{xiintgenramhurthm}}][]
We first prove the result for $0<\textup{Re}(z)<1$ and later extend it to $-1<\textup{Re}(z)<1$ by analytic continuation.

We know from Theorem \ref{reciFn}\footnote{Even though this result is true only for $-\frac{3}{4}<\textup{Re}(z)<\frac{3}{4}$, later while actually using the reciprocal functions in this result, we will replace $z$ by $z/2$, in which case the result then actually holds for $-\frac{3}{2}<\textup{Re}(z)<\frac{3}{2}$.} that $e^{-w^2/2}{}_1K_{z,iw}(2\alpha x)$ and $\beta{}_1K_{z,w}(2\beta x)$, where $\a\b=1$, are second Koshliakov transforms of each other. So let us choose $\phi(x, z, w)$ and $\psi(x, z, w)$ from \eqref{phii} and \eqref{psii} to be 
\begin{equation*}
\left(\phi(x, z, w), \psi(x, z, w)\right)=\left(e^{-w^2/2}{}_1K_{z,iw}(2\alpha x), \beta{}_1K_{z,w}(2\beta x)\right)
\end{equation*}
so that from \eqref{def}, \eqref{defZ1}, \eqref{defZ2} and \eqref{defTheta},
\begin{align}
\Theta(x,z,w)&=e^{-\frac{w^2}{2}}{}_1K_{z,iw}(2\alpha x)+\beta{}_1K_{z,w}(2\beta x),\label{Thetasp}\\
Z(s,z,w)&=Z_1(s,z,w)+Z_2(s,z,w),\label{zszwsp}
\end{align}
where, using \eqref{def},
\begin{align}
Z_1(s,z,w)&=\frac{e^{-\frac{w^2}{2}}}{2\alpha^s}{}_1F_1\left(\frac{1+s-z}{2},\frac{3}{2},\frac{w^2}{4}\right){}_1F_1\left(\frac{1+s+z}{2},\frac{3}{2},\frac{w^2}{4}\right)\nonumber\\
&=\frac{1}{2\alpha^s}{}_1F_1\left(1-\frac{s-z}{2},\frac{3}{2},-\frac{w^2}{4}\right){}_1F_1\left(1-\frac{s+z}{2},\frac{3}{2},-\frac{w^2}{4}\right),\label{z1szwsp}\\
Z_2(s,z,w)&=\frac{1}{2\beta^{s-1}}{}_1F_1\left(\frac{1+s-z}{2},\frac{3}{2},-\frac{w^2}{4}\right){}_1F_1\left(\frac{1+s+z}{2},\frac{3}{2},-\frac{w^2}{4}\right).\label{z2szwsp}
\end{align}
Note that in the second equality in \eqref{z1szwsp}, we used \eqref{kft} twice.

In order to use the above choices of $\phi, \psi, \Theta$ and $Z(s, z, w)$ in Theorem \ref{analogueReslutthm}, we need to first verify that $\phi$ and $\psi$ indeed belong to $\Diamond_{\eta, \omega}$. To that end, it suffices to show that ${}_1K_{z,w}(x)\in\Diamond_{\eta, \omega}$, which is done next. 

From Theorem \ref{integralRepr}, it is clear that ${}_1K_{z,w}(x)$ is an analytic function of $x$ for $|\arg(x)|<\frac{\pi}{4}$. Therefore part (i) of Definition \eqref{diamondc} is satisfied with $\omega=\frac{\pi}{4}$. Next, from Theorem \ref{asymSmall}, for $z\neq 0$,
\begin{align*}
{}_1K_{z,w}(x)=O\left(x^{1-\mathrm{Re}(z)}\right)
\end{align*} 
as $x\rightarrow 0$. Since Re$(z)<1$ in the hypotheses of the theorem, we have
\begin{align}\label{small x}
{}_1K_{z,w}(x)=O(x^{-\delta}),
\end{align}
for small values of $x$ and for every $\d>0$. Note from \eqref{kzwsmallii} that \eqref{small x} is valid for $z=0$ too. Similarly, the near-exponential decay of ${}_1K_{z, w}(x)$ for large values of $x$, as can be seen from Theorem \ref{asymptotcs large x}, ensures that
\begin{align}\label{large x}
{}_1K_{z,w}(x)=O\left(x^{-\eta-1-\left|\mathrm{Re}(z)\right|}\right)
\end{align} 
for $x$ large and $\eta>0$. Hence \eqref{small x} and \eqref{large x} imply that part (ii) of Definition \eqref{diamondc} is also satisfied whence ${}_1K_{z,w}(x)\in\Diamond_{\eta,\omega}$.

Next, from \eqref{Del} and \eqref{zszwsp} and the hypothesis $\a\b=1$, we have
\begin{equation}\label{zhalf}
Z\left(\frac{1+it}{2},\frac{z}{2}, w\right)=\frac{1}{2\sqrt{\a}}e^{-\frac{w^2}{4}}\Delta_2\left(\alpha,\frac{z}{2},w,\frac{1+it}{2}\right).
\end{equation}
Also, from \eqref{z1szwsp} and \eqref{z2szwsp},
\begin{align*}
Z_1\left(1\pm\frac{z}{2},\frac{z}{2},w\right)&=\frac{1}{2}\alpha^{-1\mp\frac{z}{2}}\frac{\sqrt{\pi}\mathrm{erf}\left(\frac{w}{2}\right)}{w}{}_1F_1\left(\frac{1\mp z}{2};\frac{3}{2};-\frac{w^2}{4}\right)\nonumber\\
Z_2\left(1\pm\frac{z}{2},\frac{z}{2},w\right)&=\frac{1}{2}\beta^{\mp\frac{z}{2}}\frac{\sqrt{\pi} e^{-\frac{w^2}{4}}\mathrm{erfi}\left(\frac{w}{2}\right)}{w}{}_1F_1\left(1\pm\frac{z}{2};\frac{3}{2};-\frac{w^2}{4}\right)
\end{align*}
so that from \eqref{defS} and \eqref{zszwsp} and upon using \eqref{kft} for each of ${}_1F_{1}$'s in the expressions involving $\a$, we find that
\begin{align}\label{szwexp}
S(z, w)&=\frac{\G(z+1)e^{\frac{-w^2}{4}}}{2^{z+1}\sqrt{\a}}\bigg\{\frac{1}{2}\a^{\frac{-(z+1)}{2}}\zeta(z+1)A_w(z)+\frac{1}{2}\b^{\frac{-(z+1)}{2}}\zeta(z+1)A_{iw}(z)\nonumber\\
&\qquad\qquad\qquad\qquad+\a^{\frac{z-1}{2}}\frac{\zeta(z)}{z}A_w(-z)+\b^{\frac{z-1}{2}}\frac{\zeta(z)}{z}A_{iw}(-z)\bigg\}\nonumber\\
&=\frac{\G(z+1)e^{\frac{-w^2}{4}}}{2^{z+1}\sqrt{\a}}\bigg\{\a^{\frac{(z+1)}{2}}\left(\frac{\zeta(z+1)A_w(z)}{2\alpha^{z+1}}+\frac{\zeta(z)A_w(-z)}{\alpha z}\right)\nonumber\\
&\qquad\qquad\qquad\qquad+\b^{\frac{(z+1)}{2}}\left(\frac{\zeta(z+1)A_{iw}(z)}{2\b^{z+1}}+\frac{\zeta(z)A_{iw}(-z)}{\b z}\right)\bigg\},
%2^{-1-z}\Gamma(1+z)\zeta(1+z)\bigg\{+2^{-z}\Gamma(z)\zeta(z)Z\left(1-\frac{z}{2},\frac{z}{2}, w\right)
\end{align}
where $A_w(z)$ is defined in \eqref{ab}.

Thus using the aforementioned choices of $\phi, \psi$ as well as \eqref{Thetasp}, \eqref{zszwsp}, \eqref{zhalf} and \eqref{szwexp} in Theorem \ref{analogueReslutthm}, we arrive at
{\allowdisplaybreaks\begin{align}\label{inbet}
&\frac{e^{-\frac{w^2}{4}}\pi^{\frac{z-3}{2}}}{2\sqrt{\a}}\int_0^\infty \Gamma\left(\frac{z-1+it}{4}\right)\Gamma\left(\frac{z-1-it}{4}\right)\Xi\left(\frac{t+iz}{2}\right)\Xi\left(\frac{t-iz}{2}\right)\frac{\Delta_2\left(\alpha,\frac{z}{2},w,\frac{1+it}{2}\right)}{(z+1)^2+t^2}dt\nonumber\\
&=-\frac{1}{\pi}\Gamma\left(\tfrac{z}{2}\right)\sum_{n=1}^\infty\sigma_{-z}(n) \int_0^\infty \big\{e^{\frac{-w^2}{2}}{}_1K_{\frac{z}{2},iw}(2\alpha x)+\beta{}_1K_{\frac{z}{2},w}(2\beta x)\big\}\left({}_2F_1\left(1,\tfrac{z}{2};\tfrac{1}{2};-\tfrac{x^2}{\pi^2n^2}\right)-1\right)x^{\frac{z-2}{2}}dx\nonumber\\
&-\frac{\G(z+1)e^{\frac{-w^2}{4}}}{2^{z+1}\sqrt{\a}}\bigg\{\a^{\frac{(z+1)}{2}}\left(\frac{\zeta(z+1)A_w(z)}{2\alpha^{z+1}}+\frac{\zeta(z)A_w(-z)}{\alpha z}\right)+\b^{\frac{(z+1)}{2}}\left(\frac{\zeta(z+1)A_{iw}(z)}{2\b^{z+1}}+\frac{\zeta(z)A_{iw}(-z)}{\b z}\right)\bigg\}
\end{align}}
We will now be done if we can show that
\begin{align}\label{alphawala}
&-\frac{1}{\pi}\Gamma\left(\frac{z}{2}\right)\sum_{n=1}^\infty\sigma_{-z}(n) \int_0^\infty e^{\frac{-w^2}{2}}{}_1K_{\frac{z}{2},iw}(2\alpha x)\left({}_2F_1\left(1,\frac{z}{2};\frac{1}{2};-\frac{x^2}{\pi^2n^2}\right)-1\right)x^{\frac{z-2}{2}}\, dx\nonumber\\
&=\frac{\a^{\frac{z}{2}}\G(z+1)e^{-\frac{w^2}{4}}}{2^{z+1}}\sum_{m=1}^{\infty}\varphi_{w}(z,m\a),
\end{align}
where $\phi_{w}(z,m\a)$ is defined in \eqref{genramhureqeq}, for then, replacing $\a$ by $\b$ and simultaneously $w\to iw$ in \eqref{alphawala}, and then multiplying both sides of the resulting equation by $\b e^{-\frac{w^2}{2}}$ will lead, upon noticing ${}_1K_{\frac{z}{2},-w}(x)={}_1K_{\frac{z}{2},w}(x)$, to
\begin{align}\label{betawala}
&-\frac{1}{\pi}\Gamma\left(\frac{z}{2}\right)\sum_{n=1}^\infty\sigma_{-z}(n) \int_0^\infty \b{}_1K_{\frac{z}{2},w}(2\b x)\left({}_2F_1\left(1,\frac{z}{2};\frac{1}{2};-\frac{x^2}{\pi^2n^2}\right)-1\right)x^{\frac{z-2}{2}}\, dx\nonumber\\
&=\frac{\b^{\frac{z}{2}+1}\G(z+1)e^{-\frac{w^2}{4}}}{2^{z+1}}\sum_{m=1}^{\infty}\varphi_{iw}(z,m\b),
\end{align}
and then substituting \eqref{alphawala} and \eqref{betawala} in \eqref{inbet}, multiplying both sides of the resulting equation by $\frac{2^{z+1}e^{\frac{w^2}{4}}\sqrt{\a}}{\G(z+1)}$ and simplifying would imply
\begin{align*}
&\frac{2^{z}\pi^{\frac{z-3}{2}}}{\Gamma(z+1)}\int_0^\infty \Gamma\left(\frac{z-1+it}{4}\right)\Gamma\left(\frac{z-1-it}{4}\right)\Xi\left(\frac{t+iz}{2}\right)\Xi\left(\frac{t-iz}{2}\right)\frac{\Delta_2\left(\alpha,\frac{z}{2},w,\frac{1+it}{2}\right)}{(z+1)^2+t^2}dt\nonumber\\
&=\a^{\frac{z+1}{2}}\left(\sum_{m=1}^{\infty}\varphi_w(z,m\a)-\frac{\zeta(z+1)A_w(z)}{2\alpha^{z+1}}-\frac{\zeta(z)A_w(-z)}{\alpha z}\right)\nonumber\\
&\quad+\b^{\frac{z+1}{2}}\left(\sum_{m=1}^{\infty}\varphi_{iw}(z,m\b)-\frac{\zeta(z+1)A_{iw}(z)}{2\beta^{z+1}}-\frac{\zeta(z)A_{iw}(-z)}{\b z}\right).
\end{align*}
Then invoking Theorem \ref{genramhureq}, we would finally arrive at \eqref{xiintgenramhurthmeqn}.

Thus our aim now is to establish \eqref{alphawala}. To that end, note that employing Theorem \ref{integr} in the first step below, we have
\begin{align}\label{beforeWatson}
&-\frac{1}{\pi}\Gamma\left(\frac{z}{2}\right)\sum_{n=1}^\infty\sigma_{-z}(n) \int_0^\infty e^{\frac{-w^2}{2}}{}_1K_{\frac{z}{2},iw}(2\alpha x)\left({}_2F_1\left(1,\frac{z}{2};\frac{1}{2};-\frac{x^2}{\pi^2n^2}\right)-1\right)x^{\frac{z-2}{2}}\, dx\nonumber\\
&=\frac{4\pi^{\frac{z-1}{2}}e^{-\frac{w^2}{4}}}{w^2}\sum_{n=1}^\infty\sigma_{-z}(n)n^\frac{z}{2}\int_0^\infty\int_0^\infty u^{-1+\frac{z}{2}}v^ze^{-v^2(u^2+1)}K_{\frac{z}{2}}\left(\frac{2n\pi \alpha}{u}\right)\sin(wv)\sinh(wuv)\, dudv\nonumber\\
&=\frac{4\pi^{\frac{z-1}{2}}e^{-\frac{w^2}{4}}}{w^2}\sum_{m=1}^\infty m^{-\frac{z}{2}}\int_0^\infty\int_0^\infty u^{-1+\frac{z}{2}}v^ze^{-v^2(u^2+1)}\sin(wv)\sinh(wuv)\nonumber\\
&\qquad\qquad\qquad\qquad\times \sum_{n=1}^{\infty}n^\frac{z}{2}K_{\frac{z}{2}}\left(\frac{2mn\pi \alpha}{u}\right)\, dudv,
\end{align}
where the interchange of the order of summation and integration follows from absolute convergence. Now from \cite{watsonself}, we have for Re$(\xi)>0$ and Re$(\nu)>0$,
\begin{align}\label{watson lemma}
\sum_{n=1}^\infty n^\nu K_\nu(2\pi n\xi)&=\frac{1}{4}\sqrt{\pi}(\pi\xi)^{-\nu-1}\Gamma\left(\nu+\frac{1}{2}\right)-\frac{1}{4}(\pi\xi)^{-\nu}\Gamma(\nu)\nonumber\\
&\quad+\frac{\sqrt{\pi}}{2\xi}\left(\frac{\xi}{\pi}\right)^{\nu+1}\Gamma\left(\nu+\frac{1}{2}\right)\sum_{n=1}^\infty (n^2+\xi^2)^{-\nu-\frac{1}{2}}.
\end{align}
Let $\nu=z/2$ and $\xi=m\a/u$ in \eqref{watson lemma} and substitute the resulting right-hand side to simplify the inner sum on the extreme right-hand side of \eqref{beforeWatson} thereby obtaining
\begin{align}\label{afterwatson1}
&-\frac{1}{\pi}\Gamma\left(\frac{z}{2}\right)\sum_{n=1}^\infty \sigma_{-z}(n)\int_0^\infty e^{-\frac{w^2}{2}}{}_1K_{\frac{z}{2},iw}(2\alpha x)\left({}_2F_1\left(1,\frac{z}{2};\frac{1}{2};-\frac{x^2}{\pi^2n^2}\right)-1\right)x^{\frac{z-2}{2}}\, dx\nonumber\\
&=\sum_{m=1}^\infty m^{-\frac{z}{2}}\frac{4\pi^{\frac{z-1}{2}}e^{-\frac{w^2}{4}}}{w^2}\int_0^\infty\int_0^\infty u^{-1+\frac{z}{2}}v^ze^{-v^2(u^2+1)}\sin(wv)\sinh(wuv)\nonumber\\
&\quad\qquad\times\bigg\{\frac{\sqrt{\pi}}{4}\left(\frac{\pi m \alpha}{u}\right)^{-\frac{z}{2}-1}\Gamma\left(\frac{z+1}{2}\right)-\frac{1}{4}\left(\frac{\pi m\alpha}{u}\right)^{-\frac{z}{2}}\Gamma\left(\frac{z}{2}\right)\nonumber\\
&\qquad\qquad\quad+\frac{\sqrt{\pi}u}{2m\alpha}\left(\frac{m\alpha}{\pi u}\right)^{1+\frac{z}{2}}\Gamma\left(\frac{z+1}{2}\right)\sum_{n=1}^\infty\left(n^2+\frac{m^2\alpha^2}{u^2}\right)^{\frac{-1-z}{2}}\bigg\}\, dudv 
\end{align}
Our next job is to simplify the three double integrals occurring inside the sum on $m$. Observe that using Lemma \ref{int} and \eqref{dup} and recalling the definition of $A_w(z)$ given in \eqref{ab}, we have
\begin{align}\label{doub1}
&\frac{4\pi^{\frac{z-1}{2}}e^{-\frac{w^2}{4}}}{w^2}\int_0^\infty\int_0^\infty u^{-1+\frac{z}{2}}v^ze^{-v^2(u^2+1)}\sin(wv)\sinh(wuv)\frac{\sqrt{\pi}}{4}\left(\frac{\pi m \alpha}{u}\right)^{-\frac{z}{2}-1}\Gamma\left(\frac{z+1}{2}\right)\, dudv\nonumber\\
&=\frac{(m\a)^{\frac{z}{2}}\G(z+1)e^{-\frac{w^2}{4}}}{2^{z+1}}\left(\frac{1}{2}A_w(z)(m\a)^{-z-1}\right),
\end{align}
and
\begin{align}\label{doub2}
&\frac{4\pi^{\frac{z-1}{2}}e^{-\frac{w^2}{4}}}{w^2}\int_0^\infty\int_0^\infty u^{-1+\frac{z}{2}}v^ze^{-v^2(u^2+1)}\sin(wv)\sinh(wuv)\frac{1}{4}\left(\frac{\pi m\alpha}{u}\right)^{-\frac{z}{2}}\Gamma\left(\frac{z}{2}\right)\, dudv\nonumber\\
&=\frac{(m\a)^{\frac{z}{2}}\G(z+1)e^{-\frac{w^2}{4}}}{2^{z+1}}\left(\frac{A_{iw}(-z)}{z}(m\a)^{-z}\right).
\end{align}
Moreover, employing change of variable $u=t/v$ in the last double integral occurring in \eqref{afterwatson1}, we see that
{\allowdisplaybreaks\begin{align}\label{doub3}
&\frac{4\pi^{\frac{z-1}{2}}e^{-\frac{w^2}{4}}}{w^2}\int_0^\infty\int_0^\infty u^{-1+\frac{z}{2}}v^ze^{-v^2(u^2+1)}\sin(wv)\sinh(wuv)\frac{\sqrt{\pi}u}{2m\alpha}\left(\frac{m\alpha}{\pi u}\right)^{1+\frac{z}{2}}\Gamma\left(\frac{z+1}{2}\right)\nonumber\\
&\qquad\qquad\qquad\qquad\times\sum_{n=1}^\infty\left(n^2+\frac{m^2\alpha^2}{u^2}\right)^{\frac{-z-1}{2}}dudv\nonumber\\
&=\frac{2(m\a)^{\frac{z}{2}}e^{-\frac{w^2}{4}}}{w^2\pi}\G\left(\frac{z+1}{2}\right)\int_0^\infty\int_0^\infty t^{z}v^ze^{-(t^2+v^2)}\sin(wv)\sinh(wt)\sum_{n=1}^{\infty}(n^2t^2+m^2\a^2v^2)^{\frac{-z-1}{2}}dtdv\nonumber\\
&=\frac{2(m\a)^{\frac{z}{2}}e^{-\frac{w^2}{4}}}{w^2\pi}\G\left(\frac{z+1}{2}\right)\sum_{n=1}^{\infty}\int_0^\infty\int_0^\infty \frac{t^{z}v^ze^{-(t^2+v^2)}\sin(wv)\sinh(wt)\, dtdv}{(n^2t^2+(m\a)^2v^2)^{\frac{z+1}{2}}}\nonumber\\
&=\frac{(m\a)^{\frac{z}{2}}\G(z+1)e^{-\frac{w^2}{4}}}{2^{z+1}}\zeta_{w}(z+1, m\a+1),
\end{align}}
where $\zeta_{w}(s, a)$ is defined in \eqref{new zeta}. Note that the interchange of the order of summation and integration in the second step of the above calculation is valid because of absolute convergence. 

Thus from \eqref{afterwatson1}, \eqref{doub1}, \eqref{doub2} and \eqref{doub3}, we are led to \eqref{alphawala}. This completes the proof of Theorem \ref{xiintgenramhurthm} for $0<\textup{Re}(z)<1$. However, both sides of \eqref{xiintgenramhurthmeqn} are analytic in $-1<\textup{Re}(z)<1$. (Note that $z=0$ is the removable singularity of the right-hand side as can be seen from Theorem \ref{lse}). Hence we conclude that the result holds for $-1<\textup{Re}(z)<1$.
\end{proof}

\begin{corollary}
Equation \eqref{hurwitzeqn} holds for $-1<\textup{Re}(z)<1$.
\end{corollary}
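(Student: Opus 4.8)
The plan is to derive this corollary as the limiting case $w\to 0$ of Theorem~\ref{xiintgenramhurthm}, since setting $w=0$ should collapse the generalized objects $\zeta_w$, $A_w$ and $\Delta_2$ into their classical counterparts and reduce \eqref{xiintgenramhurthmeqn} to \eqref{hurwitzeqn}. First I would record the three elementary specializations at $w=0$. From the definition \eqref{ab} of $A_w(z)$ and the fact that ${}_1F_1(\,\cdot\,;\tfrac32;0)=1$, one gets $A_0(z)=A_0(-z)=1$. From Theorem~\ref{hurwitzsc} together with Remark~\ref{ext} (applied with the shift $a\mapsto a+1$, so that the argument $m\a+1$ exceeds $1$) and the definition \eqref{zeta0sa}, one gets $\lim_{w\to 0}\zeta_w(z+1,m\a+1)=\zeta(z+1,m\a+1)$. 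Finally, from \eqref{Del}, since $\omega(x,\tfrac z2,0,s)=x^{1/2-s}$, the kernel becomes $\Delta_2\bigl(\alpha,\tfrac z2,0,\tfrac{1+it}{2}\bigr)=\alpha^{-it/2}+\alpha^{it/2}=2\cos\bigl(\tfrac12 t\log\alpha\bigr)$.

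Next I would identify the summand. Substituting $A_0=1$ and the above Hurwitz limit into \eqref{genramhureqeq} gives $\varphi_0(z,m\a)=\zeta(z+1,m\a+1)+\tfrac12(m\a)^{-z-1}-\tfrac{(m\a)^{-z}}{z}$. Applying the contiguous relation \eqref{hurrel} in the form $\zeta(z+1,m\a+1)=\zeta(z+1,m\a)-(m\a)^{-z-1}$ then yields $\varphi_0(z,m\a)=\zeta(z+1,m\a)-\tfrac12(m\a)^{-z-1}-\tfrac{(m\a)^{-z}}{z}$, which is precisely the summand appearing in $\mathcal{F}(z,\a)$. Hence the right-hand side of \eqref{xiintgenramhurthmeqn} at $w=0$ is exactly $\mathcal{F}(z,\a)$, while the left-hand side becomes $\tfrac{2^{z}\pi^{(z-3)/2}}{\Gamma(z+1)}\int_0^\infty\Gamma(\tfrac{z-1+it}{4})\Gamma(\tfrac{z-1-it}{4})\Xi(\tfrac{t+iz}{2})\Xi(\tfrac{t-iz}{2})\tfrac{\cos(\frac12 t\log\alpha)}{(z+1)^2+t^2}\,dt$ after absorbing the factor $2$ from $\Delta_2$ into the prefactor $2^{z-1}$. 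This is exactly the integral in \eqref{hurwitzeqn}. The equality $\mathcal{F}(z,\a)=\mathcal{F}(z,\b)$ is obtained in the same way by taking $w\to 0$ in Theorem~\ref{genramhureq}, where the invariance $w\leftrightarrow iw$ becomes trivial.

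The only genuine points requiring care are the interchanges of $\lim_{w\to 0}$ with the series $\sum_m$ and with the $\Xi$-integral. For the series this is controlled by the uniform asymptotic estimate \eqref{hermiteeqn1}, which shows $\varphi_w(z,m\a)=O_{z,w}\!\left(m^{-\textup{Re}(z)-1}\right)$ uniformly for $w$ near $0$, giving absolute and uniform convergence; for the integral the passage is justified by the decay of the gamma and $\Xi$ factors and the boundedness of $\Delta_2$ near $w=0$. I do not expect any substantive obstacle here, only routine dominated-convergence bookkeeping of the kind already used repeatedly in the paper. Finally, since both sides of \eqref{hurwitzeqn} are analytic in $-1<\textup{Re}(z)<1$ (the apparent pole at $z=0$ coming from the terms $\tfrac{(m\a)^{-z}}{z}$ and $\tfrac{\zeta(z)}{\a z}$ being removable, exactly as noted at the end of the proof of Theorem~\ref{xiintgenramhurthm}), the identity extends to all of $-1<\textup{Re}(z)<1$, including $z=0$, by analytic continuation.
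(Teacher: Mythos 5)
Your proposal follows essentially the same route as the paper's own proof: let $w\to 0$ in Theorems \ref{genramhureq} and \ref{xiintgenramhurthm}, reduce $\varphi_w$, $A_w$ and $\Delta_2$ to their classical values at $w=0$, and conclude via Theorem \ref{hurwitzsc} and \eqref{hurrel}, with analytic continuation handling $z=0$. However, the two interchanges you dismiss as routine are precisely where the paper does real work, and your stated justifications are not quite adequate. First, \eqref{hermiteeqn1} has error term $O_{s,w}\left(a^{-\textup{Re}(s)-1}\right)$ whose implied constant depends on $w$, so it does not by itself give uniformity near $w=0$; the paper instead returns to the triple-integral representation of $\varphi_w$ furnished by \eqref{hermiteeqn}, applies \eqref{bound for a}, and uses the monotonicity bound $|\sin(wv)\sinh(wu)|/|w|^2\leq \sinh(Mv)\sinh(Mu)/M^2$ on $|w|\leq M$ to obtain $\varphi_w(z,m\a)\ll_{z,\a,M} m^{-\textup{Re}(z)-2}$ uniformly. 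Second, your appeal to ``boundedness of $\Delta_2$ near $w=0$'' is false as a statement about $t$: for fixed $w\neq 0$ the factors ${}_1F_{1}\left(\frac{3\mp it\pm z}{4};\frac{3}{2};-\frac{w^2}{4}\right)$ grow like $e^{c|w|\sqrt{|t|}}/\sqrt{|t|}$ as $|t|\to\infty$, which is exactly why the paper derives the Whittaker-function asymptotic \eqref{bd1f1}; the dominated-convergence argument succeeds only because this subexponential growth is beaten by the $e^{-\frac{\pi}{4}t}$ decay coming from Stirling's formula and the bound on $\Xi$. Both points are repairable, so your argument is correct in substance, but as written the ``routine bookkeeping'' conceals the two estimates that constitute the paper's actual proof.
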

\begin{proof}
Let $w\to 0$ in \eqref{genramhureqeqn} and \eqref{xiintgenramhurthmeqn}. To show that passing the limit through the summation is justified, it suffices to show that $\sum_{n=1}^{\infty}\varphi_{w}(z, m\a)$ is uniformly convergent on any compact subset of the $w$-complex plane containing the origin $w=0$, say, $|w|\leq M$ for $M>0$. To that end, first note that from \eqref{hermiteeqn},
\begin{equation*}
\varphi_w(z, m\a)=\frac{2^{z+3}(m\a)^{-z}}{w^2\Gamma\left(\frac{1-z}{2}\right)\Gamma(z+1)}\int_0^\infty\int_0^\infty\int_{1}^\infty \frac{u^{z-1}e^{-(u^2+v^2)}\sin(wv)\sinh(wu)}{(e^{\frac{2\pi m\a vt}{u}}-1)(t^2-1)^{\frac{z+1}{2}}}\ dtdudv.
\end{equation*}
Using \eqref{bound for a}, we see that
\begin{equation*}
\left|\int_0^\infty \frac{e^{-v^2}\sin(wv)}{e^{\frac{2\pi m\a vt}{u}}-1}\ dv\right|\ll\frac{|w|u^2}{m^2\a^2t^2}
\end{equation*}
as $m\to\infty$. Thus,
\begin{align*}
\left|\varphi_w(z, m\a)\right|&\ll_{z, \a}\frac{1}{m^{\textup{Re}(z)+2}}\int_0^\infty\int_{1}^\infty \frac{u^{\textup{Re}(z)+1}e^{-u^2}}{t^2(t^2-1)^{\frac{\textup{Re}(z)+1}{2}}}\left|\frac{\sinh(wu)}{w}\right|\ dtdu\nonumber\\
&\ll_{z, \a}\frac{1}{m^{\textup{Re}(z)+2}}\int_0^\infty\int_{1}^\infty \frac{u^{\textup{Re}(z)+1}e^{-u^2}}{t^2(t^2-1)^{\frac{\textup{Re}(z)+1}{2}}}\frac{\sinh(Mu)}{M}\ dtdu\nonumber\\
&\ll_{z, \a, M}\frac{1}{m^{\textup{Re}(z)+2}},
\end{align*}
since the double integral over $t$ and $u$ converges for $-1<\textup{Re}(z)<1$. This implies the uniform convergence of the series $\sum_{n=1}^{\infty}\varphi_{w}(z, m\a)$ on $|w|\leq M$ whence we can pass the limit through the summation. 

Also, from \cite[p.~318]{mos}, for $|\kappa|$ large,
\begin{equation*}
M_{\kappa, \mu}(z)=\Gamma(1+2\mu)\pi^{-1/2}z^{1/4}\kappa^{-\mu-1/4}\cos\left(2\sqrt{x}\sqrt{z}-\frac{\pi}{4}(1+4\mu)\right)+O\left(|\kappa|^{-\mu-3/4}\right).
\end{equation*}
Hence from the above equation and \eqref{m1f1}, we see that the confluent hypergeometric functions occurring in $\Delta_2\left(\alpha,\frac{z}{2},w,\frac{1+it}{2}\right)$ in the integral in \eqref{xiintgenramhurthm} satisfy, as $|t|\to\infty$ and $-1<\textup{Re}(z)<1$,
\begin{equation}\label{bd1f1}
{}_1F_{1}\left(\frac{3\mp it\pm z}{4};\frac{3}{2};-\frac{w^2}{4}\right)\sim\frac{1}{2}\frac{e^{-w^2/8}}{\sqrt{-\frac{w^2}{4}\left(\frac{\pm it\pm z}{4}\right)}}\sin\left(\sqrt{-w^2\left(\frac{\pm it\pm z}{4}\right)}\right).
\end{equation}
Thus, from Stirling's formula \eqref{strivert}, the fact that $\Xi(t)=O\left(t^{A}e^{-\frac{\pi}{4}t}\right)$ for $A>0$, and the asymptotic in \eqref{bd1f1},
%$$
 %{}_1F_1\left(-s;\frac{1}{2}; \frac{w^2}{4}\right)= e^{\frac{w^2}{8} } \cos\left(w\sqrt{s + 1/4} \right) + O_{w} \left(\left| s+ 1/4  \right|^{-1/2}   \right)
 %$$
 %as $|s| \rightarrow \infty$,
we see that the limit $w\to 0$ can be passed through the integral in \eqref{xiintgenramhurthmeqn}. The result now follows easily from \eqref{genramhureqeqn}, \eqref{xiintgenramhurthmeqn}, Theorem \ref{hurwitzsc} and \eqref{hurrel}.
\end{proof}
\section{Concluding Remarks}\label{cr}
This project commenced with our desire to obtain a generalization of \eqref{hurwitzeqn} in which the modular relation will be of the form $F(z, w, \a)=F(z, iw, \b)$ with $\a\b=1$, thus incorporating the theta structure. In the course of evaluating the integral in \eqref{xiintgenramhurthmeqn} occurring in the generalization of the integral in \eqref{hurwitzeqn}, we stumbled upon a surprising new generalization of the Hurwitz zeta function $\zeta(s, a)$, that is, \eqref{new zeta}. It was then shown that this generalized Hurwitz zeta function $\zeta_w(s, a)$ satisfies a beautiful theory analogous to that of $\zeta(s, a)$ and $\zeta(s)$, and has a nice additional property not present in $\zeta(s, a)$ when Re$(s)>1$, namely, the symmetry along the line Re$(a)=1$ as can be seen from \eqref{zwfea}. This justifies the usefulness of working with the integrals involving the Riemann $\Xi$-function of the type we have studied here. They have also led us to several new and interesting special functions $K_{z, w}(x), {}_1K_{z,w}(x)$ and $\zeta_{w}(s, a)$ defined in \eqref{kzw}, \eqref{def} and \eqref{new zeta} respectively. However, it would be interesting and worthwhile seeing what the deformation parameter $w$ in $\zeta_w(s, a)$ corresponds to, in the theory of Jacobi forms or related branches of number theory.

We would also like to emphasize that Theorem \ref{xiintgenramhurthm} is but one special case of Theorem \ref{analogueReslutthm}, where we choose the pair of functions reciprocal in the kernel $\sin(\pi z)J_{2z}(4\sqrt{xt})-\cos(\pi z)L_{2z}(4\sqrt{xt})$ to be $\left(e^{-w^2/2}{}_1K_{z,iw}(2\alpha x), \beta {}_1K_{z,w}(2\beta x)\right)$ with $\a\b=1$, as done in Theorem \ref{reciFn}. As such, one can start with other pairs of functions reciprocal in this kernel and obtain further new identities.

In his performance analysis of power line communication systems, Abou-Rjeily \cite[p.~407]{abou} writes,

 \textit{``\dots the CF [Characteristic Function] of $V_{EGC}$ [the noise term] is proportional to the square of a confluent hypergeometric function which, evidently, renders all subsequent manipulations very hard to accomplish. Consequently, additional approximations are mandatory".}

Even though the confluent hypergeometric function in our setting is different from Abou-Rjeily's, the fact that from Theorem \ref{lse}, we have
\begin{equation*}
\lim_{s\to 1}(s-1)\zeta_w(s, a)=e^{\frac{w^2}{4}}{}_1F_{1}^{2}\left(1;\frac{3}{2};-\frac{w^2}{4}\right),
\end{equation*}
hints at possible applications of our new zeta function $\zeta_w(s, a)$ in Physics and Engineering where one might not need to resort to approximations but rather use exact representations for $\zeta_w(s, a)$ that are established in this paper for further analysis.\\

\noindent
\textbf{Problems for further study:}\\

We list below potential problems that are worthy of investigation.\\

1. Can $\zeta_w(s, a)$ be analytically continued (in the variable $s$) to $\mathbb{C}\backslash\{1\}$? \\

2. One of the fundamental formulas in the theory of Hurwitz zeta function $\zeta(s, a)$ is Hurwitz's formula \cite[p.~37, Equation (2.17.3)]{titch} given for $0<a\leq 1$ and $\textup{Re}(s)<0$ by\footnote{This result also holds for \textup{Re}$(s)<1$ if $0<a<1$. See \cite[p.~257, Theorem 12.6]{apostol}.}
\begin{equation}\label{fehur}
\zeta(s, a)=\frac{2\Gamma(1-s)}{(2\pi)^{1-s}}\left\{\sin\left(\frac{1}{2}\pi s\right)\sum_{n=1}^\infty \frac{\cos(2\pi na)}{n^{1-s}}+\cos\left(\frac{1}{2}\pi s\right)\sum_{n=1}^\infty \frac{\sin(2\pi na)}{n^{1-s}}\right\}.
\end{equation}
Several proofs of this result exist in the literature. We refer the interested reader to \cite{ktty} for a survey of most of the existing proofs (see also \cite{kanetsuk}). 

Find the generalization of Hurwitz's formula in the setting of $\zeta_w(s, a)$.\\ 

3. Find the role of the deformation parameter $w$ occurring in $\zeta_w(s, a)$, ${}_1K_{z,w}(x)$ and $K_{z, w}(x)$ in Mathematics and Physics.\\

4. Corollary \ref{integral21cor} contains a new integral representation for $n^{z+1/2}S_{-z-\frac{3}{2},\frac{1}{2}}(2\pi na)$ (modulo a factor involving $z$ and $\a$), where $S_{-z-\frac{3}{2},\frac{1}{2}}(2\pi na)$ is a special case of the first Lommel function. Theorem \ref{integr}, which is a generalization of Corollary \ref{integral21cor}, therefore contains a generalized Lommel function in the form of an integral (on the left-hand side of \eqref{integreqn}). This latter integral is the summand of the infinite series in \eqref{alphawala}. The special case $w=0$ of such infinite series are similar to those considered by Lewis and Zagier in \cite[Equation (2.11)]{lewzag}, and which represent the period functions of Maass wave forms with spectral parameter $(z+1)/2$. Hence it would be of interest to see what do the generalized series in \eqref{alphawala} represent in the context of Maass wave forms. Also, in view of \eqref{alphawala}, it would be worthwhile seeing if the generalized modular relation in Theorem \ref{genramhureq} has an application in Maass wave forms.\\

5. We know of no other way to prove the generalized modular-type transformation in Theorem \ref{genramhureq} and the integral evaluation in Theorem \ref{xiintgenramhurthm}. It would be interesting to see if other proofs of these results could be constructed, especially since the special case \eqref{hurwitzeqn} has several proofs \cite{dixit}, \cite{transf}.\\

6. Find a relation between $\zeta_w(s, a+1)$ and $\zeta_w(s,a)$ which would then generalize the well-known result \eqref{hurrel}.\\

7. Since Hermite's formula \eqref{hermiteor} for $\zeta(s, a)$ is valid for Re$(a)>0$, it would nice to analytically continue \eqref{hermiteeqn} \emph{as a function of $a$} in Re$(a)>0$.\\

8. It would be nice to study special values of $\zeta_w(s, a)$. For example, note that from \eqref{hermiteeqn}, we have
\begin{equation*}
\zeta_w(0,a+1)=\frac{\sqrt{\pi}}{w}\textup{erfi}\left(\frac{w}{2}\right)\left\{-\frac{1}{2}\frac{\sqrt{\pi}}{w}\textup{erf}\left(\frac{w}{2}\right)-ae^{-\frac{w^2}{4}}\right\},
\end{equation*}
which, for $w=0$, gives the well-known special value of the Hurwitz zeta function, namely, \cite[p.~264, Theorem 12.13]{apostol}
\begin{equation*}
\zeta(0,a+1)=-\frac{1}{2}-a.
\end{equation*}

9. In Theorem \ref{lse}, we introduced a new generalization of the digamma function $\psi(a)$, namely, $\psi_w(a)$ defined in \eqref{new psi function}. Traditionally, however, $\psi(a)$ is defined as the logarithmic derivative of the gamma function $\G(a)$. In view of this, it would be interesting to find the generalized gamma function $\G_w(a)$ related to $\psi_w(a)$ in a similar way as $\G(a)$ is to $\psi(a)$. Perhaps, a generalization of Lerch's formula \cite[p.~271]{ww}, namely
\begin{equation*}
\left.\frac{d}{ds}\zeta(s, a)\right|_{s=0}=\log\G(a)-\frac{1}{2}\log(2\pi),
\end{equation*}
in the context of $\zeta_w(s, a)$ would be a good place to start.\\

10. While developing the Theory of ${}_1K_{z,w}(x)$ in Section \ref{1kzw}, we found a Basset-type integral representation for ${}_1K_{0,w}(x)$ (see Theorem \ref{bessetTypeRepr}). It would be interesting to find a Basset-type integral representation for ${}_1K_{z,w}(x)$ in general. This would then generalize \eqref{Besset2}.\\
%10. It may be worth studying the properties of the generalized cosine and sine functions $\cos_{w, s}(\theta)$ and $\sin_{w, s}(\theta)$ defined in \eqref{cosw} and \eqref{sinw} respectively.\\

11. From \cite[Equation (1.19)]{kumar}, we know that the hypergeometric function ${}_2F_{2}\left(1,1;2,\frac{3}{2};-\frac{w^2}{4}\right)$ arises in a representation for the cumulative distribution function of the Voigt line profile which is an important model in molecular spectroscopy and radiative transfer, namely,
\begin{equation*}
F(x;\sigma,\beta)=\textup{Re}\left[\frac{1}{2}+\frac{\textup{erf}(w)}{2}+\frac{iw^2}{\pi}{}_2F_{2}\left(1,1;2,\frac{3}{2};-w^2\right)\right].
\end{equation*}
Also, the generalized modified Bessel function $K_{z,w}(x)$ defined in \eqref{kzw} has a nice connection with the Voigt line profile as shown in \cite[Theorem 3]{kumar}. It would be interesting to see if the functions ${}_2F_{2}\left(1,1;2,\frac{5}{2};-w^2\right)$ arising in \eqref{kzwsmallii} also has a connection with a some model in physics.\\
%This paper bears testimony to a quote by

We conclude this paper with a quote by Andrews and Berndt \cite[p.~4]{geabcblost4} since it harmonizes with the content of this paper. They say, \emph{`One of the hallmarks of Ramanujan's mathematics is that it frequently generates further interesting mathematics\dots'}. Also, as remarked by Dyson \cite[p.~263]{dyson}, Ramanujan indeed left so much in his garden for other people to discover! We hope that the results obtained here stimulate further studies on $\zeta_w(s, a)$.

\begin{center}
\textbf{Acknowledgements}
\end{center}
The authors sincerely thank Nico M. Temme for his help. They also thank Richard B. Paris, Don Zagier, M. Lawrence Glasser, Robert Maier, Alexandru Zaharescu and Alexandre Kisselev for interesting discussions. The first author's research is partially supported by the the SERB-DST grant ECR/2015/000070 and partially by the SERB MATRICS grant MTR/2018/000251. He sincerely thanks SERB for the support.

%\begin{center}
%\textbf{Declarations}
%\end{center}
%\textbf{Funding:} The first author's research is partially supported by the the SERB-DST grant ECR/2015/000070 and partially by the SERB MATRICS grant MTR/2018/000251. He sincerely thanks SERB for the support.\\
%
%\noindent
%\textbf{Conflicts of interest/Competing interests:} On behalf of all authors, the corresponding author states that there is no conflict of interests/competing interests.\\
%
%\noindent
%\textbf{Availability of data and material:} Not applicable.\\
%
%\noindent
%\textbf{Code availability:} Not applicable.\\

\end{document}